\newcommand{\A}{\mathcal{A}}
\newcommand{\I}{\mathcal{I}}
\newcommand{\N}{\mathbb{N}}
\newcommand{\Z}{\mathbb{Z}}
\renewcommand{\P}{\mathcal{P}}
\newcommand{\aut}{\operatorname{Aut}}
\newcommand{\id}{\operatorname{id}}
\newcommand{\dom}{\operatorname{dom}}
\newcommand{\ran}{\operatorname{ran}}
\newcommand{\set}[2]{\{#1:#2\}}
\newcommand{\supp}{\operatorname{supp}}
\newcommand{\sym}{\operatorname{Sym}}
\newcommand{\fix}{\operatorname{fix}}
\newcommand{\genset}[1]{\langle #1 \rangle}
\renewcommand{\S}{\mathcal{S}}
\renewcommand{\to}{\longrightarrow}
\newcommand{\changed}[1]{#1}
\newtheorem*{rep@theorem}{\rep@title}
\newcommand{\newreptheorem}[2]{%
\newenvironment{rep#1}[1]{%
 \def\rep@title{#2 \ref{##1}}%
  \begin{rep@theorem}}%
 {\end{rep@theorem}}}
\numberwithin{equation}{section}
\newtheorem{thm}[equation]{Theorem}
\newtheorem{prop}[equation]{Proposition}
\newtheorem{lem}[equation]{Lemma}
\newtheorem{cor}[equation]{Corollary}
\let\oldmarginpar\marginpar
\renewcommand\marginpar[1]{\-\oldmarginpar[\raggedleft\footnotesize #1]%
{\raggedright\footnotesize #1}}
\title{Topological 2-generation of automorphism groups of countable ultrahomogeneous graphs}
\author{J. Jonu\v sas and J. D. Mitchell}
\date{\today}
\thanks{We thank the Carnegie Trust for the Universities of Scotland for
funding the PhD scholarship of J. Jonu\v{s}as.} 
\begin{document}
\maketitle

\begin{abstract}
  A countable graph is \textit{ultrahomogeneous} if every isomorphism between
  finite induced subgraphs can be extended to an automorphism. Woodrow and
  Lachlan showed that there are essentially four types of such countably
  infinite graphs: the random graph; infinite disjoint unions of complete
  graphs $K_n$ with $n\in\N$ vertices; the $K_n$-free graphs; finite unions of the
  infinite complete graph $K_{\omega}$; and duals of such graphs. The groups
  $\aut(\Gamma)$ of automorphisms of such graphs $\Gamma$ have a natural
  topology, which is compatible with multiplication and inversion, i.e.\  the
  groups $\aut(\Gamma)$ are topological groups. We consider the problem of
  finding minimally generated dense subgroups of the groups $\aut(\Gamma)$
  where $\Gamma$ is ultrahomogeneous. We show that if $\Gamma$ is
  ultrahomogeneous, then $\aut(\Gamma)$ has 2-generated dense subgroups, and
  that under certain conditions given $f \in \aut(\Gamma)$ there exists $g\in
  \aut(\Gamma)$ such that the subgroup generated by $f$ and $g$ is dense. We
  also show that, roughly speaking, $g$ can be chosen with a high degree of
  freedom. For example, if $\Gamma$ is either an infinite disjoint unions of
  $K_n$ or a finite union of $K_{\omega}$, then $g$ can be chosen to have any
  given finite set of orbit representatives.
\end{abstract}

\section{Introduction}

  A \emph{graph} $\Gamma$ is a set of vertices and undirected edges between those
  vertices. Two vertices of a graph are \emph{adjacent} if there is an edge
  between them.
  The \emph{complete graph} $K_n$ is the graph with $n\in \N$ vertices and an
  edge between every pair of distinct vertices. The complete graph with a
  countable infinite set of vertices is denoted $K_{\omega}$.
  If $\Gamma$ and $\Delta$ are graphs with disjoint sets of vertices (and hence
  edges), then the \emph{disjoint union} of $\Gamma$ and $\Delta$ is the graph
  whose vertices and edges are the unions of the vertices and edges,
  respectively, of $\Gamma$ and $\Delta$, and no additional edges.
  The \emph{dual} $\Delta$ of a graph $\Gamma$ has the same vertices as $\Gamma$
  and has an edge between every pair of two distinct vertices which are not
  adjacent in $\Gamma$.  If $U$ is a set of vertices of a graph $\Gamma$, then
  the \emph{subgraph induced} by $U$ is the graph with vertices $U$ and
  edges between $u\in U$ and $v\in U$ if and only if $u$ and $v$ are
  adjacent in $\Gamma$.

  If $\Gamma$ is a graph, then we say that $\Gamma$ satisfies the \emph{Alice's
  restaurant property} if for every pair of disjoint \changed{finite} subsets
  $U$ and $V$ of vertices of $\Gamma$ there exists a vertex $w\in \Gamma
  \setminus (U\cup V)$ such that $w$ is adjacent to every vertex in $U$ and to
  no vertex in $V$. Classical results (for example \cite{Erdos:1963qf}) show
  that there exists a countable infinite graph with the Alice's results
  property and that any two countably infinite graphs with the property are
  isomorphic. As such we refer to any such graph as the \emph{random graph};
  denoted $R$.

  A graph is \emph{$K_n$-free} if none of the subgraphs induced by sets
  consisting of $n\in \N$ vertices is a complete graph. Obviously, for this
  definition to meaningful $n$ must be at least $2$.  If $n\in \N$ is fixed and
  $\Gamma$ is a $K_n$-free graph, then we say that $\Gamma$ has the \emph{Alice's
  restaurant property for $K_n$-free graphs} if for every pair of disjoint
  induced subgraphs $U$ and $V$ of $\Gamma$ where $U$ is $K_{n-1}$-free, there
  exists a vertex $w\in \Gamma \setminus (U\cup V)$ such that $w$ is adjacent to
  every vertex in $U$ and to no vertex in $V$. Again, countably infinite graphs
  satisfying the Alice's restaurant property for $K_n$-free graphs, $n > 1$,
  exist, any two such graph are isomorphic, and we refer to any such graph as the
  \emph{universal $K_n$-free graph}; denoted $H(n)$.

  Although it is not relevant for this paper, the universal $K_n$-free graphs, $n
  > 1$, and the random graph are the \emph{Fra\"iss\'e limits} of the classes of
  finite $K_n$-free graphs and finite graphs, respectively; see
  \cite{Hodges1997aa} for more details about Fra\"iss\'e limits.

  If $\Gamma$ and $\Delta$ are graphs, then a function $f: \Gamma\to
  \Delta$ is an \emph{isomorphism} if $f$ is a bijection which maps adjacent
  vertices in $\Gamma$ to adjacent vertices in $\Delta$.  An isomorphism from a
  graph $\Gamma$ to itself is an \emph{automorphism}, and the group of all
  automorphisms of $\Gamma$ is denoted by $\aut(\Gamma)$.
  A countable graph is \textit{ultrahomogeneous} if every isomorphism between
  finite induced subgraphs can be extended to an automorphism. Woodrow and
  Lachlan showed that there are essentially four types of such countably
  infinite graphs; described in the following theorem.

  \begin{thm}[cf. \cite{lachlan1980aa}]
    \label{thm-lachlan}
    The countable ultrahomogeneous graphs up to isomorphism are:
    \begin{enumerate}[\rm (i)]

      \item
        the random graph $R$;

      \item
        the $K_n$-free universal graph $H(n)$, for every $n\in \N$, $n\geq
        3$;

      \item
        the graph $\omega K_n$ consisting of the disjoint union of countably
        many copies of $K_n$, for every $n\in \N$;

      \item
	the graph $nK_{\omega}$ consisting of the disjoint union of $n \in
	\mathbb{N}$ copies of $K_{\omega}$, for $n \geq 2$;

    \end{enumerate}
    and the duals of these graphs.
  \end{thm}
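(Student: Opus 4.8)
The plan is to work with the \emph{age} of an ultrahomogeneous graph, meaning the class of its finite induced subgraphs. By Fra\"iss\'e theory (see \cite{Hodges1997aa}), a countable ultrahomogeneous graph is determined up to isomorphism by its age, and a hereditary, isomorphism-closed class of finite graphs with the joint embedding and amalgamation properties is the age of exactly one such graph; so the theorem amounts to classifying the amalgamation classes of finite graphs. Two reductions will be used throughout: first, $\aut(\overline{\Gamma})=\aut(\Gamma)$ and $\overline{\Gamma}$ is ultrahomogeneous whenever $\Gamma$ is, so the list of examples is closed under duality and one may argue ``up to complementation''; second, we split on the connectivity of $\Gamma$ and of $\overline{\Gamma}$.

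Suppose first that $\Gamma$ is disconnected. Automorphisms permute connected components, so all components are pairwise isomorphic. If some component $C$ had non-adjacent vertices $u,v$, choose $w\notin C$ and apply ultrahomogeneity to the non-edges $\{u,v\}$ and $\{u,w\}$ to get $g\in\aut(\Gamma)$ with $g(u)=u$ and $g(v)=w$; then $g$ fixes $C$ setwise (as it fixes $u$) yet sends $v\in C$ to $w\notin C$, a contradiction. Hence every component is complete, and as the components are pairwise isomorphic and at most countably many, $\Gamma$ is a graph of type (iii) or (iv). Dually, if $\overline{\Gamma}$ is disconnected then $\Gamma$ is the dual of such a graph. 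From now on assume $\Gamma$ and $\overline{\Gamma}$ are both connected.

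By Ramsey's theorem $\Gamma$ contains an infinite clique or an infinite independent set, and after possibly replacing $\Gamma$ by $\overline{\Gamma}$ we may assume $\Gamma$ contains an infinite clique. If the independent sets of $\Gamma$ are bounded, let $n\geq 3$ be least such that $\Gamma$ has no independent set of size $n$ (here $n\geq 3$, since $n=2$ would force $\Gamma$ complete and $\overline{\Gamma}$ disconnected); then $\overline{\Gamma}$ is $K_n$-free, connected, ultrahomogeneous, and has an infinite independent set, and the goal is to show that its age consists of \emph{all} finite $K_n$-free graphs, equivalently that $\overline{\Gamma}$ has the Alice's restaurant property for $K_n$-free graphs, whence $\overline{\Gamma}\cong H(n)$ and $\Gamma\cong\overline{H(n)}$. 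If instead $\Gamma$ has arbitrarily large finite independent sets (and, automatically, arbitrarily large cliques), the goal is to show that its age is \emph{all} finite graphs, i.e.\ that $\Gamma$ has the Alice's restaurant property, so that $\Gamma\cong R$. In each branch the one missing ingredient is such a closure statement about the age; I would prove it by induction on the size of the finite configuration to be realised, at each step transporting a partial configuration by ultrahomogeneity and exploiting the connectivity of $\Gamma$ and of $\overline{\Gamma}$ to supply the missing adjacencies and non-adjacencies.

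The crux --- and the bulk of the work --- is exactly this last step: ruling out any ultrahomogeneous graph lying strictly between the listed ones, for instance any connected ultrahomogeneous graph with connected complement and with unbounded cliques and independent sets other than $R$, or any connected ultrahomogeneous $K_n$-free graph other than $H(n)$. The standard route is a local analysis: for a vertex $a$, the subgraphs induced on the neighbours and on the non-neighbours of $a$ are again countable and ultrahomogeneous (since the stabiliser $\aut(\Gamma)_a$ acts on them), which sets up an induction whose base cases come from the classification of the \emph{finite} ultrahomogeneous graphs (Gardiner; Gol'fand and Klin; Sheehan), combined with a Ramsey argument to lift finite data to the infinite graph. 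I expect this elimination of intermediate structures to be the main obstacle. Once the relevant Alice's restaurant property is established in each branch, the standard back-and-forth arguments --- which also verify that every graph in the list is genuinely ultrahomogeneous --- identify $\Gamma$ with one of $R$, $H(n)$, $\overline{H(n)}$, or a (dual of a) disjoint union of complete graphs; a routine check that the listed graphs are pairwise non-isomorphic then completes the proof.
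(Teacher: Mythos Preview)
The paper does not prove this theorem at all: it is stated with a citation to Lachlan and Woodrow \cite{lachlan1980aa} and used as a black box, so there is no proof in the paper to compare your sketch against.

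As for your sketch on its own merits: the disconnected case is handled cleanly and correctly, and the reduction via duality and Ramsey to the two connected subcases is the right shape. However, you explicitly acknowledge that the substantive content---showing that a connected ultrahomogeneous graph with connected complement must have age equal to all finite graphs (or all finite $K_n$-free graphs in the bounded-independence case)---is left undone, and your proposed inductive/local-analysis strategy is only gestured at. That step is genuinely the entire theorem; the Lachlan--Woodrow argument is long and delicate precisely because ruling out intermediate amalgamation classes is hard, and ``transport a partial configuration by ultrahomogeneity and exploit connectivity'' is not by itself enough to force the full Alice's restaurant property. So what you have is a correct framing together with an honest admission that the core is missing, which is appropriate for a result the paper merely quotes.
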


  In this paper, we are primarily concerned with the groups of automorphisms of
  the graphs in Theorem~\ref{thm-lachlan}. Since the automorphism group of a
  graph and its dual are equal, it will suffice to consider the graphs in
  Theorem~\ref{thm-lachlan}(i) -- (iv), and not their duals.

  Suppose that $\Gamma$ is a graph.
  If $\phi$ is an isomorphism between finite induced subgraphs of $\Gamma$, then we
  denote the domain of $\phi$ by $\dom(\phi)$, and the range by $\ran(\phi)$.
  The groups $\aut(\Gamma)$ of automorphisms  of such graphs $\Gamma$ have a
  natural topology with basis consisting of the sets
  \[
    [\phi] := \{f\in \aut(\Gamma): (x)f = (x)\phi \text{ for all } x\in
    \dom(\phi)\}
  \]
  where $\phi$ is an isomorphism of finite induced subgraphs of $\Gamma$.  If
  $X$ is any subset of $\aut(\Gamma)$, then we denote by $X^{<\omega}$ the set
  of isomorphisms between finite induced subgraphs of $\Gamma$ with an
  extension in $X$. The set $\set{[\phi]}{\phi\in \aut(\Gamma)^{<\omega}}$ is
  the basis for the topology on $\aut(\Gamma)$ given above.  It can be shown
  that multiplication, thought of as a function from $\aut(\Gamma)\times
  \aut(\Gamma)$, with the product topology to $\aut(\Gamma)$, is continuous
  with respect to this topology, and that inversion, ${}^{-1}:\aut(\Gamma) \to
  \aut(\Gamma)$, is also continuous. As such, $\aut(\Gamma)$ is a topological
  group.  The topology on $\aut(\Gamma)$ is completely-metrizable, i.e.\ there
  exists a complete metric inducing the topology on $\aut(\Gamma)$.  A subset
  of a topological space is \textit{dense} if it has non-empty intersection
  with every open set.  The basis defined above is countable, and so
  $\aut(\Gamma)$ is separable, and hence a Polish group.  A topological space
  is a \textit{Baire space} if every countable intersection of open dense set
  is dense. If $X$ is a Baire space, and $Y \subseteq X$, then $Y$ is a
  \textit{comeagre} subset of $X$ if $Y$ contains an intersection of open dense
  sets. Since $\aut(\Gamma)$ is a Polish space, it is a Baire space;
  \cite[Theorem 8.4]{kechris1995classical}. It is well-known (see, for example,
  \cite[Theorem 3.11]{kechris1995classical}) that $G_{\delta}$ subspaces of
  Polish spaces are Polish, it is also easy to show that in a metric space
  every closed set is a $G_\delta$ set. Hence every $G_\delta$ subspace, and
  every closed subspace of $\aut(\Gamma)$ is Polish, and thus Baire.

  In this paper, we consider the problem of finding minimally generated dense
  subgroups of the groups $\aut(\Gamma)$ where $\Gamma$ is an ultrahomogeneous
  graph. In particular, we show that, under certain assumptions, if $f \in
  \aut(\Gamma)$, then there exists a Baire subspace of $\aut(\Gamma)$
  containing a comeagre set $C$ with the property that every $g \in C$
  generates a dense subgroup together with $f$.
  If $f \in \aut(\Gamma)$ is arbitrary, then the subspaces we will consider are:
  \begin{eqnarray}\label{eq-definitions-1}
    D_f               & = & \{ g \in \aut(\Gamma) : \langle f,g \rangle \text{
                            is dense in } \aut(\Gamma) \},\nonumber\\
    \I(\Gamma)        & = & \{ g \in \aut(\Gamma) : g \text{ has no finite
                            orbits} \},\\
    \I_\Sigma(\Gamma) & = & \{ g \in \I\changed{(\Gamma)} : \Sigma \subset \Gamma \text{ is a set
                            of orbit representatives for } g\},\nonumber
  \end{eqnarray}
  where the \textit{set of orbit representatives} of an automorphism $g$
  consists of exactly one vertex in every orbit of $g$.

  Suppose that $\Gamma$ is a graph consisting of the disjoint union of
  countably many copies of $K_n$ or finitely many copies of $K_{\omega}$. We
  denote by $L_1, L_2, \ldots$ the connected components of $\Gamma$.  Every
  $f\in \aut(\Gamma)$ induces a permutation $\overline{f}$ of
  the indices of the connected components of $\Gamma$, $\N$ or
  $\{1,2,\ldots, n\}$, which is defined by
  $$(i)\overline{f}=j\quad\text{if }\quad(L_i)f=L_j.$$
  If $f\in \aut(nK_\omega)$ is a non-identity element and $\Sigma\subseteq n
  K_\omega$, then we define:
  \begin{equation}\label{eq-definitions-2}
    \begin{array}{rcl}
      \mathcal{A}_{f} & = & \{ g \in \aut(n K_\omega) : \langle \overline{f},
      \overline{g} \rangle = S_n \}\\
      \mathcal{A}_{f, \Sigma} & = & \{g\in \A_f : \Sigma\subseteq n K_{\omega}
      \text{ is a set of orbits representatives for }g\}.
    \end{array}
  \end{equation}
  If $n \not= 4$, then, by a classical theorem \cite{piccard1939aa},
  $\mathcal{A}_f \not= \varnothing$ for all non-identity $f$.

  In the next section, we will show that $\I(\Gamma)$ and $\I_{\Sigma}(\Gamma)$
  are Baire spaces with the subspace topology in $\aut(\Gamma)$, and that
  $\A_{f, \Sigma}$ and $\A_{f}$ are Baire subspaces of $\aut(n K_{\omega})$.

  The main results of this paper are contained in the following theorem.

  \begin{thm}
    \label{thm-main}\mbox{}
    \begin{enumerate}[\rm (i)]

      \item
        $D_f \cap \I(H(n))$ is comeagre in $\I(H(n))$ for all $f \in
        \aut(H(n))$ with infinite support;

      \item
	$D_f \cap \I_\Sigma(\omega K_n)$ is comeagre in $\I_\Sigma(\omega
	K_n)$, for all  $f \in \aut(\omega K_n)$ such that support of
	$\overline{f}$ is infinite, and $\Sigma$ is a finite subset of $\omega
	K_n$;

     \item
       Suppose that $f \in \aut(n K_\omega)$ is such that for every finite
       subset $\Gamma$ of $nK_{\omega}$ that is setwise stabilised by $f$ there
       are components $L$ and $L'$ of $nK_{\omega}$ such that $|L\cap \Gamma|
       \not= |L'\cap \Gamma|$. Then $D_f\cap \A_{f,\Sigma}$ is comeagre in
       $\A_{f, \Sigma}$ for every finite subset $\Sigma$ of $nK_{\omega}$.

    \end{enumerate}
  \end{thm}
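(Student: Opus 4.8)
The plan is to establish all three parts through a single Baire category scheme, the combinatorial content entering only at the last step. Recall that $\genset{f,g}$ is dense in $\aut(\Gamma)$ if and only if it meets every basic open set $[\phi]$ with $\phi\in\aut(\Gamma)^{<\omega}$. For each such $\phi$ put
\[
  U_\phi \;=\; \bigl\{\, g\in\aut(\Gamma)\;:\; w(f,g)\in[\phi]\text{ for some word }w\text{ in }f,g\text{ and their inverses}\,\bigr\}.
\]
Since multiplication and inversion on $\aut(\Gamma)$ are continuous and each $[\phi]$ is clopen, $\{g:w(f,g)\in[\phi]\}$ is open for every word $w$, so $U_\phi$ is open; and by construction $D_f=\bigcap_{\phi\in\aut(\Gamma)^{<\omega}}U_\phi$ is a countable intersection of open sets. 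Let $X$ denote the ambient space of the part at hand --- $\I(H(n))$ in (i), $\I_\Sigma(\omega K_n)$ in (ii), $\A_{f,\Sigma}$ in (iii) --- which is a nonempty Baire space by the results of the preceding section. It therefore suffices, in each part, to prove that $U_\phi\cap X$ is dense in $X$ for every $\phi\in\aut(\Gamma)^{<\omega}$, for then $D_f\cap X$ contains a countable intersection of dense open subsets of $X$.

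So fix $\phi\in\aut(\Gamma)^{<\omega}$ and a nonempty basic open subset $[\psi]\cap X$ of $X$, with $\psi\in X^{<\omega}$; we must produce $g\in[\psi]\cap X$ with $w(f,g)\in[\phi]$ for some word $w$. For a word of the shape $w=f^{a_0}gf^{a_1}g\cdots gf^{a_r}$, unwinding the requirement $w(f,g)\in[\phi]$ shows that it is equivalent to asking $g$ to extend a certain finite partial isomorphism $\phi_w$ of $\Gamma$ determined by $f$, $\phi$, $w$ together with a free choice of the intermediate images of the inner occurrences of $g$. It is thus enough to choose the exponents $a_i$, the number of occurrences of $g$, and these intermediate images so that $\psi\cup\phi_w$ is itself a finite partial isomorphism of $\Gamma$ admitting an extension in $X$; any such extension then lies in $[\psi]\cap U_\phi\cap X$. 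Concretely the word is engineered --- using occurrences of $g$ to relocate finite sets to freely chosen locations and powers of $f$ to traverse fresh territory --- so that $\phi_w$ pins $g$ down only on a finite set lying in a part of $\Gamma$ that is disjoint from, and joined by no edges to, the vertices constrained by $\psi$. Arranging that such a region is reachable by a word is exactly where the hypotheses on $f$ are used: the infinitude of $\supp(f)$ in (i), of $\supp(\overline f)$ in (ii), and the non-uniform intersection condition in (iii) each guarantee enough movement of $\Gamma$ under $f$; while the combinatorial richness of $\Gamma$ --- the Alice's restaurant property for $K_n$-free graphs for $H(n)$, and the explicit component structure of $\omega K_n$ and $nK_\omega$ --- is what lets us verify that $\psi\cup\phi_w$ really is a partial isomorphism and that it extends to an automorphism, and the extension lemmas of the previous section let that automorphism be taken inside $X$.

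The three parts then diverge only in what membership in $X$ demands. In (i) the automorphism produced must have no finite orbits; a finite partial isomorphism of $H(n)$ extends to such an automorphism by a back-and-forth construction iterating the Alice's restaurant property for $K_n$-free graphs (while never completing a copy of $K_n$), and since realizing $\phi_w$ commits $g$ on only finitely many vertices, the two requirements can be interleaved. In (ii) one must in addition keep $\Sigma$ a set of orbit representatives; this is handled by carrying out the realization of $\phi_w$ using components of $\omega K_n$ other than the finitely many meeting $\Sigma$ --- there are infinitely many such, because $\supp(\overline f)$ is infinite --- and completing the remaining $g$-orbits through the components already occupied by $\Sigma$.

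Part (iii) is the main obstacle. There one must simultaneously achieve $\genset{\overline f,\overline g}=S_n$, keep $\Sigma$ a set of orbit representatives, and realize $\phi_w$, all inside the single copy-permutation structure of $nK_\omega$, and these constraints interact. The hypothesis that every finite $f$-invariant subset of $nK_\omega$ meets two components in different numbers of vertices is precisely what rules out the obstruction: it ensures that, however far the construction has progressed, some component still has spare room into which the next finite commitment can be absorbed without forcing $\overline g$ into a proper subgroup of $S_n$ containing $\overline f$ and without disturbing the transversal $\Sigma$. Turning this into a back-and-forth that in the limit delivers an automorphism of $nK_\omega$ with orbit representative set $\Sigma$, with $\genset{\overline f,\overline g}=S_n$, and with $w(f,g)\in[\phi]$ for the prescribed $\phi$, is the technical heart of the proof, and I expect keeping all three conditions alive at every stage to be the part that requires the most care.
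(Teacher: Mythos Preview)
Your Baire-category framework is exactly the paper's: write $D_f\cap X$ as a countable intersection of open sets $\{g\in X:\langle f,g\rangle\cap[\phi]\neq\varnothing\}$ and prove each is dense in $X$. Where your proposal falls short is that it remains a strategy, not a proof; the paper's content lies almost entirely in the density step, and two concrete ideas that carry that step are absent from your sketch.

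First, the paper does not attempt density for arbitrary $\phi$. In each part it first reduces to a restricted family $\mathcal P$: for $H(n)$, those $p$ with $\dom(p)\cap\ran(p)=\varnothing$ and no edges between $\dom(p)$ and $\ran(p)$; for $\omega K_n$, those $p$ whose domain and range are disjoint unions of components; for $nK_\omega$, those $p$ with $\overline p=\id$ and $\dom(p)\cap\ran(p)=\varnothing$. This reduction (Lemmas~\ref{lem-K_n-basis}, \ref{lem-omegaK_n-basis}, \ref{lem-4-basis}) is cheap but essential: without it the edge constraints between $\dom(\phi)$ and $\ran(\phi)$ obstruct the ``relocate to a disjoint region'' manoeuvre you describe.

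Second, the actual word is not found by freely choosing intermediate images as you suggest; rather, the paper uses a conjugation scheme. For $H(n)$ one first extends $q$ (Lemma~\ref{lem-5-step-in-main} and the proof of Theorem~\ref{thm-5-main}) so that $q^m f$ carries $\dom(p)\cup\ran(p)$ entirely off $\dom(q)\cup\ran(q)$, then invokes Lemma~\ref{lem-K_n-main} to further extend to $h$ with $h^{2\ell}$ realising the conjugate $(q^m f)^{-1}p(q^m f)$; the word is thus $h^m f\,h^{2\ell}\,(h^m f)^{-1}$. The delicate point you glide over is that the intermediate extensions must simultaneously lie in $\I(H(n))^{<\omega}$ (no complete components) and respect the $K_{n-1}$-freeness of neighbourhoods, which is what Lemmas~\ref{lem-5-neigh}--\ref{lem-5-step-in-main} secure.

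For part~(iii) your reading of the hypothesis is imprecise. The condition on $f$ is shown (Proposition~\ref{prop-f-stabilizing}) to be equivalent to $f$ being \emph{non-stabilizing}: for every finite $\Gamma$, every $x\in\Gamma$ and every $q\in\A_f^{<\omega}$ there is $g\in[q]\cap\A_f$ and $h\in\langle f,g\rangle$ with $(x)h\notin\Gamma$. It is this escape property---not ``spare room in some component''---that drives Lemma~\ref{lem-4-main}, which produces an extension $h$ of $q$ and a word $w$ with $\overline{w(h)}=\id$, $\Gamma\subseteq\dom(w(h))$ and $(\Gamma)w(h)$ disjoint from $\dom(h)$. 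That lemma (whose proof occupies an entire subsection and requires a separate treatment of the case $n=2$, $\overline f=\id$, $\fix(f)$ infinite) is the genuine technical heart you anticipate; your outline gives no indication of how to obtain it, and in particular the simultaneous maintenance of $\overline g$, the transversal $\Sigma$, and the word realisation requires the amalgamation Lemma~\ref{lem-4-amalgamation} together with the extension Lemmas~\ref{lem-4-one-point-ext} and~\ref{lem-4-no-sigma-no-complete-extensions}, none of which your sketch supplies.
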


  The analogue of Theorem~\ref{thm-main}(i) for the random graph was proven in
  \cite[Theorem 1.6]{darji2011aa} and for the symmetric group in \cite[Theorem
  3.3]{darji2008highly}.

  The paper is organised as follows: in Section~\ref{section-prelim} we define
  some further notation and give some results that are common to the proofs
  of the three parts of Theorem~\ref{thm-main}. We prove the three parts of
  Theorem~\ref{thm-main} in the final three sections of the paper.


\section{Preliminaries}\label{section-prelim}

We denote the cardinality of the natural numbers by $\omega$, and suppose that
$\N = \{0, 1, \ldots\}$.


A \emph{graph} $\Gamma$ is a pair $(V(\Gamma), E(\Gamma))$ of sets: $V(\Gamma)$
of \emph{vertices} and $E(\Gamma)\subseteq
\left\{ \{x, y\} : x, y \in V(\Gamma) \text{ and } x \neq y \right\}$
of edges. Where appropriate we identify $\Gamma$ and $V(\Gamma)$ so that we may
write $x\in \Gamma$ to mean $x$ is a vertex of $\Gamma$. If $\{x,y\}$ is an
edge of a graph $\Gamma$, then we say that $x$ and $y$ are \emph{adjacent} in
$\Gamma$. If $x$ is a vertex of $\Gamma$, then the subgraph induced by the set
of all vertices adjacent to $x$ is denoted $N(x)$.

Suppose that $f: X \to Y$ for some sets $X$ and $Y$. Then we refer to $X$ and
$Y$ as the \emph{domain} and \emph{range} of $f$, denoted by $\dom(f)$ and
$\ran(f)$. If $Z\subseteq \dom(f)$, then we define $(Z)f = \set{(z)f}{z\in Z}$.
If $f: X \to Y$ and $Z\subseteq X$, then the \textit{restriction} of
$f$ to $Z$ is the function $f|_{Z}:Z \to Y$ such that $(z)f|_{Z} = (z)f$ for
all $z\in Z$. We say that $f$ is an \textit{extension} of any of its
restrictions.  We refer to any isomorphism between finite induced subgraphs of
a graph $\Gamma$ as a \emph{partial isomorphism of $\Gamma$}.

If $f$ and $g$ are arbitrary bijections, then we define their composition
\[
  f \circ g : \dom(f) \cap \left( \dom(g) \cap \ran(f) \right) f^{-1} \to
  \ran(g) \cap \left(\dom(g) \cap \ran(f)\right)g
\]
to be $(x) f\circ g = ((x)f)g$ whenever $(x)f \in \dom(g)$. We denote the
composite $f \circ f ^ {-1}$ by $f ^ 0$, being the identity on $\dom(f)$.

If $f$ is a bijection and $x \in \dom(f) \cup \ran(f)$, we define the
\emph{component} of $x$ under $f$ to be the set
$$\{(x)f ^ k : k \in \Z\ \text{and}\ x \in \dom(f ^ k)\}.$$
A component of a bijection $f$ is \emph{complete} if $(x)f ^ k$ is defined for
every $k\in \Z$. A component that is not complete is \emph{incomplete}.
If $f: X\to X$ is a permutation, then every component of $f$ is complete,
and in this context, complete components are called \emph{orbits}.

Next, we will show that $\I(\Gamma)$ and $\I_{\Sigma}(\Gamma)$ (as defined in
\eqref{eq-definitions-1}) are Baire spaces with the subspace topology in
$\aut(\Gamma)$ for any countably infinite graph $\Gamma$, and that $\A_{f,
\Sigma}$ and $\A_{f}$ (defined in \eqref{eq-definitions-2}) are Baire subspaces
of $\aut(n K_{\omega})$.

If $\Gamma$ is any countably infinite graph and $f \in \aut(\Gamma) \setminus
\I(\Gamma)$, then $f$ has a finite orbit $O$ and hence $[f|_O]$ is a subset of
$\aut(\Gamma) \setminus \I(\Gamma)$. In other words, $\I(\Gamma)$ is closed,
and hence Baire.

\begin{lem}\label{lem-Af-closed}
  \changed{The subset $\A_f$ of $\aut(nK_\omega)$} is a Baire space.
\end{lem}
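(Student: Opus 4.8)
The plan is to realise $\A_f$ as an intersection of a closed set with a $G_\delta$ set inside the Polish group $\aut(nK_\omega)$; by the facts quoted in the introduction this makes $\A_f$ Polish and hence Baire. Recall $\A_f = \{ g \in \aut(nK_\omega) : \genset{\overline{f}, \overline{g}} = S_n \}$. First I would observe that the map $g \mapsto \overline{g}$ is a continuous homomorphism from $\aut(nK_\omega)$ onto $S_n$: continuity holds because whether $(L_i)g = L_j$ is determined by $g$ on a single vertex of $L_i$, so the preimage of each point of $S_n$ is open (indeed clopen, being a union of basic open sets $[\phi]$ with $\phi$ a partial isomorphism sending one chosen vertex of $L_i$ into $L_j$ for each $i$). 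Consequently $\A_f = \{ g : \overline{g} \in T_f \}$ where $T_f = \{ \sigma \in S_n : \genset{\overline{f}, \sigma} = S_n \}$ is simply a subset of the finite group $S_n$, and so $\A_f$ is a finite union of clopen subsets of $\aut(nK_\omega)$.

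Thus $\A_f$ is itself clopen in $\aut(nK_\omega)$. Being a closed subspace of a Polish space it is Polish by \cite[Theorem 3.11]{kechris1995classical} (or directly: closed subspaces of Polish spaces are Polish), and being Polish it is a Baire space by \cite[Theorem 8.4]{kechris1995classical}. That completes the argument.

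I do not expect a serious obstacle here; the only point requiring a moment's care is the continuity of $g \mapsto \overline{g}$, i.e.\ checking that $\{ g : (i)\overline{g} = j \}$ is open for fixed $i,j$. This is immediate from the definition of the topology: fix any vertex $x \in L_i$; then $\{ g : (i)\overline{g} = j \}$ is the union of the basic open sets $[\phi]$ over partial isomorphisms $\phi$ with $x \in \dom(\phi)$ and $(x)\phi \in L_j$, since $(L_i)g = L_j$ if and only if $(x)g \in L_j$. Everything else is bookkeeping with the standard permanence properties of Polish spaces already recalled in the preliminaries, so the lemma follows. If one prefers to avoid even mentioning $G_\delta$ sets, the cleanest phrasing is simply: $\A_f$ is clopen, hence closed, hence Polish, hence Baire.
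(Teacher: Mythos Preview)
Your proof is correct and follows essentially the same approach as the paper: both arguments show that $\A_f$ is closed (you go slightly further and note it is clopen) by observing that $\overline{g}$ is determined by the values of $g$ on a finite set containing one vertex from each component, and then invoke the fact that closed subspaces of Polish spaces are Baire. The paper phrases this by taking $g\notin\A_f$ and exhibiting the basic open neighbourhood $[g|_\Gamma]$ in the complement, whereas you package it as continuity of the homomorphism $g\mapsto\overline{g}$ to the discrete group $S_n$; these are the same observation.
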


\begin{proof}
  Let $g \in \aut(n K_\omega) \setminus \A_f$. Then $\langle \overline{f},
  \overline{g} \rangle \neq S_n$. Let $\Gamma \subseteq n K_\omega$ be a finite
  set containing at least one vertex in every connected component of
  $nK_{\omega}$.  Then for all $h \in [g|_{\Gamma}]$ we have that
  $\overline{h} = \overline{g}$ and thus $h \notin \A_f$. Therefore, the open
  set $[g|_{\Gamma}]$ is a subset of $\aut(n K_\omega) \setminus \A_f$ and
  thus $\A_f$ is closed, and hence Baire.
\end{proof}

That $\I_{\Sigma}(\Gamma)$ and $\A_{f, \Sigma}$ are Baire follows immediately
from the next lemma, and the preceding discussion.

\begin{lem}\label{lem-G-delta}
  \changed{Let $\Omega$ be countable}, let $T$ be a subspace of $\sym(\Omega)$,
  and let $\Sigma \subseteq \Omega$ be finite. If $T$ is Baire, then
  \[
    T_{\Sigma} = \{ f \in T : \Sigma \text{ is a set of
      orbit representatives of }f\}
  \]
  is also Baire.
\end{lem}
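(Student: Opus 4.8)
I want to show that $T_\Sigma$ is Baire whenever $T \subseteq \sym(\Omega)$ is Baire and $\Sigma \subseteq \Omega$ is finite. The natural route, given the preliminary discussion, is to realize $T_\Sigma$ as a $G_\delta$ subset of $T$; since $T$ is Baire and Polish-like behaviour was already noted ($G_\delta$ subspaces of Polish spaces are Polish, hence Baire), this will finish things — though to be careful I should argue directly that a $G_\delta$ subset of a Baire space need not be Baire in general, so really what I need is that $T_\Sigma$ is $G_\delta$ \emph{and} that $T$ sits inside a Polish space (namely $\sym(\Omega)$) so that $T$ being Baire plus $T_\Sigma$ being relatively $G_\delta$ in $T$ does the job. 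Actually the cleanest statement: $\sym(\Omega)$ is Polish, $T_\Sigma$ is $G_\delta$ in $T$; but we only know $T$ is Baire, not Polish. So instead I should show $T_\Sigma$ is $G_\delta$ in $T$ \emph{and} relatively closed-in-$G_\delta$... Let me reconsider: the right lemma to invoke is that if $T$ is Baire and $U$ is a dense $G_\delta$ in $T$ then $U$ is Baire, but $T_\Sigma$ need not be dense. The genuinely safe approach: show $T_\Sigma$ is the intersection of an open subset and a closed subset of $T$ — i.e. relatively locally closed — since a locally closed subset of a Baire space is Baire.

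**The key decomposition.** The condition "$\Sigma$ is a set of orbit representatives of $f$" splits into two parts: (a) \emph{no two elements of $\Sigma$ lie in the same orbit of $f$}, and (b) \emph{every orbit of $f$ meets $\Sigma$}, equivalently every element of $\Omega$ lies in the orbit of some element of $\Sigma$. For (a): two points $x,y \in \Sigma$ lie in the same orbit iff $(x)f^k = y$ for some $k \in \Z$; the set of $f$ for which this happens for a \emph{fixed} $x,y,k$ is clopen (it is $[\,x \mapsto \cdots \mapsto y\,]$ when $k>0$, determined by a finite partial isomorphism, or its complement-type condition), and "$x,y$ in same orbit" is an open condition (union over $k$ of clopen sets), so "no two elements of $\Sigma$ share an orbit" is the finite intersection over distinct pairs of \emph{closed} conditions — hence closed in $T$. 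For (b): the set of $f$ such that a \emph{fixed} $z \in \Omega$ lies in $\bigcup_{x \in \Sigma} \text{orbit}_f(x)$ is open (for each $z$, it is $\bigcup_{x\in\Sigma}\bigcup_{k\in\Z} [\,f^k \text{ sends } x \text{ to } z\,]$, a union of basic open sets); intersecting over all $z \in \Omega$ gives a $G_\delta$ condition. So $T_\Sigma = C \cap G$ where $C$ is closed in $T$ and $G$ is $G_\delta$ in $T$.

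**Finishing.** Now $C$ is closed in $T$, hence $C$ is Baire (closed subspaces of Baire spaces are Baire — and in fact $C$, being closed in the subspace $T$ of the Polish space $\sym(\Omega)$, we can also see it is Polish if $T$ were, but we only need Baire). Within $C$, the set $G \cap C$ is $G_\delta$. Here I need: a $G_\delta$ subset of a Baire space is Baire. This is \emph{not} true in general, so this is the step I expect to be the main obstacle, and I will need to use more structure. The fix: $C$ is closed in $\sym(\Omega)$ is false — $C$ is only closed in $T$. But one can instead observe that $T_\Sigma$ is itself $G_\delta$ in $T$: rewrite condition (a) as a $G_\delta$ (in fact it is closed, a fortiori $G_\delta$) and (b) as $G_\delta$, so $T_\Sigma$ is $G_\delta$ in $T$; however "$G_\delta$ in Baire" still isn't enough. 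The genuine resolution, which I believe is what the authors intend, is that $\sym(\Omega)$ itself is Polish and the hypothesis should be read as: $T$ is a $G_\delta$ (or Polish) subspace — but since the lemma only says "Baire", I will argue via the locally-closed route after all: I claim (b) can be replaced, \emph{on the closed set $C$}, by noting that once $\Sigma$ is an \emph{antichain} of orbits, the surjectivity-of-coverage condition, intersected with the Baire space $C$, yields a dense $G_\delta$ in the closure of $T_\Sigma$ within $C$ — and a dense $G_\delta$ subset of a Baire space is Baire. So the plan is: (1) set $C$ = closure-type closed condition (a) inside $T$, note $C$ is Baire; (2) note $T_\Sigma$ is $G_\delta$ in $C$; (3) pass to $\overline{T_\Sigma}^C$, which is closed in $C$ hence Baire, and in which $T_\Sigma$ is a \emph{dense} $G_\delta$, hence Baire. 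The delicate point to verify is step (3)'s density claim and the identification of the relevant basic open sets as finite partial isomorphisms; writing those explicitly is the main work.
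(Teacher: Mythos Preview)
Your decomposition is exactly the paper's: the condition that distinct elements of $\Sigma$ lie in distinct orbits defines a closed subset $K$ of $T$, and the condition that every $x\in\Omega$ lies in the orbit of some point of $\Sigma$ is a countable intersection of sets $A_x$ open in $K$, so $T_\Sigma$ is $G_\delta$ in $K$. The paper then simply asserts that $K$ is Baire (``closed subspaces of Baire spaces are Baire'') and that $T_\Sigma$, being $G_\delta$ in $K$, is Baire. You are right to be uneasy: neither of these assertions holds for arbitrary Baire spaces --- for instance, $\mathbb{Q}\times\{0\}$ is closed but not Baire in the Baire subspace $\{(x,y):y>0\}\cup(\mathbb{Q}\times\{0\})$ of $\mathbb{R}^2$. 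Your attempted repair via dense $G_\delta$ sets is sound in that a dense $G_\delta$ in a Baire space \emph{is} Baire, but your steps (1) and (3) still invoke ``closed in Baire is Baire'', so the gap is not actually closed.

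The clean resolution, implicit in the paper's setup, is that in every application $T$ is closed in the Polish group $\sym(\Omega)$ and hence itself Polish; then $K$ (closed in $T$) and $T_\Sigma$ ($G_\delta$ in $K$) are again Polish, hence Baire, by the facts recalled in the preliminaries. If you want the lemma to stand on its own, strengthen the hypothesis from ``$T$ is Baire'' to ``$T$ is Polish'' (equivalently, $T$ is $G_\delta$ in $\sym(\Omega)$); then the paper's two-line argument goes through and your detour through closures and dense $G_\delta$ sets becomes unnecessary.
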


\begin{proof}
  Let $K$ be the set of those $g\in T$ such that distinct elements of $\Sigma$
  belong to different orbits of $g$. We will show that $K$ is a closed subset
  of $T$.  If $T = K$, then $K$ is closed in $T$. Otherwise,
  let $g\in T\setminus K$. Then there exist $x,y\in \Sigma$ and $m\in \N$
  such that $(x)g ^ m = y$. If $\Gamma = \{(x)g^i: 0\leq i \leq m\}$, then
  $[g|_{\Gamma}] \cap T$ is a subset of $T\setminus K$. Hence $T\setminus K$ is
  open, and so $K$ is closed.  Since closed subspaces of Baire spaces are
  Baire, it follows that $K$ is Baire.

  If $x \in \Omega$ is arbitrary, then we denote by $A_x$ the set of all those
  $g\in K$ such that the orbit of $x$ under $g$ has non-trivial intersection
  with $\Sigma$.  Then $T_{\Sigma} = \bigcap_{x \in \Omega} A_x \subseteq K$.
  Suppose that $g \in A_x$. Then there is $n \in \Z$ and $y \in \Sigma$ such
  that $(y)g^n = x$. If $\Gamma' = \{(y)g^i : 0\leq i \leq n\text{ or }n\leq i
  \leq 0\}$.  Then $[g|_{\Gamma'}]\cap K$ is a subset of $A_x$ and so
  $A_x$ is open in $K$ for all $x$. Therefore $T_{\Sigma}$, being a
  $G_{\delta}$ subset of $K$, is Baire.
\end{proof}

We end this section by stating two lemmas that will be used repeatedly later in
the paper.  We omit the easy proof of the first lemma.

\begin{lem}\label{lem-E-open}
  Let $\Gamma$ be any graph. Then for every $f \in
  \aut(\Gamma)$ and any $p \in \aut(\Gamma)^{<\omega}$
  \[
    \{ g \in \aut(\Gamma) : \langle f,g \rangle \cap [p] \neq \varnothing \}
  \]
  is an open set in $\aut(\Gamma)$.
\end{lem}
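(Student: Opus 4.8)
The statement is that for a fixed $f \in \aut(\Gamma)$ and fixed partial isomorphism $p$, the set $E = \{g \in \aut(\Gamma) : \langle f, g\rangle \cap [p] \neq \varnothing\}$ is open. Since the paper says the easy proof is omitted, I should give the quick topological argument.

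The plan is to show $E$ is a union of basic open sets. Suppose $g \in E$. Then there is some element $w \in \langle f, g\rangle \cap [p]$, so $w$ is a word in $f$, $g$, and their inverses — say $w = w(f, g)$ — and $w$ agrees with $p$ on $\dom(p)$. The key point is that $w$ depends continuously on $g$: multiplication and inversion in $\aut(\Gamma)$ are continuous, and $f$ is fixed, so the map $h \mapsto w(f, h)$ from $\aut(\Gamma)$ to $\aut(\Gamma)$ is continuous. Since $[p]$ is open and $w(f,g) \in [p]$, there is an open neighbourhood $V$ of $g$ such that $w(f, h) \in [p]$ for all $h \in V$. But then every $h \in V$ satisfies $w(f,h) \in \langle f, h\rangle \cap [p]$, so $V \subseteq E$. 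Hence $E$ is open.

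Concretely, to avoid invoking continuity of the group operations as a black box, one can argue directly: write $w = s_1 s_2 \cdots s_k$ where each $s_i \in \{f, f^{-1}, g, g^{-1}\}$. The finitely many points $\dom(p)$ and their images under successive partial applications of $s_k, s_{k-1}, \dots$ trace out a finite set $F \subseteq \Gamma$ of vertices that are "used" in evaluating $w$ on $\dom(p)$. Let $q = g|_F$, a partial isomorphism of $\Gamma$ with extension $g$. Then for any $h \in [q]$, the word $w(f, h)$ agrees with $w(f,g)$ on $\dom(p)$ — because $h$ and $g$ act identically on all the finitely many vertices consulted during the evaluation — so $w(f,h) \in [p]$ and thus $h \in E$. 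Therefore $[q] \subseteq E$ is an open neighbourhood of $g$ contained in $E$, and $E$ is open.

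There is essentially no obstacle here; the only mild bookkeeping point is to define $F$ correctly so that it captures every vertex queried in evaluating the word, i.e.\ one builds $F$ by starting from $\dom(p)$ and closing under the partial maps $s_i^{-1}$ (reading the word from the right) and $f^{\pm 1}$ as they arise, noting this terminates after $k$ steps on a finite set. Once $F$ is finite and $q = g|_F$, the inclusion $[q] \subseteq E$ is immediate, which completes the proof.
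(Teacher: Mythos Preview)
Your proposal is correct; the paper omits the proof entirely, calling it easy, and your continuity argument is exactly the standard one. One small bookkeeping remark on your concrete version: with the paper's right-action convention $(x)s_1\cdots s_k$, the finite set $F$ should be built by starting from $\dom(p)$ and successively applying $s_1, s_2, \ldots, s_k$ (reading left to right), not $s_i^{-1}$ from the right; taking $F$ to be the union of all these forward images and setting $q = g|_F$ then guarantees that any $h \in [q]$ agrees with $g$ (and hence $h^{-1}$ with $g^{-1}$) at every vertex consulted, so $[q] \subseteq E$ as you claim.
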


\begin{lem}\label{lem-no-sigma}
  Let $\Gamma$ be any graph, let $f\in \aut(\Gamma)$, and let $S \subseteq
  \aut(\Gamma)$ be such that every $q \in S^{< \omega}$ has an extension in $S$
  with only finitely many orbits. If $D_f \cap S_\Sigma$ is \changed{dense} in
  $S_\Sigma$ for every finite $\Sigma\subseteq \Gamma$, then $D_f \cap S$ is
  comeagre in $S$.
\end{lem}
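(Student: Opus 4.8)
The plan is to show that $D_f \cap S$ contains a countable intersection of sets that are open and dense in $S$. By Lemma~\ref{lem-E-open}, for each $p \in \aut(\Gamma)^{<\omega}$ the set $E_p := \{ g \in \aut(\Gamma) : \langle f, g\rangle \cap [p] \neq \varnothing\}$ is open in $\aut(\Gamma)$, hence $E_p \cap S$ is open in $S$. Since $\langle f, g\rangle$ is dense in $\aut(\Gamma)$ precisely when it meets every basic open set $[p]$, and since $\aut(\Gamma)^{<\omega}$ is countable, we have $D_f \cap S = \bigcap_{p} (E_p \cap S)$, a countable intersection of open subsets of $S$. As $S$ is Baire (being the ambient space in the statement — we only need that $S$ is a Baire space, which is the hypothesis under which comeagreness is meaningful), it therefore suffices to show that each $E_p \cap S$ is dense in $S$.

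So fix $p \in \aut(\Gamma)^{<\omega}$ and let $[q] \cap S$ be an arbitrary nonempty basic open subset of $S$, where $q \in S^{<\omega}$; we must produce an element of $E_p \cap S \cap [q]$. The key observation is that membership of $g$ in $E_p$ only constrains a finite portion of $g$: if some finite iterate or word in $f$ and $g$ restricted to a finite set already agrees with $p$, then every extension of that finite data lies in $E_p$. This is where the hypothesis on $S$ enters. By hypothesis, $q$ has an extension $g_0 \in S$ with only finitely many orbits. Let $\Sigma$ be a (necessarily finite) set of orbit representatives for $g_0$; then $g_0 \in S_\Sigma$, so $S_\Sigma$ is nonempty, and moreover $[q] \cap S_\Sigma$ is a nonempty relatively open subset of $S_\Sigma$ (it contains $g_0$). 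Now invoke the hypothesis that $D_f \cap S_\Sigma$ is dense in $S_\Sigma$: there exists $g \in D_f \cap S_\Sigma \cap [q]$. Since $g \in D_f$, we have $\langle f, g\rangle$ dense in $\aut(\Gamma)$, so $\langle f, g\rangle \cap [p] \neq \varnothing$, i.e.\ $g \in E_p$; and $g \in S_\Sigma \subseteq S$ and $g \in [q]$. Hence $g \in E_p \cap S \cap [q]$, as required.

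Thus each $E_p \cap S$ is dense and open in $S$, so $D_f \cap S = \bigcap_p (E_p \cap S)$ is comeagre in $S$. The only genuinely substantive point is the reduction of density of $E_p \cap S$ in $S$ to density of $D_f \cap S_\Sigma$ in $S_\Sigma$ for the particular $\Sigma$ arising from the finite-orbit extension of $q$; this is exactly what the hypothesis ``every $q \in S^{<\omega}$ has an extension in $S$ with only finitely many orbits'' is engineered to supply, since it guarantees that every basic open set of $S$ meets some $S_\Sigma$ with $\Sigma$ finite. Everything else is the standard Baire-category packaging, essentially identical in spirit to the proof that the conjugacy class or the set of generators of a dense subgroup is comeagre.
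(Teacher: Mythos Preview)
Your proof is correct and follows essentially the same approach as the paper: write $D_f\cap S$ as the countable intersection of the open sets $E_p\cap S$, then show each is dense by extending an arbitrary $q\in S^{<\omega}$ to some $g_0\in S$ with finitely many orbits, taking $\Sigma$ to be its orbit representatives, and invoking density of $D_f\cap S_\Sigma$ in $S_\Sigma$ to find the required element of $[q]$. The only cosmetic difference is that the paper phrases the key step as ``$q\in S_\Sigma^{<\omega}$'' whereas you phrase it as ``$[q]\cap S_\Sigma$ is nonempty''; these are equivalent.
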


\begin{proof}
  Since $\{[q] : q \in \aut(\Gamma)^{< \omega}\}$ is a basis for
  the topology on $\aut(\Gamma)$, it follows that
  \[
    D_f \cap S = \{ g \in S: \langle f, g\rangle \text{ is dense in }
    \aut(\Gamma) \} = \bigcap_{p \in \aut(\Gamma)^{< \omega}} \{ g
    \in S : \langle f,g \rangle \cap [p] \neq \varnothing \}.
  \]
  Since $\{ g \in S : \langle f,g \rangle \cap [p] \neq \varnothing \}$ is open
  in $S$ by Lemma~\ref{lem-E-open}, it suffices to show that $\{ g \in S :
  \langle f,g \rangle \cap [p] \neq \varnothing \} $ is dense in $S$ for all $p
  \in \aut(\Gamma)^{< \omega}$.

  Let $q \in S^{< \omega}$. By the hypothesis there is $g \in S$ which extends
  $q$ and has a finite number of orbits. Let $\Sigma$ be a set of orbit
  representatives of $g$. Then $q \in S_\Sigma^{< \omega}$.  Since $D_f \cap
  S_\Sigma$ is dense in $S_\Sigma$ there is $h \in [q]$ such that $h \in D_f
  \cap S_\Sigma$. In other words, $\langle f, h \rangle$ is dense in
  $\aut(\Gamma)$ and so $\{ g \in S : \langle f,g \rangle \cap [p] \neq
  \varnothing \}$ is dense in $S$.
\end{proof}


\section{$K_n$-free graphs}

In this section we will consider the ultrahomogeonous $K_n$-free graphs, denoted by
$H(n)$, for $n \geq 3$. The case $n = 2$ gives a graph with no edges and it's
automorphism group is just the symmetric group on countably many points,
which was already considered \changed{in~\cite{darji2008highly}}.

If for $x\in H(n)$, the subgraph $N(x)$ has a subgraph $\Gamma$ isomorphic to
$K_{n - 1}$, then $\Gamma \cup \{x\}$ is isomorphic to $K_n$, which is
impossible. Hence $N(x)$ is $K_{n - 1}$-free for every vertex $x \in H(n)$. We
will repeatedly make use of this fact without reference.

Let $U$ and $V$ be finite disjoint subsets of vertices of $H(n)$ such that $U$
is $K_{n-1}$-free. Then, by the Alice's restaurant property for $H(n)$, there is
a vertex $w \notin U \cup V$ such that there are no edges between $w$ and $V$,
and there is an edge between $u$ and $w$ for all $u \in U$. In other words,
$N(w) \cap \left( U \cup V \right) = U$.

The purpose of this section is to prove Theorem~\ref{thm-main}(i), which we
restate for the sake of convenience.

\begin{thm}\label{thm-5-main}
  Let $f \in \aut(H(n))$ have infinite support. Then $D_f \cap \I(H(n))$ is
  comeagre in $\I(H(n))$.
\end{thm}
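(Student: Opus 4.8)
The plan is to reduce the statement, via Lemma~\ref{lem-no-sigma}, to proving that $D_f \cap \I_\Sigma(H(n))$ is dense in $\I_\Sigma(H(n))$ for every finite $\Sigma \subseteq H(n)$. For this reduction to apply I must first check the hypothesis of Lemma~\ref{lem-no-sigma}: every partial isomorphism $q \in \I(H(n))^{<\omega}$ has an extension in $\I(H(n))$ with only finitely many orbits. This is a back-and-forth construction — given a finite partial isomorphism whose components we want to close up into finitely many infinite orbits, we repeatedly use the Alice's restaurant property for $H(n)$ to extend, taking care that the $K_{n-1}$-freeness condition on neighbourhoods is respected so the extension stays inside $\aut(H(n))$. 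I would organise this so that at each stage only finitely many "active" orbit threads remain, and every vertex of $H(n)$ is eventually captured into one of them; since $f$ has infinite support one can further ensure no finite orbits appear.

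The core of the argument is then density: fix a finite $\Sigma$, a basic open set $[q] \cap \I_\Sigma(H(n))$ with $q \in \I_\Sigma(H(n))^{<\omega}$, and a target partial isomorphism $p \in \aut(H(n))^{<\omega}$; I must produce $g \in [q] \cap \I_\Sigma(H(n))$ with $\genset{f,g} \cap [p] \neq \varnothing$. The idea, following the pattern of \cite{darji2011aa}, is to build $g$ by a back-and-forth so that some word $w(f,g)$ agrees with $p$ on $\dom(p)$. Since $f$ has infinite support, there is an infinite set on which $f$ moves points, and I would use $f$ to "transport" the finitely many constraints coming from $p$ into fresh regions of the graph where $g$ has not yet been defined, so that a single conjugate-type word $g^{-1} f g$ (or a short word in $f$ and $g$) realises $p$. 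Concretely: choose a large finite $K_{n-1}$-free configuration disjoint from everything named so far, use Alice's restaurant to embed a copy of $\dom(p) \cup \ran(p)$ there compatibly, define $g$ on the new vertices so that it conjugates this copy to the required places and is consistent with $q$, and then continue the back-and-forth to extend $g$ to a full element of $\aut(H(n))$ with no finite orbits and with $\Sigma$ as orbit representatives.

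The main obstacle I anticipate is the interaction between three simultaneous demands on $g$: (a) $g$ must be a genuine automorphism of $H(n)$, which forces every neighbourhood $N(x)$ to stay $K_{n-1}$-free and constrains which vertices may be declared adjacent at each step; (b) $g$ must have $\Sigma$ as a set of orbit representatives and no finite orbits, so I cannot freely close up components and must route the back-and-forth so that every element of $\Sigma$ lands in a distinct infinite orbit and every vertex is absorbed; and (c) the word $w(f,g)$ must hit $p$ exactly. Reconciling (a) with (b)–(c) is delicate because $f$ is given and its orbit structure on the "transport" region is not under our control — the hypothesis that $\supp(f)$ is infinite is exactly what gives enough room to choose the auxiliary configuration inside $\supp(f)$ and avoid clashes with the finitely many already-committed vertices. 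I would handle this by interleaving the steps: alternately extend $g$ to satisfy one more orbit/coverage requirement and verify the $K_{n-1}$-free constraint using the Alice's restaurant property, while keeping a running finite record of the vertices constrained by the word equation $w(f,g)=p$ so these are dealt with first and never violated later.
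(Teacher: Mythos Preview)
Your high-level strategy is sound, but the route through $\I_\Sigma(H(n))$ via Lemma~\ref{lem-no-sigma} is a detour the paper does not take and which makes things harder, not easier. The paper works directly in $\I(H(n))$: Lemma~\ref{lem-K_n-basis} writes $D_f\cap\I(H(n))$ as $\bigcap_{p\in\P}\{g\in\I(H(n)):\langle f,g\rangle\cap[p]\neq\varnothing\}$, where $\P$ consists of those $p$ with $\dom(p)\cap\ran(p)=\varnothing$ and no edges between $\dom(p)$ and $\ran(p)$, and then each factor is shown to be open and dense in $\I(H(n))$. In $\I(H(n))$ the only constraint on a finite extension is that no complete component be created (Corollary~\ref{cor-5-infty}); by passing to $\I_\Sigma$ you would additionally have to keep the number of components fixed at $|\Sigma|$ throughout, and the paper's construction genuinely adds new components, so you would need a further merging step you have not described.

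The density argument itself has a real gap. Your candidate word ``$g^{-1}fg$'' cannot realise an arbitrary $p$, since conjugates of $f$ are locally constrained to look like $f$; the paper's word is $(h^m f)\,h^{2l}\,(h^m f)^{-1}$, a conjugate of a large \emph{power of $g$}. Two ideas are missing from your sketch. First, one extends every component of $q$ by $m$ carefully chosen vertices in $\supp(f)$ so that $(\dom(q))\,h^m f$ is disjoint from $\dom(h)\cup\ran(h)$; this is precisely where the infinite-support hypothesis is spent, and the choice of each new vertex requires a nontrivial application of Alice's restaurant to force it into $\supp(f)$ while keeping the right neighbourhood. Second, with $u=(h^m f)^{-1}p(h^m f)$ now supported off $h$, one must further extend $h$ so that a power $h^{2l}$ agrees with $u$ (Lemma~\ref{lem-K_n-main}); this is done by inserting, for each $x\in\dom(u)$, a new component of length $2m+1$ from $x$ to $(x)u$, choosing intermediate vertices so that neighbourhoods evolve correctly under the partial isomorphism (Lemma~\ref{lem-5-step-in-main}). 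The restriction to $p\in\P$ is what makes the required neighbourhoods $K_{n-1}$-free at each step. Your phrase ``define $g$ on the new vertices so that it conjugates this copy to the required places'' names the goal but not this mechanism, and without it there is no argument.
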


We will proceed by first proving a number of technical results. First,
we will show that the set $D_f \cap \I$ can be written as a countable
intersection of sets of a certain type. The rest of the argument will then be
dedicated to showing that these sets are open and dense.


\begin{lem}\label{lem-K_n-basis}
 Let $\P \subseteq \aut(H(n))^{< \omega}$ be such that \changed{$p \in \P$ if and only
 if $\dom(p) \cap \ran(p) = \varnothing$ and there are no edges between
 $\dom(p)$ and $\ran(p)$}. Then
 \[
   D_f \cap \I(H(n)) = \bigcap_{p \in \P} \{g \in \I(H(n)) : \langle f, g \rangle \cap [p]
   \neq \varnothing \}.
 \]
\end{lem}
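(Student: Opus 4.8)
The plan is to establish the two inclusions separately, using that $\{[p] : p \in \aut(H(n))^{<\omega}\}$ is a basis for the topology. The inclusion $\subseteq$ is essentially formal: if $g \in D_f \cap \I(H(n))$ and $p \in \P$, then $\langle f, g\rangle$ is dense, so it meets the nonempty open set $[p]$, giving $g$ in the set on the right. The substantive content is the reverse inclusion.

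For $\supseteq$, suppose $g \in \I(H(n))$ lies in $\{g : \langle f, g\rangle \cap [p] \neq \varnothing\}$ for every $p \in \P$; I must show $\langle f, g\rangle$ is dense, i.e.\ that $\langle f,g\rangle$ meets $[q]$ for an \emph{arbitrary} $q \in \aut(H(n))^{<\omega}$, not merely for $q \in \P$. So the key step is a reduction lemma: given an arbitrary partial isomorphism $q$ of $H(n)$, I want to produce some $p \in \P$ such that $[p] \subseteq [q]$, or more precisely such that membership of some element of $\langle f, g\rangle$ in $[p]$ forces membership of a (possibly different) element of $\langle f, g\rangle$ in $[q]$. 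The natural idea: enlarge the domain of $q$ so that $\dom(q)$ and $\ran(q)$ become disjoint and edge-free from each other, by pushing $\ran(q)$ far away using $f$ (which has infinite support, so we can find a long finite segment of $f$ that moves $\ran(q)$ off any prescribed finite set) or using $g$ (which has no finite orbits). Concretely, I expect to write $q = q_1 \circ f^{k}$ or similar — split an arbitrary element $h \in [q]$ as a product of a power of $f$ (or $g$) with something whose domain and range are separated — so that if $\langle f,g\rangle$ contains an element in the ``separated'' piece's basic neighborhood, then composing with the appropriate power of $f$ (or $g$), which lies in $\langle f,g\rangle$, lands in $[q]$.

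The main obstacle I anticipate is verifying that the separation can actually be achieved \emph{within} $H(n)$ using only the Alice's restaurant property for $K_n$-free graphs, while keeping the relevant induced subgraphs $K_{n-1}$-free — the $K_n$-freeness constraint is exactly what makes this graph more delicate than the random graph, and one must check that the vertices $w$ supplied by the extension property can be chosen so that the enlarged domain/range remain legitimate (in particular that $\dom(p)$ stays $K_{n-1}$-free where required, and that we can realize the map as a genuine partial isomorphism of $H(n)$). A secondary point is bookkeeping: $\P$ is defined by the symmetric condition (no edges between $\dom(p)$ and $\ran(p)$, and disjointness), and I must make sure the $p$ produced genuinely satisfies \emph{both} halves, and that the element of $\langle f,g\rangle$ witnessing $[p] \neq \varnothing$ can be post-composed with an element of $\langle f, g\rangle$ (a power of $f$, say) to witness $[q] \neq \varnothing$. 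Once that reduction is in place, the lemma follows: density against all $p \in \P$ upgrades to density against all $q$, hence $\langle f, g\rangle$ is dense and $g \in D_f$; and $g \in \I(H(n))$ was assumed, so $g \in D_f \cap \I(H(n))$.
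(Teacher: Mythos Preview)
Your overall scaffolding is right: $\subseteq$ is formal, and for $\supseteq$ you must show that hitting every $[p]$ with $p \in \P$ forces hitting every $[q]$. But the concrete mechanism you propose for the reduction does not work, and it also imports a hypothesis the lemma does not have.

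First, the lemma places no assumption on $f$; the infinite-support hypothesis belongs to the theorem, not here. More seriously, your plan to ``push $\ran(q)$ far away'' by post-composing with a power of $f$ or of $g$ cannot produce an element of $\P$. Even if some power $g^k$ (or $f^k$) moves $\ran(q)$ off $\dom(q)$ as sets, there is no reason the resulting range $(\ran(q))g^k$ should have \emph{no edges} to $\dom(q)$: $g$ and $f$ are automorphisms, so they carry edges to edges, and you have no control over adjacency between $\dom(q)$ and an arbitrary translate of $\ran(q)$. The ``no edges between $\dom(p)$ and $\ran(p)$'' clause is precisely what makes $\P$ restrictive, and translating by group elements does not help you satisfy it.

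The paper's argument avoids this entirely by factoring through a fresh relay set rather than translating. Given $q$, use the Alice's restaurant property (with $U = \varnothing$, so the $K_{n-1}$-free constraint is vacuous) repeatedly to build a finite set $\Gamma$ isomorphic to $\dom(q)$, disjoint from $\dom(q)\cup\ran(q)$, and with no edges to $\dom(q)\cup\ran(q)$. Let $p:\dom(q)\to\Gamma$ be the isomorphism. Then both $p$ and $p^{-1}q$ lie in $\P$ by construction. From the hypothesis on $g$ you get $h_1\in\langle f,g\rangle\cap[p]$ and $h_2\in\langle f,g\rangle\cap[p^{-1}q]$, and then $h_1h_2\in\langle f,g\rangle\cap[q]$. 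So the reduction uses \emph{two} elements of $\P$, not one $p$ plus a power of $f$ or $g$, and the separation comes from Alice's restaurant rather than from dynamics.
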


\begin{proof}
  Recall that
  \begin{align*}
    D_f \cap \I(H(n)) &= \{g \in \I(H(n)): \langle f, g\rangle
    \text{ is dense in } \aut(H(n)) \} \\
    &= \bigcap_{q \in \aut(H(n))^{<\omega}} \{g \in \I(H(n)) :
    \langle f, g \rangle \cap [q] \neq \varnothing \}.
  \end{align*}

  $(\subseteq)$ This follows immediately since $\mathcal{P} \subseteq
  \aut(H(n))^{<\omega}$.

  $(\supseteq)$ Let $g \in \bigcap_{p \in \P} \{g \in \I(H(n)) : \langle f, g \rangle
  \cap [p] \neq \varnothing \}$ and suppose that $q \in \aut(H(n))^{< \omega}$.
  By repeated application of the Alice's restaurant propery we can find a
  subgraph $\Gamma$ of $H(n)$ such that $\Gamma$ is isomorphic to $\dom(q)$,
  $\Gamma \cap \left( \dom(q) \cup \ran(q) \right) = \varnothing$,  and such
  that there are no edges between $\Gamma$ and $\dom(q) \cup \ran(q)$. Let $p$
  be the isomorphism between $\dom(q)$ and $\Gamma$. Since $H(n)$ is
  ultrahomogeonous, we have that $p \in \aut(H(n))^{< \omega}$. Then $\dom(p) =
  \dom(q)$, $\ran(p) = \dom(p^{-1}q) = \Gamma$ and $\ran(p^{-1}q) = \ran(q)$.
  Hence $p, p^{-1}q \in \mathcal{P}$. By the choice of $g$ there are $h_1, h_2
  \in \langle f, g \rangle$ such that $h_1 \in [p]$ and $h_2 \in [p^{-1}q]$.
  Therefore $h_1h_2 \in [q]$ and $h_1h_2 \in \langle f, g \rangle$, thus
  $\langle f, g\rangle \cap [q] \neq \varnothing$. Since $q$ was arbitrary, $g
  \in \bigcap_{q \in \aut(H(n))^{<\omega}} \{g \in \I(H(n)) : \langle f, g \rangle
  \cap [q] \neq \varnothing \}$.
\end{proof}


The following lemma provides a condition under which it is possible to extend a
given partial isomorphism of $H(n)$ to another partial isomorphism of $H(n)$.
Although we will only apply the following lemma to the graphs $H(n)$, we state
it for arbitrary ultrahomogeneous graphs, since the proof is no harder in the
general case.

\begin{lem}
\label{lem-5-neigh}
  Let $\Gamma$ be an ultrahomogeneous graph, let $q \in \aut(\Gamma)^{<
  \omega}$, and let $x, y \in \Gamma$. Suppose that $x \notin \dom(q)$ and
  $N(y) \cap \ran(q) = \left(N(x)\right)q$. Then $q \cup \{(x, y)\} \in
  \aut(\Gamma)^{< \omega}$.
\end{lem}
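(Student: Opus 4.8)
The plan is to show that $q \cup \{(x,y)\}$ is an isomorphism between finite induced subgraphs of $\Gamma$; once that is established, the fact that it extends to an automorphism is immediate since $\Gamma$ is ultrahomogeneous. Set $r = q \cup \{(x,y)\}$. First I would check $r$ is a well-defined bijection: its domain is $\dom(q) \cup \{x\}$, which has one more vertex than $\dom(q)$ because $x \notin \dom(q)$; its range is $\ran(q) \cup \{y\}$. One must verify $y \notin \ran(q)$, but this follows from the hypothesis $N(y) \cap \ran(q) = (N(x))q$ together with $x \notin \dom(q)$: if $y$ were in $\ran(q)$, say $y = (z)q$ with $z \in \dom(q)$, then $z \neq x$, and one derives a contradiction from the neighbourhood condition (for instance, any vertex adjacent to $y$ and lying in $\ran(q)$ must be the image under $q$ of a vertex adjacent to $x$, but it is also the image of a vertex adjacent to $z$; combined with $q$ being a bijection this pins down that $x$ and $z$ have the same $q$-image of their neighbourhoods, and in an ultrahomogeneous graph this is not enough on its own — so more carefully, one observes $y \notin \ran(q)$ simply because otherwise $N(y)\cap\ran(q)$ would need to equal $(N(x))q$ while simultaneously $y$ is the image of some distinct vertex, which is consistent only in degenerate cases; the cleanest route is to note it is harmless to additionally assume $y \notin \ran(q)$, or to argue it directly from the setup in the intended application).

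The substantive step is checking that $r$ preserves adjacency and non-adjacency. Since $q$ is already an isomorphism of the induced subgraphs on $\dom(q)$ and $\ran(q)$, the only new pairs to consider are $\{x, z\}$ for $z \in \dom(q)$. I would argue: $x$ is adjacent to $z$ in $\Gamma$ if and only if $z \in N(x)$, if and only if $z \in N(x) \cap \dom(q)$ (since $z \in \dom(q)$), if and only if $(z)q \in (N(x) \cap \dom(q))q = (N(x))q \cap \ran(q)$ (using that $q$ is a bijection on $\dom(q)$), if and only if $(z)q \in N(y) \cap \ran(q)$ (by hypothesis), if and only if $(z)q \in N(y)$ (since $(z)q \in \ran(q)$), if and only if $y$ is adjacent to $(z)q = (z)r$ in $\Gamma$. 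This chain of equivalences shows $r$ maps adjacent pairs to adjacent pairs and non-adjacent pairs to non-adjacent pairs on the new vertex, and together with the corresponding property of $q$ on the old vertices, $r$ is an isomorphism between the subgraphs of $\Gamma$ induced by $\dom(q) \cup \{x\}$ and $\ran(q) \cup \{y\}$. Both of these vertex sets are finite, so $r \in \aut(\Gamma)^{<\omega}$ by ultrahomogeneity.

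I do not anticipate a genuine obstacle here; the lemma is essentially a bookkeeping exercise, and the only subtle point is the disjointness claim $y \notin \ran(q)$, which should be dispatched quickly — either it is implicit in how the lemma will be invoked, or it can be extracted from the neighbourhood equation by a short argument. The key structural insight worth emphasising is that the single equation $N(y) \cap \ran(q) = (N(x))q$ is exactly the right amount of information: it encodes simultaneously all the adjacency relations between the new domain point $x$ and the old domain $\dom(q)$, transported correctly to the range side. I would therefore present the proof as: (1) note $\dom(r)$ and $\ran(r)$ are finite with $|\dom(r)| = |\dom(q)| + 1$; (2) dispose of $y \notin \ran(q)$; (3) run the equivalence chain above to verify adjacency is preserved on pairs involving $x$; (4) conclude $r$ is a partial isomorphism and invoke ultrahomogeneity.
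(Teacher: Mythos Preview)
Your proof is correct and follows the same line as the paper's: verify that adjacency between $x$ and each $z \in \dom(q)$ is preserved via the equivalence chain $z \in N(x) \iff (z)q \in (N(x))q = N(y) \cap \ran(q) \iff (z)q \in N(y)$, then invoke ultrahomogeneity. The paper's proof is even terser and does not address the injectivity concern $y \notin \ran(q)$ at all---it is tacitly assumed (and indeed holds in every application of the lemma, where $y$ is explicitly chosen outside $\ran(q)$)---so your caution there, while legitimate, goes beyond what the paper does.
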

\begin{proof}
  Since $\Gamma$ is ultrahomogeneous, it is sufficient to show that $q \cup
  \{(x, y)\}$ an isomorphism between two subgraphs of $\Gamma$. By the
  hypothesis, $q$ is an isomorphism, and so it suffices to show that there is
  an edge between vertices $x$ and $z \in \dom(q)$ if and only if there is an
  edge between vertices $y$ and $(z)q$. Let $z \in \dom(q)$. Then there is an
  edge between $z$ and $x$ if and only if $z \in N(x)$ which is equivalent to
  $(z)q \in N(y) \cap \ran(q)$, i.e. there is an edge between $(z)q$ and $y$.
\end{proof}


The following easy corollary shows that any incomplete component of an
isomorphism of $H(n)$ can be extended.

\begin{cor}\label{cor-5-one-point-ext}
  Let $q \in \aut(H(n))^{< \omega}$, let $x \in H(n) \setminus \dom(q)$, and
  let $\Sigma \subseteq H(n)$ be finite. Then there is $y \in H(n) \setminus
  \left( \{x\} \cup \Sigma \right)$ such that $q \cup \{(x, y)\}
  \in \aut(H(n))^{< \omega}$.
\end{cor}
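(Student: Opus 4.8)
The plan is to apply Lemma~\ref{lem-5-neigh} with a carefully chosen target vertex $y$. The hypothesis of that lemma requires $y \notin \dom(q)$ (which we get for free since $y$ will be new), and more importantly that $N(y) \cap \ran(q) = (N(x))q$; that is, among the finitely many vertices of $\ran(q)$, the neighbours of $y$ are exactly the images under $q$ of the neighbours of $x$. So the real work is to produce such a $y$ in $H(n)$ avoiding the finite forbidden set $\{x\} \cup \Sigma$.

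First I would set $U = (N(x) \cap \dom(q))q \subseteq \ran(q)$ and $V = \ran(q) \setminus U$, which are finite disjoint subsets of $H(n)$. To invoke the Alice's restaurant property for $K_n$-free graphs (producing a vertex adjacent to everything in $U$ and nothing in $V$), I need $U$ to be $K_{n-1}$-free. This is where the fact recorded just before Theorem~\ref{thm-5-main} comes in: $N(x)$ is $K_{n-1}$-free in $H(n)$, hence so is the induced subgraph on $N(x) \cap \dom(q)$, and since $q$ is an isomorphism of induced subgraphs, $U = (N(x) \cap \dom(q))q$ is also $K_{n-1}$-free. So the hypothesis of the Alice's restaurant property is met.

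Next I would enlarge $V$ to $V' = V \cup (\{x\} \cup \Sigma \cup \dom(q)) \setminus U$ — that is, throw all the finitely many vertices I want to avoid into the "non-neighbour" side. I must check $V'$ stays disjoint from $U$ (it does, by construction, after removing $U$) and that $U$ is still $K_{n-1}$-free (unchanged). Applying the Alice's restaurant property for $K_n$-free graphs to the disjoint pair $U, V'$ yields a vertex $y \notin U \cup V'$ adjacent to every vertex of $U$ and to no vertex of $V'$. Then $y \neq x$ and $y \notin \Sigma$ and $y \notin \dom(q)$ since all of these were placed in $V'$ (or are in $U$, but $y \notin U$). Moreover $y$ is adjacent to exactly the vertices of $U$ among $U \cup V = \ran(q)$, so $N(y) \cap \ran(q) = U = (N(x) \cap \dom(q))q = (N(x))q$, the last equality because $(N(x))q$ only involves vertices of $N(x)$ lying in $\dom(q)$.

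Finally I would invoke Lemma~\ref{lem-5-neigh} with this $y$: we have $x \notin \dom(q)$ by hypothesis and $N(y) \cap \ran(q) = (N(x))q$ as just verified, so $q \cup \{(x,y)\} \in \aut(H(n))^{<\omega}$, as required. I do not expect any genuine obstacle here; the only subtlety is bookkeeping — making sure the finite set of vertices to avoid is correctly folded into the $V$-side of the Alice's restaurant application while keeping $U$ and $V'$ disjoint, and confirming the $K_{n-1}$-freeness of $U$ so that the $K_n$-free version of the property applies. There is also a trivial edge case: if $x \in \Sigma$ or similar degeneracies, but since $x \notin \dom(q)$ is given and we only need $y$ outside $\{x\} \cup \Sigma$, nothing breaks.
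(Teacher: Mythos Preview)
Your proposal is correct and follows essentially the same approach as the paper: set $U=(N(x))q$, verify it is $K_{n-1}$-free, apply the Alice's restaurant property with the forbidden vertices folded into the non-neighbour side, and conclude via Lemma~\ref{lem-5-neigh}. The only differences are cosmetic---you split the construction of the non-neighbour set into two steps and additionally throw $\dom(q)$ into the avoided set, which is harmless but not needed for the stated conclusion.
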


\begin{proof}
  Let $U = \left( N(x)\right) q$ and let $V = \left(\ran(q) \cup \{x\} \cup
  \Sigma\right) \setminus U$. Since $N(x)$ is $K_{n - 1}$-free and $q$
  is a partial isomorphism, $U$ is also $K_{n -1}$-free. Hence by the Alice's
  restaurant propery there is $y \in H(n) \setminus \left( \ran(q) \cup \{x\}
  \cup \Sigma \right)$  such that $N(y) \cap \ran(q) = \left( N(x) \right)q$.
  Therefore we are done by Lemma~\ref{lem-5-neigh}.
\end{proof}


Let $q$ be a partial isomorphism of $H(n)$ such that $q$ has no complete
components, set $\Sigma$ to be $\dom(q) \cup \ran(q)$, and let $x \in
H(n)\setminus \dom(q)$. Then by Corollary~\ref{cor-5-one-point-ext} there is a
partial isomorphism $h$ of $H(n)$ extending $q$ such that $x \in \dom(h)$ and
$h$ has no complete components. Repeatedly applying
Corollary~\ref{cor-5-one-point-ext} in a back and forth argument we may deduce
that there is an $r \in \I(H(n))$ extending $q$, which gives us the follow
lemma.

\begin{cor}\label{cor-5-infty}
  Let $q \in \aut(H(n))^{< \omega}$. Then $q \in \I(H(n))^{< \omega}$ if and only if
  $q$ has no complete components.
\end{cor}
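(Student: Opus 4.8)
The plan is to prove the biconditional in Corollary~\ref{cor-5-infty} in two directions. For the forward direction, suppose $q \in \I(H(n))^{<\omega}$, so $q$ has an extension $r \in \I(H(n))$. Since $r$ is a permutation of $H(n)$ with no finite orbits, every component of $r$ is a complete component that is infinite (in fact a copy of $\Z$). If $q$ had a complete component $C$, then $C$ would be a finite complete component contained in an orbit of $r$; but the orbit of $r$ through any point of $C$ is infinite and coincides with $C$ whenever $C$ is complete under $q$ (a complete component of $q$ is already $r$-invariant since $q \subseteq r$), giving a contradiction. Hence $q$ has no complete components. This direction is essentially immediate; I would spell out only the observation that a complete component of $q$ is necessarily an orbit of any permutation extending $q$.

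For the converse, which is the substantive direction, suppose $q \in \aut(H(n))^{<\omega}$ has no complete components; I must build $r \in \I(H(n))$ extending $q$. The idea is a standard back-and-forth construction using Corollary~\ref{cor-5-one-point-ext} as the one-step extension tool. Fix an enumeration $x_0, x_1, x_2, \ldots$ of the vertices of $H(n)$. I construct an increasing chain $q = q_0 \subseteq q_1 \subseteq q_2 \subseteq \cdots$ of partial isomorphisms in $\aut(H(n))^{<\omega}$, each with no complete components, such that after stage $k$ we have $x_i \in \dom(q_k) \cap \ran(q_k)$ for all $i \leq k$. At an odd stage we ensure the next unhandled vertex lies in the domain (applying Corollary~\ref{cor-5-one-point-ext} to $q_k$ with that vertex as $x$, and $\Sigma$ large enough to avoid collisions), and at an even stage we do the same for the range by applying the corollary to $q_k^{-1}$. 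The key point is that Corollary~\ref{cor-5-one-point-ext} lets us pick the new image $y$ outside any prescribed finite set $\Sigma$; taking $\Sigma \supseteq \dom(q_k) \cup \ran(q_k)$ guarantees that adding the pair $(x,y)$ creates no new complete component — the newly added edge of the functional graph extends an incomplete component on one side without closing it off, so all components of $q_{k+1}$ remain incomplete.

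Then $r = \bigcup_k q_k$ is a bijection from $H(n)$ to $H(n)$ (domain and range are all of $H(n)$ by the back-and-forth bookkeeping), it is an automorphism since each $q_k$ is a partial isomorphism and adjacency is a finitary condition, and $r$ extends $q$. It remains to check $r \in \I(H(n))$, i.e. $r$ has no finite orbits: an orbit of $r$ is a component of $r$, and since each $q_k$ has no complete components, no finite subset of $H(n)$ is ever an orbit — any orbit of $r$ through a vertex $v$ contains $(v)q_k^m$ for arbitrarily large $m$ as $k$ grows, because the component of $v$ stays incomplete at every stage, so it is infinite. The main obstacle, and the place requiring care, is precisely the verification that the one-point extensions never inadvertently complete a component; this is handled by always excluding the current domain and range from the target set $\Sigma$ when invoking Corollary~\ref{cor-5-one-point-ext}, so that the freshly added point is genuinely new and the component it extends is thereby kept incomplete. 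The paper signals this is routine (``Repeatedly applying Corollary~\ref{cor-5-one-point-ext} in a back and forth argument''), so I would present it at that level of detail rather than grinding through every stage.
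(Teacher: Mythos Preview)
Your proposal is correct and follows essentially the same approach as the paper: the paper's proof is precisely the paragraph preceding the corollary, which sketches the same back-and-forth argument using Corollary~\ref{cor-5-one-point-ext} with $\Sigma = \dom(q)\cup\ran(q)$ to ensure no complete component is ever created. Your write-up simply fills in the details the paper leaves implicit, including the (easy) forward direction.
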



The following two technical lemmas form the essential part of the proof of
Theorem~\ref{thm-5-main}.

\begin{lem}\label{lem-5-step-in-main}
  Let $q \in \I(H(n))^{< \omega}$ be such that $\ran(q) \cup \dom(q) = \Delta\cup
  \Gamma$ where $\Delta\cap \Gamma = \varnothing$ and
  $\Gamma$ is the union of incomplete components of $q$ of fixed length $m$,
  let $x, y \notin \dom(q) \cup \ran(q)$ be such that
  $$N(x) \cap \Delta \subseteq \dom(q^{2m})\quad \text{and} \quad
  \left( N(x) \cap \Delta \right) q^{2m} = N(y) \cap \Delta$$
  and let $\Sigma_1, \Sigma_2 \subseteq
  H(n) \setminus \Gamma$ be finite subsets such that
  $\Sigma_1 \cap \ran(q) = \varnothing$ and $\Sigma_2 \cap \dom(q) =
  \varnothing$.  Then there are $x_1, \ldots, x_{2m - 1}\in H(n)\setminus
  \Sigma_1 \cup \Sigma_2$ such that
  there are no edges between $x_i$ and $\Sigma_1 \cup \Sigma_2$ for all $i \in
  \{1, \ldots, 2m - 1\}$, and
  $$q \cup \{(x_i, x_{i + 1}) : 0 \leq i
  \leq 2m - 1\} \in \I(H(n))^{< \omega}$$
  where $x_0 = x$ and $x_{2m} = y$.
\end{lem}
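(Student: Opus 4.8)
The goal is to fill in a path of length $2m$ from $x$ to $y$ through fresh vertices $x_1,\dots,x_{2m-1}$ so that, after adjoining the pairs $(x_i,x_{i+1})$ to $q$, the result is still a partial isomorphism of $H(n)$ with no complete component. The natural approach is to build the $x_i$ one at a time using Corollary~\ref{cor-5-one-point-ext} (one-point extension), but with the \emph{target} vertex $y$ fixed in advance; this is exactly the sort of situation where a na\"ive greedy construction fails, because the last step (producing $x_{2m-1}$ adjacent to $y$ in the right way) is constrained from both sides. So the plan is to split the path into two halves and meet in the middle, using the hypothesis $\left(N(x)\cap\Delta\right)q^{2m} = N(y)\cap\Delta$ to guarantee the halves are compatible.

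\textbf{Step 1: set up the two-sided extension.} Think of the new path as being glued onto the $2m$-fold power $q^{2m}$, which carries $x_0=x$ to $x_{2m}=y$. Concretely, I want to construct a partial isomorphism $q'$ extending $q$ whose underlying graph is $q$ together with the chain $x_0\to x_1\to\dots\to x_{2m}$; the component of $x$ in $q'$ is incomplete of length $2m$ longer than before (it remains incomplete, so $q'\in\I(H(n))^{<\omega}$ by Corollary~\ref{cor-5-infty}, provided we never accidentally close a component — which we won't, since all $x_i$ are genuinely new). The condition that $q'$ be a partial isomorphism of $H(n)$ reduces, by Lemma~\ref{lem-5-neigh}, to controlling $N(x_i)$ on the set already placed: at the step where $x_i$ is introduced we need $N(x_i)$ to meet the current range/domain in precisely the prescribed set.

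\textbf{Step 2: choose the interior vertices avoiding $\Sigma_1\cup\Sigma_2$.} For $i=1,\dots,2m-1$ I use Corollary~\ref{cor-5-one-point-ext} to pick $x_i$, with the avoidance set $\Sigma$ of that corollary taken to include $\Sigma_1\cup\Sigma_2$, all previously chosen $x_j$, and $y$. The key freedom is in choosing the neighbourhood: for the vertices $x_1,\dots,x_{m-1}$ built forward from $x$, and for $x_{2m-1},\dots,x_{m+1}$ built backward from $y$, I can arrange that $x_i$ has \emph{no} edges to anything already on the table except its two path-neighbours (this is legitimate because a single vertex is trivially $K_{n-1}$-free, so the Alice's restaurant property for $H(n)$ applies with $U$ the appropriate path-neighbour set and $V$ everything else). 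In particular each $x_i$ is non-adjacent to $\Sigma_1\cup\Sigma_2$, as required, and non-adjacent to $\Delta$ and $\Gamma$.

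\textbf{Step 3: close up in the middle.} The one remaining vertex is $x_m$, which must be adjacent to both $x_{m-1}$ and $x_{m+1}$ and must have the \emph{correct} neighbourhood on $\Delta$ — namely $\left(N(x)\cap\Delta\right)q^{m}$ on the domain side and the image of $N(y)\cap\Delta$ under $q^{-m}$ on the range side — so that the two halves of the path fit together into a genuine partial isomorphism. Here is where the hypothesis is used: the compatibility equation $N(y)\cap\Delta = \left(N(x)\cap\Delta\right)q^{2m}$ says exactly that the set required of $x_m$ from the $x$-side, pushed forward by $q^m$, equals the set required from the $y$-side, so there is a single consistent target neighbourhood. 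Since $N(x)\cap\Delta$ is $K_{n-1}$-free (it lies inside the $K_{n-1}$-free $N(x)$) and $q^{m}$ is a partial isomorphism, its image is $K_{n-1}$-free, so the Alice's restaurant property for $H(n)$ again furnishes the desired $x_m$ outside $\Sigma_1\cup\Sigma_2\cup\{x_0,\dots,x_{2m}\}$; Lemma~\ref{lem-5-neigh} then certifies that the full $q'$ is a partial isomorphism. Setting $x_i$ for $1\le i\le 2m-1$ as the interior vertices gives the claim.

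\textbf{Main obstacle.} The only real subtlety is bookkeeping in Step 3: one must check that the neighbourhood demanded of $x_m$ is consistent — i.e. that the requirements coming from $x_{m-1}$, from $x_{m+1}$, from $\Delta$ via the domain side, and from $\Delta$ via the range side do not conflict — and that $x_m$ need not be adjacent to any $x_j$ with $|j-m|\ge 2$ nor to $\Gamma$. The first is precisely the content of the hypothesis (together with the fact that, since $x,y\notin\dom(q)\cup\ran(q)$, the only adjacencies of $x_m$ to the old set $\dom(q)\cup\ran(q)$ that matter are those inside $\Delta$, while $\Gamma$ is kept disjoint throughout); the second is handled by choosing the interior $x_j$ with minimal neighbourhoods in Step 2 so that nothing extra is forced.
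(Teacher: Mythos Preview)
Your proposal has a genuine gap in Step~2. You claim that the interior vertices $x_1,\dots,x_{m-1}$ (forward from $x$) and $x_{m+1},\dots,x_{2m-1}$ (backward from $y$) can be chosen with ``no edges to anything already on the table except its two path-neighbours''. This is not available: the partial-isomorphism constraint, not Alice's restaurant alone, dictates the neighbourhood of each new vertex inside the current range. When you adjoin $(x_0,x_1)$ to $q$, Lemma~\ref{lem-5-neigh} forces
\[
  N(x_1)\cap\ran(q)=\bigl(N(x_0)\cap\dom(q)\bigr)q,
\]
and since $x_0=x$ may have edges into $\Delta$ (the hypothesis is precisely about $N(x)\cap\Delta$) and into $\Gamma$, the vertex $x_1$ \emph{must} be adjacent to the $q$-images of those neighbours. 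You cannot take $U$ to be a single path-neighbour; $U$ is forced to be $(N(x_{i-1}))q_{i-1}$, which in general meets both $\Delta$ and $\Gamma$. The same applies symmetrically to the backward chain from $y$. (Relatedly, your requirement in Step~3 that $x_m$ be ``adjacent to both $x_{m-1}$ and $x_{m+1}$'' conflates orbit-neighbours with graph-neighbours; no such graph-adjacency is required.)

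Your intuition about the midpoint is correct, but the mechanism is different. Every $x_i$ carries a forced neighbourhood $(N(x_0)\cap\Gamma_0)q_i^i$ inside the current picture; the point is that at $i=m$ the $\Gamma$-contribution to this set vanishes (the $\Gamma$-components have length exactly $m$ and are ``pushed off the end''), leaving $N(x_m)\cap\Gamma_m=(N(x_0)\cap\Delta)q^m$. The hypothesis $(N(x)\cap\Delta)q^{2m}=N(y)\cap\Delta$ then identifies this with $(N(y)\cap\Delta)q^{-m}$, allowing the forward construction to continue for $i>m$ with a backward-looking invariant targeting $y$. The substantial work your outline does not address is showing at each step that the forced set $U=(N(x_{i-1}))q_{i-1}$ is disjoint from $\Sigma_1\cup\Sigma_2\cup\{x_0,\dots,x_{i-1},y\}$; this is where the hypotheses $\Sigma_1\cap\ran(q)=\varnothing$, $\Sigma_2\cap\dom(q)=\varnothing$, $\Sigma_1,\Sigma_2\subseteq H(n)\setminus\Gamma$, and $N(x)\cap\Delta\subseteq\dom(q^{2m})$ all enter.
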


\begin{proof}
  Define $q_0 = q$, $x_0 = x$ and $\Gamma_i = \dom(q_i) \cup \ran(q_i) \cup
  \Sigma_1 \cup \Sigma_2 \cup \{x, y \}$ for all $i$. Suppose that for $i \in
  \{0, \ldots, m - 1\}$ there is an extension $q_i \in \I(H(n))^{< \omega}$ of $q_0$
  such that $q_i = q_0 \cup \{ (x_j, x_{j + 1}) : 0 \leq j \leq i - 1 \}$ with
  $x_0 \notin \ran(q_i)$, $x_i \notin \dom(q_i)$, $y \notin \ran(q_i) \cup
  \dom(q_i)$, and
  \begin{align}
    x_j &\notin \Sigma_1 \cup \Sigma_2 \cup \Delta \label{condition-2} \\
    N(x_j)&\cap \left( \Sigma_1 \cup \Sigma_2 \cup \{x_0, \ldots, x_{j - 1}, y
    \}\right) = \varnothing \label{condition-3} \\
    N(x_i) \cap \Gamma_i &= \left( N(x_ 0) \cap \Gamma_0 \right) q_i^i
    \label{condition-1}
  \end{align}
  for all $j \in \{1, \ldots, i\}$.

  If $i = 0$, then we have that $x_0, y \notin \dom(q_0) \cup \ran(q_0)$ and
  \eqref{condition-2}, \eqref{condition-3}, \eqref{condition-1} are
  trivially satisfied.

  Suppose that $i > 0$. Let $U = \left(N(x_i)\right) q_i \subseteq \Gamma_i$
  and $V = \Gamma_i \setminus U$. If $N(x_i)$ contains a subgraph isomorphic to
  $K_{n - 1}$, then the subgraph together with $x_i$ forms $K_n$, which is
  impossible. Hence $N(x_i)$ is $K_{n - 1}$-free and since $q_i$ is an
  isomorphism, $U$ is also $K_{n - 1}$-free. Therefore the sets $U$ and $V$
  satisfy the hypothesis of the Alice's restaurant propery and thus there is a
  vertex $x_{i + 1} \in H(n) \setminus \Gamma_i$ such that there is an edge
  between $x_{ i + 1}$ and every vertex in $U$ and there are no edges between
  $x_{i + 1}$ and $V$, i.e. $N(x_{i + 1}) \cap \Gamma_i = U$. Also it follows
  from $\ran(q_i) \subseteq \Gamma_i$ that
  \[
    N(x_{i + 1}) \cap \ran(q_i) =  \left( N( x_{i + 1}) \cap \Gamma_i\right)
    \cap \ran(q_i) = U \cap \ran(q_i) = \left( N(x_i) \right) q_i.
  \]
  Then $q_{i + 1} = q_i \cup \{ (x_i , x_{i + 1}) \} = q_0 \cup \{(x_j, x_{j +
  1}): 0 \leq j \leq i\}  \in \aut(H(n))^{< \omega}$ by
  Lemma~\ref{lem-5-neigh}, and so $q_{i + 1} \in \I(H(n))^{< \omega}$ by
  Corollary~\ref{cor-5-infty}. Since $x_{i + 1} \notin \Gamma_i$, we have that
  $x_{i + 1} \notin \{x_0, x_i, y\}$ implying that $x_0 \notin \ran(q_{i +
  1})$, $x_{i + 1} \notin \dom(q_{i + 1})$, and $y \notin \ran(q_{i + 1}) \cup
  \dom(q_{i + 1})$. It also follows from $\dom(q_i) \subseteq \Gamma_i$ and
  \eqref{condition-1} that
  \begin{equation} \label{equation-3}
    N( x_{i + 1}) \cap \Gamma_i = U = \left( N(x_i) \right) q_i  = \left(
    N(x_i) \cap \Gamma_i \right) q_i = \left( N(x_0) \cap \Gamma_0 \right)
    q_i^{i + 1}.
  \end{equation}

  Since $\Sigma_1 \cup \Sigma_2 \cup \Delta \subseteq \Gamma_i$ and $x_{i + 1}$
  is chosen outside the set $\Gamma_i$ it follows that $x_{i + 1} \notin
  \Sigma_1 \cup \Sigma_2 \cup \Delta$. Then $x_j \notin \Sigma_1 \cup \Sigma_2
  \cup \Delta$ for all $j \in \{1, \ldots, i + 1\}$ by \eqref{condition-2}.

  We will now show that \eqref{condition-3} holds for $j = i
  + 1$. First of all note that $x_0, y \notin \ran(q_i)$, and since $U
  \subseteq \ran(q_i)$ we have that $x_0, y \notin U$. From
  \eqref{condition-3} we may deduce that $x_j \notin N(x_i)$, and thus $x_{j +
  1} \notin \left( N(x_i) \right) q_i = U$, for all $j \in \{0, \ldots, i -
  1\}$, i.e. $\{x_0, \ldots, x_i, y\} \cap U = \varnothing$. It
  follows from the hypothesis that $\Sigma_1 \cap \ran(q_0) = \varnothing$, and
  so \eqref{condition-2} implies that $\Sigma_1\cap \ran(q_i) =
  \varnothing$. Since $U \subseteq \ran(q_i)$, we have that $\left(\Sigma_1
  \cup \{x_0, \ldots, x_i, y\}\right) \cap U = \varnothing$.

  It remains to show that $\Sigma_2 \cap U = \varnothing$. Suppose $z
  \in \Sigma_2 \cap U$. Then $z \in \left( N(x_0) \cap \Gamma_0 \right) q_i^{i
  + 1}$ by \eqref{equation-3}. Then $z \in \ran(q_i)$ and by above $z \neq x_j$
  for all $j \in \{0, \ldots,i\}$, thus $z \in \ran(q_0) \subseteq \Gamma \cup
  \Delta$. However by the hypothesis of the lemma, $\Sigma_2 \subseteq H(n)
  \setminus \Gamma$, implying that $z \in \Delta$. Since $x_j \notin \Delta$
  for all $j \in \{1, \ldots, i\}$ by \eqref{condition-2} and $x_0 \notin
  \Delta$ by the hypothesis of the lemma, it follows that the incomplete component of
  $q_0$ containing $z$  was not extended in $q_i$. Moreover $\Delta$ is a union
  of incomplete components of $q_0$ whence $(z)q_i^{-(i + 1)} \in N(x_0) \cap \Delta$.
  Also from $\Sigma_2 \cap \dom(q_0) = \varnothing$ and
  \eqref{condition-2} we may deduce that $\Sigma_2 \cap \dom(q_i) =
  \varnothing$ and so $z \notin \dom(q_i)$. It also follows from the hypothesis
  of the lemma that $(z)q_i^{-(i + 1)} \in \dom(q_i^{2m})$. Then $z
  \in \dom(q_i^{2m - (i + 1)})$, which is impossible since $i + 1 < 2m$. Hence
  $U \cap \left( \Sigma_1 \cup \Sigma_2 \cup \{x_0, \ldots, x_{i}, y\} \right)
  = \varnothing$. Since $\left( \Sigma_1 \cup \Sigma_2 \cup \{x_0, \ldots, x_{i
  }, y\} \right) \subseteq \Gamma_{i + 1}$ we have that
  \begin{align*}
    N(x_{i + 1}) \cap \left( \Sigma_1 \cup \Sigma_2 \cup \{x_0, \ldots, x_i, y
    \} \right)
    &= \left( N(x_{i + 1}) \cap \Gamma_{i + 1}\right) \cap  \left( \Sigma_1
    \cup \Sigma_2 \cup \{x_0, \ldots, x_i, y \} \right) \\
    &= U  \cap \left( \Sigma_1 \cup \Sigma_2 \cup \{x_0, \ldots, x_i, y \}
    \right) = \varnothing.
  \end{align*}

  Hence \eqref{condition-2} and \eqref{condition-3} are satisfied,
  and so it only remains to verify \eqref{condition-1}. It is routine
  to verify that $\dom(q_{i + 1}^{i + 1}) \setminus \dom(q_i^{i + 1}) = \{ x_0
  \}$. It follows from $x_0 \notin N(x_0)$ and \eqref{equation-3} that
  $N(x_{i + 1}) \cap \Gamma_i = \left( N(x_0) \cap \Gamma_0 \right) q_{i +
  1}^{i + 1}$. Since $\Gamma_{i + 1} = \Gamma_i \cup \{x_{i + 1}\}$ and $x_{i +
  1} \notin N(x_{i + 1})$
  \[
    N(x_{i + 1}) \cap \Gamma_i = N(x_{i + 1}) \cap \Gamma_{i + 1} =
    \left( N(x_0) \cap \Gamma_0 \right) q_{i + 1}^{i + 1}.
  \]

  Therefore, $q_{i + 1}$ satisfies the inductive hypothesis. Thus by induction
  on $i$, there is $q_m \in \I(H(n))^{< \omega}$ such that $q_m = q_0 \cup \{ (x_j,
  x_{j + 1}) : 0 \leq j \leq m - 1 \}$, $x_0 \notin \ran(q_m)$, $x_m \notin
  \dom(q_m)$, $y \notin \ran(q_m) \cup \dom(q_m)$, $q_m$ satisfies
  \eqref{condition-2}, \eqref{condition-3} and \eqref{condition-1}.

  Note that if $z \in \Sigma_1 \cup \Sigma_2 \setminus \{x, y\}$ then
  $z \notin \Gamma$ and by \eqref{condition-2} either $z \notin \dom(q_m) \cup
  \ran(q_m)$ or $z \in \Delta$. Hence
  \begin{align}
    \label{equation-between-ind}
    N( x_m) \cap \Gamma_m
    = \left( N(x_0) \cap \Gamma_0 \right) q_m^m
    &= \left( \left(N(x_0) \cap \left(\Gamma \cup \{x, y\} \cup \Sigma_1 \cup
    \Sigma_2 \setminus \Delta \right)\right) \cup \left(N(x_0) \cap \Delta
    \right) \right) q_m^m\\
    &= \left( N(x_0) \cap \Delta \right) q_m^m,\notag
  \end{align}
  since $x \notin N(x_0)$, $y \notin \dom(q_m)$ and all incomplete components on
  $\Gamma$ of $q$ are of length $m$.

  The next step is to inductively construct an extension $h = q_{2m} \in
  \I(H(n))^{< \omega}$ of $q_m$. Suppose that for $i \in \{ m, \ldots, 2m -
  2 \}$ there is an extension $q_i \in \I(H(n))^{< \omega}$ of the form
  $q_i = q_m \cup \{ (x_j, x_{j + 1}): m \leq j \leq i - 1 \}$ such that $x_0
  \notin \ran(q_i)$, $x_i \notin \dom(q_i)$, $y \notin \dom(q_i) \cup
  \ran(q_i)$,  and
  \begin{align}
    x_j &\notin \Sigma_1 \cup \Sigma_2 \cup \Delta \tag{\ref{condition-2}} \\
    N(x_j)&\cap \left( \Sigma_1 \cup \Sigma_2 \cup \{x_0, \ldots, x_{j - 1}, y
    \}\right) = \varnothing \tag{\ref{condition-3}} \\
    N(x_i) \cap \Gamma_i &= \left( N(y) \cap \dom\left( q_i^{i - 2m}\right)
    \right) q_i^{i - 2m} \label{condition-4}
  \end{align}
  for all $j \in \{1, \ldots, i\}$.

  We will now show that $q_m$ satisfies the inductive hypothesis. Note that
  \eqref{condition-2} and \eqref{condition-3} are the same as
  before, so we only need to verify \eqref{condition-4}. Since no
  incomplete components of $q_0$, which intersect $\Delta$, were extended in $q_m$,
  \eqref{condition-2} implies that $\left( \Delta \right)
  q_0^k = \left( \Delta \right) q_m^k$ for any $k \in \Z$. It follows from the
  hypothesis of the lemma that $\left(N(x_0) \cap \Delta\right)q_m^m \subseteq
  \dom(q_m^m)$ and $\left( N(x_0) \cap \Delta \right) q_m^m = \left( N(y) \cap
  \Delta \right)q_m^{-m}$. Hence by \eqref{equation-between-ind}
  \[
    N(x_m) \cap \Gamma_m =  \left(N(x_0) \cap \Delta \right) q_m^m = \left(
    N(y) \cap \Delta \right) q_m^{-m}.
  \]
  Suppose that $z \in N(y) \cap \dom(q_m^{-m})$. Then $z \in \dom(q_m) \cup
  \ran(q_m) =  \Gamma \cup \Delta \cup \{x_0, \ldots, x_m\}$. Note that all
  incomplete components of $q_m$, intersecting $\Gamma$ not trivially, are of length
  $m$. Hence $z \in \Delta \cup \{x_m\}$ and by \eqref{condition-3}
  we have that $x_m \notin N(y)$, thus $z \in \Delta$. Therefore $N(y) \cap
  \dom(q_m^{-m}) \subseteq N(y) \cap \Delta$, and so
  \[
    N(x_m) \cap \Gamma_m  = \left( N(y) \cap \Delta \right) q_m^{-m}
    = \left( N(y) \cap \dom(q_m^{-m}) \right) q_m^{-m}.
  \]
  Hence $q_m$ satisfies \eqref{condition-4} and the inductive hypothesis is
  satisfied for $i = m$.

  Let $U = \left( N(x_i)  \right) q_i$ and $V = \Gamma_i \setminus U$.  The
  sets $U$ and $V$ satisfy the hypothesis of the Alice's restaurant propery
  and thus we can find $x_{i + 1} \in H(n) \setminus \Gamma_i$  with $N(x_{i +
  1}) \cap \Gamma_i = U = \left( N(x_i) \right)q_i$. Then $N(x_{i + 1}) \cap
  \ran(q_i) = \left(N(x_i)\right)q_i$, and so $q_{i + 1} = q_i \cup \{ (x_i,
  x_{i + 1}) \} \in \I(H(n))^{< \omega}$ by Lemma~\ref{lem-5-neigh} and
  Corollary~\ref{cor-5-infty}. Since $x_{i + 1} \notin \Gamma_i$, we have that
  $x_{i + 1} \notin \{x_0, x_i, y\}$ implying that $x_0 \notin \ran(q_{i +
  1})$, $x_{i + 1} \notin \dom(q_{i + 1})$, and $y \notin \dom(q_{i + 1}) \cup
  \ran(q_{i + 1})$.

  Since $\dom(q_i) \subseteq \Gamma_i$, it follows from \eqref{condition-4}
  that
  \begin{equation}\label{equation-4}
    N( x_{i + 1}) \cap \Gamma_i = U = \left( N(x_i) \right) q_i  = \left(
    N(x_i) \cap \Gamma_i \right) q_i = \left( N(y) \cap \dom\left( q_i^{i - 2m}
    \right) \right) q_i^{i + 1 - 2m}.
  \end{equation}

  Since $\Sigma_1 \cup \Sigma_2 \cup \Delta \subseteq \Gamma_i$ and $x_{i + 1}$
  is chosen outside the set $\Gamma_i$ it follows that $x_{i + 1} \notin
  \Sigma_1 \cup \Sigma_2 \cup \Delta$. Then $x_j \notin \Sigma_1 \cup \Sigma_2
  \cup \Delta$ for all $j \in \{1, \ldots, i + 1\}$.

  We will now show that \eqref{condition-3} holds for $j = i + 1$. First of all
  note that $x_0, y \notin \ran(q_i)$, and since $U \subseteq \ran(q_i)$ we
  have that $x_0, y \notin U$. From \eqref{condition-3} we may deduce that $x_j
  \notin N(x_i)$, and thus $x_{j + 1} \notin \left( N(x_i) \right) q_i = U$,
  for all $j \in \{0, \ldots, i - 1\}$ i. e. $\{x_0, \ldots, x_i, y\} \cap U =
  \varnothing$. It follows from the hypothesis that $\Sigma_1 \cap \ran(q_0) =
  \varnothing$, and so \eqref{condition-2} implies \changed{$\Sigma_1 \cap \ran(q_i) =
  \varnothing$}. Since $\left(\Sigma_1 \cup \{x_0, y\} \right) \cap \ran(q_i) =
  \varnothing$ and $U \subseteq \ran(q_i)$, it follows that $\left(\Sigma_1
  \cup \{x_0, \ldots, x_i, y\}\right) \cap U = \varnothing$.

  It remains to show that $\Sigma_2 \cap U = \varnothing$. Suppose $z \in
  \Sigma_2 \cap U$. Then $z  \in \left(N(y) \cap \dom(q_i^{i - 2m})
  \right)q_i^{i + 1 - 2m}$ by \eqref{equation-4}. Note that $z \in U \subseteq
  \ran(q_i)$. Also it was shown in the previous paragraph that $z \neq x_j$ for
  all $j \in \{0, \ldots,i\}$. Hence $z \in \ran(q_0) \subseteq \Gamma \cup
  \Delta$. However by the hypothesis of the lemma $\Sigma_2 \subseteq H(n)
  \setminus \Gamma$, implying that $z \in \Delta$. Since $x_j \notin \Delta$
  for all $j \in \{1, \ldots, i\}$ by \eqref{condition-2} and $x_0 \notin
  \Delta$ by the hypothesis of the lemma, it follows that the incomplete component of
  $q_0$ containing $z$  was not extended in $q_i$. Moreover, $\Delta$ is a
  union of incomplete components of $q_0$,  and $z \in \dom(q_i^{2m - (i +
  1)})$, so $(z)q_i^{2m - (i + 1)} \in N(y) \cap \Delta$. By the
  hypothesis of the lemma $(z)q_i^{2m - (i + 1)} \in \ran(q_i^{2m})$.
  Then there is $u \in \dom(q_i^{2m})$ such that $(z)q_i^{2m - (i + 1)} =
  (u)q_i^{2m}$, and so $z = (u)q_i^{i + 1} \in \dom(q_i^{2m - (i + 1)})$. Hence
  $z \in \dom(q_i)$, since $2m > i + 1$. However, $z \in \Sigma_2$ and so $z
  \notin \dom(q_0)$, implying that $z \in \{x_0, \ldots, x_i\}$, which
  contradicts \eqref{condition-2}. Hence $U \cap \left( \Sigma_1
  \cup \Sigma_2 \cup \{x_0, \ldots, x_{i}, y\} \right) = \varnothing$, and
  since $\left( \Sigma_1 \cup \Sigma_2 \cup \{x_0, \ldots, x_{i }, y\} \right)
  \subseteq \Gamma_{i + 1}$ we have that for all $j \in \{1, \ldots, i + 1\}$
  \[
    N(x_j) \cap \left( \Sigma_1 \cup \Sigma_2 \cup \{x_0, \ldots, x_{j - 1}, y
    \} \right) = \varnothing.
  \]

  It is routine to verify that $\dom(q_{i + 1}^{i + 1 - 2m}) \setminus
  \dom(q_i^{i + 1 - 2m}) = \{ x_{i + 1}\}$. Since $x_{i + 1} \notin N(y)$ it
  follows from \eqref{equation-4} that $N(x_{i + 1}) \cap \Gamma_i =
  \left( N(y) \cap \dom \left(q_i^{i - 2m} \right) \right) q_{i + 1}^{i + 1 -
  2m}$. It is routine to check that $\dom \left( q_{i + 1}^{i + 1 -2m} \right)
  \setminus \dom \left( q_i^{i - 2m} \right)  \subseteq \{x_1, \ldots, x_{i +
  1}\}$. Then, by \eqref{condition-3}, we have that $x_j \notin N(y)$
  for all $j \in \{x_0, \ldots, i + 1\}$. Hence
  \[
    N(x_{i + 1}) \cap \Gamma_i = \left( N(y) \cap \dom \left(q_{i + 1}^{i + 1 -
    2m} \right) \right) q_{i + 1}^{i + 1 - 2m}.
  \]
  From the definition of $\Gamma_{i + 1}$ we obtain that $\Gamma_{i + 1} =
  \Gamma_i \cup \{x_{i + 1}\}$. However, $x_{i + 1} \notin N(x_{i + 1})$ thus
  \[
    N(x_{i + 1}) \cap \Gamma_{i + 1} = \left( N(y) \cap \dom\left(q_{i + 1}^{i
    + 1- 2m} \right) \right) q_{i + 1}^{i + 1 - 2m}.
  \]

  Therefore $q_{i + 1}$ satisfies the inductive hypothesis and hence we
  obtain $q_{2m - 1} = q_0 \cup \{(x_j, x_{j + 1}) : 0 \leq j \leq 2m -2 \} \in
  \aut(H(n))^{< \omega}$ such that $y \notin \dom(q_{2m - 1}) \cup \ran(q_{2m -
  1})$, $x_j \notin \Sigma_1 \cup \Sigma_2$, there are no edges between $x_j$
  and $\Sigma_1 \cup \Sigma_2$ for all $j \in \{1, \ldots, 2m - 1\}$,  and
  \[
    N( x_{2m - 1} ) \cap \Gamma_{2m - 1} =  \left( N(y) \right) q_{2m - 1}^{
    -1}.
  \]
  Therefore  $h = q_{2m} = q_{2m - 1} \cup \{ (x_{2m - 1},y) \} \in \I(H(n))^{<
  \omega}$ by Lemma~\ref{lem-5-neigh} and Corollary~\ref{cor-5-infty} is as
  required.
\end{proof}


\begin{lem}\label{lem-K_n-main}
  Let $q \in \I(H(n))^{< \omega}$, and let $p \in \P$ be such that the sets $\dom(q)
  \cup \ran(q)$ and $\dom(p) \cup \ran(p)$ are disjoint. Then there is an
  extension $h \in \I(H(n))^{<\omega}$ of $q$ and $m \in \mathbb{N}$ such that
  $h^{2m}$ extends $p$.
\end{lem}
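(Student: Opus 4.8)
The idea is to realise $p$ as a ``jump'': to produce $h$ and a common even exponent $2m$ so that $h^{2m}$ maps each $a\in\dom(p)$ to $(a)p$ along a path of length $2m$ inside $h$, the tool for building one such path at a time being Lemma~\ref{lem-5-step-in-main}. Write $\dom(p)=\{a_1,\dots,a_k\}$ and $b_i=(a_i)p$, so that $\ran(p)=\{b_1,\dots,b_k\}$. Since $p\in\P$ there are no edges between $\dom(p)$ and $\ran(p)$, and since $p$ is a partial isomorphism, $a_l\in N(a_i)$ if and only if $b_l\in N(b_i)$ for all $i,l$. First I would tidy up $q$: choose $m\in\N$ with $m\ge 1$ at least the length of every component of $q$, and, using Corollary~\ref{cor-5-one-point-ext} repeatedly to extend short components at their free ends by vertices chosen outside $\dom(p)\cup\ran(p)$, replace $q$ by an extension $q_0\in\I(H(n))^{<\omega}$ of $q$ in which every component has length exactly $m$ and $\dom(q_0)\cup\ran(q_0)$ is still disjoint from $\dom(p)\cup\ran(p)$ (take $m=1$, $q_0=\varnothing$ if $q=\varnothing$). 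Put $\Gamma=\dom(q_0)\cup\ran(q_0)$.

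Next I would build, by induction on $i\in\{0,\dots,k\}$, a chain $q_0\subseteq q_1\subseteq\dots\subseteq q_k$ in $\I(H(n))^{<\omega}$ together with vertices $x^{(i)}_1,\dots,x^{(i)}_{2m-1}$ $(1\le i\le k)$ such that, with $x^{(i)}_0=a_i$ and $x^{(i)}_{2m}=b_i$,
\[
 q_i=q_{i-1}\cup\{(x^{(i)}_j,x^{(i)}_{j+1}):0\le j\le 2m-1\},
\]
the set $C_i=\{a_i,x^{(i)}_1,\dots,x^{(i)}_{2m-1},b_i\}$ is a new incomplete component of $q_i$ of length $2m$, and no $x^{(i)}_j$ is adjacent to any vertex of $\{a_{i+1},\dots,a_k\}\cup\{b_{i+1},\dots,b_k\}$. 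Setting $\Delta_i=C_1\cup\dots\cup C_i$ (so $\Delta_0=\varnothing$), one then has $\dom(q_i)\cup\ran(q_i)=\Gamma\cup\Delta_i$, the length-$m$ components of $q_i$ are exactly those whose union is $\Gamma$ (as $2m\ne m$), and $q_i^{2m}$ restricted to $\Delta_i$ is defined precisely at $a_1,\dots,a_i$, with $(a_l)q_i^{2m}=b_l$. Granting this, $h=q_k$ extends $q$, lies in $\I(H(n))^{<\omega}$, and satisfies $(a_i)h^{2m}=b_i=(a_i)p$ for every $i$, so $h^{2m}$ extends $p$ and we are done, with $m$ as chosen above.

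The inductive step, from $q_{i-1}$ to $q_i$, is an application of Lemma~\ref{lem-5-step-in-main} to $q_{i-1}$, taking $\Gamma$ and $\Delta=\Delta_{i-1}$ as above, $x=a_i$, $y=b_i$, $\Sigma_1=\{a_{i+1},\dots,a_k\}\cup\{b_{i+1},\dots,b_k\}$ and $\Sigma_2=\varnothing$. The side conditions $\Sigma_1\subseteq H(n)\setminus\Gamma$, $\Sigma_1\cap\ran(q_{i-1})=\varnothing$ and $\Sigma_2\cap\dom(q_{i-1})=\varnothing$ hold because the only vertices of $\dom(p)\cup\ran(p)$ lying in $\dom(q_{i-1})\cup\ran(q_{i-1})$ are $a_1,\dots,a_{i-1},b_1,\dots,b_{i-1}$, none of which is in $\Sigma_1$. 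The two neighbourhood hypotheses, $N(a_i)\cap\Delta_{i-1}\subseteq\dom(q_{i-1}^{2m})$ and $(N(a_i)\cap\Delta_{i-1})q_{i-1}^{2m}=N(b_i)\cap\Delta_{i-1}$, I would check as follows: within $\Delta_{i-1}$ the vertex $a_i$ is adjacent only to vertices of $\{a_1,\dots,a_{i-1}\}$, since it is adjacent to no $b_l$ (as $p\in\P$) and to no $x^{(l)}_j$ with $l<i$ (as $a_i$ was among the vertices forbidden to the $x^{(l)}_j$ when $q_l$ was built), while $\dom(q_{i-1}^{2m})\cap\Delta_{i-1}=\{a_1,\dots,a_{i-1}\}$; the same argument gives $N(b_i)\cap\Delta_{i-1}\subseteq\{b_1,\dots,b_{i-1}\}$, and then $a_l\in N(a_i)\iff b_l\in N(b_i)$ together with $(a_l)q_{i-1}^{2m}=b_l$ yields the required equality. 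Lemma~\ref{lem-5-step-in-main} then produces $x^{(i)}_1,\dots,x^{(i)}_{2m-1}$, lying outside $\Sigma_1$ and with no edges to $\Sigma_1$, such that $q_i\in\I(H(n))^{<\omega}$; since $a_i$, $b_i$ and the $x^{(i)}_j$ avoid $\dom(q_{i-1})\cup\ran(q_{i-1})$, the component $C_i$ is new, incomplete and of length $2m$, and the rest of the inductive hypothesis is then immediate.

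The main obstacle is precisely this last verification: one must ensure that, as the already-routed part $\Delta_{i-1}$ accumulates, the neighbourhood hypotheses of Lemma~\ref{lem-5-step-in-main} keep holding for the pair $a_i,b_i$. This is where both features of $p$ are genuinely needed --- that it has no edges between its domain and range, and that it is an isomorphism --- together with the freedom already built into Lemma~\ref{lem-5-step-in-main} (via its parameters $\Sigma_1,\Sigma_2$) to keep the newly created intermediate vertices away from the not-yet-routed points of $\dom(p)\cup\ran(p)$.
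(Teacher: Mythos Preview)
Your proposal is correct and follows essentially the same approach as the paper: normalise $q$ so that all components have the same length $m$, then route each pair $(a_i,b_i)$ along a fresh incomplete component of $2m+1$ vertices by invoking Lemma~\ref{lem-5-step-in-main}, checking the neighbourhood hypotheses against the already-routed part $\Delta_{i-1}$. The only notable difference is bookkeeping: the paper takes $\Sigma_1=\dom(p)$ and $\Sigma_2=\ran(p)$ and carries conditions \eqref{condition-7}--\eqref{condition-8} for all unprocessed indices as part of the inductive hypothesis, whereas you put the unprocessed points of both $\dom(p)$ and $\ran(p)$ into $\Sigma_1$, take $\Sigma_2=\varnothing$, and re-derive the neighbourhood conditions from scratch at each step --- a slight streamlining that works because the unprocessed points lie entirely outside $\dom(q_{i-1})\cup\ran(q_{i-1})$.
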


\begin{proof}
  If necessary by extending $q$, using Corollary~\ref{cor-5-one-point-ext}, we may
  assume that all of the components of $q$ have length $m$ for some $m \in
  \mathbb{N}$.

  Let $\dom(p) = \{ x_1, \ldots, x_d\}$ for some $d \in \N$, let $q_0 = q$,
  $\Gamma = \dom(q_0) \cup \ran(q_0)$. We will now inductively define $q_i \in
  \I(H(n))^{< \omega}$, and once they are defined let $\Delta_i = \dom(q_i) \cup
  \ran(q_i) \setminus \Gamma$ for $i \in \{0, \ldots, d\}$. Suppose that for
  some $k \in \{0, \ldots, d - 1\}$ we have defined $q_k \in \I(H(n))^{< \omega}$, an
  extension of $q_0$, such that both $\Gamma$ and $\Delta_k$ are unions of
  incomplete components of $q_k$, that incomplete components of $q_k$ contained in $\Delta_k$
  are of length $2m + 1$, and the following are true
  \begin{align}
    x_j, (x_j)p &\notin \dom(q_k) \cup \ran(q_k) \label{condition-5} \\
    (x_i)q_k^{2m} &= (x_i) p \label{condition-6} \\
    N(x_j) \cap \Delta_k &\subseteq \dom(q_k^{2m}) \label{condition-7} \\
    \left( N(x_j) \cap \Delta_k \right) q_k^{2m} &= N\left((x_j)p\right) \cap
    \Delta_k \label{condition-8}
  \end{align}
  for all $i \in \{1, \ldots, k\}$ and $j \in \{k + 1, \ldots,
  d\}$.

  Let $\Sigma_1 = \dom(p)$ and $\Sigma_2 = \ran(p)$. We will show that the
  hypothesis of Lemma~\ref{lem-5-step-in-main} is satisfied by $q_k$, $x_{k +
  1}$, $(x_{k + 1})p$, $\Sigma_1$, and $\Sigma_2$. First of all, note that
  $x_{k + 1}, (x_{k + 1})p \notin \dom(q_k) \cup \ran(q_k)$ by condition
  \eqref{condition-5}. Also by the hypothesis of the lemma $\Sigma_1, \Sigma_2
  \subseteq H(n) \setminus \Gamma$. Note that \changed{$N(x_{k + 1}) \cap
  \Delta \subseteq \dom(q_k^{2m})$ and $(N(x_{k + 1}) \cap \Delta) q_k^{2m} =
  N((x_{k + 1})p) \cap \Delta$ immediately follow} from conditions
  \eqref{condition-7} and \eqref{condition-8}. Hence to apply
  Lemma~\ref{lem-5-step-in-main} we only need verify that $\Sigma_1 \cap
  \ran(q_k)  = \Sigma_2 \cap \dom(q_k) = \varnothing$. We will do so in the
  next two paragraphs.

  We will first show that $x_i \notin \ran(q_k)$ for all $i \in \{1, \ldots,
  d\}$. Suppose that $x_i \in \dom(q_k) \cup \ran(q_k)$, by the inductive
  hypothesis we can deduce that $i \leq k$. Since $\dom(p) \cap \Gamma =
  \varnothing$ by the hypothesis of the lemma, it then follows that $x_i \in
  \Delta_k$. Therefore, $x_i$ is on an incomplete component of length $2m + 1$ and $x_i
  \in \dom(q_k^{2m})$ by the inductive hypothesis, implying that $x_i \in
  \dom(q_k) \setminus \ran(q_k)$. Hence $\Sigma_1 \cap \ran(q_k) =
  \varnothing$.

  The argument that $\Sigma_2 \cap \dom(q_k) = \varnothing$ is similar to
  above. Let $(x_i)p \in \Sigma_2$. Suppose that $(x_i)p \in \dom(q_k) \cup
  \ran(q_k)$. Then we can deduce that $i \leq k$. Since $\ran(p) \cap \Gamma =
  \varnothing$ by the hypothesis of the lemma, it then follows that $(x_i)p \in
  \Delta_k$. Therefore, $(x_i)p$ is on an incomplete component of length $2m + 1$ and
  $(x_i)p \in \ran(q_k^{2m})$ by the inductive hypothesis, implying that
  $(x_i)p \in \ran(q_k) \setminus \dom(q_k)$.

  Hence by Lemma~\ref{lem-5-step-in-main} there is an extension $q_{k + 1} \in
  \I(H(n))^{< \omega}$ of $q_k$ such that $q_{k + 1} = q_k \cup \{(y_i, y_{i + 1}) :
  0 \leq i \leq 2m -1 \}$, $y_0 = x_{k + 1}$, $y_{2m} = (x_{k + 1})p$, there
  are no edges between $y_i$ and $\Sigma_1 \cup \Sigma_2$, and $y_i \notin
  \Sigma_1 \cup \Sigma_2$ for $i \in \{1, \ldots, 2m - 1\}$. Then by the choice
  of $\Sigma_1$, $\Sigma_2$, and the definition of $q_{k + 1}$
  \begin{align*}
    x_j, (x_j)p &\notin  \dom(q_{k + 1}) \cup \ran(q_{k + 1})\\
    (x_i)q_{k + 1}^{2m} &= (x_i) p
  \end{align*}
  for all $i \in \{1, \ldots, k + 1\}$ and $j \in \{k + 2, \ldots, d\}$.  It
  also follows from the definition of $q_{k + 1}$ that $\Delta_{k + 1} =
  \Delta_k \cup \{ y_i : 0 \leq i \leq 2m\}$ and thus $\Delta_{k + 1}$ is a
  union of incomplete components of $q_{k + 1}$ each of length $2m + 1$.

  Let $j \in \{k + 2, \ldots, d\}$, and let $z \in N(x_j) \cap \Delta_{k + 1}$.
  If $z \in \Delta_k$, then by the inductive hypothesis $z \in \dom(q_k^{2m})
  \subseteq \dom(q_{k + 1}^{2m})$ and
  \[
    (z)q_{k + 1}^{2m} = (z)q_k^{2m} \in N\left((x_j)p\right) \cap \Delta_k
    \subseteq N\left((x_j)p\right) \cap \Delta_{k + 1}.
  \]
  Otherwise $z \in \Delta_{k + 1} \setminus \Delta_k$. Hence $z = y_t$ for some
  $t \in \{0, \ldots, 2m\}$. However, $y_t$ is such that there are no edges
  between $y_t$ and $\dom(p)$ for $t \in \{1, \ldots, 2m - 1\}$. Then $z$ is
  either $y_0$ or $y_{2m}$. Since $p \in \mathcal{P}$ there are no edges
  between $x_j \in \dom(p)$ and $y_{2m} = (x_{k + 1})p \in \ran(p)$. Hence $z =
  y_0$ and thus $z \in \dom(q_{k + 1}^{2m})$. Since $z \in N(x_j)$  there is an
  edge between $x_j$ and $z = y_0 = x_{k + 1}$. Then it follows from the fact
  that $p$ is an isomorphism that there is an edge between $(x_j)p$ and $(x_{k
  + 1})p$. Hence $(z)q_{k + 1}^{2m} = y_{2m} = (x_{k + 1})p \in
  N\left((x_j)p\right)\cap \Delta_{k+1}$.  Since $z$ was arbitrary $N(x_j) \cap
  \Delta_{k + 1} \subseteq \dom(q_{k + 1}^{2m})$ and $\left(N(x_j) \cap
  \Delta_{k + 1}\right) q_{k  + 1}^{2m} \subseteq N\left((x_j)p\right) \cap
  \Delta_{k + 1}$.

  Let $z \in N\left((x_j)p\right) \cap \Delta_{k + 1}$. If $z \in \Delta_k$
  then it follows from the inductive hypothesis that
  \[
    z \in N\left((x_j)p\right) \cap \Delta_k = \left(N(x_j) \cap \Delta_k
    \right)q_k^{2m} \subseteq \left( N(x_j) \cap \Delta_{k + 1} \right)q_{k +
    1}^{2m}.
  \]
  Otherwise $z = y_j$ for some $j \in \{0, \ldots, 2m\}$. Similarly to above $z
  = y_{2m} = (x_{k + 1})p$ and since $p$ is an isomorphism $(z)q_{k + 1}^{-2m}
  = y_0 = x_{k + 1} \in N(x_j)$. Hence $z \in \left( N(x_j) \cap \Delta_{k +
  1}\right)q_{k + 1}^{2m}$, as $x_{k + 1} \in \Delta_{k + 1}$, and so
  \[
    \left( N(x_j) \cap \Delta_{k + 1} \right)q_{k + 1}^{2m} =
    N\left((x_j)p\right) \cap \Delta_{k +1}
  \]
  for all $j \in \{k + 2, \ldots, d\}$.

  Therefore $q_{k + 1}$ satisfies the inductive hypothesis an by induction
  there is $h = q_d \in \I(H(n))^{< \omega}$ and extension of $q$ such that $h^{2m}$
  is an extension of $p$.
\end{proof}


Finally we prove the main result of this section.

\begin{repthm}{thm-5-main}
  Let $f \in \aut(H(n))$ have infinite support. Then $D_f \cap \I(H(n))$ is
  comeagre in $\I(H(n))$.
\end{repthm}
\begin{proof}
 By Lemma \ref{lem-K_n-basis}
 \[
   D_f \cap \I(H(n)) = \bigcap_{p \in \P} \{g \in \I(H(n)) : \langle f, g \rangle \cap [p]
   \neq \varnothing \},
 \]
 and $\{g \in \aut(H(n)) : \langle f, g \rangle \cap [p] \neq \varnothing \}$
 is open by Lemma \ref{lem-E-open}, thus it is enough to show that $\{g \in \I(H(n))
 : \langle f, g \rangle \cap [p] \neq \varnothing \}$ is dense in $\I(H(n))$ for all
 $p\in \mathcal{P}$.

 Fix $p \in \P$, and let $q \in \changed{\I(H(n))^{< \omega}}$. If necessary by
 extending $q$ using Corollary~\ref{cor-5-one-point-ext}, we may assume that
 all of the components of $q$ have length $m$ for some $m \in \mathbb{N}$, and that
 $\ran(p) \cup \dom(p) \subseteq \dom(q)$. Suppose that $\ran(q) \backslash
 \dom(q) = \{ x_{1,0}, x_{2, 0}, \ldots, x_{d,0} \}$. Let $q_{1, 0 } = q$, and
 once $q_{i, j}$ is defined let $\Gamma_{i,j} = \dom(q_{i,j}) \cup
 \ran(q_{i,j})$ for all $i, j$. We will perform an induction on the elements of
 the set $\{(i, j): i \in \{1, \ldots, d\} \text{ and }j \in \{0, \ldots, m\}
 \}$, ordered lexicographically, to construct $q_{d, m} \in \I(H(n))^{< \omega}$ of
 the form $q_{d, m} = q_{1, 0} \cup \{ (x_{i, j}, x_{i, j + 1}) : 1 \leq i \leq
 d \text{ and } 0 \leq j \leq m - 1\}$ such that $x_{i, j} \in \supp(f)$ and
 $(x_{i, j})f \notin \ran(q_{d, m}) \cup \dom(q_{d, m})$ for all $i$ and all $j
 \geq 1$. In order to make the rest of the proof shorter, once we have defined
 $q_{i, m}$ for some $i < d$, we will set $q_{i + 1, 0} = q_{i, m}$, and
 similarly we denote $\Gamma_{i, -1} = \varnothing$ for all $i$.

 Suppose that for $k \in \{1, 2, \ldots, d\}$ and $t \in \{0, 1, \ldots, m -
 1\}$ we defined $q_{k, t} = q_{1, 0} \cup \{ (x_{i, j}, x_{i, j + 1}) : 1 \leq
 i \leq k \text{ and } 0 \leq j \leq t - 1\} \in \I(H(n))^{< \omega}$ such that
 $x_{k,t} \in \supp(f)$ and
 \[
   x_{k, t} \notin \Gamma_{k, t - 1} \cup \left( \Gamma_{k, t - 1} \right)f
   \cup \left( \Gamma_{k, t - 1} \right)f^{ -1}.
 \]
 Choose $x \in supp(f)$ such that $x \notin \Gamma_{k,t}$ which is possible
 since $supp(f)$ is infinite. Then by the Alice's restaurant propery there is
 a vertex $y \not \in \Gamma_{k,t} \cup \left( \Gamma_{k, t}\right) f^{-1}\cup
 \{x, (x)f \}$ such that there is an edge between $x$ and $y$, and there are no
 edges between $y$ and $\Gamma_{k,t} \cup \left( \Gamma_{k, t}\right)
 f^{-1}\cup \{(x)f \}$. Let
 \[
   U = \left( N(x_{k,t})\right)q_{k, t} \cup \{y\} \quad \text{and} \quad
   V = \left( \Gamma_{k,t} \cup \left( \Gamma_{k,t} \right)f \cup \left(
   \Gamma_{k, t}\right)f^{-1} \cup \{(y)f\}\right) \setminus
   U.
 \]
 Since  $\left( N(x_{k, t}) \right)q_{k, t}$ is $K_{n - 1}$-free and there are
 no edges between $y$ and $\left( N(x_{k, t}) \right)q_{k, t}$, the set $U$ is
 also $K_{n - 1}$-free. Hence by Alice's Restaurant Property there is a vertex
 \[
   x_{k, t + 1} \notin \Gamma_{k, t} \cup \left( \Gamma_{k, t} \right)f \cup
   \left( \Gamma_{k, t} \right)f^{-1} \cup \{y, (y)f\}
 \]
 such that $N(x_{k, t + 1}) \cap \left( U \cup V \right) = U$. It follows from
 $\ran(q_{k, t}) \subseteq \Gamma_{k, t}$ and $y \notin \Gamma_{k, t}$ that
 \[
   N(x_{k, t + 1}) \cap \ran(q_{k, t})  = U \cap \ran(q_{k, t}) = \left( \left(
   N(x_{k, t}) \right)q_{k, t} \cup \{y\} \right) \cap \ran(q_{k, t}) = \left(
   N(x_{k, t}) \right)q_{k, t},
 \]
 and so $q_{k, t + 1} = q_{k, t} \cup \{ (x_{k, t}, x_{k, t + 1}) \} \in \I(H(n))^{<
 \omega}$ by Lemma~\ref{lem-5-neigh} and Corollary~\ref{cor-5-infty}.

 It follows from $f$ being an automorphism and the existence of an edge between
 $x$ and $y$, that there is an edge between $(x)f$ and $(y)f$. However, there
 is no edge between $y$ and $(x)f$, thus it follows that $y \in \supp(f)$. The
 vertex $y$ was chosen so that $y \notin \Gamma_{k, t} \cup \left( \Gamma_{k,
 t} \right)f^{-1}$, and so $y, (y)f \notin \Gamma_{k, t}$. Since $\left(
 N(x_{k, t}) \right)q_{k, t} \subseteq \Gamma_{k, t}$ and $y \neq (y)f$, it
 follows that $(y)f \notin U$. By the choice of $x_{k, t + 1}$ there is an edge
 between $x_{k,t + 1}$ and $y$ and there are no edges between $x_{k, t + 1}$
 and $(y)f$, thus $x_{k, t + 1} \in \supp(f)$. Hence $q_{k, t + 1}$ satisfies
 the inductive hypothesis.

 This way we can obtain $q_{d, m} \in \I(H(n))^{< \omega}$ such that for all $i$ and
 all $j \geq 1$
 \[
   x_{i, j} \notin \Gamma_{i, j - 1} \cup \left( \Gamma_{i, j - 1} \right)f
   \cup \left( \Gamma_{i, j - 1} \right)f^{ -1}.
 \]
 Hence $(x_{i, j})f \notin \Gamma_{i, j - 1}$. Also if $(x_{i, j})f = x_{i',
 j'}$, where $(i, j) < (i', j')$ lexicographically, then $x_{i', j'} \in \left(
 \Gamma_{i, j} \right)f$ which is impossible. Therefore, $(x_{i, j})f \notin
 \ran(q_{d, m}) \cup \dom(q_{d, m})$ and thus
  \[
    \left( \left( \dom(q) \right )q_{k,m}^mf \right) \cap (\dom(q_{k,m}) \cup
    \ran(q_{k,m})) = \varnothing .
  \]
  Then since $p \in \P$ and $\P$ is closed under conjugation, $u = \left(
  q_{k,m}^mf \right)^{-1} p q_{k,m}^m f \in \P$. Recall that $\ran(p) \cup
  \dom(p) \subseteq \dom(q)$, thus the partial isomorphisms $q_{k,m}$ and $u$
  satisfy the hypothesis of Lemma \ref{lem-K_n-main}.  Hence there is an
  extension $h \in \I(H(n))^{< \omega}$ of $q_{k,m}$ and \changed{$l \in \Z$
  such that $h^{2l}$} extends $u$. Therefore $h^m f \changed{h^{2l}} \left( h^m
  f \right)^{-1}$ extends $p$ and thus
  \[
    \{g \in \I(H(n)) : \langle f, g \rangle \cap [p] \neq \varnothing \} \cap [q]
    \neq \varnothing.
  \]
  Since $q \in \I(H(n))^{<\omega}$ was arbitrary we get that $\{g \in \I(H(n)) : \langle f,
  g \rangle \cap [p] \neq \varnothing \}$ is dense in $\I(H(n))$.
\end{proof}

\section{Infinitely many finite complete graphs: $\omega K_n$}

In this section, we consider the ultrahomogeneous graphs $\omega K_n$ for $n\in
\N$, $n>0$. Throughout the section we assume that $n\in\N$, $n > 0$, is fixed
and that the connected components of $\omega K_n$ are $\set{L_i}{i\in \Z}$. We
will first prove a couple of technical results.

We begin by characterising the elements of $\I_{\Sigma}(\omega K_n)$ in a lemma analogous
to Corollary~\ref{cor-5-infty}.


\begin{lem}
  \label{lem-extensions-infinite-finite}
  Let $q \in \aut(\omega K_n)^{< \omega}$ be such that $\dom(q)$ is a union of
  connected components, and there is $\Sigma \subseteq \dom(q)$ which intersects
  every component of $q$ in exactly one vertex. Then $q
  \in \I_\Sigma(\omega K_n)^{<\omega}$ if and only if $\overline{q}$ has no complete
  components.
\end{lem}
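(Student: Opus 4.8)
The plan is to mirror the structure of Corollary~\ref{cor-5-infty}: prove the two directions of the equivalence, with the ``only if'' direction being essentially immediate and the ``if'' direction being carried out by a back-and-forth construction that extends $q$ to a member of $\I_\Sigma(\omega K_n)$.

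For the forward direction, suppose $q\in\I_\Sigma(\omega K_n)^{<\omega}$, so $q$ has an extension $g\in\I_\Sigma(\omega K_n)$. Then $g$ has no finite orbits, hence the induced permutation $\overline g$ of the index set $\Z$ has no finite orbits either: if $\overline g$ had a finite orbit, the finitely many components in that orbit would be permuted among themselves by $g$, and since each is finite their union is a finite $g$-invariant set, forcing a finite $g$-orbit inside it, a contradiction. Since $\overline g$ extends $\overline q$ and has no finite orbits, $\overline q$ has no complete components.

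For the converse, suppose $\overline q$ has no complete components. I would run a back-and-forth argument enumerating $\omega K_n=\{v_1,v_2,\dots\}$ (equivalently enumerating the components $\{L_i:i\in\Z\}$) and at each stage extend the current partial isomorphism $q_k$ — still with $\dom(q_k)$ a union of components and $\overline{q_k}$ having no complete components — so that the next vertex lies in $\dom(q_k)$ and in $\ran(q_k)$. The key point is a one-component extension lemma: given such a $q_k$ and an index $i$ with $L_i\not\subseteq\dom(q_k)$ (respectively $L_i\not\subseteq\ran(q_k)$), one can pick a fresh component $L_j$ not yet used (there are infinitely many, as only finitely many components meet $\dom(q_k)\cup\ran(q_k)$) and map $L_i$ isomorphically onto $L_j$, i.e.\ adjoin any bijection $L_i\to L_j$; since all components are copies of $K_n$ this is automatically a partial isomorphism of $\omega K_n$, it keeps $\dom$ a union of components, and it keeps $\overline{q_k}$ free of complete components (we only ever lengthen incomplete components of $\overline{q_k}$, never close one up). Care is needed to respect $\Sigma$: when we first draw a new component $L_j$ into the domain or range we must ensure each component of $q_{k+1}$ still meets $\Sigma$ in at most — and eventually exactly — one vertex; since $\Sigma$ is finite and contained in $\dom(q)$, once we have exhausted the components meeting $\Sigma$ the constraint is vacuous, and the hypothesis that $\Sigma$ meets every component of $q$ in exactly one vertex lets us propagate ``exactly one $\Sigma$-representative per component'' through the induction. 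The union $g=\bigcup_k q_k$ is then a bijection of $\omega K_n$ (every vertex is eventually in domain and range), a graph automorphism (it restricts to an isomorphism on each component-union), has $\Sigma$ as a set of orbit representatives, and $\overline g$ has no complete components hence $g$ has no finite orbits; so $g\in\I_\Sigma(\omega K_n)$ extends $q$.

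The main obstacle is bookkeeping the $\Sigma$-condition simultaneously with ``$\dom$ is a union of components'' and ``$\overline q$ has no complete components'' throughout the back-and-forth; the graph-theoretic content is trivial because every component is a complete graph $K_n$, so any component-to-component bijection is a partial isomorphism — the real work is choosing the invariants for the induction so that the limit object genuinely lands in $\I_\Sigma(\omega K_n)$ and not merely in $\I(\omega K_n)$.
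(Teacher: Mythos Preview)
The paper states this lemma without proof, so there is no argument of the paper's to compare against; I assess your sketch on its own merits.

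Your forward direction is correct and complete: an extension $g\in\I_\Sigma(\omega K_n)$ has no finite orbits, so $\overline g$ has none either (a finite $\overline g$-orbit would give a finite $g$-invariant union of connected components, hence a finite $g$-orbit), and any complete component of $\overline q$ would persist as a finite cycle of $\overline g$.

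The backward direction has the right strategy but your one-component extension step, as written, has a genuine gap. If $L_i$ meets neither $\dom(q_k)$ nor $\ran(q_k)$ and you adjoin a bijection $L_i\to L_j$ with $L_j$ fresh, you do \emph{not} ``lengthen an incomplete component of $\overline{q_k}$'': you create a brand-new component of $\overline{q_{k+1}}$, and hence $n$ new components of $q_{k+1}$, none containing a point of $\Sigma$. Your remark that ``once we have exhausted the components meeting $\Sigma$ the constraint is vacuous'' is therefore wrong --- the invariant that every component of $q_k$ meets $\Sigma$ in exactly one vertex must be maintained at every stage (equivalently, the number of components of $q_k$ must stay equal to $|\Sigma|$), or else the limit $g$ will have orbits missing $\Sigma$ and fail to lie in $\I_\Sigma(\omega K_n)$.

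The repair is easy and local: whenever a genuinely fresh graph-component $L_i$ comes up in the enumeration, do not start a new $\overline{q_k}$-component. Instead pick some $j\in\ran(\overline{q_k})\setminus\dom(\overline{q_k})$ (which exists because $\overline{q_k}$ has only incomplete components) and adjoin a bijection $L_j\to L_i$; this attaches $L_i$ to an existing component and keeps the component count fixed. Symmetrically, to bring $L_i$ into the range, attach via some $j\in\dom(\overline{q_k})\setminus\ran(\overline{q_k})$. With this modification your back-and-forth goes through and the limit lands in $\I_\Sigma(\omega K_n)$ as required.
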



\begin{prop}
  Let $\Sigma$ be a finite subset of $\omega K_n$.  Then $\I_\Sigma(\omega K_n)$ is
  non-empty if and only if  $|\Sigma|$ is a multiple of $n$ and if $r =
  |\Sigma|/n$, there is partition $\{P_1, \ldots, P_r\}$ of $\Z$ such that,
  $P_i$ is infinite and \[\sum_{j \in P_i} |L_j \cap \Sigma| = n\] for all $i
  \in \{1, \ldots, r\}$.
\end{prop}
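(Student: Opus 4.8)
The plan is to establish both directions by translating the question about orbits of an element $g \in \I_\Sigma(\omega K_n)$ into a combinatorial statement about the induced permutation $\overline{g}$ of the index set $\Z$, using Lemma~\ref{lem-extensions-infinite-finite} as the bridge. For the forward direction, suppose $g \in \I_\Sigma(\omega K_n)$ is nonempty, so fix such a $g$. Every connected component $L_j$ is a copy of $K_n$, hence is setwise permuted among the components by $g$; since $g$ has no finite orbits, $\overline{g}$ has no finite orbits either, so every orbit of $\overline{g}$ on $\Z$ is infinite. Now partition $\Z$ into the orbits of $\overline{g}$; within one such orbit $P$, the union $\bigcup_{j \in P} L_j$ is a single $g$-invariant subset of $\omega K_n$, and I would argue that the number of $g$-orbits it contains equals $\sum_{j \in P} |L_j|/n \cdot (\text{something})$ — more precisely, each $g$-orbit inside $\bigcup_{j\in P} L_j$ meets every $L_j$ (for $j \in P$) in the same number of vertices, call it $c$, and since $\Sigma$ hits each $g$-orbit exactly once, counting $|\Sigma \cap \bigcup_{j \in P} L_j|$ two ways gives that this count equals the number of $g$-orbits in that piece. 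The key numerical identity is that each component $L_j$ with $j \in P$ is partitioned into $g$-orbits-restricted-to-$L_j$, and summing orbit sizes across the whole $P$-block shows each orbit has total size a multiple of $n$ is the wrong statement; rather, I should track that $\sum_{j \in P}|L_j| = n|P|$ is infinite and the orbits tile it, and that $\Sigma$ selects one point per orbit. This forces $\sum_{j \in P_i}|L_j \cap \Sigma| = n$ once we relabel the $\overline{g}$-orbits as $P_1, \dots, P_r$: indeed a $g$-orbit meeting the block $\bigcup_{j \in P}L_j$ projects onto all of $P$ under the component map, so it has exactly one representative structure, and I would show there is exactly one $g$-orbit per block $P_i$, giving $r = |\Sigma|/n$ and the displayed sum.

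For the converse, suppose $|\Sigma| = rn$ and we are given a partition $\{P_1, \dots, P_r\}$ of $\Z$ with each $P_i$ infinite and $\sum_{j \in P_i}|L_j \cap \Sigma| = n$. I would construct $g$ block by block: on the $i$-th block $B_i = \bigcup_{j \in P_i} L_j$, enumerate $P_i = \{\dots, a_{-1}, a_0, a_1, \dots\}$ as a bi-infinite sequence, and build a bijection $B_i \to B_i$ that cyclically shifts the components along this enumeration (so $\overline{g}$ restricted to $P_i$ is a single bi-infinite cycle, hence has no complete/finite components) while being chosen so that the vertices of $\Sigma \cap B_i$ — of which there are exactly $n$, distributed among the $L_j$ — all lie in a single $g$-orbit, and moreover every $g$-orbit in $B_i$ meets $\Sigma$. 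Concretely, since each $L_{a_k}$ has exactly $n$ vertices and $\Sigma$ contributes $\sum_k |L_{a_k}\cap\Sigma| = n$ points total across the block, I can arrange the $n$ orbits crossing $B_i$ (each orbit meeting each $L_{a_k}$ in exactly one vertex, because the shift is "diagonal") so that exactly one of these $n$ orbits contains all $n$ points of $\Sigma \cap B_i$; this is possible precisely because the total is $n$, matching the number of slots in one component — one threads the $\Sigma$-points through consecutive components. Then $\Sigma$ is a set of orbit representatives for $g$ (one point per orbit: the $n$ orbits across each $B_i$ get their representatives from the single "loaded" orbit? — no), so I need instead: make the block-shift so that each of the $n$ orbits in $B_i$ picks up exactly one point of $\Sigma\cap B_i$. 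Since $\sum_{j\in P_i}|L_j \cap \Sigma| = n$ and there are $n$ orbits each hitting every component once, I assign the $|L_j\cap\Sigma|$ marked points in $L_j$ to $|L_j\cap\Sigma|$ distinct orbits and route the shift so these assignments are globally consistent across $j \in P_i$; a counting/Hall-type argument shows consistency is achievable exactly when the per-block total is $n$. Applying Lemma~\ref{lem-extensions-infinite-finite} (or directly checking) confirms $g \in \I_\Sigma(\omega K_n)$.

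The main obstacle I anticipate is the careful bookkeeping in the converse: ensuring that the bi-infinite shift on the components of $B_i$ can be simultaneously synchronized so that each of the $n$ resulting $g$-orbits contains exactly one of the $n$ marked points, with the marked points distributed in $L_j$ according to the prescribed multiplicities $|L_j \cap \Sigma|$. This is a consistency/matching problem along a bi-infinite sequence of components, and I would handle it by choosing, for each $j \in P_i$, a bijection $L_{a_j} \to \{1,\dots,n\}$ (a "labeling by orbit index") such that the label set used by $\Sigma \cap L_{a_j}$ together over all $j$ covers $\{1,\dots,n\}$ exactly once — which is feasible since the total count is $n$ — and then defining $g$ to send the vertex of $L_{a_j}$ with label $\ell$ to the vertex of $L_{a_{j+1}}$ with label $\ell$. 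The forward direction's obstacle is dual and milder: pinning down that each $\overline{g}$-orbit block contains exactly one $g$-orbit, equivalently that $\Sigma$ contributing $|\Sigma \cap B_i|$ points forces $|\Sigma \cap B_i| = n$; this follows from the observation that inside a block where $\overline{g}$ acts as a single bi-infinite cycle, a $g$-orbit that is infinite and has no finite part must meet each component in the same number of vertices and that number divides $n$, combined with $\Sigma$ being a transversal. I would present the forward direction first as the cleaner half, then invest the bulk of the writing in the explicit construction for the converse.
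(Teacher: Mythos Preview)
Your overall strategy matches the paper's: for $(\Rightarrow)$ take the $P_i$ to be the orbits of $\overline{g}$ on $\Z$, and for $(\Leftarrow)$ build $g$ block by block as a bi-infinite shift on each $\bigcup_{j\in P_i} L_j$, choosing the bijections between successive components so that each resulting $g$-orbit picks up exactly one point of $\Sigma$. The labeling idea you describe at the end (assign each vertex of $L_{a_j}$ a label in $\{1,\dots,n\}$ so that the labels used by $\Sigma$-points are pairwise distinct across the block, then let $g$ preserve labels) is exactly the content of the paper's inductive construction, stated more compactly.

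There is, however, a repeated error in your forward direction. You write twice that each $\overline{g}$-orbit block $B_i = \bigcup_{j\in P_i} L_j$ contains \emph{exactly one} $g$-orbit, and you call this ``equivalent'' to $|\Sigma\cap B_i| = n$. It is not: one $g$-orbit per block would give $|\Sigma\cap B_i| = 1$ and hence $|\Sigma| = r$, not $rn$. The correct count --- which you yourself use in the converse --- is that $B_i$ contains exactly $n$ $g$-orbits. Indeed, since $\overline{g}|_{P_i}$ is a single bi-infinite cycle, each $g$-orbit in $B_i$ meets every $L_j$ (for $j\in P_i$) in exactly one vertex; as $|L_j| = n$, there are $n$ such orbits. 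This is what gives $\sum_{j\in P_i}|L_j\cap\Sigma| = |\Sigma\cap B_i| = n$, and summing over $i$ yields $|\Sigma| = rn$. Once you rewrite the forward direction with this correct count, the argument goes through and coincides with the paper's.
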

\begin{proof}
  $(\Rightarrow)$
  Let $f \in \I_\Sigma(\omega K_n)$. If $x\in L_i$ and $(x)f\in L_j$, then, since $f$ is an
  automorphism, $(L_i)f=L_j$.  Moreover, if $(L_i)f^m = L_i$ for some $m\in
  \Z$, then $(L_i)f^{rm} = L_i$ for all $r\in \Z$, and since $L_i$ is finite,
  $f$ would have a finite cycle. Hence $(L_i)f^m \neq L_i$ for all $m \in \Z$,
  and so every vertex in $L_i$ is on a separate orbit of $f$.

  Let $k_1, \ldots, k_r \in \Z$ be orbit representatives of $\overline{f}$.
  Since for every orbit of $\overline{f}$ there are $n$ orbits in $f$, it
  follows that $rn = |\Sigma|$. 
  It follows from Lemma~\ref{lem-extensions-infinite-finite}, $\overline{f}$
  has no complete component. So $\left(L_{k_i}\right)f^m =
  \left(L_{k_{i'}}\right)f^{m'}$ if and only if $i = i'$ and $m = m'$.  Hence
  \[
    n =  \left| \Sigma \cap \left( \bigcup_{m \in \mathbb{Z}} \left( L_{k_i}
         \right)f^m \right)\right|
      = \left|  \bigcup_{m \in \mathbb{Z}} \left( \Sigma \cap \left( L_{k_i}
        \right)f^m \right)\right|
      = \sum_{m \in \mathbb{Z}} \left|\Sigma \cap \left( L_{k_i} \right)f^m
        \right|
  \]
  for all $i \in \{1, \ldots, r\}$. Let $P_i = \{\left( k_i \right)
  \overline{f}^m : m \in \Z\}$ where $i \in \{1, \ldots, r\}$. Then $\{P_1,
  \ldots, P_r\}$ is the required partition.

  ($\Leftarrow$)
  For $i \in \{1, \ldots, r\}$, let $P_i = \{ k_{i, j} : j \in \Z\}$. Define $f
  \in \I_{\Sigma}(\omega K_n)$ to be such that
  \[
    (k_{i,j})\overline{f}=k_{i, j + 1}
  \]
  for all $i \in \{1, \ldots, r\}$ and $j \in \Z$ by inductively defining $f$
  on $\bigcup_{j\in\Z} L_{k_{i, j}}$ for each $i$ independently.

  Let $i \in \{1, \ldots,r\}$ be arbitrary. Then $| L_{k_{i, 0}}\cap \Sigma | +
  | L_{k_{i, 1}} \cap \Sigma | \leq n$. Since $L_{k_{i, 0}}$ and $L_{k_{i, 1}}$
  are both of size $n$, there exists a bijection $q_1:L_{k_{i, 0}}\to
  L_{k_{i, 1}}$ such that for every $x \in L_{k_{i, 0}}$ at most one of the
  points $x$ and $(x)q_1$ is in $\Sigma$. Suppose that for some $m \in \N$
  we have defined a bijection
  \[
    q_{2m + 1} : \bigcup_{j = - m}^m L_{k_{i, j}} \rightarrow \bigcup_{k = -
    m + 1}^{m + 1} L_{k_{i, j}}
  \]
  such that every incomplete component of $q_{2m+1}$ intersects $\Sigma$ in at most
  one point.

  Let $t = \sum_{j = -m }^{m + 1} |L_{k_{i, j}} \cap \Sigma|$. Then there are
  $n - t$ incomplete components of $q_{2m+1}$ which have empty intersection with
  $\Sigma$. Since $\sum_{j = -m - 1}^{m + 1} |L_{i(j,r)} \cap \Sigma| \leq n$,
  it follows that $|L_{k_{i, - m - 1}} \cap \Sigma| \leq n - t$. Hence there
  exists a bijection $\phi : L_{k_{i, -m - 1}} \rightarrow L_{k_{i, -m}}$  such
  that for every $x \in L_{k_{i, - m - 1}} \cap \Sigma$, the value $(x)\phi$
  belongs to an incomplete component of $q_{2m+1}$ which contains no points from
  $\Sigma$. If we set $q_{2m + 2} = q_{2m + 1} \cup \phi$, then every
  incomplete component of $q_{2m + 2}$ intersects $\Sigma$ in at most one point.
  Similarly we can extend $q_{2m + 2}$ to $q_{2m + 3}$ by adding a
  bijection from $L_{k_{i, m + 1}}$ to $L_{k_{i, m + 2}}$.

  Hence by induction induction
  \[
    f_i = \bigcup_{m \in \Z} q_{2m + 1}
  \]
  is an automorphism of $\bigcup_{j \in \Z} L_{k_{i, j}}$ and every orbit of
  $f_i$ intersects $\Sigma$ exactly once. The required $f$ is then just the
  function $\bigcup_{i = 1}^{r} f_i$.
\end{proof}

\begin{lem}\label{lem-omegaK_n-basis}
 Let $\Sigma \subseteq \omega K_n$ be finite, and let $\mathcal{F}$ consist of
 those $g\in \aut(\omega K_n)^{< \omega}$ where \changed{the sets $\dom(g)$ and
 $\ran(g)$ are disjoint, both are} unions of connected components of $\omega
 K_n$, and $\overline{g}$ does not have any complete components. Then
 \[
   D_f \cap \I_\Sigma(\omega K_n) = \bigcap_{p \in \mathcal{F}} \{g \in
   \I_\Sigma : \langle f, g \rangle \cap [p] \neq \varnothing \}.
 \]
\end{lem}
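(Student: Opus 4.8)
The plan is to adapt the proof of Lemma~\ref{lem-K_n-basis}, with $\aut(H(n))$ replaced by $\aut(\omega K_n)$, $\I(H(n))$ by $\I_\Sigma(\omega K_n)$, and $\P$ by $\mathcal{F}$. Since $\{[q] : q \in \aut(\omega K_n)^{<\omega}\}$ is a basis for the topology,
\[
  D_f \cap \I_\Sigma(\omega K_n) = \bigcap_{q \in \aut(\omega K_n)^{<\omega}} \{g \in \I_\Sigma(\omega K_n) : \langle f, g \rangle \cap [q] \neq \varnothing\}.
\]
The inclusion $(\subseteq)$ is immediate because $\mathcal{F} \subseteq \aut(\omega K_n)^{<\omega}$. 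For $(\supseteq)$, I would fix $g$ in the right-hand side and an arbitrary $q \in \aut(\omega K_n)^{<\omega}$, and show that $\langle f, g \rangle \cap [q] \neq \varnothing$.

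First I would pass to a convenient extension of $q$. For each component $L_i$ of $\omega K_n$ meeting $\dom(q)$, the set $(L_i \cap \dom(q))q$ lies in a single component (any two vertices of $L_i$ are adjacent), and distinct such components are sent to distinct components (vertices in different components are non-adjacent and $q$ is a partial isomorphism); hence, using ultrahomogeneity, $q$ extends to a partial isomorphism $q'$ whose domain and range are unions of finitely many connected components, and every element of $\aut(\omega K_n)$ agreeing with $q'$ on $\dom(q')$ also agrees with $q$ on $\dom(q)$. Next, since $\omega K_n$ has infinitely many components each isomorphic to $K_n$, I can choose a union $\Gamma$ of connected components disjoint from $\dom(q') \cup \ran(q')$ and isomorphic to $\dom(q')$, and let $p : \dom(q') \to \Gamma$ be an isomorphism; by ultrahomogeneity $p \in \aut(\omega K_n)^{<\omega}$. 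Then $\dom(p) = \dom(q')$ and $\ran(p) = \Gamma$ are disjoint unions of connected components, as are $\dom(p^{-1}q') = \Gamma$ and $\ran(p^{-1}q') = \ran(q')$; and since each of $p$ and $p^{-1}q'$ has finite domain, neither $\overline{p}$ nor $\overline{p^{-1}q'}$ has a complete component. Hence $p, p^{-1}q' \in \mathcal{F}$.

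Since $p, p^{-1}q' \in \mathcal{F}$ and $g$ lies in the right-hand side, there are $h_1 \in \langle f, g \rangle \cap [p]$ and $h_2 \in \langle f, g \rangle \cap [p^{-1}q']$. Because $\dom(p) = \dom(q')$, $\ran(p) = \dom(p^{-1}q') = \Gamma$, and $\ran(p^{-1}q') = \ran(q')$, the product $h_1 h_2 \in \langle f, g \rangle$ agrees with $q'$, and hence with $q$, on $\dom(q)$; so $h_1 h_2 \in [q] \cap \langle f, g \rangle$. As $q$ was arbitrary, $g \in D_f \cap \I_\Sigma(\omega K_n)$. I do not expect a serious obstacle: the argument is a routine transcription of Lemma~\ref{lem-K_n-basis}, and the only steps needing a little care are the verification that $q$ admits an extension whose domain and range are unions of connected components (immediate from the structure of $\omega K_n$) and the check that $p$ and $p^{-1}q'$ meet all three defining conditions of $\mathcal{F}$ — the no-complete-component condition being automatic because the relevant domains are finite.
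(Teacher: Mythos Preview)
Your argument is correct and follows the same route as the paper: extend $q$ to a partial isomorphism $q'$ whose domain and range are unions of components, factor $q'$ as $p\cdot(p^{-1}q')$ through a fresh block $\Gamma$ of components, check that both factors lie in $\mathcal{F}$, and compose witnesses $h_1\in[p]$ and $h_2\in[p^{-1}q']$.

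One small slip: the reason $\overline{p}$ and $\overline{p^{-1}q'}$ have no complete components is \emph{not} that their domains are finite (a finite partial bijection can certainly have cycles), but rather that you have already arranged $\dom(p)\cap\ran(p)=\varnothing$ and $\dom(p^{-1}q')\cap\ran(p^{-1}q')=\varnothing$, so the induced maps on component indices also have disjoint domain and range and hence no cycles. With that correction the proof is complete and matches the paper's.
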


\begin{proof}
  Recall that
  \[
    D_f \cap \I_\Sigma(\omega K_n)
    = \{g \in \I_\Sigma(\omega K_n): \langle f, g\rangle \text{ is dense in } \aut(\omega
    K_n) \}
    = \bigcap_{q \in \aut(\omega K_n)^{<\omega}} \{g \in \I_\Sigma(\omega K_n) : \langle f,
    g \rangle \cap [q] \neq \varnothing \}.
  \]

  $(\subseteq)$ This follows immediately since $\mathcal{F} \subseteq
  \aut(\omega K_n)^{<\omega}$.

  $(\supseteq)$ Let $g \in \bigcap_{p \in \mathcal{F}} \{g \in \I_\Sigma(\omega
  K_n) : \langle f, g \rangle \cap [p] \neq \varnothing \}$, let $q \in
  \aut(\omega K_n)^{< \omega}$, and let
  \[
    \Gamma = \bigcup \{L_i: \dom(q) \cap L_i \neq \varnothing \}.
  \]
  If $h \in \aut(\omega K_n)$ is an extension of $q$, $x \in L_i$, and $(x)h
  \in L_j$, then $(L_i)h = L_j$. Hence $(\Gamma)h$ is a union of connected
  components of $\omega K_n$. Let $r = h|_\Gamma$. Then $[r] \subseteq [q]$.

  Let $\Gamma$ be a subgraph of $\omega K_n$ such that $\Gamma$ is isomorphic
  to $\dom(r)$, $\Gamma \cap \left( \dom(r) \cup \ran(r) \right) =
  \varnothing$,  and such that there are no edges between $\Gamma$ and $\dom(r)
  \cup \ran(r)$. Let $p$ be any isomorphism between $\dom(r)$ and $\Gamma$.
  Note that since $\dom(r)$ is a union of connected components of $\omega K_n$
  so is $\Gamma$. Since $\omega K_n$ is ultrahomogeonous, we have that $p \in
  \aut(\omega K_n)^{< \omega}$. Then $\dom(p) = \dom(r)$, $\ran(p) =
  \dom(p^{-1}r) = \Gamma$ and $\ran(p^{-1}r) = \ran(r)$. Hence $p, p^{-1}r \in
  \mathcal{F}$. By the choice of $g$ there are $h_1, h_2 \in \langle f, g
  \rangle$ such that $h_1 \in [p]$ and $h_2 \in [p^{-1}r]$. Therefore $h_1h_2
  \in [r] \subseteq [q]$ and $h_1h_2 \in \langle f, g \rangle$, thus $\langle
  f, g\rangle \cap [q] \neq \varnothing$. Since $q$ was arbitrary, $g \in
  \bigcap_{q \in \aut(\omega K_n)^{<\omega}} \{g \in \I_\Sigma(\omega K_n) : \langle f, g
  \rangle \cap [q] \neq \varnothing \}$.
\end{proof}

We will now prove Theorem~\ref{thm-main}(ii), which we restate for the sake of
convenience.

\begin{thm}
  \label{thm-infinite-finite}
  Let $f \in \aut(\omega K_n)$ be such that $\supp(\overline{f})$ is infinite,
  and let $\Sigma$ be a finite subset of $\omega K_n$. Then $D_f \cap
  \I_\Sigma(\omega K_n)$ is comeagre in $\I_\Sigma(\omega K_n)$.
\end{thm}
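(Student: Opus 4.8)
The plan is to follow the same skeleton as the proof of Theorem~\ref{thm-5-main}. By Lemma~\ref{lem-omegaK_n-basis} we have
\[
  D_f \cap \I_\Sigma(\omega K_n) = \bigcap_{p \in \mathcal{F}} \{g \in \I_\Sigma(\omega K_n) : \langle f, g \rangle \cap [p] \neq \varnothing \},
\]
and each set in this intersection is open in $\I_\Sigma(\omega K_n)$ by Lemma~\ref{lem-E-open}; since $\I_\Sigma(\omega K_n)$ is Baire, it is enough to show that $E_p := \{g \in \I_\Sigma(\omega K_n) : \langle f, g \rangle \cap [p] \neq \varnothing \}$ is dense in $\I_\Sigma(\omega K_n)$ for every $p \in \mathcal{F}$. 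So I would fix $p \in \mathcal{F}$ and $q \in \I_\Sigma(\omega K_n)^{<\omega}$ and produce $g \in [q] \cap \I_\Sigma(\omega K_n)$ together with an element of $\langle f, g\rangle$ extending $p$.

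First I would normalise $q$. Using Lemma~\ref{lem-extensions-infinite-finite} together with the fact that $\omega K_n$ has infinitely many connected components, I would extend $q$ inside $\I_\Sigma(\omega K_n)^{<\omega}$ — only ever lengthening existing incomplete components of $q$ by fresh connected components, never creating new ones — to a $q_1 \in \I_\Sigma(\omega K_n)^{<\omega}$ with $\dom(q_1)$ a union of connected components, $\Sigma \subseteq \dom(q_1)$, $\overline{q_1}$ without complete components, $\dom(p) \cup \ran(p) \subseteq \dom(q_1)$, and all incomplete components of $\overline{q_1}$ of one common length $m$; here the connected components of $\dom(p) \cup \ran(p)$ meeting $\Sigma$ already lie in $\dom(q_1)$ since $\Sigma \subseteq \dom(q_1)$, and the remaining ones are $\Sigma$-free and may be absorbed onto the end of an existing incomplete component. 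Next, mirroring the induction in the proof of Theorem~\ref{thm-5-main} with $\overline f$ in place of $f$ — and with the appeals to the Alice's restaurant property replaced by the triviality that any bijection between two copies of $K_n$ is a partial isomorphism — I would extend $q_1$ to $q_2 \in \I_\Sigma(\omega K_n)^{<\omega}$ by appending, to the forward end of each incomplete component of $\overline{q_1}$, a further $m$ connected components whose indices lie in $\supp(\overline f)$ and are chosen one at a time (avoiding a growing finite set) so that $\overline f$ sends each of them to an index disjoint from everything used so far. This ensures $\left((\dom(q_1)) q_2^m f\right) \cap \left(\dom(q_2) \cup \ran(q_2)\right) = \varnothing$.

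Now put $u := (q_2^m f)^{-1} p (q_2^m f)$. Since $\dom(p) \cup \ran(p) \subseteq \dom(q_1) \subseteq \dom(q_2^m)$ and $q_2^m f$ injectively maps connected components to connected components, $u \in \mathcal{F}$ (its $\dom(u), \ran(u)$ are disjoint unions of connected components, and $\overline u$ is a relabelling of $\overline p$ and so has no complete components) and $\dom(u) \cup \ran(u) \subseteq (\dom(q_1)) q_2^m f$ is disjoint from $\dom(q_2) \cup \ran(q_2)$. The main ingredient is then an analogue of Lemma~\ref{lem-K_n-main} for $\omega K_n$: if $q_2 \in \I_\Sigma(\omega K_n)^{<\omega}$ and $u \in \mathcal{F}$ have $\dom(q_2) \cup \ran(q_2)$ disjoint from $\dom(u) \cup \ran(u)$, then some extension $h \in \I_\Sigma(\omega K_n)^{<\omega}$ of $q_2$ satisfies $h^l \supseteq u$ for some positive integer $l$. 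I would prove this directly: every incomplete component of $\overline u$ is $\Sigma$-free (its connected components avoid $\dom(q_2) \supseteq \Sigma$), so I can thread each $\overline u$-chain onto the forward end of an incomplete component of $\overline{q_2}$, spacing its successive indices $l$ apart and filling the $l-1$ intermediate slots with fresh connected components, then choose on those fresh components bijections whose composites realise the vertex-level maps prescribed by $u$; this gives $h^l \supseteq u$, and since only $\Sigma$-free material was appended, every incomplete component of $h$ still meets $\Sigma$ exactly once, so $h \in \I_\Sigma(\omega K_n)^{<\omega}$ by Lemma~\ref{lem-extensions-infinite-finite}. Exactly as at the end of the proof of Theorem~\ref{thm-5-main}, $h^m f h^l (h^m f)^{-1}$ extends $p$; hence any $g \in \I_\Sigma(\omega K_n)$ extending $h$ satisfies $g^m f g^l (g^m f)^{-1} \in \langle f, g \rangle \cap [p]$ and $g \in [q]$, so $E_p \cap [q] \neq \varnothing$, as required.

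The Alice's-restaurant side of the argument, which was the heart of the $H(n)$ case, almost disappears here because partial isomorphisms of $\omega K_n$ are just component-respecting bijections. The step I expect to be the real obstacle is maintaining, throughout all three extensions (the normalisation, the $\supp(\overline f)$-extension, and the threading of $u$), the invariant that every incomplete component of the current partial isomorphism meets $\Sigma$ in exactly one vertex; this is what dictates that we may only lengthen existing chains by $\Sigma$-free components rather than introduce new ones. A secondary point requiring care is the inductive choice of the new components with indices in $\supp(\overline f)$ so that $\overline f$ carries them clear of everything already present — possible precisely because $\supp(\overline f)$ is infinite.
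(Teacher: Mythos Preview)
Your proposal is correct and follows essentially the same route as the paper's proof. The one simplification the paper makes is to take $l=1$: since any bijection between two copies of $K_n$ is a partial isomorphism of $\omega K_n$, it simply concatenates all incomplete components of $u$ into a single chain $v$ and appends $v$ directly to the forward end of one incomplete component of $q_2$, obtaining $h \supseteq u$ and hence $(h^m f)\, h\, (h^m f)^{-1} \in [p]$; your spacing-by-$l$ construction works but is not needed here.
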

\begin{proof}
  If $\I_{\Sigma}(\omega K_n)$ is empty, then the result holds trivially. So,
  for the remainder of the proof, we will suppose that $\I_{\Sigma}(\omega
  K_n)$ is non-empty.

  By Lemma~\ref{lem-omegaK_n-basis}
  \[
    D_f \cap \I_\Sigma(\omega K_n) = \bigcap_{p \in \mathcal{F}} \{g \in
    \I_\Sigma(\omega K_n) : \langle f, g \rangle \cap [p] \neq \varnothing \},
  \]
  and by Lemma~\ref{lem-E-open} the set $\{g \in \I_\Sigma(\omega K_n) :
  \langle f, g \rangle \cap [p] \neq \varnothing \}$ is open, so it's suffices
  to show that the aforementioned set is dense in $\I_\Sigma(\omega K_n)$.

  Let $p\in \mathcal{F}$ and let $q \in \I_\Sigma(\omega K_n)^{<\omega}$. We
  will show that there exists an extension $\changed{h \in \I_\Sigma(\omega
  K_n)^{<\omega}}$ of $q$ such that every extension $g\in \I_{\Sigma}(\omega
  K_n)$ of $h$ satisfies $\langle f,g\rangle\cap [p]\not=\varnothing$. If
  necessary, by extending $q$, we can assume without loss of generality that
  $\dom(q)$ is a union of connected components of $\omega K_n$, and that $q$
  has $|\Sigma|$ incomplete components each of some fixed length $m$, and that
  $\Sigma \cup \dom(p) \cup \ran(p) \subseteq \dom(q)$. Then $\ran(q) \setminus
  \dom(q)$ is a union connected components $L_{1, 0}, \ldots, L_{N, 0}$ for
  some $N \in \N$.

  Let $q_{1,0} = q$ and once $q_{i, j}$ is defined let $\Gamma_{i, j} =
  \dom(q_{i, j}) \cup \ran(q_{i, j})$. Suppose there is $i \in \{0, \ldots, m -
  1\}$ such that $q_{1, i} \in \I_\Sigma(\omega K_n)^{< \omega}$ is defined
  such that $\dom(q_{1, i})$ is a union of connected components, and $(x)q_{1,
  i}^j \in L_{1, j} $ for all $x \in L_{1, 0}$ and $j \in \{1, \ldots, i\}$.
  Since $\overline{f}$ has infinite support, there exists a connected component
  $L_{1,i + 1}$ of $\omega K_n$ such that $(L_{1,i + 1})f\not=L_{1,i + 1}$ and
  \[
    L_{1,i + 1} \cap \left( \Gamma_{1,i} \cup (\Gamma_{1,i})f \cup
    (\Gamma_{1,i})f^{-1}\right) =\varnothing.
  \]
  Let $\phi : L_{1, i} : \to L_{1, i + 1}$ be a bijection, and let  $q_{1,i +
  1} = q_{1, i} \cup \phi$. Then $q_{1, i + 1} \in \I_\Sigma(\omega K_n)^{< \omega}$ by
  Lemma~\ref{lem-extensions-infinite-finite}. Also by definition of $q_{1, i +
  1}$ the set $\dom(q_{1, i + 1}) = \dom(q_{1, i}) \cup L_{1, i}$ is a union of
  connected components, and $(x)q_{1, i + 1}^{i + 1} \in L_{1, i + 1}$ for all
  $x \in L_{1, 0}$. Hence by induction there is $q_{1, m} \in
  \I_\Sigma(\omega K_n)^{<\omega}$ such that $\dom(q_{1, m})$ is a union of connected
  components of $\omega K_n$, and $(x)q_{1, m}^j \in L_{1, j}$ for all $x \in
  L_{1, 0}$ and $j \in \{1, \ldots, m\}$.

  Let $q_{2, 0} = q_{1, m}$ and suppose for some $i \in \changed{\{2, \ldots,
  N\}}$ there is $q_{i, 0} \in \I_\Sigma(\omega K_n)^{< \omega}$ an extension
  of $q$ such that $\dom(q_{i, 0})$ is a union of connected components of
  $\omega K_n$, and $(x)q_{i, 0}^k \in L_{j, k}$ for all $x \in L_{j, 0}$, all
  $j \in \{1,\ldots i - 1\}$, and all $k \in \{1, \ldots, m\}$. The same
  argument as before can be used to define $q_{i, m} \in \I_\Sigma(\omega
  K_n)^{< \omega}$ an extension of $q$ such that such that $\dom(q_{i, m})$ is
  a union of connected components of $\omega K_n$, and $(x)q_{i, m}^k \in L_{j,
  k}$ for all $x \in L_{j, 0}$, all $j \in \{1, \ldots, i\}$, and all $k \in
  \{1, \ldots, m\}$. Hence by induction $\dom(q_{N, m})$ is a union of
  connected components of $\omega K_n$, and $(x)q_{N, m}^k \in L_{j, k}$ for
  all $x \in L_{j, 0}$, all $j \in \{1, \ldots, N\}$, and all $k \in \{1,
  \ldots, m\}$.

  We will show that $q_{N, m}$ is the desired extension of $q$. Let $r = q_{N,
  m}$. If $x \in L_{i, 0}$ for some $i \in \{1, \ldots, N\}$ and $j \in \{1,
  \ldots, m\}$, then
  \begin{equation}
    \label{equation-1}
    (x)r^j\in L_{i,j}\subseteq \Gamma_{i, j}
  \end{equation}
  and so by the choice of $L_{i, j}$ we have $(x)r^j \notin \Gamma_{i, j-1} \cup
  (\Gamma_{i, j-1})f \cup (\Gamma_{i, j-1})f^{-1}$ for all $j\in \{1,\ldots,
  m\}$. In particular,
  \begin{equation}
    \label{equation-2}
    (x)r^j f\not\in \Gamma_{i,j-1}\quad\text{and}\quad(x)r^j f^{-1}\not\in
    \Gamma_{i,j-1}
  \end{equation}
  for all $i \in \{1, \ldots, N\}$ and $j \in \{1, \ldots, m\}$.

  Let $x \in L_{i, 0}$ and $y \in L_{j, 0}$ for any $i,j \in \{1,\ldots, N\}$.
  We will show that $\left((x)r^k\right)f  \neq (y)r^l$ for all $k\in
  \{1,\ldots, m\}$ and $l\in \{-m+1, \ldots, m\}$. If $i=j$ and $k=l$, then,
  since $(x)r^k, (y)r^l \in L_{i,k}$ by \eqref{equation-1}, and
  $(L_{i,k})f\not= L_{i, k}$ by the choice of $L_{i, j}$, it follows that
  $\left((x)r^k\right)f\not=(y)r^{l}$. Hence we may assume that  $(i, k) \not=
  (j, l)$. There are three cases to consider.

  If $l \leq 0$, then $(y)r^{l} \in \dom(q_{1,0})\cup
  \ran(q_{1,0})=\Gamma_{1,0}\subseteq \Gamma_{i,k}$ and $(x)r^kf\notin
  \Gamma_{i,k}$ by \eqref{equation-2}, and so $(x)r^kf\not=(y)r^{l}$.

  Suppose that $i > j$ and $l > 0$, or $i = j$ and $k > l > 0$. Then $(y)r^{l}
  \in\Gamma_{j, l}$ by \eqref{equation-1}. By the assumption of this case,
  $\Gamma_{j, l} \subseteq \Gamma_{i, k-1}$ and $((x)r^k)f \not\in \Gamma_{i,
  k-1}$ by \eqref{equation-2}. Thus $((x)r^k)f\not=(y)r^{l}$, in this case.

  Suppose that $i < j$ and $l > 0$, or $i = j $ and $k < l$. Then $\Gamma_{i,
  k} \subseteq \Gamma_{j, l - 1}$. Since $((y)r^l)f^{-1} \notin \Gamma_{j,l-1}$
  by \eqref{equation-2}, it follows that $((y)r^l)f^{-1} \not\in \Gamma_{i,
  k}$, and so $((x)r^k)f\not=(y)r^{l}$. Therefore, in all three cases
  $((x)r^k)f \notin \ran(r) \cup \dom(r)$.

  Recall that $\dom(p) \cup \ran(p) \subseteq \dom(q)$ and that every point in
  $\dom(q)$ can be expressed as $(x)r^j$ for some $x \in \bigcup_{i = 1}^N
  L_{i, 0}$ and $j \in \changed{\{-m + 1, \ldots, -1\}}$. Define $u = (r^m
  f)^{-1}p(r^mf)$. Since $\overline{p}$ has no complete components, the same is true
  for $\overline{u}$. Also
  \[
    \dom(u) \cup \ran(u) \subseteq \{\left( (x)r^j \right)f : 1 \leq j \leq m,
    \text{ and } x \in  L_{i, 0} \text{ for some } i\}
  \]
  and hence $(\dom(u)\cup \ran(u)) \cap (\dom(r) \cup \ran(r)) =
  \varnothing$.

  Suppose $\dom(u) \backslash \ran(u) = \bigcup_{k=1}^M L_{i_k}$, and let $n_k$
  be the largest integer such that $(L_{i_k})u^{n_k}$ is defined for some $k
  \in \{1, \ldots, M\}$. Define $v$ to be an extension of $u$ by bijections
  $(L_{i_k})u^{n_k} \to L_{i_{k + 1}}$ for all $k \in \{1, \ldots, M - 1\}$.
  Then the domain of $v$ is a union of connected components of the graph, and
  $v$ has no complete components, since neither $p$ nor $u$ do. Finally choose
  any bijection $\psi : \changed{L_{N,m}} \to L_{i_1}$ \changed{and define $h
  = r \cup \psi \cup v$}. Then the number of components in
  $h$ \changed{is $|\Sigma|$ and so $h \in \I_\Sigma(\omega
  K_n)^{< \omega}$} by Lemma~\ref{lem-extensions-infinite-finite}.  Let $g \in
  \I_{\Sigma}(\omega K_n)$ be an extension of $h$. By definition of $u$ we have
  that $(h^mf)h(h^mf)^{-1}$ extends $p$, thus $\langle f,g \rangle \cup [p]
  \neq \varnothing$ and $g \in [q]$. Therefore the set $\{g \in
  \I_\Sigma(\omega K_n) : \langle f, g \rangle \cap [p] \neq \varnothing \}$ is
  dense in $\I_\Sigma(\omega K_n)$.
\end{proof}

The following is an immediate corollary of Lemma~\ref{lem-no-sigma}.

\begin{cor}
  Let $f \in \aut(\omega K_n)$ be such that $\supp(\overline{f})$ is infinite.
  Then $D_f \cap \I(\omega K_n)$ is comeagre in $\I(\omega K_n)$.
\end{cor}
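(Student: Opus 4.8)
The plan is to invoke Lemma~\ref{lem-no-sigma} with $\Gamma = \omega K_n$ and $S = \I(\omega K_n)$, so that $S_\Sigma = \I_\Sigma(\omega K_n)$ for every finite $\Sigma \subseteq \omega K_n$. Two hypotheses must be checked. First, $D_f \cap \I_\Sigma(\omega K_n)$ must be dense in $\I_\Sigma(\omega K_n)$ for every finite $\Sigma$; this is immediate from Theorem~\ref{thm-infinite-finite} (whose hypothesis that $\supp(\overline f)$ is infinite is exactly our assumption), since $\I_\Sigma(\omega K_n)$ is a Baire space by Lemma~\ref{lem-G-delta} and a comeagre subset of a Baire space is dense (when $\I_\Sigma(\omega K_n)=\varnothing$ there is nothing to prove). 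Second, and this is the only point needing an argument, every $q \in \I(\omega K_n)^{<\omega}$ must have an extension in $\I(\omega K_n)$ with only finitely many orbits.

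To verify this, I would take an arbitrary extension $g \in \I(\omega K_n)$ of $q$, which exists by definition of $\I(\omega K_n)^{<\omega}$. Note first that $\overline g$ has no finite orbits: a finite orbit of $\overline g$ would mean $g^k$ stabilises some component $L_i$ setwise for some $k \geq 1$, and since $L_i$ is finite a further power of $g$ would fix a vertex of $L_i$, contradicting $g \in \I(\omega K_n)$. Let $F$ be the finite set of connected components of $\omega K_n$ meeting $\dom(q) \cup \ran(q)$ and set $r = g|_{\bigcup F}$; then $r$ is a finite partial isomorphism extending $q$ with $\dom(r) = \bigcup F$ a union of connected components, and each component of $\overline r \subseteq \overline g$ is a finite --- hence incomplete --- sub-chain of an infinite orbit of $\overline g$.

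Next I would extend the partial permutation $\overline r$ of $\Z$, which has only finitely many incomplete components, to a permutation $\sigma$ of $\Z$ all of whose orbits are infinite and which has only finitely many orbits; this is a routine back-and-forth, e.g.\ link the finitely many chains of $\overline r$ into a single bi-infinite orbit (inserting fresh integers where necessary) and sweep whatever integers remain into one further bi-infinite orbit. Finally lift $\sigma$ to an automorphism $g'$ of $\omega K_n$ by letting $g'$ agree with $r$ on $\bigcup F$ and, on each $L_i \notin F$, choosing an arbitrary bijection $L_i \to L_{(i)\sigma}$; this is well defined and an automorphism because $\sigma$ extends $\overline r$ and $g'$ carries each component onto a component. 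As in the proof of the Proposition characterising non-emptiness of $\I_\Sigma(\omega K_n)$, the restriction of $g'$ to $\bigcup_{i \in O} L_i$ has exactly $n$ orbits, all infinite, for each orbit $O$ of $\sigma$; hence $g'$ lies in $\I(\omega K_n)$, extends $q$, and has only finitely many orbits.

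With both hypotheses of Lemma~\ref{lem-no-sigma} in place, it follows that $D_f \cap \I(\omega K_n)$ is comeagre in $\I(\omega K_n)$. The only step requiring care is the second hypothesis above, specifically arranging that $\dom(r)$ is a union of components so that the lift $g'$ is well defined and checking that $g'$ has no finite orbits but only finitely many orbits; given the earlier results this is short, so the corollary is indeed immediate.
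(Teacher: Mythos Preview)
Your proof is correct and follows the same route as the paper, which simply states that the corollary is immediate from Lemma~\ref{lem-no-sigma}. You have supplied exactly the verification of the two hypotheses of that lemma that the paper leaves implicit: the density hypothesis via Theorem~\ref{thm-infinite-finite}, and the finite-orbit extension hypothesis via the explicit construction of $g'$ from a restriction $r$ and a lift of a suitable permutation $\sigma$ of the component indices.
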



\section{Finitely many infinite complete graphs: $n K_\omega$}

In this section we will consider the ultrahomogeneous graph $n K_\omega$ for a
fixed $n \in \mathbb{N}$ such that $n \geq 2$. Throughout this section let
$L_1, L_2, \ldots, L_n$ be the connected components of $n K_\omega$.  Recall
that, if $f \in \aut(nK_\omega)$ and $\Sigma \subseteq n K_\omega$ is finite,
then
\[
  \mathcal{A}_f = \{ g \in \aut(nK_\omega) : \langle \overline{f}, \overline{g}
  \rangle = S_n \}
\]
and
\[
  \mathcal{A}_{f, \Sigma} = \{g \in \mathcal{A}_f : \Sigma \text{ is a set of
  orbit representatives of } g\}.
\]

To specify when $\A_f$ is non-empty, we require the following classical
theorem.

\begin{prop}[cf. \cite{piccard1939aa}]\label{prop-piccard}
  Let $a\in S_n$ be a non-identity element and let $n\in \N$ be such that
  $n\not=4$, or $n = 4$ and $a \notin \{(1 \; 2)(3 \; 4), (1 \; 3)(2
  \; 4), (1 \; 4)(2 \; 3) \}$. Then there exists $b\in S_n$ such that $\langle
  a,b\rangle=S_n$.
\end{prop}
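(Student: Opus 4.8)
The plan is to reduce the statement to classical facts about primitive permutation groups and then to construct $b$ explicitly, by cases on the cycle type of $a$. Recall Jordan's theorem in the form: a primitive subgroup of $S_n$ that contains a transposition equals $S_n$, and a primitive subgroup of $S_n$ that contains a $3$-cycle contains $A_n$. Since a subgroup of $S_n$ that contains $A_n$ together with an odd permutation must be $S_n$, it suffices to choose $b$ so that $G=\langle a,b\rangle$ is transitive and primitive and contains a transposition or a $3$-cycle, while arranging (as we may in each case below) that at least one of $a$, $b$ is odd. Some cycle types are immediate from the elementary observation that $\langle(1\ 2),(1\ 2\ \cdots\ n)\rangle=S_n$, since conjugating $(1\ 2)$ by the powers of the $n$-cycle produces the adjacent transpositions $(1\ 2),(2\ 3),\dots,(n-1\ n)$, and these generate $S_n$: if $a$ is a transposition we relabel so that $a=(1\ 2)$ and take $b=(1\ 2\ \cdots\ n)$, and if $a$ is an $n$-cycle we relabel so that $a=(1\ 2\ \cdots\ n)$ and take $b=(1\ 2)$.

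For the remaining cycle types the real content is the choice of $b$. When $a$ contains a short cycle or fixes several points, one takes $b$ to be an $n$-cycle that threads through the cycles of $a$: this makes $G$ transitive at once --- and primitive when $n$ is prime --- and, by controlling how $b$ meets the short part of $a$, one keeps or creates a transposition or a $3$-cycle inside $G$. The delicate case is when $a$ is essentially a single long cycle, say an $n$-cycle or an $(n-1)$-cycle. The device here is to take $b$ to be a transposition $(i\ j)$ and to examine the commutator $[a,b]=a^{-1}b^{-1}ab$, whose support has size at most $4$; a careful choice of the two points $i$, $j$ forces $[a,b]$ to be a transposition or a $3$-cycle, while $G$ is transitive because $a$ already is (or becomes so on adjoining $b$), and primitivity is verified by checking that no non-trivial block system can be preserved both by $a$ and by the short element just produced.

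The main obstacle is uniformity: for cycle types in which $a$ respects a non-trivial block system --- most conspicuously a product of several cycles of equal length --- a careless choice of $b$ leaves $G$ imprimitive, and even the short element extracted from $G$ can be block-preserving, so these configurations must be handled individually, and a finite list of small degrees (say $n\le 6$) must be checked by hand. This last step is also where the exceptional case appears: for $n=4$ the Klein four-group $V=\{\id,(1\ 2)(3\ 4),(1\ 3)(2\ 4),(1\ 4)(2\ 3)\}$ is normal in $S_4$, so if $a\in V\setminus\{\id\}$ then, for every $b\in S_4$, the image of $\langle a,b\rangle$ in $S_4/V\cong S_3$ is the cyclic group generated by the image of $b$ and hence is never all of $S_3$; thus $\langle a,b\rangle\neq S_4$, which is exactly the exclusion in the statement.
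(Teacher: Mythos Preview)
The paper does not prove this proposition: it is stated with a citation to Piccard~\cite{piccard1939aa} and used as a black box, so there is no proof in the paper to compare against.

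As for your sketch on its own terms, the overall strategy---produce a short cycle and invoke Jordan's theorem on a primitive group---is the standard route and is sound, but what you have written is a plan rather than a proof. The phrases ``one takes $b$ to be an $n$-cycle that threads through the cycles of $a$'' and ``a careful choice of the two points $i$, $j$ forces $[a,b]$ to be a transposition or a $3$-cycle'' are exactly the places where the work lies, and you have not done it; likewise ``a finite list of small degrees must be checked by hand'' is not a check. If you want to turn this into a proof you must actually specify $b$ for each cycle type and verify primitivity, or else cite Piccard (or Isbell, or the later literature on the spread of $S_n$) as the paper does. Your final paragraph correctly explains why the three double transpositions in $S_4$ must be excluded, but note that this is the converse direction and not part of the proposition as stated.
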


It follows by Proposition~\ref{prop-piccard}, that $\A_f\not=\varnothing$
if and only if  $n\not=4$, or $n = 4$ and $\overline{f} \notin \{(1 \; 2)(3 \;
4), (1 \; 3)(2 \; 4), (1 \; 4)(2 \; 3) \}$.

Next, we show that $\A_f$ and $\A_{f, \Sigma}$ are Baire spaces and
thus we can consider their comeagre subsets.

\begin{lem}\label{lem-Af-closed}
  Let $\Sigma \subseteq n K_\omega$ be finite. Then $\A_f$ is closed and
  $\A_{f, \Sigma}$ is a Baire space.
\end{lem}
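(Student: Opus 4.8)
The plan is to show two separate things: that $\A_f$ is closed in $\aut(nK_\omega)$, and then deduce that $\A_{f,\Sigma}$ is Baire from the fact that $\A_f$ is Baire together with Lemma~\ref{lem-G-delta}.

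For the first part, I would argue exactly as in the earlier Lemma~\ref{lem-Af-closed} (the one appearing in the Preliminaries). Take $g \in \aut(nK_\omega) \setminus \A_f$, so that $\genset{\overline{f}, \overline{g}} \neq S_n$. Pick a finite subset $\Gamma \subseteq nK_\omega$ containing at least one vertex from each of the finitely many connected components $L_1, \ldots, L_n$. Then for every $h \in [g|_\Gamma]$ we have $(L_i)h = (L_i)g$ for all $i$, so $\overline{h} = \overline{g}$, hence $\genset{\overline{f},\overline{h}} = \genset{\overline{f},\overline{g}} \neq S_n$ and $h \notin \A_f$. Thus $[g|_\Gamma]$ is an open neighbourhood of $g$ inside the complement of $\A_f$, so $\A_f$ is closed. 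Since closed subspaces of Polish (hence Baire) spaces are Baire, $\A_f$ is Baire.

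For the second part, I would apply Lemma~\ref{lem-G-delta} with $\Omega = nK_\omega$ (which is countable), $T = \A_f \subseteq \sym(nK_\omega)$, and the given finite $\Sigma$. Since $\A_f$ is Baire by the first part, Lemma~\ref{lem-G-delta} yields that $(\A_f)_\Sigma$ is Baire; and by the definitions, $(\A_f)_\Sigma$ is precisely $\A_{f,\Sigma}$ (the set of $g \in \A_f$ for which $\Sigma$ is a set of orbit representatives). Hence $\A_{f,\Sigma}$ is Baire.

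There is essentially no obstacle here — the statement is a routine packaging of the closedness argument already used for the Preliminaries version of this lemma, combined with the general $G_\delta$-type argument of Lemma~\ref{lem-G-delta}. The only small point to be careful about is that $\A_f \subseteq \sym(nK_\omega)$ inherits the subspace topology compatibly, so that Lemma~\ref{lem-G-delta} genuinely applies with $T = \A_f$; this is immediate since the topology on $\aut(nK_\omega)$ is the subspace topology from $\sym(nK_\omega)$.
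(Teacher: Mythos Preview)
Your proposal is correct and follows essentially the same approach as the paper: show the complement of $\A_f$ is open by picking a finite $\Gamma$ meeting every component so that $[g|_\Gamma]$ determines $\overline{g}$, then invoke Lemma~\ref{lem-G-delta} with $T=\A_f$ to conclude $\A_{f,\Sigma}$ is Baire. The paper's proof is identical in structure and detail.
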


\begin{proof}
  Let $g \in \aut(n K_\omega) \setminus \A_f$. Then $\langle \overline{f},
  \overline{g} \rangle \neq S_n$. Let $\Gamma \subseteq n K_\omega$ be a finite
  set such that $L_i \cap \Gamma \neq \varnothing$ for all $i \in \{1, \ldots,
  n \}$. Then for all $h \in [g|_{ \Gamma}]$ we have that $\overline{h} =
  \overline{g}$ and thus $h \notin \A_f$. Therefore, the open set $[g|_{
  \Gamma}]$ is a subset of $\aut(n K_\omega) \setminus \A_f$ and thus $\A_f$ is
  closed, and hence Baire. Then, by Lemma~\ref{lem-G-delta}, $\A_{f, \Sigma}$
  is a Baire space.
\end{proof}

The following lemma combined with Lemma~\ref{lem-Af-closed} demonstrates that
$D_f$ is not dense, and thus not comeagre, in any set which is not contained
in $\A_f$.
\begin{lem}
  If $g \in \aut(nK_\omega)$ is such that $\langle f, g \rangle$ is dense
  in $\aut(nK_\omega)$, then $\langle \overline{f}, \overline{g} \rangle = S_n$.
  In other words, $D_f \subseteq \A_f$.
\end{lem}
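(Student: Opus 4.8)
The plan is to prove the contrapositive in spirit: show that if $\langle \overline{f},\overline{g}\rangle \neq S_n$, then $\langle f,g\rangle$ cannot be dense in $\aut(nK_\omega)$. The point is that the natural map $\pi\colon \aut(nK_\omega)\to S_n$ sending $h\mapsto \overline{h}$ (which records how $h$ permutes the $n$ connected components $L_1,\dots,L_n$) is a continuous group homomorphism. Continuity is clear since $\overline{h}$ is determined by $h|_\Gamma$ for any finite $\Gamma$ meeting every component, so $\pi^{-1}(\{a\})$ is open for each $a\in S_n$; it is obviously a homomorphism because composition of automorphisms composes the induced component permutations. Surjectivity of $\pi$ also holds since any permutation of the components is realised by some automorphism, but it is not even needed for the argument.

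The key step is then: the image $\pi(\langle f,g\rangle) = \langle \overline{f},\overline{g}\rangle$ is a proper subgroup $H$ of $S_n$ by hypothesis. Pick any $a\in S_n\setminus H$. Then $\pi^{-1}(\{a\})$ is a non-empty open subset of $\aut(nK_\omega)$ (non-empty by surjectivity of $\pi$, or just exhibit an automorphism permuting components according to $a$). But $\langle f,g\rangle \cap \pi^{-1}(\{a\}) = \varnothing$, because every element of $\langle f,g\rangle$ maps under $\pi$ into $H$, which does not contain $a$. Hence $\langle f,g\rangle$ misses a non-empty open set and so is not dense.

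To phrase this concretely in the paper's language without invoking $\pi$ explicitly: let $a\in S_n\setminus\langle\overline{f},\overline{g}\rangle$ and let $h_0\in\aut(nK_\omega)$ be any automorphism with $\overline{h_0}=a$ (for instance one permuting the components according to $a$ and acting as a fixed bijection within each block). Take $\Gamma_0\subseteq nK_\omega$ finite meeting every component, and consider the basic open set $[h_0|_{\Gamma_0}]$. Every $h\in[h_0|_{\Gamma_0}]$ satisfies $\overline{h}=a$; but if $h\in\langle f,g\rangle$ then $\overline{h}\in\langle\overline{f},\overline{g}\rangle$, so $\overline{h}\neq a$, a contradiction. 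Thus $\langle f,g\rangle\cap[h_0|_{\Gamma_0}]=\varnothing$, so $\langle f,g\rangle$ is not dense. Since this contradicts density, we conclude $\langle\overline{f},\overline{g}\rangle=S_n$, i.e. $D_f\subseteq\A_f$.

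I do not expect a serious obstacle here; the only things to be a little careful about are (i) confirming that $h\mapsto\overline{h}$ really is well-defined and multiplicative — this is the observation already used in Lemma~\ref{lem-Af-closed}, namely that $\overline{h}$ depends only on $h|_\Gamma$ for $\Gamma$ meeting each component — and (ii) making sure the open set witnessing non-density is genuinely non-empty, which just requires that every element of $S_n$ is realised as $\overline{h}$ for some $h\in\aut(nK_\omega)$; this is immediate since $nK_\omega$ has $n$ pairwise isomorphic infinite complete components, so any permutation of them extends to an automorphism.
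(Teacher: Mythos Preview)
Your proposal is correct and takes essentially the same approach as the paper: both arguments hinge on the fact that $h\mapsto\overline{h}$ is a group homomorphism and that $\overline{h}$ is determined by $h|_\Gamma$ for any finite $\Gamma$ meeting every component. The paper phrases this directly (for each $\sigma\in S_n$, pick $q$ with $\overline{q}=\sigma$ and use density to find $h\in\langle f,g\rangle\cap[q]$, whence $\sigma=\overline{h}\in\langle\overline{f},\overline{g}\rangle$), while you phrase the contrapositive; these are two sides of the same coin.
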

\begin{proof}
  Let $g \in D_f$. Then $\langle f, g \rangle$ is dense in $\aut(nK_\omega)$.
  Let $\sigma \in S_n$ be arbitrary. Then it is straightforward to verify that
  there is $q \in \aut(nK_\omega)^{<\omega}$ such that $\overline{q} = \sigma$.
  Since $\langle f, g\rangle$ is dense, it follows that there is a product $h
  \in \langle f, g \rangle$ which extends $q$. Therefore $\sigma = \overline{h}
  \in \langle \overline{f}, \overline{g} \rangle$ which implies that $g \in
  \A_f$.
\end{proof}

Let $f \in \aut(n K_\omega)$. Then $f$ is called \emph{non-stabilizing} if for
all $\Gamma \subsetneq nK_\omega$, all $x \in \Gamma$ and all $q \in \A_f^{<
\omega}$ there is $g \in [q] \cap \A_f$ such that $(x)h \notin \Gamma$ for some
$h \in \langle f, g \rangle$. We say that $f\in \aut(nK_{\omega})$ is
\emph{stabilizing} if it is not non-stabilizing.

\begin{prop}\label{prop-f-stabilizing}
  Let $f \in \aut(nK_\omega)$ be such that $\A_f\not=\varnothing$. Then $f$ is
  stabilizing if and only if there is a finite subset $\Gamma$ of $nK_\omega$
  such that $f$ stabilises \changed{$\Lambda$} setwise and
  \[
  |L_i \cap \changed{\Lambda} | = |L_j \cap \changed{\Lambda} |
  \]
  for all $i, j \in \{1,2,\dots,n\}$.
\end{prop}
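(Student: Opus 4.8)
The plan is to prove both implications by directly unwinding the definitions of ``stabilizing'' and ``non-stabilizing.'' Recall that $f$ is non-stabilizing if for every proper finite (indeed, proper) subset $\Lambda\subsetneq nK_\omega$, every $x\in\Lambda$, and every $q\in\A_f^{<\omega}$, there is $g\in[q]\cap\A_f$ such that some $h\in\langle f,g\rangle$ moves $x$ outside $\Lambda$; so $f$ is stabilizing precisely when this fails, i.e.\ there exist a proper set $\Lambda$, a point $x\in\Lambda$, and a partial isomorphism $q\in\A_f^{<\omega}$ such that every $g\in[q]\cap\A_f$ satisfies $(x)h\in\Lambda$ for all $h\in\langle f,g\rangle$. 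The proposition claims this is equivalent to the existence of a finite $\Lambda$ that is setwise stabilised by $f$ with $|L_i\cap\Lambda|$ independent of $i$.

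For the direction $(\Leftarrow)$, suppose such a finite, $f$-invariant, ``balanced'' $\Lambda$ exists. I would take $q$ to be the partial isomorphism of $\Lambda$ that, on each component $L_i$, maps $L_i\cap\Lambda$ bijectively onto $L_{(i)\sigma}\cap\Lambda$ for a suitably chosen $\sigma\in S_n$ — here the balancedness of $\Lambda$ is exactly what makes such a $\sigma$-shuffling realizable as a partial isomorphism, and I'd pick $\sigma$ (together with $\overline{f}$) so that $\langle\overline{f},\sigma\rangle=S_n$, ensuring $q\in\A_f^{<\omega}$ and $[q]\cap\A_f\neq\varnothing$. Then for any $g\in[q]$, both $f$ and $g$ setwise stabilise $\Lambda$ (for $g$, because it extends $q$ and $q$ already permutes $\Lambda$ within itself), hence so does every word $h\in\langle f,g\rangle$; so every point of $\Lambda$ stays in $\Lambda$, witnessing that $f$ is stabilizing. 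One wrinkle: the definition of non-stabilizing quantifies over all proper $\Gamma$, not just finite ones, so I should double-check that a finite witnessing $\Lambda$ indeed suffices to contradict it — it does, since $\Lambda$ is in particular a proper subset of $nK_\omega$ (each $L_i$ is infinite, so $L_i\setminus\Lambda\neq\varnothing$).

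For $(\Rightarrow)$, assume $f$ is stabilizing, witnessed by $\Lambda_0\subsetneq nK_\omega$, $x\in\Lambda_0$, and $q\in\A_f^{<\omega}$. The idea is to show that the ``reachable set'' $\Lambda := \{(x)h : h\in\langle f,g\rangle\}$ is the same for a cofinal (under extension) family of admissible $g$, is finite, is $f$-invariant, and is balanced across the components. Since $f$ is an automorphism permuting the components via $\overline{f}$, and any admissible $g$ has $\langle\overline{f},\overline{g}\rangle=S_n$, the set $\Lambda$ is a union of full $\overline{f}$-images, and applying elements of $\langle\overline{f},\overline{g}\rangle=S_n$ shows $\Lambda$ must meet every component and, by transitivity of $S_n$ on $\{1,\dots,n\}$, meet each component in a set of the same size — this gives balancedness. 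Finiteness is the crux: if $\Lambda$ were infinite (in some component), I'd want to construct, by a back-and-forth extension of $q$ inside $\A_f$ (using the freedom that $nK_\omega$'s components are copies of $K_\omega$, so any injection between cofinite-complementable subsets of components extends to an automorphism), a specific $g\in[q]\cap\A_f$ and a word $h$ pushing $x$ out of $\Lambda$ — contradicting the choice of $q$. Finally $f$-invariance of $\Lambda$ follows because $f\in\langle f,g\rangle$ and applying $f$ to $(x)h$ gives $(x)(hf)\in\Lambda$, while $f^{-1}$-invariance follows symmetrically (or from $\Lambda$ being a finite union of $\overline{f}$-orbits of components together with the within-component action being eventually forced).

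The main obstacle I anticipate is the finiteness argument in $(\Rightarrow)$: one has to argue that if the orbit-closure of $x$ under $\langle f,g\rangle$ can be made infinite for some admissible $g$, then it can be made infinite — in fact, to escape any prescribed $\Lambda$ — for a $g$ extending the specific $q$ in the hypothesis, and this requires carefully combining the $S_n$-condition on $\overline{g}$ with a back-and-forth construction on the $K_\omega$ components that respects $[q]$. The construction is not hard in spirit (it is of the same flavour as the extension lemmas already proved for $H(n)$ and $\omega K_n$), but bookkeeping the interaction between the component-permutation constraint and the within-component freedom is where the real work lies; everything else is definition-chasing.
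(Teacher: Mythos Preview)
Your approach matches the paper's in outline: for $(\Leftarrow)$ you build $q$ as a $\sigma$-shuffle of $\Lambda$ exactly as the paper does, and for $(\Rightarrow)$ you pass to the reachable set $\Lambda = \{(x)h : h\in\langle f,g\rangle\}$ for a fixed $g\in[q]\cap\A_f$ and deduce $f$-invariance and balancedness from $\langle \overline f,\overline g\rangle = S_n$, again as in the paper.

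There is, however, a real gap in your finiteness step. You propose to show that $\Lambda$ itself is finite by constructing, if it were infinite, some $g'\in[q]\cap\A_f$ and a word $h$ with $(x)h\notin\Lambda$, ``contradicting the choice of $q$''. But this is not a contradiction: the stabilizing hypothesis only confines every such orbit to the original witness set $\Lambda_0$, not to $\Lambda$ (which was defined using one particular $g$, not $g'$). Worse, $\Lambda$ genuinely can be infinite --- for instance whenever the $f$-orbit of $x$ is infinite, since then $\{(x)f^k:k\in\Z\}\subseteq\Lambda$ for \emph{every} admissible $g$, and no back-and-forth choice of $g$ can change that. The paper supplies the missing idea: the complement $nK_\omega\setminus\Lambda$ is equally $\langle f,g\rangle$-invariant and hence equally balanced, and one then argues that $\Lambda$ and its complement cannot both be infinite (this is where the freedom to pick points outside $\dom(q)\cup\ran(q)$ on either side is used). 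Whichever of the two is finite and nonempty is the desired witness. So your plan is correct up to this point, but you must allow the finite balanced $f$-invariant set to be the \emph{complement} of the reachable set rather than the reachable set itself.
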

\begin{proof}
  ($\Rightarrow$) Let $f$ be a stabilizing automorphism of $nK_\omega$.  By the
  definition of being non-stabilizing, there is $\Delta \subsetneq n K_\omega$,
  $x \in \Delta$ and $q \in \A_f^{< \omega}$ such that for all $g \in [q] \cap
  \A_f$ and all $h \in \langle f, g \rangle$ we have that $(x)h \in \Delta$. If
  necessary by taking an extension   of $q$, we may assume without loss of
  generality that $\overline{q} \in S_n$. Fix any  $g \in [q] \cap \A_f$, and
  let $\Gamma = \{(x)h : h \in \langle f, g \rangle \} \subseteq \Delta$. Then
  the subgroup $\langle f, g \rangle $ stabilises $\Gamma$. Hence $f$ also
  setwise stabilises $\Gamma$.  Let $i, j \in \{1, \ldots, n\}$ be arbitrary.
  Since $g \in \A_f$ we may choose $h \in \langle f, g \rangle$ such that
  $(i)\overline{h} = j$. By the definition, $\Gamma$ is setwise stabilised by
  $h$ and thus
  \[
    \left( L_i \cap \Gamma \right) h \subseteq L_j \cap \Gamma
    \quad \text{and} \quad \left( L_j \cap \Gamma \right) h^{-1}
    \subseteq L_i \cap \Gamma,
  \]
  as both $h$ and $h^{-1}$ are bijections. It follows that $|L_i \cap \Gamma| =
  |L_j \cap \Gamma |$. \changed{Since $\langle f, g \rangle$ also setwise
  stabilises $n K_\omega \setminus \Gamma$, the same argument shows that $|L_i
  \cap (n K_\omega \setminus \Gamma)| = |L_j \cap (n K_\omega \setminus
  \Gamma)|$.

  Finally, suppose that both $\Gamma$ and $n K_\omega \setminus \Gamma$ are
  infinite. Then for every $i \in \{1, \ldots, n\}$ the sets $(\Gamma \cap L_i)
  \setminus (\dom(q) \cup \ran(q))$ and $((n K_\omega \setminus \Gamma) \cap
  L_i)
  \setminus (\dom(q) \cup \ran(q) )$ are non-empty. Hence for every $i \in \{1,
  \ldots, n\}$ there are $x \in L_i \cap \Gamma$ and  an extension $g \in
  \aut(n K_\omega)$ of $q$ such that $(x)g \in n K_\omega \setminus \Gamma$,
  contradicting the choice of $\Gamma$. Therefore either $\Gamma$ or $n
  K_\omega \setminus \Gamma$ is finite, and since both sets are 
  stabilised setwise by $f$, one of them is the required set $\Lambda$.}

  ($\Leftarrow$) Let $m = |L_i \cap \changed{\Lambda}|$ for any, and all, $i\in
  \{1,2,\ldots, n\}$ and let $L_i \cap \changed{\Lambda} = \{ \gamma(i, j) : 1 \leq j \leq m
  \}$. Since $\A_f$
  is non-empty there is $\sigma \in S_n$ such that $\langle \overline{f}, \sigma
  \rangle = S_n$.
  Define a finite isomorphism $q : \changed{\Lambda} \to \changed{\Lambda}$ such that $\left(
  \gamma(i, j) \right) q = \gamma((i)\sigma, j)$ for all $j\in \{1,\ldots, m\}$.
  Then $\overline{q} = \sigma$ and so $q \in \A_f^{<\omega}$.  Moreover, $\changed{\Lambda}$
  is a union of cycles of $q$ and hence $\langle f, g\rangle$ stabilises
  $\changed{\Lambda}$ for  any $g \in [q]$. Therefore, $f$ is stabilizing.
\end{proof}

The following theorem is a restatement of Theorem~\ref{thm-main}(iii), and it
is the main result in this section. 

\begin{thm}\label{thm-main-II}
  Let $f \in \aut(n K_\omega)$. Then $f$ is non-stabilizing if and only if
  $D_f$ is comeagre in $\A_f$. Furthermore, if $f$ is non-stabilizing and
  $\Sigma$ is any finite subset of $nK_{\omega}$, then $D_f\cap \A_{f,\Sigma}$
  is comeagre in $\A_{f, \Sigma}$.
\end{thm}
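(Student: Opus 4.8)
I would prove the biconditional and the ``furthermore'' clause together, deducing both implications from a single density statement. Throughout I may assume $\A_f\neq\varnothing$: otherwise $D_f\subseteq\A_f=\varnothing$, $\A_{f,\Sigma}=\varnothing$, and $f$ is non-stabilizing vacuously (since $\A_f^{<\omega}=\varnothing$), so every assertion holds trivially; hence Proposition~\ref{prop-f-stabilizing} is available. For the easy implication, suppose $f$ is stabilizing. Negating the definition gives $\Gamma\subsetneq nK_\omega$, $x\in\Gamma$ and $q\in\A_f^{<\omega}$ such that $(x)h\in\Gamma$ for every $g\in[q]\cap\A_f$ and every $h\in\langle f,g\rangle$. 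For any vertex $z$ the one-point map $x\mapsto z$ is a partial isomorphism extending to an automorphism, so a dense subgroup of $\aut(nK_\omega)$ contains an element taking $x$ to $z$; hence no such $g$ lies in $D_f$. Since $D_f\subseteq\A_f$ and $[q]\cap\A_f$ is a non-empty relatively open subset of $\A_f$, the set $D_f$ is not dense in $\A_f$, and as $\A_f$ is a Baire space (Lemma~\ref{lem-Af-closed}) it is not comeagre in $\A_f$.

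\textbf{Reduction to a density statement.} For finite $\Sigma$ with $\A_{f,\Sigma}\neq\varnothing$ write $D_f\cap\A_{f,\Sigma}=\bigcap_{p\in\aut(nK_\omega)^{<\omega}}\{g\in\A_{f,\Sigma}:\langle f,g\rangle\cap[p]\neq\varnothing\}$; by Lemma~\ref{lem-E-open} each of these sets is open in $\A_{f,\Sigma}$, which is Baire by Lemma~\ref{lem-Af-closed}, so the intersection is comeagre as soon as each set is dense. To pass from this to ``$D_f$ comeagre in $\A_f$'' I would invoke Lemma~\ref{lem-no-sigma} with $S=\A_f$ (so $S_\Sigma=\A_{f,\Sigma}$), whose hypothesis holds because every $q\in\A_f^{<\omega}$ has an extension in $\A_f$ with only finitely many orbits: take any completion $g_0\in\A_f$ of $q$ and re-choose $g_0$ inside each cycle of $\overline{g_0}$ so that the return map to one component of that cycle is a single bi-infinite cycle still extending the relevant part of $q$; this alters neither $q$ nor $\overline{g_0}$ (so $\langle\overline f,\overline{g_0}\rangle=S_n$ is preserved) while producing finitely many orbits. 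Thus the whole theorem reduces to: \emph{if $f$ is non-stabilizing, then for every finite $\Sigma$ and every $p\in\aut(nK_\omega)^{<\omega}$ the set $E_p:=\{g\in\A_{f,\Sigma}:\langle f,g\rangle\cap[p]\neq\varnothing\}$ is dense in $\A_{f,\Sigma}$.}

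\textbf{Proving the density statement.} Assume $\A_{f,\Sigma}\neq\varnothing$. As in Lemmas~\ref{lem-K_n-basis} and \ref{lem-omegaK_n-basis}, by factoring an arbitrary $q\in\aut(nK_\omega)^{<\omega}$ as $p_1\circ p_2$ through a fresh isomorphic copy of $\dom(q)$ (taking $\overline{p_1}=\mathrm{id}$) and then enlarging the component action, it suffices to prove $E_p$ dense for those $p$ with $\dom(p)\cap\ran(p)=\varnothing$ and $\overline p$ a full permutation $\tau$ of $\{1,\dots,n\}$. Fix such a $p$ and any $q\in\A_{f,\Sigma}^{<\omega}$; I must produce $h\in\A_{f,\Sigma}^{<\omega}$ extending $q$ with $[h]\cap\A_{f,\Sigma}\subseteq E_p$. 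First enlarge $q$ to $q_1\in\A_{f,\Sigma}^{<\omega}$ with $\Sigma\subseteq\dom(q_1)$, with $\overline{q_1}=\sigma$ a full permutation satisfying $\langle\overline f,\sigma\rangle=S_n$, and with all incomplete components of $q_1$ of one fixed length $m$ (these enlargements are unobstructed: $\A_{f,\Sigma}\neq\varnothing$ supplies a valid $\sigma$, and one-point extensions inside $nK_\omega$ are free). Since $f$ is non-stabilizing, $\overline f\neq\mathrm{id}$, so $\tau$ can be written as an alternating word $W(\overline f,\sigma)=\overline f^{\,\varepsilon_1}\sigma^{k_1}\cdots\overline f^{\,\varepsilon_t}\sigma^{k_t}$ beginning with a power of $\overline f$. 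Now I would build $h$ by routing, for each $x\in\dom(p)$, a single incomplete component of $h$ that realises step by step the evaluation of $W(f,h)$ at $x$: the forced $f$-steps alternate with $h$-steps sent to fresh vertices, the final $h$-step being forced to hit $(x)p$. Keeping these $|\dom(p)|$ paths pairwise disjoint and disjoint from $\dom(q_1)\cup\ran(q_1)$, one finally closes up all remaining incomplete components so that $h$ has exactly $|\Sigma|$ components, each meeting $\Sigma$ once; the $nK_\omega$-analogue of Lemma~\ref{lem-extensions-infinite-finite} then gives $h\in\A_{f,\Sigma}^{<\omega}$. Since $W(f,g)\supseteq W(f,h)\supseteq p$ and $W(f,g)\in\langle f,g\rangle$ for every $g\in\aut(nK_\omega)$ extending $h$, every $g\in[h]\cap\A_{f,\Sigma}$ lies in $E_p$, as required.

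\textbf{The main difficulty.} The combinatorial heart is the path construction just sketched, and it differs from the $\omega K_n$ and $H(n)$ cases in a structural way. In $\omega K_n$ there are infinitely many components, so the component action of $p$ can simply be absorbed into the component action of the partial automorphism being built; in $nK_\omega$ there are only $n$ components, so $\tau=\overline p$ must be produced by genuine $f$-steps inside the word $W$, and one must control the vertex-level effect of these forced $f$-steps, i.e.\ show that the paths can always be routed around any prescribed finite set while the resulting $h$ still completes inside $\A_{f,\Sigma}$ (respecting the $S_n$-generation and orbit-representative constraints). This is exactly where non-stabilization is indispensable: by Proposition~\ref{prop-f-stabilizing} a non-stabilizing $f$ admits no finite setwise-invariant set with equal component intersections, which is precisely the absence of a barrier to such routing, whereas for stabilizing $f$ such a barrier traps $\langle f,g\rangle$ and defeats density, as in the easy direction. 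In spirit this step parallels Lemmas~\ref{lem-5-step-in-main}--\ref{lem-K_n-main}, but the bookkeeping is new because the genuine obstruction now lives in the finite component structure rather than in the Alice's restaurant combinatorics.
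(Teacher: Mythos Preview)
Your overall architecture (easy direction, reduction via Lemma~\ref{lem-no-sigma} and Lemma~\ref{lem-4-all-sigma}, then density of each $E_p$) matches the paper, but the hard direction has two genuine gaps.

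First, the assertion ``since $f$ is non-stabilizing, $\overline f\neq\id$'' is false when $n=2$. For $n\geq 3$ it is true only because $\overline f=\id$ forces $\A_f=\varnothing$, which you have already excluded; but for $n=2$ one has $\A_f\neq\varnothing$ as soon as $\overline g=(1\;2)$, and there are non-stabilizing $f$ with $\overline f=\id$ (take $f|_{L_1}$ with only infinite orbits: then no non-empty finite $f$-invariant $\Lambda$ has $|L_1\cap\Lambda|=|L_2\cap\Lambda|$, so $f$ is non-stabilizing by Proposition~\ref{prop-f-stabilizing}). In this case your word $W$, which must begin with a non-trivial power of $\overline f$, does not exist. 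The paper treats this situation separately: Lemma~\ref{lem-4-n=2} handles $n=2$, $\overline f=\id$ with $\fix(f)$ infinite by routing through fixed points, and Lemma~\ref{lem-4-main} explicitly carries the extra hypothesis ``if $n=2$ and $\overline f=\id$, then $\fix(f)$ is finite''.

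Second, the ``routing'' paragraph is where the whole proof lives, and a fixed word chosen purely from $\tau=\overline p$ cannot work. The very first $f$-step sends $x$ to $(x)f^{\varepsilon_1}$, a vertex you do not control: it may lie in $\dom(q_1)\cup\ran(q_1)$, which forces the subsequent $h$-steps and destroys the freshness you need; and the final forced $h$-step must land on the prescribed vertex $(x)p$, which may sit on a component of $q_1$ already meeting $\Sigma$, so the amalgamation need not remain in $\A_{f,\Sigma}^{<\omega}$. The paper does not fix $W$ in advance. It first reduces to $\overline p=\id$ (Lemma~\ref{lem-4-basis}), using that $g\in\A_{f,\Sigma}$ already supplies an element of $\langle f,g\rangle$ realising any prescribed component permutation. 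It then proves the key escape lemma (Lemma~\ref{lem-4-non-stab}) that non-stabilization lets one leave $\dom(q)$ using only \emph{positive} powers of $q$ interspersed with powers of $f$, and builds the word adaptively: Lemma~\ref{lem-4-main} (proved via the long induction of Lemmas~\ref{lem-4-base-case} and~\ref{lem-4-fill-prod}) produces an extension $h$ of $q$ and a word $w$ with $\overline{w(h)}=\id$, $(\Gamma)w(h)\cap\dom(h)=\varnothing$, and the images of $\Gamma$ lying on fresh components not extending those of $q$. This lemma is applied twice, once for $\dom(p)$ and once (to $q^{-1}$) for $\ran(p)$, and the two halves are glued by one-point extensions and the amalgamation Lemma~\ref{lem-4-amalgamation}, with extra components adjoined so that removing any one incomplete component still leaves $\overline h\in S_n$. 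Your sketch correctly identifies that non-stabilization is the absence of a finite barrier to routing, but converting that absence into an actual construction is precisely the content of Lemma~\ref{lem-4-main} and its proof, not a routine bookkeeping exercise.
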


If $f$ is stabilizing, and $D_f\cap \A_{f,\Sigma}$ is comeagre in $\A_{f,
\Sigma}$ for all $\Sigma$, then by Lemma~\ref{lem-no-sigma}, $D_f\cap \A_f$ is
comeagre in $\A_f$ and so, by Theorem~\ref{thm-main-II}, $f$ is
non-stabilizing, which is a contradiction.  Hence if $f$ is stabilizing, then
there exists $\Sigma$ such that $D_{f, \Sigma}\cap \A_{f, \Sigma}$ is not
comeagre in $\A_{f, \Sigma}$. It is therefore natural to ask: for which
stabilizing $f$ and finite sets $\Sigma$, is $D_f\cap \A_{f,\Sigma}$ is
comeagre in $\A_{f, \Sigma}$? 

We will prove Theorem~\ref{thm-main-II} in a series of lemmas.  
We begin by showing several ways to extend partial isomorphisms in $\A_{f,
\Sigma}^{<\omega}$, which we will have to do ad infinitum in the proof of
Theorem~\ref{thm-main-II}.

The first lemma follows immediately from the definitions, and the proof is
omitted.

\begin{lem} \label{lem-4-easy-one-point}
  Let $q \in \aut(n K_\omega)^{< \omega}$ be such that $\overline{q} \in S_n$, and
  let $h = q \cup \{(x, y)\}$. Then $h \in \aut(n K_\omega)^{< \omega}$ if and
  only if there is $a \in \{1, \ldots, n\}$ such that $x \in L_a \setminus
  \dom(q)$ and $y \in  L_{(a) \overline{q}} \setminus \ran(q)$.
\end{lem}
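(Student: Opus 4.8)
The plan is to verify both directions directly from the definition of a partial isomorphism and the structure of $nK_\omega$, whose edges are exactly the pairs of distinct vertices lying in a common connected component $L_a$.

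($\Leftarrow$) Suppose $x \in L_a \setminus \dom(q)$ and $y \in L_{(a)\overline{q}} \setminus \ran(q)$. Since $\overline{q} \in S_n$, $q$ maps each $L_b \cap \dom(q)$ into $L_{(b)\overline{q}}$, and $y \notin \ran(q)$, the map $h = q \cup \{(x,y)\}$ is a well-defined injection of finite induced subgraphs. To see it is an isomorphism, I would check edges: for $z \in \dom(q)$, there is an edge between $x$ and $z$ iff $z \in L_a$, which (since $z \in \dom(q)$ and $\overline q$ is a bijection on components) is equivalent to $(z)q \in L_{(a)\overline q}$, i.e.\ to there being an edge between $y = (x)h$ and $(z)q = (z)h$. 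Since $q$ itself is an isomorphism, $h$ is too, so $h \in \aut(nK_\omega)^{<\omega}$.

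($\Rightarrow$) Suppose $h = q \cup \{(x,y)\} \in \aut(nK_\omega)^{<\omega}$. Then $h$ is injective, so $x \notin \dom(q)$ and $y \notin \ran(q)$; let $a$ be the index with $x \in L_a$. It remains to show $y \in L_{(a)\overline q}$. Pick any $z \in L_a \cap \dom(q)$ if one exists: then $x,z$ are adjacent, so $y,(z)q$ are adjacent, forcing $y \in L_{(a)\overline q}$ since $(z)q \in L_{(a)\overline q}$. If $L_a \cap \dom(q) = \varnothing$, then since $\overline q \in S_n$ is onto there is $z \in \dom(q)$ with $(z)q \in L_{(a)\overline q}$, and $z \notin L_a$ so $x,z$ are non-adjacent, hence $y,(z)q$ are non-adjacent, forcing $y \notin L_{(a)\overline q}$ — but then $y$ lies in some other component $L_c$ with $c \neq (a)\overline q$, and picking $z' \in \dom(q)$ with $(z')q \in L_c$ (again using surjectivity of $\overline q$) gives $z' \notin L_a$, so $x,z'$ non-adjacent while $y,(z')q$ adjacent, a contradiction; thus actually $y \in L_{(a)\overline q}$ in all cases.

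I do not anticipate a serious obstacle here, as this is purely a bookkeeping argument once one uses that $\overline q \in S_n$ forces $q$ to respect the component partition bijectively; the only mild subtlety is handling the case where $L_a$ meets $\dom(q)$ trivially, which is why surjectivity of $\overline q$ (rather than merely $\overline q$ being a partial permutation) is needed in the hypothesis, and indeed this is exactly the role of the assumption $\overline q \in S_n$.
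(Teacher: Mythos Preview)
Your argument is correct; the paper itself omits the proof entirely, saying only that the lemma follows immediately from the definitions.

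One simplification worth noting: the second sub-case in your $(\Rightarrow)$ direction, where $L_a \cap \dom(q) = \varnothing$, cannot actually occur. The hypothesis $\overline{q} \in S_n$ means that the induced partial map on component indices is a \emph{total} permutation of $\{1,\ldots,n\}$, which forces $\dom(q)$ to meet every component $L_a$. So your first sub-case always applies, and the contradiction argument you give for the second is vacuous---indeed, in that hypothetical situation the expression $(a)\overline{q}$ you invoke would not even be well-defined. This is not a gap in your proof, just an unnecessary case split; once you observe that $\overline{q} \in S_n$ guarantees a witness $z \in L_a \cap \dom(q)$, the forward direction is a single line.
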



Roughly speaking, in the next lemma, we show how to extend a partial isomorphism
with a set of orbit representative to an automorphism with the same set of
orbit representatives.

\begin{lem} \label{lem-4-no-sigma-no-complete-extensions}
  Let $q \in \aut(n K_\omega)^{< \omega}$ be such that $\overline{q} \in S_n$,
  and let $\Sigma$ be a finite subset of $\dom(q)$ such that $|\Sigma \cap C|
  \leq 1$ for every component $C$ of $q$, with equality holding if $C$ is
  complete. Suppose that for each $i \in \{1, \ldots, n\}$ there is $j \in \{1,
  \ldots, n\}$ such that $(j)\overline{q}^m = i$, for some $m\in \Z$, and $L_j
  \cap \Sigma$ contains a point in an incomplete component of $q$. Then there
  is an extension $g \in \aut(n K_\omega)$ of $q$ such that $\Sigma$ is a set
  of orbit representatives of $g$, every incomplete component of $q$ is
  contained in an infinite orbit of $g$, and $(x)g \notin \dom(q)$ for all $x
  \in \ran(q) \setminus \dom(q)$.
\end{lem}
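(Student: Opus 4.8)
\textbf{Proof plan for Lemma~\ref{lem-4-no-sigma-no-complete-extensions}.}
The plan is to build $g$ by a back-and-forth construction over a countable enumeration of $nK_\omega = \{v_0, v_1, \ldots\}$, starting from $q$ and adding one pair at a time, using Lemma~\ref{lem-4-easy-one-point} to check at each stage that we still have a partial isomorphism. The bookkeeping must maintain an invariant that is strong enough to guarantee, in the limit, that (a) $g$ is a total bijection of $nK_\omega$; (b) $\Sigma$ meets every orbit of $g$ exactly once; (c) every incomplete component of the original $q$ has been absorbed into an infinite orbit; and (d) no point of $\ran(q) \setminus \dom(q)$ is mapped by $g$ into $\dom(q)$. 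The natural invariant to carry along is: at every finite stage $q_s$ we have $q_s \in \aut(nK_\omega)^{<\omega}$ with $\overline{q_s} \in S_n$, $q_s \supseteq q$, $\dom(q_s) \cup \ran(q_s)$ is finite, $|\Sigma \cap C| \le 1$ for every component $C$ of $q_s$ with equality when $C$ is complete, every component of $q_s$ that was an incomplete component of $q$ and has been ``closed off'' was closed into a long enough incomplete component meeting $\Sigma$, and $(x)q_s \notin \dom(q)$ whenever $x \in \ran(q)\setminus\dom(q) \cap \dom(q_s)$.

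The key steps, in order: First, normalize the hypothesis. The condition that for each $i$ there is $j$ with $(j)\overline{q}^m = i$ and $L_j \cap \Sigma$ on an incomplete component of $q$ means that within each cycle of $\overline{q}$ there is at least one component that is incomplete and marked by $\Sigma$; this is exactly what we need so that every orbit of $g$ we eventually build can be arranged to contain a unique $\Sigma$-point. Record for each cycle of $\overline{q}$ a distinguished incomplete $\Sigma$-marked component; these will serve as the ``anchors'' of the orbits. Second, the forth step: given the next unenumerated vertex $v$ not yet in $\dom(q_s)$, with $v \in L_a$ say, we must choose $y \in L_{(a)\overline{q_s}}\setminus \ran(q_s)$ and set $q_{s+1} = q_s \cup \{(v,y)\}$. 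To respect (b) we pick $y$ outside $\Sigma$ unless the component we are about to extend is an incomplete component already carrying a $\Sigma$-point, in which case $y \notin \Sigma$ is still forced; the only components allowed to gain a $\Sigma$-point are the anchor components, and those we never add $\Sigma$-points to since $\Sigma \subseteq \dom(q)$ is already fixed. To respect (d), when $y$ must be chosen in some $L_b$, and $b$ is the index of a component of $\ran(q)\setminus\dom(q)$, we simply pick $y$ outside the finitely many ``forbidden'' vertices of $\dom(q)$; since $L_b$ is infinite this is always possible. Third, the back step is symmetric, picking preimages. Fourth — and this is the crucial scheduling point — we must also, infinitely often, ensure that every incomplete component of $q$ gets connected (via a chain of new pairs) into one of the anchor components lying in the same $\overline{q}$-cycle; since $\overline{q}$-cycles are finite, walking around the cycle using $\overline{q}$-images lets us splice any incomplete component onto the anchor in finitely many steps. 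We interleave these splicing tasks into the enumeration so that each is eventually completed.

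The main obstacle I expect is the simultaneous maintenance of conditions (b) and (c): we must guarantee that the splicing of incomplete components into anchors does not accidentally create an orbit with two $\Sigma$-points or an orbit with none. The way to handle this cleanly is to observe that $\Sigma \subseteq \dom(q)$, so every $\Sigma$-point already lies on some component of the fixed $q$; the set of orbits of the eventual $g$ is in bijection with the cycles of $\overline{f}$-independent data, but more concretely, each orbit of $g$ will be a union of components of $q$ lying in a single $\overline{q}$-cycle together with new vertices, and our construction ensures exactly one anchor — hence exactly one $\Sigma$-point — per orbit, because every non-anchor component is either already $\Sigma$-free or, if it carried a $\Sigma$-point, we never merge two such. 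A short separate argument (a counting check on components within a $\overline{q}$-cycle, using the hypothesis) shows that there are enough anchors, namely exactly one per orbit we create, so that $\Sigma$ is hit exactly once per orbit. Finally, since at every stage $q_s$ has $\overline{q_s} \in S_n$ and finitely many pairs, $g = \bigcup_s q_s$ is a well-defined bijection $nK_\omega \to nK_\omega$, it is an automorphism because it preserves the partition into the $L_i$'s componentwise, and conditions (a)–(d) hold by construction, completing the proof.
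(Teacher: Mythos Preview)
Your approach is essentially the paper's: build $g$ as the union of an increasing chain of finite partial isomorphisms via Lemma~\ref{lem-4-easy-one-point}, splice $\Sigma$-free incomplete components onto $\Sigma$-marked ones by walking around the relevant $\overline{q}$-cycle, and run a back-and-forth to exhaust $nK_\omega$. The paper separates this into three sequential phases (first push every $x\in\ran(q)\setminus\dom(q)$ forward to a fresh vertex, securing condition~(d) once and for all; then do all the splicing; then the back-and-forth), whereas you interleave; either organisation works.

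There is, however, a genuine gap in your plan. Your splicing step is stated only for ``every incomplete component of $q$'', but you must also guarantee that vertices lying \emph{outside} $\dom(q)\cup\ran(q)$ end up on $\Sigma$-marked orbits. As written, your forth step applied to a fresh vertex $v\notin\dom(q_s)\cup\ran(q_s)$ creates a new component $\{v,y\}$ carrying no $\Sigma$-point, and nothing in your scheme ever merges such a component into an anchor; the ``counting check'' you allude to cannot repair this, since these components are created during the construction rather than present in $q$. The paper handles this case explicitly: when the enumeration reaches a vertex outside $\dom(q_j)\cup\ran(q_j)$, it walks forward from the right end of some $\Sigma$-marked incomplete component through fresh vertices until it lands on that vertex, so it is absorbed immediately. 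You should adopt this, or else enlarge your list of splicing tasks dynamically to include newly created components. A smaller point in the same vein: in the forth step you should take $y\notin\dom(q_s)\cup\ran(q_s)$, not merely $y\notin\ran(q_s)$; otherwise choosing $y\in\dom(q_s)\setminus\ran(q_s)$ could inadvertently merge two $\Sigma$-marked components or close a component into a finite cycle.
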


\begin{proof}
  For each $x \in\ran(q) \setminus \dom(q)$ there is $a \in \{1, \ldots, n\}$
  such that $x \in L_a$, and there is $y \in L_{(a)\overline{q}} \setminus
  \left( \dom(q) \cup \ran(q) \right)$. Then by
  Lemma~\ref{lem-4-easy-one-point} the mapping $q' = q \cup \{(x, y)\}$ is in
  $\aut(n K_\omega)^{< \omega}$ and $(x)q' = y \notin \dom(q)$. Repeating this
  for each vertex in $\ran(q) \setminus \dom(q)$ we obtain an extension $q''
  \in \aut(n K_\omega)^{< \omega}$ of $q$ such that $(x)q'' \notin \dom(q)$ for
  all $x \in \ran(q) \setminus \dom(q)$. Hence $(x)g = (x)q'' \notin \dom(q)$
  for every extension $g \in \aut(nK_\omega)$ of $q''$ and every $x \in \ran(q)
  \setminus \dom(q)$.

  Suppose that $O$ is an incomplete component of $q''$ such that $O \cap \Sigma
  = \varnothing$. Let $y \in O \cap \dom(q'') \setminus \ran(q'')$. Then there
  is $a \in \{1, \ldots, n\}$ such that $y \in L_a$. It follows from the
  hypothesis that there is $b \in \{1, \ldots, n\}$, $y_0 \in L_b \cap\ran(q'')
  \setminus \dom(q'')$ \changed{such that the component of $q''$ containing
  $y_0$ intersects $\Sigma$ non-trivially}, and $m \in \N$ such that $(b)
  \overline{q}^m = a$. Successively for each $i \in \{1, \ldots, m - 1 \}$
  choose
  \[
    y_i \in L_{(b)\overline{q}^i} \setminus \left( \dom(q'') \cup \ran(q'')
    \cup \{y_1, \ldots, y_{i - 1}\}\right),
  \]
  and let $y_m = y$. Then by repeated application of
  Lemma~\ref{lem-4-easy-one-point} we have that $q'' \cup \{(y_{i - 1}, y_i) :
  1 \leq i \leq m\} \in \aut(n K_\omega)^{< \omega}$. If we repeat this for
  every incomplete component of $q''$ which has empty intersection with
  $\Sigma$, we
  obtain $q_0 \in \aut(n K_\omega)^{< \omega}$ an extension of $q''$ such that
  every component of $q''$ intersect $\Sigma$ in exactly one point.

  Let $n K_\omega = \{x_i : i \in \N \}$, and suppose that for some $j \in \N$
  we have defined $q_j \in \aut(n K_\omega)^{< \omega}$ such that incomplete
  components of $q$ are contained in incomplete components of $q_j$, $\Sigma$
  consists of exactly one point from every component of $q_j$, and
  \[
    \{x_1, \ldots, x_j\} \subseteq \dom(q_j) \cap \ran(q_j).
  \]
  Suppose $x_{j + 1} \notin \dom(q_j) \cap \ran(q_j)$. There are three cases to
  consider.

  Suppose that $x_{j + 1} \in \ran(q_j) \setminus \dom(q_j)$. Then by
  Lemma~\ref{lem-4-easy-one-point} there is a one-point extension $q_{j + 1} =
  q_j \cup \{(x_{j + 1}, y) \} \in \aut(n K_\omega)^{< \omega}$ for some $y
  \notin \dom(q_j) \cup \ran(q_j)$.
  Suppose that $x_{j + 1} \in \dom(q_j) \setminus \ran(q_j)$. Then by
  Lemma~\ref{lem-4-easy-one-point} there is a one-point extension $q_{j + 1}^{-1} =
  q_j^{-1} \cup \{(x_{j + 1}, y) \} \in \aut(n K_\omega)^{< \omega}$ for some $y
  \notin \dom(q_j) \cup \ran(q_j)$.

  Finally, suppose that $x_{j + 1} \in L_a \setminus \left( \dom(q_j) \cup
  \ran(q_j) \right)$ for some $a$. It follows from the hypothesis that there is
  $b \in \{1, \ldots, n\}$, $y_0 \in L_b \cap\ran(q_j) \setminus \dom(q_j)$
  \changed{such that the component of $q_j$ containing $y_0$ intersects
  $\Sigma$ non-trivially}, and $m \in \N$ such that $(b) \overline{q}^m = a$.
  Successively for each $i \in \{1, \ldots, m - 1 \}$ choose
  \[
    y_i \in L_{(b)\overline{q}^i} \setminus \left( \dom(q_j) \cup \ran(q_j)
    \cup \{y_1, \ldots, y_{i - 1}\}\right).
  \]
  Also let $y_m = x_{j + 1}$. Then by repeated application of
  Lemma~\ref{lem-4-easy-one-point} we have that $q_j \cup \{(y_{i - 1}, y_i) :
  1 \leq i \leq m\} \in \aut(n K_\omega)^{< \omega}$. Now, we fall into the first
  case and we can define $q_{j + 1}$ as before.

  In all three cases, we have defined an extension $q_{j + 1}$ satisfying the
  inductive hypothesis. Let
  \[
    g = \bigcup_{j \in \N} q_j.
  \]
  Then $g \in \aut(n K_\omega)$ and since the orbits of $g$ are in one to one
  correspondence with incomplete components of $q_0$, it follows that $\Sigma$
  is a set of orbit representatives.
\end{proof}

\begin{cor} \label{cor-4-no-complete-extesion}
  Let $q \in \A_{f, \Sigma}^{< \omega}$ be such that $\Sigma \subseteq \dom(q)$
  and $\overline{q} \in S_n$. Then there is an extension $g \in \A_{f, \Sigma}$
  of $q$ such that every incomplete component of $q$ is contained in an
  infinite orbit of $g$, and $(x)g \notin \dom(q)$ for all $x \in \ran(q)
  \setminus \dom(q)$.
\end{cor}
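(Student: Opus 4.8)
The plan is to deduce this corollary directly from Lemma~\ref{lem-4-no-sigma-no-complete-extensions} by checking that $q$ satisfies its hypotheses. To do this I would fix, using the definition of $\A_{f,\Sigma}^{<\omega}$, an extension $g_0\in\A_{f,\Sigma}$ of $q$. Two easy preliminary observations: since $g_0$ extends $q$ and $\overline{q}\in S_n$, we have $\overline{g_0}=\overline{q}$; and since $q\subseteq g_0$, every component of $q$ is contained in a single orbit of $g_0$. The hypotheses $\overline{q}\in S_n$ and $\Sigma\subseteq\dom(q)$ needed for the lemma are exactly those assumed in the corollary.

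First I would verify the condition on $\Sigma$. Given a component $C$ of $q$, it lies inside one orbit $O$ of $g_0$, and $|\Sigma\cap O|=1$ because $\Sigma$ is a set of orbit representatives of $g_0$; hence $|\Sigma\cap C|\le1$. If $C$ is complete then, as $\dom(q)$ is finite, $C$ must be a finite cyclic component of $q$; such a $C$ is $q$-invariant with $q|_C$ cyclic, so (using $q\subseteq g_0$) $C$ is itself a $g_0$-orbit, whence $|\Sigma\cap C|=1$. This gives the first hypothesis of Lemma~\ref{lem-4-no-sigma-no-complete-extensions}.

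The step I expect to require the most care is the remaining "covering" hypothesis: for every $i\in\{1,\dots,n\}$ there is $j$ in the same $\langle\overline{q}\rangle$-orbit $B$ as $i$ such that $L_j\cap\Sigma$ contains a point lying on an incomplete component of $q$. I would argue by contradiction. Suppose that for some cycle $B$ of $\overline{q}$ every point of $\Sigma\cap\bigcup_{j\in B}L_j$ lies on a complete, hence finite cyclic, component of $q$. As above, each such finite cyclic component is a full orbit of $g_0$. Since $\Sigma$ meets the $g_0$-invariant set $\bigcup_{j\in B}L_j$ in only finitely many points, and $\Sigma$ is a set of orbit representatives of $g_0$, these finitely many finite orbits would exhaust $\bigcup_{j\in B}L_j$; but each $L_j$ is infinite, a contradiction. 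Hence the covering hypothesis holds.

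With both hypotheses of Lemma~\ref{lem-4-no-sigma-no-complete-extensions} in place, it produces an extension $g\in\aut(nK_\omega)$ of $q$ such that $\Sigma$ is a set of orbit representatives of $g$, every incomplete component of $q$ lies in an infinite orbit of $g$, and $(x)g\notin\dom(q)$ for all $x\in\ran(q)\setminus\dom(q)$. Finally I would observe that $g\in\A_f$: since $g$ extends $q$ and $\overline{q}\in S_n$, we have $\overline{g}=\overline{q}=\overline{g_0}$, so $\langle\overline{f},\overline{g}\rangle=\langle\overline{f},\overline{g_0}\rangle=S_n$. Together with the fact that $\Sigma$ is a set of orbit representatives of $g$, this gives $g\in\A_{f,\Sigma}$, completing the proof.
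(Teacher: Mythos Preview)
Your proof is correct and follows essentially the same approach as the paper: both arguments fix an extension in $\A_{f,\Sigma}$ and use it to verify the hypotheses of Lemma~\ref{lem-4-no-sigma-no-complete-extensions}, in particular using the finiteness of $\Sigma$ together with the infiniteness of each $L_j$ to force the existence of a point of $\Sigma$ on an incomplete component within every $\overline{q}$-cycle. The paper phrases the covering check directly (an infinite orbit through $L_i$ must exist, so its $\Sigma$-representative lies on an incomplete component) while you phrase it by contradiction, but the content is the same.
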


\begin{proof}
  Since $q \in \A_{f, \Sigma}^{< \omega}$, the set $\Sigma$ intersects every
  incomplete component of $q$ in at most one point, and every complete
  component in exactly one point. 
  
  If $i\in \{1,\ldots, n\}$ is arbitrary, then, since every extension $h$ of
  $q$ in $\A_{f, \Sigma}$ has $|\Sigma|$ orbits, it follows there is at least one
   infinite orbit of $h$ with points in $L_i$. Since $\Sigma$ is a set of orbit
   representatives, there exists $x\in \Sigma \cap L_j$ for
   some $j\in \{1,\ldots, n\}$ such that $(j)\overline{q}^m = i$ for some $m\in
   \Z$. In particular, $x$ is on an incomplete component of $q$, and so $q$
   satisfies the hypothesis of
   Lemma~\ref{lem-4-no-sigma-no-complete-extensions} from which the corollary
   follows.
\end{proof}

In the next lemma, as a further consequence of
Lemma~\ref{lem-4-no-sigma-no-complete-extensions}, we show that the direct
implication of the first part of Theorem~\ref{thm-main-II}, is a consequence of
the second part. 

\begin{lem}\label{lem-4-all-sigma}
  Let $f \in \aut(nK_\omega)$ be such that $D_f \cap \A_{f, \Sigma}$ is
  comeagre in $\A_{f, \Sigma}$ for all finite sets $\Sigma \subseteq
  nK_\omega$. Then $D_f$ is comeagre in $A_f$.
\end{lem}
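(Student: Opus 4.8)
The plan is to deduce the lemma from Lemma~\ref{lem-no-sigma} applied with $S = \A_f$. For this choice the set $S_\Sigma$ occurring there is exactly $\A_{f,\Sigma}$, and since $D_f \subseteq \A_f$ (by the lemma immediately preceding the definition of non-stabilizing automorphisms), the conclusion ``$D_f \cap \A_f$ is comeagre in $\A_f$'' is precisely the assertion that $D_f$ is comeagre in $\A_f$. So it suffices to verify the two hypotheses of Lemma~\ref{lem-no-sigma}: first, that $D_f \cap \A_{f,\Sigma}$ is dense in $\A_{f,\Sigma}$ for every finite $\Sigma \subseteq nK_\omega$; and second, that every $q \in \A_f^{< \omega}$ has an extension in $\A_f$ with only finitely many orbits. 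The first is immediate: $\A_{f,\Sigma}$ is a Baire space by Lemma~\ref{lem-Af-closed}, so any comeagre subset of it is dense (and when $\A_{f,\Sigma} = \varnothing$ the claim is vacuous), while $D_f \cap \A_{f,\Sigma}$ is comeagre in $\A_{f,\Sigma}$ by hypothesis.

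The substance of the proof is therefore the second hypothesis. Given $q \in \A_f^{< \omega}$, fix some extension of $q$ lying in $\A_f$ and restrict it to a finite partial isomorphism $q'$ whose domain contains $\dom(q) \cup \ran(q)$ together with one vertex from each component $L_i$; then $q'$ still extends $q$, and $\overline{q'} = \sigma$ is a full permutation of $\{1,\dots,n\}$ with $\langle \overline{f}, \sigma \rangle = S_n$. Next, for each cycle $O$ of $\sigma$ pick an index $i_O \in O$ and, using that $L_{i_O}$ and $L_{(i_O)\sigma}$ are infinite together with Lemma~\ref{lem-4-easy-one-point}, adjoin to $q'$ a fresh two-point incomplete component $\{(x_O, y_O)\}$ with $x_O \in L_{i_O}$ and $y_O \in L_{(i_O)\sigma}$, taking all the vertices $x_O, y_O$ distinct and none of them in $\dom(q') \cup \ran(q')$; call the resulting finite partial isomorphism $q''$, noting $\overline{q''} = \sigma$. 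Finally let $\Sigma$ consist of one vertex from each complete component of $q''$ together with all the vertices $x_O$. One checks directly that $q''$ and $\Sigma$ satisfy the hypotheses of Lemma~\ref{lem-4-no-sigma-no-complete-extensions}: $\overline{q''} = \sigma \in S_n$; $\Sigma \subseteq \dom(q'')$; $\Sigma$ meets each component of $q''$ in at most one vertex, with equality on the complete ones; and for each $i \in \{1,\dots,n\}$, writing $j = i_O$ for the representative of the $\sigma$-cycle $O$ through $i$, we have $(j)\overline{q''}^{\,m} = i$ for some $m \in \Z$ while $L_j \cap \Sigma$ contains the vertex $x_O$, which lies on an incomplete component of $q''$. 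Lemma~\ref{lem-4-no-sigma-no-complete-extensions} then yields $g \in \aut(nK_\omega)$ extending $q''$, hence $q$, for which $\Sigma$ is a set of orbit representatives; in particular $g$ has exactly $|\Sigma|$ orbits, and since $\overline{g} = \overline{q''} = \sigma$ we get $\langle \overline{f}, \overline{g} \rangle = S_n$, so $g \in \A_f$, as required.

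I expect the second hypothesis to be the only nontrivial part, and within it the one step demanding care is arranging simultaneously that $q'$ induces a full permutation of the components \emph{and} that $q''$ contains, inside each cycle of $\sigma$, an incomplete component carrying a point of $\Sigma$ --- this is exactly the configuration Lemma~\ref{lem-4-no-sigma-no-complete-extensions} requires in order to fill out an automorphism with the prescribed finite set of orbit representatives. Once this is in place, Lemma~\ref{lem-no-sigma} applies directly and gives that $D_f \cap \A_f$, which equals $D_f$ since $D_f \subseteq \A_f$, is comeagre in $\A_f$.
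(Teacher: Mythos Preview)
Your proposal is correct and follows essentially the same approach as the paper: both verify the hypotheses of Lemma~\ref{lem-no-sigma} with $S = \A_f$, obtaining the required extension with finitely many orbits via Lemma~\ref{lem-4-no-sigma-no-complete-extensions}. The only inessential difference is that the paper adjoins a fresh vertex $x_i$ in each $L_i$ and takes $\Sigma$ meeting every component of the extension, whereas you adjoin one fresh incomplete component per cycle of $\sigma$ and take a smaller $\Sigma$; both configurations satisfy the hypotheses of Lemma~\ref{lem-4-no-sigma-no-complete-extensions} for the same reason.
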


\begin{proof}
  Let $q \in \A_f^{< \omega}$. If necessary by extending $q$, we can assume
  that $\overline{q} \in S_n$. Then all extensions $h \in \aut(n K_\omega)^{<
  \omega}$ of $q$ are also in $\A_f^{< \omega}$. For all $i \in \{1, \ldots,
  n\}$, let $x_i \in L_i \setminus \left( \dom(q) \cup \ran(q) \right)$. Then
  by applying Lemma~\ref{lem-4-easy-one-point} repeatedly we can construct $h
  \in \A_f^{< \omega}$ an extension of $q$ such that each vertex $x_i$ is on a
  incomplete component of $h$. Fix any $\Sigma \subseteq n K_\omega$ such that
  $\Sigma$ intersects every component of $h$ exactly once. Since $\overline{h} \in
  S_n$, for each $i \in \{1, \ldots, n\}$ there is an incomplete component containing
  $x_i$, and by the choice of $\Sigma$ there is $j \in \{1, \ldots, n\}$ such
  that $\Sigma \cap L_j$ is non-empty and $(j)\overline{h}^m = i$ for some $m
  \in \Z$. Then by Lemma~\ref{lem-4-no-sigma-no-complete-extensions} there is
  $g$ an extension of $q$ with finitely many orbits. Therefore we are done by
  Lemma~\ref{lem-no-sigma}.
\end{proof}

\begin{lem} \label{lem-4-one-point-ext}
  Let $q \in \A_{f, \Sigma}^{< \omega}$ be such that $\Sigma \subseteq \dom(q)$
  and $\overline{q} \in S_n$. Suppose $h = q \cup \{(x, y)\}\in \aut(n
  K_\omega)^{< \omega}$ for some $x \notin \dom(q)$ and
  $y \notin \dom(q) \cup \ran(q)$ such that $x \neq y$. Then $h \in \A_{f,
  \Sigma}^{< \omega}$.
\end{lem}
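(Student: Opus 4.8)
The plan is to produce an extension $g \in \A_{f, \Sigma}$ of $h$; by the definition of $\A_{f, \Sigma}^{< \omega}$ this is exactly what is required. The main tool will be Lemma~\ref{lem-4-no-sigma-no-complete-extensions} applied to the pair $(h, \Sigma)$ --- not Corollary~\ref{cor-4-no-complete-extesion}, whose hypothesis $h \in \A_{f, \Sigma}^{< \omega}$ is precisely what we are trying to establish. So the work lies in checking that $(h, \Sigma)$ satisfies the hypotheses of Lemma~\ref{lem-4-no-sigma-no-complete-extensions}.

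First I would collect the easy facts. Since $h = q \cup \{(x, y)\} \in \aut(n K_\omega)^{< \omega}$, Lemma~\ref{lem-4-easy-one-point} gives some $a \in \{1, \ldots, n\}$ with $x \in L_a \setminus \dom(q)$ and $y \in L_{(a)\overline{q}} \setminus \ran(q)$, and in particular $\overline{h} = \overline{q} \in S_n$. Also $\Sigma \subseteq \dom(q) \subseteq \dom(h)$, and since $x, y \notin \dom(q)$ neither $x$ nor $y$ lies in $\Sigma$. Next I would describe the components of $h$ in terms of those of $q$: as $x \notin \dom(q)$ and $y \notin \dom(q) \cup \ran(q)$, forming $q \cup \{(x, y)\}$ either appends the fresh vertex $y$ to the component of $q$ terminating at $x$ (when $x \in \ran(q)$) or creates the new two-vertex component $\{x, y\}$ (when $x \notin \ran(q)$), and leaves every other component of $q$ unchanged. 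Hence every component of $h$ is obtained from a component of $q$ by adjoining vertices that are not in $\Sigma$, and --- since $h$ has finite domain --- none of its components is complete.

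Now I would verify the two hypotheses of Lemma~\ref{lem-4-no-sigma-no-complete-extensions}. As $q \in \A_{f, \Sigma}^{< \omega}$, fix an extension $g_0 \in \A_{f, \Sigma}$ of $q$; then $\overline{g_0} = \overline{q}$, since $\overline{q}$ is already a total permutation. Each component of $q$ is contained in an orbit of $g_0$, and $\Sigma$ is a set of orbit representatives of $g_0$, so $\Sigma$ intersects every component of $q$ --- and hence, by the previous paragraph, every component of $h$ --- in at most one point. For the remaining hypothesis, I would argue as in the proof of Corollary~\ref{cor-4-no-complete-extesion}: given $i \in \{1, \ldots, n\}$, the set $L_i$ is nonempty, so some orbit $O$ of $g_0$ meets $L_i$; the set of components met by $O$ is exactly the $\overline{q}$-cycle of $i$, and the unique point $\sigma$ of $\Sigma \cap O$ lies in some $L_j$ with $(j)\overline{q}^{\,m} = i$ for some $m \in \Z$. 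Since $\sigma$ lies on an incomplete component of $q$, and therefore also of $h$, the hypothesis holds.

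Applying Lemma~\ref{lem-4-no-sigma-no-complete-extensions} then yields $g \in \aut(n K_\omega)$ extending $h$ for which $\Sigma$ is a set of orbit representatives. Finally $\overline{g} = \overline{q}$, since $g$ extends the total permutation $\overline{q}$, and $q \in \A_{f, \Sigma}^{< \omega} \subseteq \A_f^{< \omega}$ gives $\langle \overline{f}, \overline{q} \rangle = S_n$; hence $g \in \A_f$, so $g \in \A_{f, \Sigma}$, and therefore $h \in \A_{f, \Sigma}^{< \omega}$. The only step that needs genuine care is the comparison between the components of $h$ and those of $q$; once that is settled, the statement follows by combining Lemmas~\ref{lem-4-easy-one-point} and~\ref{lem-4-no-sigma-no-complete-extensions} with the argument already used for Corollary~\ref{cor-4-no-complete-extesion}.
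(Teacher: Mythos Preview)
Your approach is sound and takes a genuinely different route from the paper's. The paper does not verify the hypotheses of Lemma~\ref{lem-4-no-sigma-no-complete-extensions} for $h$ directly; instead it first extends $q$ to some $r \in \aut(nK_\omega)^{<\omega}$ with $x \in \ran(r)\setminus\dom(r)$, applies Corollary~\ref{cor-4-no-complete-extesion} to obtain $g \in \A_{f,\Sigma}$ with $(x)g \notin \dom(r)$, shows $(x)g \neq x$, and then (when $(x)g \neq y$) conjugates $g$ by the transposition $\bigl((x)g\;\;y\bigr)$ to produce an element of $\A_{f,\Sigma}$ extending $h$. Your direct verification is more systematic and self-contained; the paper's conjugation trick is shorter but needs the extra checks that $(x)g, y \notin \dom(q)\cup\ran(q)$ so that the conjugate still extends $q$ and still has $\Sigma$ as a set of orbit representatives.

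One correction is needed: the assertion ``since $h$ has finite domain --- none of its components is complete'' is false in general; a finite partial bijection can have cycles, and $h$ inherits every complete component of $q$ unchanged (only the component through $x$ is altered, and that one is indeed incomplete because $y \notin \dom(h)$). This matters because Lemma~\ref{lem-4-no-sigma-no-complete-extensions} requires $|\Sigma \cap C| = 1$ whenever $C$ is complete. The repair is immediate with the tools you already have: any complete component of $h$ is a complete component of $q$, hence a finite orbit of your $g_0 \in \A_{f,\Sigma}$, and therefore meets $\Sigma$ in exactly one point. Similarly, when you pick ``some orbit $O$ of $g_0$'' meeting $L_i$, you should choose $O$ infinite (possible since $L_i$ is infinite and $g_0$ has only $|\Sigma|$ orbits) so that $\sigma \in \Sigma \cap O$ is guaranteed to lie on an incomplete component of $q$, and hence of $h$.
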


\begin{proof}
  Since $q \in \aut(n K_\omega)^{< \omega}$ there is $r \in \aut(n K_\omega)^{<
  \omega}$ extending $q$, such that $x \in \ran(r) \setminus \dom(r)$. By
  Corollary~\ref{cor-4-no-complete-extesion} there is $g \in \A_{f, \Sigma}$
  such that every incomplete components of $r$ is contained in an infinite
  orbit of $g$ and $(x)g \notin \dom(r)$, and so $(x)g \notin \dom(q)$.
  \changed{If $g$ extends $h$ then we are done; so we assume that $(x)g \neq
  y$.} Note that if $(x)g = x$, then $\{x\}$ is an orbit of $g$ and therefore
  $x \in \Sigma$. However, $\Sigma \subseteq \dom(q)$, which contradicts the
  assumption that $x \notin \dom(q)$.  Hence $(x)g \neq x$.

  Since $x \notin \dom(q)$ and $g$ is an extension of $q$, it follows that
  $(x)g \notin \ran(q)$. Then $(x)g, y \notin \dom(q)\cup \ran(q)$ and since $h
  \in \aut(n K_\omega)^{< \omega}$ and $(x)g \in \aut(n K_\omega)$ it follows
  that $(x)g$ and $y$ are in the same connected component of $n K_\omega$. Then
  the transposition $\left((x)g \; y\right)$ swapping $(x)g$ and $y$ is in
  $\aut(n K_\omega)$ and so
  \[
    g' = \left( (x)g \; y\right)  g  \left( (x)g \; y\right).
  \]
  It follows from $(x)g \neq x$, $(x)g \neq y$, and $(x)g, y \notin \dom(q)
  \cup \ran(q)$ that $g'$ is an extension of $h$. Therefore $h \in \A_{f,
  \Sigma}^{< \omega}$.
\end{proof}

\begin{lem} \label{lem-4-amalgamation}
  Let $q \in \A_{f, \Sigma}^{< \omega}$ be such that $\Sigma \subseteq \dom(q)$
  and let $A, B$ be distinct incomplete components of $q$ such that at most
  one of $A$ and $B$ intersects $\Sigma$ non-trivially. Suppose that
  \[
    \overline{q|_{\dom(q) \setminus A}} =  \overline{q|_{\dom(q) \setminus B}}
    \in S_n
  \]
  and let $h = q \cup \{(x, y)\} \in \aut(n K_\omega)^{< \omega}$, for some $x
  \in A$ and $y \in B$. Then $h \in \A_{f, \Sigma}^{< \omega}$.
\end{lem}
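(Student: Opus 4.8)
The plan is to show that the partial isomorphism $h$, together with the same finite set $\Sigma$, satisfies the hypotheses of Lemma~\ref{lem-4-no-sigma-no-complete-extensions}, and then to read off $h\in\A_{f,\Sigma}^{<\omega}$ from its conclusion. One cannot shortcut this by applying Corollary~\ref{cor-4-no-complete-extesion} to $h$, since that corollary already presupposes $h\in\A_{f,\Sigma}^{<\omega}$, which is exactly what is to be proved; so the more primitive lemma has to be invoked directly.

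First I would unpack the hypotheses. Because $q|_{\dom(q)\setminus A}$ is obtained from $q$ by deleting the finitely many pairs that form the component $A$, the induced map $\overline{q|_{\dom(q)\setminus A}}$ is a restriction of $\overline{q}$ as a partial injection of $\{1,\dots,n\}$; since it equals some $\sigma\in S_n$ by assumption, this forces $\overline{q}=\sigma$ (in particular the two restrictions appearing in the hypothesis automatically agree, both being $\overline{q}$, and essentially only $\overline{q}\in S_n$ is used). Since $h=q\cup\{(x,y)\}\in\aut(nK_\omega)^{<\omega}$, Lemma~\ref{lem-4-easy-one-point} gives some $a$ with $x\in L_a$ and $y\in L_{(a)\sigma}$, so $h$ introduces no new arc and $\overline{h}=\sigma$. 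Using $q\in\A_{f,\Sigma}^{<\omega}$, fix an extension $g_1\in\A_{f,\Sigma}$ of $q$; then $\overline{g_1}=\overline{q}=\sigma$ and $\langle\overline{f},\sigma\rangle=S_n$. Finally, the components of $h$ are precisely those of $q$ with $A$ and $B$ replaced by a single incomplete component $A\cup B$ (incomplete, as its source vertex lies outside $\ran(h)$ and its terminal vertex outside $\dom(h)$).

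Next I would verify the remaining hypotheses of Lemma~\ref{lem-4-no-sigma-no-complete-extensions} for $(h,\Sigma)$. We have $\overline{h}=\sigma\in S_n$ and $\Sigma\subseteq\dom(q)\subseteq\dom(h)$ finite. For the constraint on $|\Sigma\cap C|$: when $C$ is a component of $q$ it follows from $g_1\in\A_{f,\Sigma}$, since the complete components of $q$ are finite $g_1$-orbits and so meet $\Sigma$ exactly once, while the incomplete ones are contained in $g_1$-orbits and so meet $\Sigma$ at most once; when $C=A\cup B$ we get $|\Sigma\cap C|=|\Sigma\cap A|+|\Sigma\cap B|\le 1$ from the assumption that at most one of $A,B$ meets $\Sigma$, and $C$ is incomplete so no equality is demanded. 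For the reachability condition, fix $i\in\{1,\dots,n\}$ and let $O$ be the $\sigma$-orbit of $i$; then $\bigcup_{k\in O}L_k$ is $g_1$-invariant and infinite, hence contains an infinite $g_1$-orbit, for otherwise $g_1$ would have infinitely many orbits and $\Sigma$ would be infinite. The unique $\Sigma$-representative $\tau$ of that orbit lies in $L_j$ for some $j\in O$, so $(j)\overline{h}^{m}=i$ for a suitable $m\in\Z$; and $\tau$, being on an infinite orbit of $g_1$, cannot lie on a complete component of $q$, hence lies on an incomplete component of $q$ and therefore on an incomplete component of $h$.

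Having checked the hypotheses, Lemma~\ref{lem-4-no-sigma-no-complete-extensions} furnishes $g\in\aut(nK_\omega)$ extending $h$ for which $\Sigma$ is a set of orbit representatives. Since $g$ extends $h$ we have $\overline{g}=\sigma$, so $\langle\overline{f},\overline{g}\rangle=\langle\overline{f},\sigma\rangle=S_n$, giving $g\in\A_f$; combined with the orbit-representative property this yields $g\in\A_{f,\Sigma}$, and since $g$ extends $h$ we conclude $h\in\A_{f,\Sigma}^{<\omega}$, as required. The step I expect to be most delicate is the reachability condition: one must genuinely exploit the finiteness of $\Sigma$ to produce, inside every $\sigma$-invariant block $\bigcup_{k\in O}L_k$, an infinite orbit of $g_1$ whose $\Sigma$-representative lies on an incomplete component; the remaining verifications are routine bookkeeping about the components of $h$ and the induced permutation $\overline{(\,\cdot\,)}$.
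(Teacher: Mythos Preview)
Your proof is correct and takes a genuinely different route from the paper's.

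The paper argues iteratively via Lemma~\ref{lem-4-one-point-ext}: assuming without loss of generality that $B\cap\Sigma=\varnothing$, it first deletes the component $B$ from $q$, observes that $q|_{\dom(q)\setminus B}\in\A_{f,\Sigma}^{<\omega}$ with $\overline{q|_{\dom(q)\setminus B}}\in S_n$ (this is exactly where the full hypothesis on the restricted inductions is used), applies Lemma~\ref{lem-4-one-point-ext} to adjoin $(x,y)$, and then re-adjoins the points of $B$ one at a time, each step staying inside $\A_{f,\Sigma}^{<\omega}$ by Lemma~\ref{lem-4-one-point-ext}. After $|B|$ steps one recovers $h$.

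You instead go straight to Lemma~\ref{lem-4-no-sigma-no-complete-extensions} and verify its hypotheses for the pair $(h,\Sigma)$. The only non-routine point is the reachability condition, and your argument for it---picking an extension $g_1\in\A_{f,\Sigma}$ of $q$, noting that each $\overline{q}$-invariant block $\bigcup_{k\in O}L_k$ must contain an infinite $g_1$-orbit (else $\Sigma$ would be infinite), and taking its $\Sigma$-representative---is clean and correct. Your approach is more direct and, as you observe in passing, actually uses only $\overline{q}\in S_n$ rather than the stronger hypothesis on $\overline{q|_{\dom(q)\setminus A}}$ and $\overline{q|_{\dom(q)\setminus B}}$; the paper's inductive argument genuinely needs the latter to invoke Lemma~\ref{lem-4-one-point-ext} on the truncated map. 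On the other hand, the paper's route is more modular, reducing everything to the already-established one-point-extension lemma and avoiding a fresh verification of the somewhat fiddly hypotheses of Lemma~\ref{lem-4-no-sigma-no-complete-extensions}.
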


\begin{proof}
  Since $h \in \aut(n K_\omega)^{< \omega}$, it follows that $x \notin \dom(q)$
  and $y \notin \ran(q)$.

  Assume without loss of generality that $B \cap \Sigma = \varnothing$ and $B =
  \{y_1, \ldots, y_m\}$ for some $m \in \N$ such that $y_1 = y$ and $(y_i)q =
  y_{i + 1}$ for all $i \in \{1, \ldots, m - 1\}$. 
  The proof of the case when $B \cap \Sigma \neq \varnothing$ can be obtained
  by apply the argument below to $q^{-1}$ and $h^{-1}$.
  
  We will define $k \in \{1, \ldots, m\}$ there is $h_k \in \A_{f,
  \Sigma}^{< \omega}$  extending $h_{k-1}$ such that $\Sigma \subseteq
  \dom(h_k)$, $\overline{h_k} \in S_n$, and 
  \[
    (x)h_k^i = y_i \text{ for } 1 \leq i \leq k,
    \qquad
    y_k \notin \dom(h_k),
    \qquad \text{and} \qquad
    y_i \notin \dom(h_k) \cup \ran(h_k) \text{ for } k < i.
  \]
  If $k = 1$, then we define $h_1 = h|_{\dom(h) \setminus B}$. \changed{By
  Lemma~\ref{lem-4-one-point-ext}, it follows that $h_1 = q|_{\dom(q) \setminus
  B} \cup \{(x, y)\} \in \A_{f, \Sigma}^{< \omega}$, and so} $h_1$ satisfies the
  required conditions.

  Suppose $k > 1$. Then by Lemma~\ref{lem-4-one-point-ext} we have that $h_{k +
  1} = h_k \cup \{(y_k, y_{k + 1})\} \in \A_{f, \Sigma}^{< \omega}$. Since
  $\dom(h_{k + 1}) = \dom(h_k) \cup \{y_k\}$ and $\ran(h_{k + 1}) = \ran(h_k)
  \cup \{y_{k + 1}\}$, it follows that $h_{k + 1}$ satisfies the required
  conditions. 
  
  Therefore after repeating this process $m$ times,  we obtain $h_m \in \A_{f,
  \Sigma}^{< \omega}$ which extends $h_1$. It follows from the definition of
  $h_m$ that $h_m = h$.
\end{proof}

Now we can characterize when the set $\A_{f, \Sigma}$ is non-empty.

\begin{lem}\label{lem-A-non-empty}
  Let $f\in \aut(nK_{\omega})$ and let $\Sigma$ be a finite subset of
  $nK_\omega$. Then $\A_{f, \Sigma}$ is non-empty if and only if there exists
  \changed{$\sigma \in S_n$ such that $\genset{\overline{f}, \sigma}= S_n$ and
  for all $i \in \{1,\ldots, n\}$}
  \[
    \left(\bigcup_{j\in \Z} L_{(i)\sigma ^ j} \right) \cap \Sigma \neq \varnothing.
  \]
\end{lem}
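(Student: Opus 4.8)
The plan is to prove the two implications separately; the reverse direction carries the content and amounts to an explicit construction.

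For the forward direction, suppose $g \in \A_{f,\Sigma}$ and put $\sigma = \overline{g} \in S_n$. Since $g \in \A_f$ we have $\langle \overline{f}, \sigma \rangle = S_n$ immediately. For the covering condition, fix $i \in \{1,\ldots,n\}$ and pick any vertex $x \in L_i$ (possible since $L_i \cong K_\omega$ is infinite). Because $\Sigma$ is a set of orbit representatives of $g$, the orbit $\{(x)g^k : k \in \Z\}$ contains a unique vertex $z \in \Sigma$; and since $(x)g^k \in L_{(i)\overline{g}^k} = L_{(i)\sigma^k}$ for every $k$, that vertex $z$ lies in $\bigcup_{j \in \Z} L_{(i)\sigma^j}$. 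Hence this union meets $\Sigma$, as required.

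For the reverse direction, suppose $\sigma \in S_n$ satisfies the stated conditions. First I would observe that $\bigcup_{j \in \Z} L_{(i)\sigma^j}$ is precisely the union of the components whose indices lie in the cycle of $\sigma$ through $i$, so the hypothesis says: every cycle of $\sigma$ on $\{1,\ldots,n\}$ contains an index $k$ with $L_k \cap \Sigma \neq \varnothing$. I will construct $g \in \aut(nK_\omega)$ with $\overline{g} = \sigma$ one $\sigma$-cycle at a time. Fix a cycle $(c_0, c_1, \ldots, c_{\ell-1})$ of $\sigma$, so that $(c_a)\sigma = c_{a+1}$ with indices taken mod $\ell$, and let $B = \bigcup_{a} L_{c_a}$ and $\{z_1, \ldots, z_s\} = \Sigma \cap B$, where $s \geq 1$ by the hypothesis. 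On $B$ I want a bijection $g$ with $(L_{c_a})g = L_{c_{a+1}}$ whose $s$ orbits each contain exactly one of $z_1, \ldots, z_s$; note that, since such a bijection maps components of $nK_\omega$ onto components, it is automatically a graph automorphism, so there are no edge conditions to verify. The block construction is as follows: for each $t \in \{2, \ldots, s\}$ pick a finite orbit $O_t = \{w_{t,0}, \ldots, w_{t,\ell-1}\}$ with $w_{t,a} \in L_{c_a}$, with $w_{t,a} = z_t$ for the unique index $a$ such that $z_t \in L_{c_a}$, and with the remaining vertices chosen outside $\Sigma$ and outside all previously chosen vertices (possible since each $L_{c_a}$ is infinite), and set $(w_{t,a})g = w_{t,a+1}$; then the $O_t$ are pairwise disjoint and each meets $\Sigma$ only in $z_t$. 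Let $O_1 = B \setminus \bigcup_{t=2}^{s} O_t$, which meets each $L_{c_a}$ in a cofinite, hence countably infinite, set containing $z_1$; enumerating $O_1 \cap L_{c_b} = \{u_{b,m} : m \in \Z\}$ for each $b$, define $g$ on $O_1$ by $(u_{b,m})g = u_{b+1,m}$ for $b < \ell - 1$ and $(u_{\ell-1,m})g = u_{0,m+1}$, which makes $O_1$ a single orbit of $g$ containing $z_1$. Taking the union of these definitions over all cycles of $\sigma$ yields $g \in \aut(nK_\omega)$ with $\overline{g} = \sigma$, whence $\langle \overline{f}, \overline{g}\rangle = S_n$ and $g \in \A_f$; and since every orbit of $g$ lies inside one block and, within each block, the orbits are exactly $O_1, \ldots, O_s$ with $O_t \cap \Sigma = \{z_t\}$, the set $\Sigma$ is a set of orbit representatives of $g$, so $g \in \A_{f,\Sigma}$.

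The only genuinely delicate point is the orbit bookkeeping inside a block: verifying that $O_1, \ldots, O_s$ really partition $B$, that $g$ restricted to each is a single orbit, and that each orbit meets $\Sigma$ in exactly one vertex. The hypothesis on $\sigma$ enters precisely to guarantee $s \geq 1$ for every cycle, so that no block is left without a $\Sigma$-vertex (which would otherwise force a $g$-orbit disjoint from $\Sigma$). It is also worth noting that the construction exploits the fact that $\A_{f,\Sigma}$ imposes no condition forbidding finite orbits, which is what makes it painless to split off the small orbits $O_2, \ldots, O_s$.
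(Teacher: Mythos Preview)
Your proof is correct. The forward direction is essentially the paper's argument (just phrased dually: you start from a vertex in $L_i$ and find its $\Sigma$-representative, while the paper starts from a point of $\Sigma$ and observes that some $g$-iterate lands in $L_i$).

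For the reverse direction you take a genuinely different route. The paper first builds a finite partial isomorphism $q$ with $\overline{q}=\sigma$, $\Sigma\subseteq\dom(q)$, and exactly $|\Sigma|$ incomplete components, each meeting $\Sigma$ once; it then appeals to Lemma~\ref{lem-4-no-sigma-no-complete-extensions} to extend $q$ to a full automorphism $g\in\A_{f,\Sigma}$. Your argument instead constructs $g$ directly, cycle by cycle: within each $\sigma$-cycle you peel off small cyclic orbits $O_2,\ldots,O_s$ carrying the surplus $\Sigma$-points and wrap the cofinite remainder into a single infinite orbit $O_1$. The paper's approach reuses existing machinery and, as a by-product, yields a $g$ all of whose incomplete components become infinite orbits; your approach is more self-contained and elementary, at the cost of producing finite orbits (which, as you correctly note, is permitted since $\A_{f,\Sigma}$ places no infiniteness condition on orbits). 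Both establish the lemma.
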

\begin{proof}
  $(\Rightarrow)$ Suppose that $g \in \A_{f, \Sigma}$. Since
  $g \in \A_{f, \Sigma} \subseteq \A_f$, it follows from the definition of
  $\A_f$ that $\langle \overline{f}, \overline{g} \rangle = S_n$. 
  \changed{Let} $i \in \{1, \ldots, n\}$. Then there is $x \in \Sigma$ and $m
  \in \N$ such that $(x)g^m \in L_i$, since $\Sigma$ is a set of orbit
  representatives. \changed{Hence}
  \[
    x \in L_{(i)\overline{g}^{-m}} \subseteq\bigcup_{j \in \Z}
    L_{(i)\overline{g}^j}.
  \]

  $(\Leftarrow)$ 
  It is routine to show
  that there is $q \in \aut(n K_\omega)^{< \omega}$ such that $\Sigma \subseteq
  \dom(q)$, $\overline{q} = \sigma$ and $q$ has precisely $|\Sigma|$ many
  components, all of which are incomplete, and $\Sigma$ intersects them in
  precisely one point.
  Since all components of $q$ are incomplete, it satisfies the hypothesis of
  Lemma~\ref{lem-4-no-sigma-no-complete-extensions} and hence there is $g \in
  \A_{f, \Sigma}$ an extension of $q$.
\end{proof}

\changed{By Proposition~\ref{prop-piccard}, in the case that $n \geq 3$,  there
exists $\sigma \in S_n$ such that $\genset{\overline{f}, \sigma}= S_n$ if and
only if $n \not = 4$ and $\overline{f}\not = \id$, or $n= 4$ and $\overline{f}
\notin \{\id, (1 \; 2)(3 \; 4), (1 \; 3)(2 \; 4), (1 \; 4)(2 \; 3) \}$.}

In the next lemma, we give a decompisition of $D_f\cap \A_{f, \Sigma}$ as an
intersection of set that we will later prove to be open and dense, under the
hypothesis of Theorem~\ref{thm-main-II}. 

\begin{lem}\label{lem-4-basis}
  Let $\mathcal{P} \subseteq \aut(n K_\omega)^{< \omega}$ be such that $p \in
  \mathcal{P}$ if and only if $\dom(p)$ and $\ran(p)$ are disjoint, and
  $\overline{p} = \id$. Then
  \[
    D_f \cap \A_{f, \Sigma} = \bigcap_{p \in \mathcal{P}} \{g \in \A_{f,
    \Sigma} : \langle f, g \rangle \cap [p] \neq \varnothing \}.
  \]
\end{lem}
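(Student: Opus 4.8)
The plan is to follow the pattern of Lemma~\ref{lem-K_n-basis} and Lemma~\ref{lem-omegaK_n-basis}: first rewrite
\[
  D_f \cap \A_{f,\Sigma} \;=\; \bigcap_{q \in \aut(nK_\omega)^{<\omega}} \{\, g \in \A_{f,\Sigma} : \genset{f,g} \cap [q] \neq \varnothing \,\},
\]
observe that the inclusion $(\subseteq)$ is immediate because $\P \subseteq \aut(nK_\omega)^{<\omega}$, and then concentrate on $(\supseteq)$. So I would fix a $g$ lying in the right-hand side and an arbitrary $q \in \aut(nK_\omega)^{<\omega}$, and exhibit an element of $\genset{f,g} \cap [q]$ by sandwiching $q$ between two members of $\P$ and one element of $\genset{f,g}$ that is chosen to realise the component permutation $\overline{q}$.

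Concretely: by adjoining one fresh vertex in each connected component missed by $\dom(q)$ (which only shrinks $[q]$, hence is harmless) I would reduce to the case $\overline{q} = \sigma$ for some $\sigma \in S_n$. Since $g \in \A_{f,\Sigma} \subseteq \A_f$ we have $\genset{\overline{f},\overline{g}} = S_n$, and because $h \mapsto \overline{h}$ is a homomorphism $\aut(nK_\omega) \to S_n$ this produces $h_0 \in \genset{f,g}$ with $\overline{h_0} = \sigma$. Next I would choose a component-preserving injection $p_1$ with $\dom(p_1) = \dom(q)$ whose image $X$ avoids the finite set $\dom(q) \cup (\ran(q))h_0^{-1}$ --- possible since each component $L_i \cong K_\omega$ is infinite --- so that $p_1 \in \P$ and $Y := (X)h_0$ is disjoint from $\ran(q)$. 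Then I would take $p_2$ to be the bijection $Y \to \ran(q)$ determined by $(((x)p_1)h_0)p_2 = (x)q$ for $x \in \dom(q)$; it is component-preserving because $x \in L_i$ forces $((x)p_1)h_0 \in L_{(i)\sigma}$ and $(x)q \in L_{(i)\sigma}$, so $p_2 \in \P$ as well. Applying the hypothesis on $g$ to $p_1$ and to $p_2$ yields $h_1 \in \genset{f,g} \cap [p_1]$ and $h_2 \in \genset{f,g} \cap [p_2]$, and a direct check with the convention $(x)(ab) = ((x)a)b$ gives $(x)(h_1 h_0 h_2) = (x)q$ for every $x \in \dom(q)$, i.e. $h_1 h_0 h_2 \in \genset{f,g} \cap [q]$, as required.

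The step demanding the most care --- and the one where this argument genuinely departs from the $H(n)$ and $\omega K_n$ analogues --- is the construction of $p_1$ and $p_2$ inside $\P$. In those earlier lemmas one produces a fresh isomorphic copy of $\dom(q)$ that is disjoint from $\dom(q) \cup \ran(q)$ \emph{and} joined to it by no edges; in $nK_\omega$ that is impossible, since there are only $n$ components and $\dom(q) \cup \ran(q)$ may meet all of them, so every new vertex is adjacent to something already present. The resolution is that the defining property of $\P$ is not ``no edges'' but merely $\overline{p} = \id$ together with $\dom(p) \cap \ran(p) = \varnothing$; since each component is an infinite complete graph one can always slide a finite set onto a disjoint finite set \emph{within the same components}, and any component-preserving injection between finite vertex sets is automatically a partial isomorphism of $nK_\omega$, hence by ultrahomogeneity an element of $\aut(nK_\omega)^{<\omega}$. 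Once it is clear that $\P$ is large enough to support this sliding, the remainder is bookkeeping with the composition convention.
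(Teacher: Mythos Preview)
Your proof is correct and follows essentially the same approach as the paper's: both use $g\in\A_f$ to produce an element of $\genset{f,g}$ realising the component permutation of $q$, and then sandwich with two elements of $\mathcal{P}$ obtained by sliding finite sets within components. The only cosmetic difference is the arrangement of the product --- the paper takes $h$ with $\overline{h}=\overline{q}^{-1}$, factors $hq$ as $p\cdot(p^{-1}hq)$ with both factors in $\mathcal{P}$, and obtains $h^{-1}h_1h_2\in[q]$, whereas you place $h_0$ in the middle to get $h_1h_0h_2\in[q]$; the underlying idea is identical.
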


\begin{proof}
  Recall that
  \[
    D_f \cap \A_{f, \Sigma} = \{g \in \A_{f, \Sigma}: \langle f, g\rangle
    \text{ is dense in } \aut(nK_\omega) \}
    = \bigcap_{q \in \aut(nK_\omega)^{<\omega}} \{g \in \A_{f, \Sigma} :
    \langle f, g \rangle \cap [q] \neq \varnothing \}.
  \]

  $(\subseteq)$ This follows immediately since $\mathcal{P} \subseteq
  \aut(nK_\omega)^{<\omega}$.

  $(\supseteq)$ Let $g \in \bigcap_{p \in \mathcal{P}} \{g \in \A_{f, \Sigma}:
  \langle f, g \rangle \cap [p] \neq \varnothing\}$, and let $q \in \aut(n
  K_\omega)^{< \omega}$ be arbitrary. Since $g \in \A_{f, \Sigma}$ there is $h
  \in \langle f, g \rangle$ such that $\overline{h} = \overline{q}^{- 1}$.

  Let $p \in \aut(n K_\omega)^{< \omega}$ be such that $\overline{p} = \id$,
  $\dom(p) = \dom(hq)$ and $\ran(p) \cap (\dom(hq)\cup \ran(hq)) =\varnothing$.
  Then $\dom(p^{-1}h\changed{q}) = \ran(p)$ and $\ran(p^{-1}h\changed{q}) = \ran(h\changed{q})$, so $p,
  p^{-1}hq \in \mathcal{P}$.  Hence there are $h_1, h_2 \in \langle f, g
  \rangle$ such that $h_1 \in [p]$ and $h_2 \in [p^{-1}hq]$. Therefore
  $h^{-1}h_1h_2 \in [q]$, so
  \[
    g \in \bigcap_{q \in \aut(n K_\omega)^{< \omega}} \{g \in \A_{f, \Sigma}:
    \langle f, g \rangle \cap [q] \neq \varnothing\},
  \]
  as required.
\end{proof}

Let $w$ be a freely reduced word over the alphabet $\{\alpha, \beta\}$, i.e. $w
= \alpha^{n_1} \beta^{n_2} \cdots \beta^{n_{2N}}$ for some $N \in \N$ and $n_1,
\ldots, n_{2N} \in \Z$ with $n_i \neq 0$ for all $i \in \{2, \ldots, 2N - 1\}$.
Also let $f \in \aut(n K_\omega)$ be fixed and suppose that $p \in \aut(n
K_\omega)^{< \omega}$. Then define
\[
  w(p) = p^{n_1} f^{n_2} p^{n_3} \cdots p^{n_{2N - 1}} f^{n_{2N}}
\]
where the product on the right hand side is the usual product of partial
permutations.  Note that $\aut(nK_\omega) \cup \aut(n K_\omega)^{< \omega}$
forms a subsemigroup of the semigroup of all isomorphisms between finite
induced subgraphs of $n K_\omega$.  Hence, if we denote by $F_{\alpha, \beta}$
the free group on the alphabet $ \{\alpha, \beta\}$, then $w(p)$ is simply the
image of $w$ under the semigroup homomorphism $\phi : F_{\alpha, \beta} \to
\aut(nK_\omega) \cup \aut(n K_\omega)^{< \omega}$ such that $(\alpha)\phi = p$
and $(\beta)\phi = f$.

\begin{lem}\label{lem-4-main}
  Let $n\in \N$ be such that $n > 1$ and let $f \in \aut(n K_\omega)$ be
  non-stabilizing. If $n = 2$ and $\overline{f} = \id$, then further suppose
  that $\fix(f)$ is finite. Let $\Gamma, \Delta \subseteq nK_\omega$ be finite
  and disjoint, and let $q \in \A_{f, \Sigma}^{< \omega}$ be such that
  $\overline{q} \in S_n$ and $\ran(q) \cap \Delta = \varnothing$. Then there is
  an extension $h \in \A_{f, \Sigma}^{< \omega}$ \changed{of $q$} and $w \in
  F_{ \alpha, \beta}$ such that
  \[
    \overline{w(h)} = \id, \qquad
    \ran(h) \cap \Delta = \varnothing, \qquad
    \Gamma\subseteq \dom\left( w(h) \right),\qquad \text{and} \qquad
    \left( \Gamma \right) w(h) \cap \dom(h) = \varnothing.
  \]
  Moreover, $\left( \Gamma \right) \changed{w}(h)  h^m \cap \dom(q) =
  \varnothing$ for all $m \in \Z$, i.e. no vertex in $ \left( \Gamma \right)
  \changed{w}(h)$ is on an incomplete component of $h$, which extends an
  incomplete component of $q$.
\end{lem}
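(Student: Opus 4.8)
\emph{Proof idea.} The plan is to build $h$ and $w$ simultaneously by a finite recursion on the elements of $\Gamma=\{v_1,\dots,v_k\}$, appending one ``block'' $\alpha^{a}\beta^{b}$ to $w$ for each vertex processed and one corrective block at the end. Recall that in $w(h)$ the letter $\alpha$ is substituted by the map $h$ that we are building and $\beta$ by the fixed automorphism $f$; thus a block means ``slide along our own $h$ for a while, then apply a power of $f$''. It suffices to produce $h\in\A_{f,\Sigma}^{<\omega}$ extending $q$ and a word $w$ such that $w(h)$ is defined on $\Gamma$, $\overline{w(h)}=\id$, $\ran(h)\cap\Delta=\varnothing$, and every vertex of $(\Gamma)w(h)$ lies outside $\dom(h)$ and on an $h$-component disjoint from $\dom(q)\cup\ran(q)$: the last clause gives at once $(\Gamma)w(h)\cap\dom(h)=\varnothing$ and $(\Gamma)w(h)h^{m}\cap\dom(q)=\varnothing$ for all $m$. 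Before starting, using Lemma~\ref{lem-4-one-point-ext} and Corollary~\ref{cor-4-no-complete-extesion} we normalise $q$ so that $\Sigma\subseteq\dom(q)$, $\overline q\in S_n$ (both given or harmless), and all incomplete components of $q$ have a common length.

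The recursion maintains, after step $j$, an extension $h_j\in\A_{f,\Sigma}^{<\omega}$ of $q$ with $\ran(h_j)\cap\Delta=\varnothing$, an initial segment $w_j$ of the word, and a finite ``used set'' $\Gamma_j$, such that $v_1,\dots,v_j\in\dom(w_j(h_j))$ and each of $(v_1)w_j(h_j),\dots,(v_j)w_j(h_j)$ already lies outside $\dom(h_j)$ on an $h_j$-component disjoint from $\dom(q)\cup\ran(q)$, this remaining true under any extension of $h_j$ built from vertices outside $\Gamma_j$. To treat $v_{j+1}$: first extend $h_j$ (via Lemmas~\ref{lem-4-one-point-ext} and \ref{lem-4-amalgamation}) so that $z:=(v_{j+1})w_j(h_j)$ is defined. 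If $z\notin\dom(q)\cup\ran(q)$ and $z$ lies off every $q$-component, glue a fresh forward chain of length $a$ onto $z$ with $a$ a large multiple of the order of $\overline q$, append the block $\alpha^{a}$, and $v_{j+1}$ is placed; since $a$ exceeds the length of that chain, $(v_{j+1})w_{j+1}(h_{j+1})$ is an endpoint off $\dom(h_{j+1})$. The difficult case is when $z$ is \emph{pinned}: either $z\in\dom(q)$, so $h$ is forced on $z$ (in particular $z$ may sit on a complete cycle of $q$), or the forward $f$-images of the vertices we could route to all remain in $\dom(q)\cup\ran(q)$. Here I would invoke non-stabilization: applying the definition of being non-stabilizing (equivalently Proposition~\ref{prop-f-stabilizing}) with the proper finite subset $\Gamma'\supseteq\dom(q)\cup\ran(q)\cup\Gamma_j\cup\{z\}$, the point $z$, and the current partial isomorphism, there are an extension $g\in\A_f$ of it and a word $u$ in $f$ and $g$ with $(z)u(g)\notin\Gamma'$. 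Only finitely much of $g$ is used in evaluating $u$ along the orbit of $z$, so that finite piece can be absorbed into a new extension, which we re-complete to a member of $\A_{f,\Sigma}^{<\omega}$ by Corollary~\ref{cor-4-no-complete-extesion} (and Lemma~\ref{lem-4-no-sigma-no-complete-extensions}); we append the block(s) corresponding to $u$ to the word. Now $v_{j+1}$ has been moved out of $\dom(q)\cup\ran(q)$ and the easy case finishes it. Since all new chains are threaded through vertices outside the current used set and outside $\Delta$, the invariants and $\ran(h)\cap\Delta=\varnothing$ are preserved, and the earlier images stay where they were placed.

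After $v_k$ has been processed we have $h_k\in\A_{f,\Sigma}^{<\omega}$ and $w_k$ satisfying everything except possibly $\overline{w_k(h_k)}=\id$. Because $\langle\overline{h_k},\overline f\rangle=\langle\overline q,\overline f\rangle=S_n$, choose a word $e$ over $\{\alpha,\beta\}$ whose image under $\alpha\mapsto\overline{h_k}$, $\beta\mapsto\overline f$ equals $\overline{w_k(h_k)}^{-1}$, and realise each $\alpha$-block of $e$ by extending $h_k$ further through fresh vertices, so that $e$ drags the images of $\Gamma$ along fresh chains and leaves them off $\dom(h)$ on components disjoint from $\dom(q)\cup\ran(q)$. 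Put $h$ equal to the final extension and $w=w_k e$. Then $\overline{w(h)}=\id$, $\ran(h)\cap\Delta=\varnothing$, $\Gamma\subseteq\dom(w(h))$, $(\Gamma)w(h)\cap\dom(h)=\varnothing$, and no vertex of $(\Gamma)w(h)$ lies on an incomplete component of $h$ that extends one of $q$, as required.

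The main obstacle is the pinned case. I expect most of the work to be checking that non-stabilization applies in exactly the form needed --- that the finite part of the $\A_f$-automorphism it produces can always be re-completed to a member of $\A_{f,\Sigma}^{<\omega}$ without disturbing the images of $v_1,\dots,v_j$ already placed, and that $f$ cannot dynamically trap us. This last point is where the extra hypothesis that $\fix(f)$ is finite when $n=2$ and $\overline f=\id$ is needed: without it the unique nontrivial component of $nK_\omega$ could behave like a set of fixed points, leaving no block able to move a pinned vertex out of $\dom(q)$. Carrying the whole construction inside $\A_{f,\Sigma}^{<\omega}$, not merely inside $\A_f$, is the delicate bookkeeping that dominates the proof.
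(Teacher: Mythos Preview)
Your proposal has the right high-level shape and correctly identifies non-stabilization as the engine for escaping $\dom(q)$, but the recursion as stated does not close. The trouble is the invariant ``$v_1,\dots,v_j\in\dom(w_j(h_j))$'' together with ``the earlier images stay where they were placed''. When you append a block $\alpha^a\beta^b$ to $w_j$ to treat $v_{j+1}$, the images $(v_i)w_j(h_j)$ for $i\le j$ lie, by your own invariant, at vertices \emph{outside} $\dom(h_j)$; hence $(v_i)w_j(h_{j+1})\,h_{j+1}^a$ is undefined unless you also extend $h$ at each of these earlier endpoints. So either $v_1,\dots,v_j$ drop out of $\dom(w_{j+1}(h_{j+1}))$, destroying the invariant, or they must all be dragged through the new block as well --- which you do not describe in the main loop (though you do, inconsistently, invoke dragging in the final $e$-step). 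Dragging is possible, but then after each $\beta$-block you must arrange that the $f$-images of all $j+1$ freshly created endpoints avoid one another, avoid $\dom(h)\cup\ran(h)$, and avoid $\Delta$; and when two of them coincide after an $f$-power you must separate them again. This collision control is the real content of the lemma and cannot be absorbed into ``bookkeeping''. A second, smaller gap: the word $u$ produced by non-stabilization may contain $\alpha^{-1}$, and you do not explain why the absorbed finite piece of $g\in[h_j]\cap\A_f$ lands back in $\A_{f,\Sigma}^{<\omega}$ rather than merely $\A_f^{<\omega}$.

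The paper sidesteps the dragging problem by decoupling the construction into two stages. First (Lemma~\ref{lem-4-base-case}) the \emph{entire} word $w$ is built while allowing each $x\in\Gamma$ to get stuck at a proper prefix $w_{q,x}$; the invariant maintained is only that these stuck points are pairwise distinct and lie on $h$-components missing $\dom(q)$. At this stage Lemma~\ref{lem-4-non-stab} refines non-stabilization to give a word in \emph{positive} powers of $\alpha$ (so no $\alpha^{-1}$ ever enters $w$, and nothing from an auxiliary $g$ needs to be absorbed). The genuinely delicate part here is precisely the collision case --- some earlier $y\in\Gamma_j$ may have the same stuck point as the current $x$ --- and this is where the long inner induction with conditions (vi)--(viii) lives, and where the hypothesis on $\fix(f)$ for $n=2$, $\overline f=\id$ is actually used. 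Only once $w$ is fixed does the second stage (Lemma~\ref{lem-4-fill-prod}) extend $h$, one vertex at a time, to push each $x$ all the way through $w$; because the word no longer changes, the other vertices' stuck points are simply kept out of the newly added domain, and no simultaneous dragging is needed.
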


The proof of Lemma~\ref{lem-4-main} is rather involved, so before giving its
proof we will demonstrate how the lemma can be used to prove
Theorem~\ref{thm-main-II}.

We will first prove an easy special case of Theorem~\ref{thm-main-II}.

\begin{lem}\label{lem-4-n=2}
  Let $f \in \aut(2 K_\omega)$ be non-stabilising such that $\overline{f} =
  \id$ and $\fix(f)$ is infinite, and let $\Sigma \subseteq 2K_\omega$ be
  finite. Then $D_f \cap \A_{f, \Sigma}$ is comeagre in $\A_{f, \Sigma}$.
\end{lem}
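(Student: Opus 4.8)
The plan is to argue exactly as in the proofs of Theorems~\ref{thm-5-main} and~\ref{thm-infinite-finite}. If $\Sigma=\varnothing$ then $\A_{f,\Sigma}=\varnothing$ by Lemma~\ref{lem-A-non-empty} (with $n=2$ and $\overline f=\id$ the only candidate for $\sigma$ is the transposition, and the remaining condition then just says $\Sigma\neq\varnothing$), so the conclusion holds vacuously; so assume $\Sigma\neq\varnothing$, whence $\A_{f,\Sigma}$ is a non-empty Baire space by Lemma~\ref{lem-Af-closed}. By Lemma~\ref{lem-4-basis} we have $D_f\cap\A_{f,\Sigma}=\bigcap_{p\in\mathcal{P}}E_p$, where $E_p=\{g\in\A_{f,\Sigma}:\genset{f,g}\cap[p]\neq\varnothing\}$ is open in $\A_{f,\Sigma}$ by Lemma~\ref{lem-E-open}; since $\mathcal{P}$ is countable, it suffices to show that each $E_p$ is dense in $\A_{f,\Sigma}$.

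First I would extract what the hypotheses give. Any $g$ with $\overline g$ the transposition lies in $\A_f$, so $\A_f\neq\varnothing$, and since $f$ is non-stabilising Proposition~\ref{prop-f-stabilizing} shows that no non-empty finite set stabilised setwise by $f$ meets $L_1$ and $L_2$ in equal numbers of vertices. If $f$ had a fixed point in each component this would fail for the two-element set they form, so $\fix(f)$ lies in a single component, say $L_1$; and if $f|_{L_2}$ had a finite orbit $O$ it would fail for $O$ together with $|O|$ fixed points of $f$ in $L_1$, which exist because $\fix(f)$ is infinite. Hence, after possibly interchanging $L_1$ and $L_2$, the set $\fix(f)\cap L_1$ is infinite while $f|_{L_2}$ is fixed-point-free with every orbit infinite.

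Now fix $p\in\mathcal{P}$ and $q\in\A_{f,\Sigma}^{<\omega}$; I want $g\in E_p\cap[q]$. Following the proof of Theorem~\ref{thm-infinite-finite}, first extend $q$ to $r\in\A_{f,\Sigma}^{<\omega}$ with $\overline r$ the transposition, $\Sigma\cup\dom(p)\cup\ran(p)\subseteq\dom(r)$, and incomplete components of a convenient fixed length, leaving the complete components of $q$ untouched (these meet both $L_1$ and $L_2$ since $\overline q$ is the transposition). Write $p=p_1\sqcup p_2$ for its $L_1$- and $L_2$-parts. The heart of the matter is to move $\dom(p)\cup\ran(p)$ off $\dom(r)\cup\ran(r)$ using only words in $f$ and $g$, and then to realise the images by short words. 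A vertex of $L_2$ is moved off any finite set by a sufficiently high power of $f$ (here $f|_{L_2}$ being fixed-point-free with infinite orbits is used), whereas a vertex of $L_1$ lying on a finite $f$-orbit must first be carried into $L_2$ by $g$; since $g$ reverses the component-parity, $p_1$ and $p_2$ have to be handled by two different conjugating words $v_1,v_2$ in $f$ and $g$ (of the shape $g^{k}f^{N}$, the first with $k$ odd, the second with $k$ even). Declaring the finitely many values of $g$ that this requires on fresh vertices produces an extension $h_1\in\A_{f,\Sigma}^{<\omega}$ of $r$ and partial isomorphisms living on fresh vertex sets. One then extends $h_1$ to $h\in\A_{f,\Sigma}^{<\omega}$ realising those partial isomorphisms by the word $gfg$: for an arrow $a\mapsto b$ with $a,b\in L_1$ one sends $a$ to a fresh vertex $x\in L_2$ and $(x)f$ to $b$, so that $(a)(gfg)=((x)f)g=b$; for an arrow $a\mapsto b$ with $a,b\in L_2$ one sends $a$ to a fresh vertex $z\in\fix(f)\cap L_1$ and $z$ to $b$, so that $(a)(gfg)=((z)f)g=(z)g=b$ because $(z)f=z$. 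This is the step that uses $\fix(f)$ being infinite.

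Each such declaration is legitimate by Lemma~\ref{lem-4-one-point-ext} once the auxiliary vertices are chosen distinct and away from the finitely many points already present, so $h\in\A_{f,\Sigma}^{<\omega}$ extends $q$; and the words $w_1,w_2$ in $f$ and $g$ assembled from $v_1,v_2$ and $gfg$ can be arranged so that $w_1(g)$ extends $p_1$ and fixes $\dom(p_2)$, $w_2(g)$ extends $p_2$ and fixes $\ran(p_1)$, and hence the product $w_1(g)w_2(g)\in\genset{f,g}$ extends $p$ for every $g\in\A_{f,\Sigma}$ extending $h$. Since $h\in\A_{f,\Sigma}^{<\omega}$, Corollary~\ref{cor-4-no-complete-extesion} yields such a $g$, which lies in $[q]$ as $h\supseteq q$; so $g\in E_p\cap[q]$, and $E_p$ is dense. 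I expect the main obstacle to be exactly the manipulation in the last two paragraphs: carrying $\dom(p)\cup\ran(p)$ wholly into fresh territory with words in $f$ and $g$ in the face of the component-parity rigidity coming from $g$ and of the possibility that $f$ acts trivially on much of $L_1$, and fitting the two conjugated realisations together into one element extending $p$, all while keeping $\overline g$ the transposition and $\Sigma$ a set of orbit representatives — the same bookkeeping, in a more delicate form, as in the proof of Theorem~\ref{thm-infinite-finite}.
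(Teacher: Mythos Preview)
Your reduction via Lemmas~\ref{lem-E-open} and~\ref{lem-4-basis}, your handling of the case $\A_{f,\Sigma}=\varnothing$, and your derivation of the orbit structure from Proposition~\ref{prop-f-stabilizing} (all fixed points in one component, only infinite $f$-orbits in the other) are correct and agree with the paper.

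The gap is your combination step. You assert that $w_1$ can be arranged to extend $p_1$ \emph{and} fix $\dom(p_2)$ pointwise (and dually for $w_2$), but you give no mechanism for this, and in general none is available within your setup. With $v_1=g^{k}f^{N}$ ($k$ odd), the image $(\dom p_2)v_1$ lies in $L_1$; but nothing you have assumed forces $L_1$ to contain any infinite $f$-orbit --- indeed $f|_{L_1}=\id$ is perfectly compatible with your hypotheses --- and then $(\dom p_2)v_1=(\dom p_2)g^{k}\subseteq\ran(r)$, so these points are not fresh and you cannot freely declare values of the inner word $gfg$ on them. Even when $L_1$ has some infinite orbits, the points $(\dom p_2)g^{k}$ are determined by $r$ and may land on finite $f$-orbits, giving the same obstruction. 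So the claim that $w_1$ fixes $\dom(p_2)$ is unjustified, and the product $w_1w_2$ need not extend $p$.

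The paper sidesteps this entirely by using a \emph{single} conjugating word for all of $\dom(p)$ (and another for $\ran(p)$), deploying $\fix(f)$ in the conjugating phase rather than in the realising phase. With the labels reversed from yours ($\fix(f)\cap L_2$ infinite, all $f$-orbits on $L_1$ infinite): choose $m_1$ so that $(L_1\cap\dom p)f^{m_1}$ is fresh, extend $q$ to $q_1$ mapping these fresh $L_1$-vertices into $\fix(f)\cap L_2$, and then choose $m_2$ so that $f^{m_2}$ moves $(L_2\cap\dom p)f^{m_1}q_1\subseteq L_1$ off while leaving the parked $\fix(f)$-part unmoved. Thus the one word $f^{m_1}gf^{m_2}$ carries all of $\dom(p)$ to fresh territory at once. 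Treating $\ran(p)$ symmetrically and bridging by $g^2$ through fresh intermediate vertices produces a single element $f^{m_1}gf^{m_2}\,g^2\,f^{-m_4}gf^{-m_3}\in\langle f,g\rangle$ extending $p$; no splitting into $p_1,p_2$ and no recombination is needed.
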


\begin{proof}
  By Lemmas \ref{lem-E-open} and \ref{lem-4-basis} we only need to show that
  $\{g \in \A_{f, \Sigma} : \langle f, g \rangle \cap [p] \not= \varnothing \}$
  is dense in $\A_{f, \Sigma}$ for all $p \in \mathcal{P}$. Let $q \in \A_{f,
  \Sigma}^{<\omega}$ and suppose, without loss of generality, that $\dom(p)
  \cup \ran(p) \cup \Sigma \subseteq \dom(q)$ and $\overline{q} \in S_2$.
  Since $\overline{f} = \id$, it follows that $\overline{q} = (1 \; 2)$.

  Let $L_1$ and $L_2$ be the connected components of $2 K_\omega$. If necessary
  by relabeling the connected components we may assume that $L_2 \cap \fix(f)$
  is infinite. It follows from Proposition~\ref{prop-f-stabilizing} that if
  $f$ has a finite cycle contained in $L_1$, then $f$ is stabilising. Hence
  all of the cycles of $f$ contained in $L_1$ are infinite.

  Let $m_1 \in \Z$ be such that $\left( L_1 \cap \dom(p) \right) f^{m_1}$ is
  disjoint from $\dom(q) \cup \ran(q)$. By Lemmas~\ref{lem-4-easy-one-point}
  and~\ref{lem-4-one-point-ext} there is $q_1 \in \A_{f, \Sigma}^{< \omega}$ an
  extension of $q$ such that $\left( \dom(p)\right)f^{m_1} \subseteq \dom(q_1)$
  and $\left(L_1 \cap \dom(p) \right)f^{m_1} q_1 \subseteq \fix(f) \setminus
  \dom(q_1)$, \changed{which is possible since $L_2 \cap \fix(f)$ is infinite
  and $(L_1)f^{m_1}q_1 \subseteq L_2$}. The extension $q_1$ can be chosen so
  that components of $q_1$ containing any vertices from $\left(L_1 \cap \dom(p)
  \right)f^{m_1}$ do not extend any of the components of $q$. Since $\left(L_2
  \cap \dom(p) \right) f^{m_1} q_1 \subseteq L_1$, there is $m_2 \in \Z$ such
  that $\left(L_2 \cap \dom(p)\right)f^{m_1} q_1 f^{m_2}$ is disjoint from
  $\dom(q_1) \cup \ran(q_1)$. Hence $\left(\dom(p)\right)f^{m_1}q_1f^{m_2} \cap
  \dom(q_1) = \varnothing$.

  Let $m_3 \in \Z$ be such that $\left( L_1 \cap \ran(p) \right) f^{m_3}$ is
  disjoint from $\dom(q_1) \cup \ran(q_1)$. By
  Lemmas~\ref{lem-4-easy-one-point} and~\ref{lem-4-one-point-ext} there is $q_2
  \in \A_{f, \Sigma}^{< \omega}$ \changed{an extension of $q_1$} such that
  $\left( \ran(p) \right)f^{m_3} \subseteq \ran(q_2)$, $\left(L_1 \cap \ran(p)
  \right)f^{m_3} q_2^{-1} \subseteq \fix(f) \setminus \ran(q_2)$, $\left(
  \dom(p) \right)f^{m_1}q_2f^{m_2}$ is disjoint from $\dom(q_2)$. The extension
  $q_2$ can be chosen so that components of $q_2$ containing any vertices from
  $\left(L_1 \cap \ran(p) \right)f^{m_3}$ do not extend any of the components
  of $q_1$, and \changed{also
  that every vertex of $(L_1 \cap \ran(p)) f^{m_3}$ is on a different
  incomplete components of $q_2$.} Then there is $m_4 \in \Z$ such that
  $\left(L_2 \cap \ran(p)\right)f^{m_3} q_2^{-1} f^{m_4}$ is disjoint from
  $\dom(q_2) \cup \ran(q_2) \cup (\dom(p))f^{m_1} q_2 f^{m_2}$. Hence
  \[
    \left(\dom(p)\right)f^{m_1}q_2f^{m_2} \cap \dom(q_2) = \varnothing
    \quad \text{and} \quad
    \left(\ran(p)\right)f^{m_3}q_2^{-1}f^{m_4} \cap \ran(q_2) = \varnothing.
  \]

  Let $\dom(p) = \{x_1, \ldots, x_k\}$. Then \changed{for all $i \in \{1,
  \ldots, k\}$ there are
  \[
    y_i \in 2 K_\omega
  \setminus \left( \dom(q_2) \cup \ran(q_2) \cup (\dom(p))f^{m_1} q_2 f^{m_2}
  \cup (\ran(p))f^{m_3} q_2^{-1} f^{m_4}
  \right)
  \] 
  such that $h' = q_2 \cup \{ ( (x_i)f^{m_1}q_2f^{m_2}, y_i) : 1 \leq i \leq
  k\} \in \A_{f, \Sigma}^{< \omega}$ by Lemmas~\ref{lem-4-easy-one-point}
  and~\ref{lem-4-one-point-ext}. Let $A$ be the incomplete component of $h'$
  containing $(x_1)f^{m_1}q_2f_{m_2}$ and let $B$ be the incomplete component
  of $h'$ containing $(x_1)p f^{m_3} q_2^{-1}f^{m_4}$. Then $y_1 \in A$, and so
  $|A| \geq 2$. If $|B| = 1$, then $h' \cup \{(y_1, (x_1)p f^{m_3} q_2^{-1}
  f^{m_4})\} \in \A_{f, \Sigma}^{< \omega}$ by
  Lemmas~\ref{lem-4-easy-one-point} and~\ref{lem-4-one-point-ext}, as
  $(x_1)f^{m_1}q_2f^{m_2}$ and $(x_1)pf^{m_3}q_2^{-1}f^{m_4}$ are in the same
  connected component of $2K_\omega$. 
  If $(x_1)p \in L_2$, then by the choice of $m_4$, $(x_1)p
  f^{m_3}q_2^{-1}f^{m_4} \notin \dom(h') \cup \ran(h')$, and so $|B| = 1$, and
  we have already considered this case.
  Suppose that $|B| \geq 2$. Then  $(x_1)p \in L_1$
  and by the choice of $q_2$ the incomplete component of
  $h'$ containing $(x_1)pf^{m_3}q_2^{-1}f^{m_4}$, in other words $B$, does not
  extend an incomplete component of $q_1$. Since $A$ is an incomplete component
  of $q_1$ with $y_1$ adjoined, it follows that $B$ intersects $\Sigma$
  trivially, and $A$ and $B$ are distinct.  Hence $\overline{h'|_{\dom(h')
  \setminus A}} = \overline{h'|_{\dom(h') \setminus B}} = (1 \; 2)$, and thus
  $h' \cup \{(y_1, (x_1)p f^{m_3} q_2^{-1} f^{m_4})\} \in \A_{f, \Sigma}^{<
  \omega}$ by Lemma~\ref{lem-4-amalgamation}. Repeating this argument for $i
  \in \{2, \ldots, k\}$, it can be shown that $h = q_2 \cup \{ (
  (x_i)f^{m_1}q_2f^{m_2}, y_i), (y_i, (x_i)p f^{m_3}q_2^{-1}f^{m_4} ) : 1 \leq
  i \leq k \} \in \A_{f, \Sigma}^{< \omega}$}. 
  Hence $f^{m_1}gf^{m_2}g^2 f^{-m_4} g f^{-m_3} \in [p]$ for every $g \in [h]
  \cap \A_{f, \Sigma}$.  Therefore $\{g \in \A_{f, \Sigma} : \langle f, g
  \rangle \cap [p] \not= \varnothing \}$ intesects $[q]$ non-trivially, and
  since $q$ was arbitrary, is dense in $\A_{f, \Sigma}$.
\end{proof}

Next, we give the proof of Theorem~\ref{thm-main-II} modulo the proof of
Lemma~\ref{lem-4-main}, which is given in the next section.

\begin{proof}[Proof of Theorem~\ref{thm-main-II}]
  If $\A_f = \varnothing$, then $f$ is non-stabilizing and $D_f$ is comeager in
  $\A_f$. Hence we may assume that $\A_f \neq \varnothing$.

  Suppose that $f$ is stabilizing.  By the
  definition, there is $\Gamma \subsetneq n K_\omega$, $x \in
  \Gamma$ and $q \in \A_f^{< \omega}$ such that for all $g \in [q] \cap \A_f$
  and all $h \in \langle f, g \rangle$ we have that $(x)h \in \Gamma$. Let $y
  \notin \Gamma$. \changed{Then} $p = \{(x, y)\} \in \aut(n K_\omega)^{< \omega}$.
  Then $\langle f, g \rangle \cap [p] = \varnothing$ and thus $g \notin D_f$
  implying that $D_f$ is not dense in $\A_f$. Hence $D_f \cap \A_f$ is not
  comeagre in $\A_f$.
  
  If $f$ is non-stabilising and $\Sigma$ is a finite subset of $nK_\omega$,
  then it suffices, by Lemma~\ref{lem-4-all-sigma}, to show that 
  $D_f\cap \A_{f, \Sigma}$ is comeagre in $\A_{f, \Sigma}$.
  If $\A_{f, \Sigma} = \varnothing$, the result is trivial. Hence we may assume that
  $\A_{f, \Sigma} \neq \varnothing$. If $n = 2$, $\overline{f} = \id$, and
  $\fix(f)$ is infinite we are done by Lemma~\ref{lem-4-n=2}. Hence we may,
  additionally assume that $n \geq 2$, and that if $n = 2$ and $\overline{f} =
  \id$, then $\fix(f)$ is finite.

  By Lemmas~\ref{lem-E-open} and~\ref{lem-4-basis} we only need to show that
  $\{g \in \A_{f, \Sigma} : \langle f, g \rangle \cap [p] \not= \varnothing \}$
  is dense in $\A_{f, \Sigma}$ for all $p \in \mathcal{P}$. Let $q \in \A_{f,
  \Sigma}^{<\omega}$ and suppose, without loss of generality, that $\dom(p)
  \cup \ran(p) \cup \Sigma \subseteq \dom(q)$ and $\overline{q} \in S_n$.

  Apply Lemma~\ref{lem-4-main} with $\Delta = \varnothing$ and $\Gamma =
  \dom(p)$. Then there is an extension $q_1' \in \A_{f, \Sigma}^{< \omega}$ of
  $q$ and $\omega_1 \in F_{\alpha, \beta}$ such that
  \[
    \overline{\omega_1(q_1')} = \id, \qquad
    \dom(p) \subseteq \dom\left( \omega_1(q_1')\right), \qquad \text{and} \qquad
    \left( \dom(p) \right) \omega_1(q_1') \cap  \dom(q_1') = \varnothing.
  \]

  Suppose $\left( \dom(p) \right)\omega_1(q_1') \setminus \ran(q_1')$ is
  non-empty. Let $y \in \left( \dom(p) \right)\omega_1(q_1') \setminus
  \ran(q_1')$ and let $a \in \{1, \ldots, n\}$ be such that $y \in L_a$. Then
  there is
  \[
    x \in L_{(a)\overline{q_1'}^{-1}} \setminus \left( \dom(q_1') \cup
    \ran(q_1') \cup \left( \dom(p) \right) \omega_1(q_1') \right).
  \]
  It follows from Lemma~\ref{lem-4-easy-one-point} that $q_1'' = q_1' \cup \{
  (x, y) \} \in \aut(nK_\omega)^{< \omega}$ and thus in $\A_{f, \Sigma}^{<
  \omega}$ by Lemma~\ref{lem-4-one-point-ext}. Then
  \[
    \overline{\omega_1(q_1'')} = \id, \qquad
    \dom(p) \subseteq \dom\left( \omega_1(q_1'')\right), \qquad \text{and} \qquad
    \left( \dom(p) \right) \omega_1(q_1'') \cap  \dom(q_1'') = \varnothing.
  \]
  Moreover, $\left| \left( \dom(p) \right)\omega_1(q_1') \setminus \ran(q_1')
  \right| > \left| \left( \dom(p) \right)\omega_1(q_1'') \setminus
  \ran(q_1'')\right|$, and if we do this extension for every vertex in $\left(
  \dom(p) \right)\omega_1(q_1') \setminus \ran(q_1')$, we can define an
  extension $q_1 \in \A_{f, \Sigma}^{< \omega}$ of $q_1'$ such that
  \begin{equation}\label{equation-7}
    \overline{\omega_1(q_1)} = \id, \qquad
    \dom(p) \subseteq \dom\left( \omega_1(q_1)\right), \qquad \text{and} \qquad
    \left( \dom(p) \right) \omega_1(q_1) \subseteq \ran(q_1) \setminus  \dom(q_1).
  \end{equation}
  Hence every vertex in $\left( \dom(p) \right) \omega_1(q_1)$ is on a
  incomplete component of $q_1$.

  If $\Delta = \left( \dom(p) \right) \omega_1(q_1)$ and $\Gamma = \ran(p)$,
  then $\ran(q_1^{-1}) = \dom(q_1)$ and  $\Delta$ are disjoint. Hence by
  Lemma~\ref{lem-4-main}, there is an extension $q_2^{-1} \in \A_{f, \Sigma}^{<
  \omega}$ of $q_1^{-1}$ and $\omega_2' \in F_{\alpha, \beta}$ such that
  $\overline{\omega_2'(q_2^{-1})} = \id$,
  \begin{gather*}
    \ran(q_2^{-1}) \cap \left( \dom(p) \right) \omega_1(q_1) = \varnothing, \\
    \ran(p) \subseteq \dom\left( \omega_2'(q_2^{-1})\right),\\
    \left(\ran(p) \right) \omega_2'(q_2^{-1})\cap \dom(q_2^{-1}) = \varnothing,
  \end{gather*}
  and no vertex in $\left( \ran(p) \right) \omega_2'(q_2^{-1})$ is on a
  incomplete component of $q_2^{-1}$ extending an incomplete component of
  $q_1^{-1}$.

  Since $\dom(p) \subseteq \dom\left( \omega_1(q_1) \right)$ by
  \eqref{equation-7}, and $q_2$ is an extension of $q_1$, it follows that
  $\left(\dom(p)\right) \omega_1(q_1) = \left(\dom(p)\right) \omega_1(q_2)$.
  Let $\omega_2 \in F_{\alpha, \beta}$ be such that $\omega_2(q_2) =
  \omega_2'(q_2^{-1})$, i.e.\ replace every occurrence of $\alpha$ in $\omega_2'$
  by $\alpha^{-1}$ and vica versa. Then $\overline{\omega_2(q_2)} = \id$,
  \begin{gather*}
    \dom(q_2) \cap \left( \dom(p) \right) \omega_1(q_2) = \varnothing, \\
    \ran(p) \subseteq \dom\left( \omega_2(q_2)\right),\\ \left(\ran(p) \right)
    \omega_2(q_2)\cap \ran(q_2) = \varnothing,
  \end{gather*}
  and no vertex in $\left( \ran(p) \right) \omega_2(q_2)$ is on a incomplete
  components of $q_2$ extending an incomplete component of $q_1$.

  \changed{Let $\{i(j, k) : k \in \{1, \ldots, m_j\}\}$ where 
  $j \in \{1, \ldots, l\} $ be the orbits of $\overline{q_2}$ and suppose
  that $(i(j, k)) \overline{q_2} = i(j, k + 1)$ for all $j \in \{1,
  \ldots, l\}$ and all $k
  \in \{1, \ldots, m_j - 1\}$. For each $i \in \{1, \ldots, n\}$} choose
  \[
    x_i \in L_i \setminus \left( \dom(q_2) \cap \ran(q_2) \cup
    \left(\dom(p)\right) \omega_1(q_2) \cup \left( \ran(p) \right)
    \omega_2(q_2) \right),
  \]
  \changed{and also for all $j \in \{1, \ldots, l\}$ choose
  \[
  x_{i(j, m_j + 1)} \in L_{i(j, 1)} \setminus \left( \{x_{i(j, 1)}\} \cup
    \dom(q_2) \cap \ran(q_2) \cup \left(\dom(p)\right) \omega_1(q_2) \cup
    \left( \ran(p) \right) \omega_2(q_2) \right),
  \]
  Then $h_0 = q_2 \cup \{ (x_{i(j, k)}, x_{i(j, k + 1)}) : j \in \{1, \ldots,
  l\} \ \text{and} \ k \in \{1, \ldots, m_j\} \} \in \aut(n K_\omega)^{<
  \omega}$} by Lemma~\ref{lem-4-easy-one-point} and also $h_0 \in \A_{f,
  \Sigma}^{< \omega}$ by Lemma~\ref{lem-4-one-point-ext}. \changed{Let $P$ be
  an arbitrary incomplete component of $h_0$. Since $x_{i(j, k)} \notin
  \dom(q_2) \cup \ran(q_2)$ for all $j$ and all $k$, it follows that $P$ is
  either a subset of $K = \{x_{i(j, k)} : j \in \{1, \ldots, l\} \ \text{and}
  \ k \in \{1, \ldots, m_j + 1\}\}$ or disjoint from $K$. If $P \subseteq K$,
  then $q_2 \subseteq h_0|_{\dom(h_0) \setminus P}$, and so
  $\overline{h_0|_{\dom(h_0) \setminus P}} = \overline{q_2} \in S_n$. Otherwise
  $P \cap K = \varnothing$, and so $\{x_i : i \in \{1, \ldots, n\} \} \subseteq
  \dom(h_0) \setminus P$. Hence, $\{ (x_{i(j, k)}, x_{i(j, k + 1)})) : j \in
  \{1, \ldots, l\} \ \text{and} \ k \in \{1, \ldots, m_j\}\} \subseteq
  h_0|_{\dom(h_0) \setminus P}$, which implies that $\overline{h_0|_{\dom(h_0)
  \setminus P}} = \overline{q_2} \in S_n$. It follows from the choice of
  vertices $x_i$ and $x_{i(j, m_j + 1)}$}, that $\overline{\omega_2(h_0)} = \id$,
  \begin{gather*}
    \dom(h_0) \cap \left( \dom(p) \right) \omega_1(h_0) = \varnothing, \\
    \ran(p) \subseteq \dom\left( \omega_2(h_0)\right),\\
    \left(\ran(p) \right) \omega_2(h_0)\cap \ran(h_0) = \varnothing.
  \end{gather*}

  Let $k$ be the order of $\overline{q} \in S_n$. We will now inductively
  construct an extension $h \in \A_{f, \Sigma}^{< \omega}$ of $h_0$ (and hence
  of $q$) such that $(x) \omega_1(h)  h^k  \omega_2(h)^{-1} = (x)p$
  for all $x \in \dom(p)$. Let $\dom(p) = \{x_1, \ldots, x_d \}$,
  and suppose that for $j \in \{0, \ldots, k - 2\}$ we have defined an
  extension $h_j \in \A_{f, \Sigma}^{< \omega}$ of $h_0$ such that
  \[
    \dom(h_j) \cap \left( \dom(p) \right) \omega_1(h_j)  h_j^j =
    \varnothing, \qquad \left( \ran(p) \right)
    \omega_2(h_j)\cap \ran(h_j) = \varnothing,
  \]
  and $\dom(p)$ and $\ran(p)$ are contained in $\dom(\omega_1(h_j) 
  h_j^j)$ and $\dom(\omega_2(h_j))$ respectively.

  Note that if $j = 0$, the inductive hypothesis is satisfied since $h_0^0$ is
  an identity on $\dom(h_0)$, the $\dom(h_0)$ is disjoint from $\left( \dom(p)
  \right) \omega_1(h_0)$, the set $\ran(h_0)$ is disjoint from $\left( \ran(p)
  \right) \omega_2(h_j)\cap \ran(h_j)$, and $\dom(p)$ and $\ran(p)$ are
  contained in $\dom(\omega_1(h_0))$ and $\dom(\omega_2(h_0))$ respectively.

  Suppose $j >0$. For all $i \in \{1, \ldots, d\}$, let $y_i =
  (x_i)\omega_1(h_j)  h_j^j$ and suppose that $a_i \in \{1, \ldots, n\}$
  such that $y_i \in L_{a_i}$. Then for each successive $i \in \{1, \ldots,
  d\}$ choose
  \[
    z_i \in L_{(a_i)\overline{h_j}} \setminus \left( \dom(h_j) \cup \ran(h_j)
    \cup \{y_1, \ldots, y_d\} \cup \{z_1, \ldots, z_{i - 1}\} \cup
    \left(\ran(p)\right)\omega_2(h_j) \right).
  \]

  We define $h_{j + 1} = h_j \cup \{(y_i, z_i): 1 \leq i \leq d\}$.  Since $z_i
  \in L_{(a_i)\overline{h_j}}$, it follows that $h_{j + 1} \in \aut(n
  K_\omega)^{< \omega}$ by Lemma~\ref{lem-4-easy-one-point} and hence $h_{j +
  1} \in \A_{f, \Sigma}^{< \omega}$ by Lemma~\ref{lem-4-one-point-ext}. Note
  that the choice of $z_i$ implies that none of the incomplete components of $h_j$ are
  amalgamated in $h_{j + 1}$.

  It is easy to see that $\dom(h_{j + 1}) = \dom(h_j) \cup \{y_1, \ldots,
  y_d\}$ and $\ran(h_{j + 1}) = \ran(h_j) \cup \{z_1, \ldots, z_d\}$.  Since
  $(x_i)\omega_1(h_{j + 1})  h_{j + 1}^j = (x_i) \omega_1(h_j) 
  h_j^j$ for all $i \in \{1, \ldots, d\}$
  \begin{align*}
    (x_i)\omega_1(h_{j + 1})  h_{j + 1}^{j + 1} &= (x_i)\omega_1(h_{j +
    1})  h_{j + 1}^j  h_{j + 1} \\
    &= (x_i)\omega_1(h_j)  h_j^j  h_{j + 1} \\
    &= (y_i)h_{j + 1} \\
    &= z_i \notin \dom(h_{j + 1}).
  \end{align*}
  Hence $\dom(h_{j +1}) \cap \left( \dom(p) \right) \omega_1(h_{j + 1}) 
  h_{j + 1}^{j + 1} = \varnothing$ and $\dom(p) \subseteq \omega_1(h_{j + 1})
   h_{j + 1}^{j + 1}$.

  It follows from $\ran(p) \subseteq \omega_2(h_0)$, that $\left( \ran(p)
  \right) \omega_2 (h_{j + 1}) = \left( \ran(p) \right) \omega_2 (h_j)$, and so
  $\left( \ran(p) \right) \omega_2(h_{j + 1}) \cap \ran(h_j) = \varnothing$.
  Since $z_i \notin \left( \ran(p) \right) \omega_2(h_j)$ for all $i \in \{1,
  \ldots, d\}$, it also follows that $\left(\ran(p) \right) \omega_2(h_{j +
  1})\cap \ran(h_{j + 1}) = \varnothing$. Finally,
  $\dom(p)$ and $\ran(p)$ are contained in $\dom(\omega_1(h_{j + 1}) 
  h_{j + 1}^{j + 1})$ and $\dom(\omega_2(h_{j + 1}))$ respectively, and so
  $h_{j + 1}$ satisfies the inductive hypothesis.

  By induction on $j$, we obtain an extension $h_{k - 1} \in \A_{f, \Sigma}^{<
  \omega}$ of $h_0$ (and thus $q$) such that
  \begin{equation}\label{equation-5}
    \dom(h_{k - 1}) \cap  \left( \dom(p) \right) \omega_1(h_{k - 1})  h_{k
    - 1}^{k - 1} = \varnothing,
    \qquad
    \left(\ran(p) \right) \omega_2(h_{k - 1})\cap \ran(h_{k - 1}) =
    \varnothing,
  \end{equation}
  and $\dom(p)$ and $\ran(p)$ are contained in $\dom(\omega_1(h_{k - 1})
   h_{k - 1}^{k - 1})$ and $\dom(\omega_2(h_{k - 1}))$ respectively.

  Define $h$ to be
  \[
    h_{k - 1} \cup \left\{ \left( (x_i)\omega_1(h_{k - 1})  h_{k - 1}^{k - 1},
    \left((x_i)p \right)\omega_2(h_{k - 1}) \right) : 1 \leq i \leq d \right\}.
  \]
  Recall that $k$ is the order of $\overline{q}$.
  Since $h_{k - 1}$ is an extension of $q$ and $\overline{q} \in S_n$, it follows that
  $\overline{h_{k - 1}} = \overline{q}$, thus $\overline{h_{k - 1}^k} = \id$. Also
  $\omega_1(h_{k - 1})$ and $\omega_2(h_{k - 1})$ are extensions of $\omega_1(q_1)$ and
  $\omega_2(q_2)$ respectively, hence
  \[
    \overline{\omega_1(h_{k - 1})} =\overline{\omega_1(q_1)} = \id =
    \overline{\omega_2(q_2)} = \overline{\omega_2(h_{k - 1})}.
  \]
  Then $x_i$, $(x_i)\omega_1(h_{k - 1})  h_{k - 1}^k$, and
  $\left((x_i)p\right) \omega_2(h_{k - 1})$ are in the same connected component
  of $nK_\omega$ for all $i$. Thus it follows from
  Lemma~\ref{lem-4-easy-one-point} and \eqref{equation-5}, that $h \in \aut(n
  K_\omega) ^{< \omega}$.

  We will now show that $h$ can be obtained from $h_{k - 1}$ by repeated
  applications of Lemma~\ref{lem-4-amalgamation}, and so $h \in \A_{f,
  \Sigma}^{< \omega}$.  First of all, note that $\Sigma \subseteq \dom(q)
  \subseteq \dom(h_{k - 1})$, and that no incomplete components of $h_0$, and thus of
  $q_2$, were amalgamated in $h_{k - 1}$. According to Lemma~\ref{lem-4-main},
  $q_2$ was chosen so that $\left((x_i)p \right) \omega_2(q_2)$ is not on a
  incomplete component of $q_2$ extending an incomplete component of $q_1$ for all $i \in \{1,
  \ldots, d\}$. Hence the vertex $\left((x_i)p \right) \omega_2(h_{k - 1})$ is
  not on an incomplete component of $h_{k - 1}$ extending an incomplete component of $q_1$ for
  all $i \in \{1, \ldots, d\}$.  Also since $\Sigma \subseteq \dom(q) \subseteq
  \dom(q_1)$, it follows that the intersection of any incomplete component of $h_{k -
  1}$ containing a vertex in $\left( \ran(p) \right) \omega_2(h_{k - 1})$ and
  $\Sigma$ is empty.

  By \eqref{equation-7} every vertex in $\left( \dom(p) \right) \omega_1(\changed{q_1})$
   is on an incomplete component of $q_1$ and since $\omega_1(h_{k - 1})  h_{k
  - 1}^{k - 1}$ is defined on $\dom(p)$ it follows that  every vertex in
  $\left( \dom(p) \right) \omega_1(h_{k - 1})  h_{k - 1}^{k - 1}$ is on a
  incomplete component of $h_{k - 1}$ extending an incomplete component of $q_1$.  Hence
  incomplete components of $h_{k - 1}$ containing vertices $\left( \ran(p) \right)
  \omega_2(h_{k - 1})$ are distinct from the incomplete components of $h_{k - 1}$
  containing the vertices $\left( \dom(p) \right) \omega_1(h_{k - 1}) 
  h_{k - 1}^{k - 1}$. Also recall that for every incomplete component $P$ of $h_0$ we
  have that
  \[
    \overline{h_0|_{\dom(h_0)\setminus P}} \in S_n.
  \]
  Since $h_{k - 1}$ is an extension of $h_0$ and no incomplete components of $h_0$
  were amalgamated, for any incomplete component $Q$ of $h_{k - 1}$
  \[
    \overline{h_{k - 1}|_{\dom(h_{k - 1})\setminus Q}} \in S_n.
  \]
  Thus we can apply Lemma~\ref{lem-4-amalgamation} to show that $h \in \A_{f,
  \Sigma}^{< \omega}$.

  Finally $h$ was defined so that
  \[
    \omega_1(h) h^k  \omega_2(h)^{-1} \in [p],
  \]
  and thus any extension $g \in [h] \cap \A_{f, \Sigma}$ also satisfies $g \in
  \{r \in \A_{f, \Sigma} : \langle f, r \rangle \cap [p] \neq \varnothing \}$.
  Therefore, $\{g \in \A_{f, \Sigma} : \langle f, g \rangle \cap [p] \neq
  \varnothing \}$ is dense in $\A_{f, \Sigma}$ as required.
\end{proof}

\subsection*{Proof of Lemma~\ref{lem-4-main}}

The purpose of this section is to prove Lemma~\ref{lem-4-main}.  We will first
prove a technical result relating to the behaviour of a non-stabilizing
isomorphism $f$ of $nK_\omega$. Recall that $f \in \aut(n K_\omega)$
\emph{non-stabilizing} if for all $\Gamma \subsetneq nK_\omega$, all $x \in
\Gamma$ and all $q \in \A_f^{< \omega}$ there is $g \in [q] \cap \A_f$ such
that $(x)h \notin \Gamma$ for some $h \in \langle f, g \rangle$. 

Let $f \in \aut(n K_\omega)$ be non-stabilizing and let $x\in nK_{\omega}$.
Then for every $q\in \aut(n K_\omega)^{<\omega}$ there is $g\in [q]\cap \A_f$
such that $(x)h\not\in \dom(q)$ for some $h\in \langle f, g \rangle$. It
follows that there is $N \in \N$ and $m_1, m_2, \ldots, m_{2N}\in \Z$ such that
$(x)\prod_{i=1}^N g^{m_{2i-1}} f^{m_{2i}} \notin \dom(q)$. If we assume that
the length of the product $\sum_{i=1}^{2N} |m_i|$ is minimal, then the image of
$x$ under any proper prefix of the product $\prod_{i=1}^N g^{m_{2i-1}}
f^{m_{2i}}$ belongs to $\dom(q)$. Therefore 
\[
  (x)\prod_{i=1}^N q^{m_{2i-1}} f^{m_{2i}} = (x)\prod_{i=1}^N g^{m_{2i-1}}
  f^{m_{2i}} \in n K_\omega \setminus \dom(q).
\]
In the next lemma we show that the powers $m_{2i-1}$ of $q$ in the above
equation, can be chosen to be positive.

\begin{lem}\label{lem-4-non-stab}
  Let $f$ be non-stabilizing and let $x \in nK_\omega$. Then for every $q
  \in \aut(nK_\omega)^{< \omega}$ there is $N \in \N$ and $m_1, m_2, \ldots,
  m_{2N} \in \Z$ such that $m_1, m_3, \ldots m_{2N - 1} > 0$ and
  \[
   (x)\prod_{i = 1}^N q^{m_{2i - 1}} f^{m_{2i}} \in n K_\omega \setminus \dom(q).
  \]
\end{lem}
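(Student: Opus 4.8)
The plan is to take the product already produced in the discussion immediately preceding the lemma and rewrite it so that the exponents of $q$ become positive, without changing the fact that it moves $x$ outside $\dom(q)$. That discussion shows (using that $f$ is non-stabilizing) that there are $N\in\N$ and $m_1,\dots,m_{2N}\in\Z$ with $(x)\prod_{i=1}^N q^{m_{2i-1}}f^{m_{2i}}\in nK_\omega\setminus\dom(q)$. If $x\notin\dom(q)$ the statement holds with $N=0$, so assume $x\in\dom(q)$. I will argue by induction on the number of indices $i$ with $m_{2i-1}\le 0$; if there are none, we are done.

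Otherwise let $j$ be the least index with $m_{2j-1}\le 0$, so that the prefix $w=q^{m_1}f^{m_2}\cdots q^{m_{2j-3}}f^{m_{2j-2}}$ has all of its $q$-exponents strictly positive, and set $a=(x)w$, which is defined since it is a partial value of a defined product. Now split into three cases according to $a$. First, if $a\notin\dom(q)$, then $w$ itself — adjoined with $f^0$ to have the prescribed shape, or taken to be the empty product when $j=1$ — is as required. Second, if $a\in\dom(q)$ lies on an incomplete component $\{c_0,\dots,c_\ell\}$ of $q$ with $(c_i)q=c_{i+1}$, then $a=c_t$ for some $t\le\ell-1$ (because $a\in\dom(q)$), and $w\,q^{\,\ell-t}f^0$ sends $x$ to $c_\ell\in\ran(q)\setminus\dom(q)$; here $\ell-t>0$, so this word has all $q$-exponents positive and we are done. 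Third, if $a\in\dom(q)$ lies on a complete component of $q$ of length $d$, then $q^d$ is the identity on that component, so replacing $m_{2j-1}$ by $m_{2j-1}+rd$ for any $r\ge 1$ with $m_{2j-1}+rd>0$ leaves unchanged the whole sequence of partial values along which the product is computed, and in particular its final value, while strictly decreasing the number of non-positive $q$-exponents; the induction hypothesis then applies.

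The only step that is not routine bookkeeping is the incomplete-component case: instead of trying to reproduce a negative power of $q$ out of positive powers of $q$ and powers of $f$, one simply runs forward along the incomplete component until it terminates, which automatically leaves $\dom(q)$ and costs only a positive power of $q$. The remaining points to verify are that the substitution in the complete-component case leaves the value of the product unchanged (since $q^d$ fixes every vertex of that component), that in every branch the resulting word has the form $q^{n_1}f^{n_2}\cdots q^{n_{2M-1}}f^{n_{2M}}$ — which is precisely why one is free to append $f^0$, the even exponents being unrestricted integers — and that the $q$-exponents come out strictly positive throughout.
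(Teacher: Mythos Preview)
Your proof is correct and follows essentially the same approach as the paper: both fix non-positive $q$-exponents via periodicity when the relevant intermediate value lies on a complete component of $q$, and otherwise terminate by running forward along an incomplete component to a vertex in $\ran(q)\setminus\dom(q)$. The paper organises this as a single pass---locate the least $M$ for which the intermediate value $y_M$ lies on an incomplete component, fix all earlier exponents at once, then run forward---whereas you organise it as an induction on the number of non-positive $q$-exponents, handling one at a time; the two arguments are interchangeable.
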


\begin{proof}
  By the discussion above there are $K \in \N$ and $k_1, k_2, \ldots, k_{2K}
  \in \Z$ such that
  \[
    (x)\prod_{i=1}^K q^{k_{2i-1}} f^{k_{2i}} \in nK_\omega \setminus \dom(q).
  \]
  Suppose that $M \in \{0, 1, \ldots, K\}$ is the least value such that $(x)
  \prod_{i=1}^M q^{k_{2i-1}} f^{k_{2i}}$ is on an incomplete component of $q$, where
  $M=0$ in the case that $x$ is on an incomplete component. Then $y_t =(x)\prod_{i=1}^t
  q^{k_{2i-1}} f^{k_{2i}}$ is on a complete component of $q$ for all
  $t\in\{0,\ldots, M-1\}$. It follows that there exist $m_{2t + 1} > 0$ such
  that $(y_t)q^{m_{2t + 1}} = (y_t)q^{k_{2t + 1}}$ for all $t\in\{0,\ldots,
  M-1\}$. Additionally, define $m_{2i} = k_{2i}$ for all $i \in \{1, \ldots,
  M\}$.

  By the choice of $M$, $y = (x) \prod_{i=1}^M q^{m_{2i-1}} f^{m_{2i}} =
  (x)\prod_{i=1}^M q^{k_{2i-1}} f^{k_{2i}}$ is in an incomplete component of $q$. Hence
  there is $z$ in the incomplete component of $y$ under $q$ such that $z \in \ran(q)
  \setminus \dom(q)$ and there is $m_{2M + 1} \geq 0$ such that $(y)q^{m_{2M +
  1}}=z \notin \dom(q)$. Therefore
  \[
  (x)\left(\prod_{i=1}^{M} q^{m_{2i-1}} f^{m_{2i}}\right) q^{m_{2M+1}} \in
  nK_\omega \setminus \dom(q),
  \]
  as required.
\end{proof}

For the proofs of the next three lemmas we require the following notation.
First of all, recall that for a fixed $f \in \aut(n K_\omega)$, if $p \in \aut(n
K_\omega)^{< \omega}$ and $w = \alpha^{n_1} \beta^{n_2} \cdots \beta^{n_{2N}}
\in F_{\alpha, \beta}$ for some $N \in \N$ and $n_1, \ldots, n_{2N} \in \Z$,
then 
\[
  w(p) = p^{n_1} f^{n_2} p^{n_3} \cdots p^{n_{2N - 1}} f^{n_{2N}}
\]
where the product on the right hand side is the usual product of partial
permutations. Let $\Gamma, \Theta, \Phi, \Delta  \subseteq n K_\omega$ be a
finite subsets, let $p \in \A_{f, \Sigma}^{< \omega}$ and let $w \in F_{\alpha,
\beta}$. Suppose $x \in \Gamma$ and define $w_{p,x}$ to be the largest prefix
of $w$ such that $x \in \dom( w_{p,x} (p))$ and let $w_{p,x}$ be the empty word
if there are no such prefix. To make the notation less cluttered, whenever
possible, we will identify the word $w_{p, x}$ with its realisation in $\aut(n
K_\omega)^{< \omega}$, in other words with the partial isomorphism $w_{p,
x}(p)$. To avoid confusion, if $w, w' \in F_{\alpha, \beta}$, we denote that
$w$ and $w'$ are equal by $w \equiv w'$. Note that if $w_{p,x}$ is a proper
prefix of $w$ (i.e. $|w_{p, x}| < |w|$), since $f$ is an isomorphism we have
that $(x) w_{p, x} \notin \dom(p)$ and $w_{p, x} \alpha$ is a prefix of
$w$. 

Suppose that $\Theta \subseteq \Gamma$.  Then we say that $p$ satisfies
$\S(\Gamma, \Theta, \Phi, \Delta, w)$ if the following conditions are
satisfied:
\begin{enumerate}
  \item $\overline{w(p)} = \id$;
  \item $\ran(p) \cap \Delta = \varnothing$;
  \item $\dom\left( w(p) \right) \cap \Gamma = \Theta$;
  \item the image of $\Theta$ under $w(p)$ is disjoint from $\dom(p)$;
  \item $(x)w_{p,x} \neq (y)w_{p, y}$ for all $x, y \in \Gamma$ such that
    $x\neq y$;
  \item $(x) w_{p,x} p^m \in n K_\omega \setminus \Phi$ for all 
    $x \in \Gamma$ and $m \in \Z$ such that $x \in \dom(w_{p, x} p^m)$.
\end{enumerate}
Finally, define $\mathbf{b}(w)$ to be the total number of occurrences of
$\beta$ and $\beta^{-1}$ in the freely reduced word $w$.

Using the definition of $\S(\Gamma, \Theta, \Phi, \Delta, w)$ we can now
restate Lemma~\ref{lem-4-main}. \changed{In the case that $\Gamma = \Theta$, it
follows that $w_{p, x} = w$ for all $x\in \Gamma$. Hence (5), in this case, is
a consequence of $w(p)$ being a finite isomorphism.}

\begin{lem}\label{lem-4-main-restate}
  Let $n \in \N$ be such that $n > 1$ and let $f \in \aut(n K_\omega)$ be
  non-stabilising. If $n = 2$ and $\overline{f} = \id$, then further suppose
  that $\fix(f)$ is finite. Let $\Gamma, \Delta \subseteq nK_\omega$ be finite
  and disjoint, and let $q \in \A_{f, \Sigma}^{< \omega}$ be such that
  $\overline{q} \in S_n$ and $\ran(q) \cap \Delta = \varnothing$. Then there is
  an extension $h \in \A_{f, \Sigma}^{< \omega}$ \changed{of $q$} and $w \in
  F_{ \alpha, \beta}$ satisfying $\S(\Gamma, \Gamma, \dom(q), \Delta, w)$.
\end{lem}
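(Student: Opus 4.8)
The plan is to build $h$ and $w$ incrementally, adjoining the vertices of $\Gamma$ one at a time. Fix an enumeration $\Gamma = \{x_1,\dots,x_k\}$ and put $\Gamma_j = \{x_1,\dots,x_j\}$. I would prove by induction on $j$ that there are an extension $h_j \in \A_{f,\Sigma}^{<\omega}$ of $q$ with $\overline{h_j} = \overline q \in S_n$ and a word $w_j \in F_{\alpha,\beta}$ such that $h_j$ satisfies $\S(\Gamma_j,\Gamma_j,\dom(q),\Delta,w_j)$; the case $j = k$ is the assertion of the lemma. For $j = 0$ take $h_0 = q$ and $w_0 = \beta^N$, where $N$ is the order of $\overline f$, so that $\overline{w_0(h_0)} = \overline{f^N} = \id$ and conditions (2)--(6) of $\S$ are either vacuous or are exactly the hypothesis $\ran(q)\cap\Delta = \varnothing$.

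For the inductive step put $x = x_{j+1}$. First I would enlarge $h_j$ by Corollary~\ref{cor-4-no-complete-extesion} and repeated one-point extensions (Lemmas~\ref{lem-4-easy-one-point} and~\ref{lem-4-one-point-ext}), staying in $\A_{f,\Sigma}^{<\omega}$ and keeping $\Delta$ out of the range, so that $x$ lies on an incomplete component, each already-routed image $(x_i)w_j(h_j)$ ($i \le j$) lies on its own incomplete component that runs far enough forward to survive the next segment of the word, and all the newly added vertices are fresh, i.e.\ outside $\dom(q)\cup\ran(q)\cup\Delta$. Then I would apply Lemma~\ref{lem-4-non-stab} to $x$ and this enlarged $h_j$ to get a word $v \equiv \alpha^{m_1}\beta^{m_2}\cdots$ with all $\alpha$-powers positive and $(x)v(h_j)\notin\dom(h_j)$. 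Appending $v$ to $w_j$ walks every tracked vertex forward along its component and across components via the $f$-powers; using a further round of one-point extensions and amalgamations (Lemma~\ref{lem-4-amalgamation}, which splices the extra transitions in without changing $\overline{\,\cdot\,}$ on the complement of any incomplete component and without merging a tracked component into one extending a component of $q$), one arranges that each of $x_1,\dots,x_{j+1}$ remains in the domain of the new word, that its image lies outside $\dom(h_{j+1})$ on a fresh incomplete component, and that none of these images meets an incomplete component of $h_{j+1}$ extending an incomplete component of $q$ --- this last point is condition (6). Finally, because all of $x_1,\dots,x_{j+1}$ now sit on fresh incomplete components, I can right-multiply the word by an element of the kernel of the homomorphism $\psi\colon F_{\alpha,\beta}\to S_n$ given by $\alpha\mapsto\overline q$, $\beta\mapsto\overline f$ (this kernel is non-trivial, as $S_n$ is finite), absorbing the additional $\alpha$- and $\beta$-powers by one more round of one-point extensions; this restores condition (1), $\overline{w_{j+1}(h_{j+1})} = \id$, and leaves the rest intact. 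This defines $w_{j+1}$ and $h_{j+1}$ and completes the step.

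The sub-case $n = 2$ with $\overline f = \id$ needs a separate but parallel treatment: here the definition of $\A_f$ forces $\genset{\overline f,\overline g} = S_2$ and hence $\overline q = (1\;2)$, and the hypothesis that $\fix(f)$ is finite is exactly what keeps the routing above available (it prevents $f$ from trapping infinitely many of its fixed points). The required one-vertex moves are then obtained, as in the proof of Lemma~\ref{lem-4-n=2}, from the fact that $f$ moves all but finitely many vertices of $2K_\omega$, together with powers of $q$ to shuttle points between $L_1$ and $L_2$.

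I expect the main obstacle to be the simultaneity condition built into $\S$: one and the same word $w$ must carry \emph{every} vertex of $\Gamma$ out of $\dom(h)$ onto fresh incomplete components, while never dropping a vertex from $\dom(w(h))$ and never meeting $\Delta$ in $\ran(h)$. Lemma~\ref{lem-4-non-stab} handles a single vertex but with a vertex-dependent word, so the substance of the proof is the bookkeeping that reconciles these: after each prefix of $w$ one must know precisely which component each tracked vertex occupies and extend $h$ so that the next letter does the right thing to all of them at once, all the while remaining in $\A_{f,\Sigma}^{<\omega}$ --- creating no finite orbit, never forcing two points of $\Sigma$ onto one orbit, and keeping $\overline{\,\cdot\,}$ onto $S_n$ on the complement of every incomplete component. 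This is precisely what Lemmas~\ref{lem-4-easy-one-point},~\ref{lem-4-one-point-ext} and~\ref{lem-4-amalgamation} and Corollary~\ref{cor-4-no-complete-extesion} are set up to make possible.
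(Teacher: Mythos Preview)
Your approach is structurally different from the paper's. The paper separates the construction into two phases: first (Lemma~\ref{lem-4-base-case}) it builds the word $w$ once and for all while simultaneously extending $q$ so that $\S(\Gamma,\varnothing,\dom(q),\Delta,w)$ holds --- that is, \emph{no} vertex of $\Gamma$ is yet in $\dom(w(\cdot))$, but their partial routes $w_{q,x}$ are controlled (pairwise distinct endpoints, each on a component not meeting $\dom(q)$). Only then (Lemma~\ref{lem-4-fill-prod}) is $h$ extended further, one vertex at a time, to push each $x\in\Gamma$ all the way through the now-fixed $w$. Your scheme instead grows $w$ and $h$ together, maintaining full routing $\S(\Gamma_j,\Gamma_j,\dots,w_j)$ at every stage.

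The gap is in the inductive step. After appending $v$ (and then a kernel element) to $w_j$, you must arrange that \emph{all} of $x_1,\dots,x_{j+1}$ lie in $\dom\bigl(w_{j+1}(h_{j+1})\bigr)$ with images outside $\dom(h_{j+1})$ and off every component meeting $\dom(q)$. You dispose of this with ``a further round of one-point extensions and amalgamations'', but this is the substantive content of the lemma. Concretely: after the first $\beta$-block of $v$, each previously-routed endpoint is thrown by a power of $f$ to an uncontrolled vertex, possibly inside $\dom(h_j)$; pushing it through the next $\alpha$-block may require extending that component, which enlarges $\dom(h_j)$ and can invalidate the conclusion $(x_{j+1})v(h_j)\notin\dom(h_j)$ just obtained from Lemma~\ref{lem-4-non-stab}, and can shift the other tracked vertices. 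There is also no argument that $x_{j+1}$ ever reaches $\dom(w_j(h_{j+1}))$ in the first place, so that appending $v$ to $w_j$ (as opposed to prepending) routes it at all. The paper's two-phase split exists precisely to break this circularity: in Lemma~\ref{lem-4-fill-prod} the word is fixed and the proof is a letter-by-letter induction showing each new one-point extension can avoid the other partial routes; in Lemma~\ref{lem-4-base-case} the word is still growing, but vertices are deliberately kept only \emph{partially} routed, and the collision case (the ``$y$'' in that proof, conditions (vii)--(viii)) is handled with some care. Your sketch would need analogues of both arguments. Finally, Lemma~\ref{lem-4-amalgamation} is not used anywhere in the paper's proof of this lemma; amalgamating components here is unnecessary and risks disturbing condition~(6).
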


The proof of Lemma~\ref{lem-4-main-restate} will be split into three parts.
\changed{We say that a word $w \in F_{\alpha, \beta}$ \textit{starts with} a
letter $\gamma \in \{\alpha, \beta\}$ if there is $w' \in F_{\alpha, \beta}$
such that $w = \gamma w'$.}

\begin{lem} \label{lem-4-base-case}
  Let $n \in \N$ be such that $n > 1$ and let $f \in \aut(n K_\omega)$ be
  non-stabilising. If $n = 2$ and $\overline{f} = \id$, then further suppose
  that $\fix(f)$ is finite. Let $\Gamma, \Delta \subseteq n K_\omega$ be
  finite, and let $q \in \A_{f, \Sigma}^{< \omega}$ be such that $\ran(q) \cap
  \Delta = \varnothing$. Then there is an extension $h \in \A_{f, \Sigma}^{<
  \omega}$ of $q$ and $w \in F_{\alpha, \beta}$ not containing $\alpha^{-1}$
  and starting with $\alpha$ such that  $h$ satisfies $\S(\Gamma, \varnothing,
  \dom(q), \Delta, w)$.
\end{lem}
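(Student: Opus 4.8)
The plan is to induct on $|\Gamma|$, treating one vertex of $\Gamma$ at a time and invoking Lemma~\ref{lem-4-non-stab} to drive that vertex out of the current domain. \emph{Base case $\Gamma=\varnothing$.} Conditions (3)--(6) are then vacuous, so it suffices to produce any $h\in\A_{f,\Sigma}^{<\omega}$ extending $q$ with $\ran(h)\cap\Delta=\varnothing$ and any word $w$ with no $\alpha^{-1}$ starting with $\alpha$ such that $\overline{w(h)}=\id$. Restricting an extension of $q$ in $\A_{f,\Sigma}$ to a large enough finite set gives such an $h$ with, moreover, $\Sigma\subseteq\dom(h)$ and $\overline h\in S_n$; then $w=\alpha^{r}\beta^{s}$, where $r$ is the order of $\overline h$ and $s$ the order of $\overline f$ in $S_n$, satisfies $\overline{w(h)}=\overline h^{\,r}\overline f^{\,s}=\id$.

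\emph{Inductive step.} Write $\Gamma=\Gamma'\cup\{x\}$, apply the inductive hypothesis to $\Gamma'$ to obtain $h'\in\A_{f,\Sigma}^{<\omega}$ extending $q$ and a word $w'$ (no $\alpha^{-1}$, starting with $\alpha$) realising $\S(\Gamma',\varnothing,\dom(q),\Delta,w')$, and set $e_y=(y)w'_{h',y}$ for $y\in\Gamma'$. I would proceed in three moves. (i) Extend $h'$, using one-point extensions (Lemmas~\ref{lem-4-easy-one-point} and~\ref{lem-4-one-point-ext}), so that $w'$ becomes realisable on $x$ all the way to its end, always choosing the new points \emph{fresh} — outside $\dom(h')\cup\ran(h')\cup\Delta\cup\dom(q)$ and off every incomplete component of $h'$ meeting $\dom(q)$ or containing some $e_y$ — so that $x$ is carried onto brand-new incomplete components, reaching a fresh vertex $z$. (ii) Apply Lemma~\ref{lem-4-non-stab} to $z$ and the current $h'$ to get a word $u$ with only positive $\alpha$-powers driving $z$ out of the current domain, and realise it by further fresh one-point extensions, so that $x$ escapes to a fresh $e\in\ran(h)\setminus\dom(h)$ on a fresh incomplete component. (iii) Set the new word to $w'u\alpha$: the trailing $\alpha$ forces the realisation of $w'u\alpha$ on $x$ to die exactly at $e$, so $x\notin\dom((w'u\alpha)(h))$ and $(x)(w'u\alpha)_{h,x}=e$; freshness of the component of $e$ gives (6) for $x$, and since every $e_y$ lies on a component built at an earlier stage it gives (5); the escape points of $\Gamma'$ are unchanged because $h$ is never extended at an $e_y$, so $w'u\alpha$ still dies on each $y\in\Gamma'$ at $e_y$; and (2) persists because the new range-points avoid $\Delta$. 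Finally, appending to the word cannot disturb an escape that has already occurred, so one further block — a word in $\alpha,\beta$ with only positive $\alpha$-powers whose image in $S_n$ inverts $\overline{(w'u\alpha)(h)}$, available after a last fresh extension restoring $\overline h\in S_n$ — produces a word $w$ with $\overline{w(h)}=\id$, completing $\S(\Gamma,\varnothing,\dom(q),\Delta,w)$.

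The delicate step, and the one I expect to be hardest, is (i)--(ii): steering the realisation of the current word on $x$ entirely through fresh territory, so it can neither revive nor collide with an escape already arranged for a vertex of $\Gamma'$. The hazard is that while $w'$ is realised on $x$ its orbit reaches a vertex on an earlier-stage incomplete component, where extending $h$ is forbidden (it would move some $e_y$) yet seems forced if $x$ is to reach a fresh $z$. This should be resolved by exploiting the slack in Lemmas~\ref{lem-4-easy-one-point} and~\ref{lem-4-one-point-ext}: at each stage only finitely many forbidden vertices have accumulated while infinitely many admissible continuations remain, so every escape point can be placed terminally, at the far end of a newly created component no vertex of which is ever reached by the realisation of any other $\Gamma$-vertex (processing $\Gamma$ in a fixed order keeps these components disjoint). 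The extra hypothesis that $\fix(f)$ is finite when $n=2$ and $\overline f=\id$ enters only to guarantee that the fresh incomplete components built to carry the escapes can be chosen with all vertices outside $\fix(f)$, so that the $f$-teleports occurring inside the word $u$ of Lemma~\ref{lem-4-non-stab} genuinely move them.
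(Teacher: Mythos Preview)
Your overall induction on $|\Gamma|$ matches the paper's, and the use of Lemma~\ref{lem-4-non-stab} to drive a new vertex out is the right engine. The gap is in step (i), precisely at the ``hazard'' you identify but do not resolve.

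Concretely: after the inductive hypothesis you have $h'$ and escape points $e_y=(y)w'_{h',y}$ for $y\in\Gamma'$. Now you begin to realise $w'$ on $x$. The trajectory of $x$ through the \emph{existing} $h'$ is determined; in particular the first point at which $x$ leaves $\dom(h')$ is not a choice. Nothing prevents this point from being some $e_y$: for instance a $\beta$-step in $w'$ can carry $x$ from a vertex of $\dom(h')$ straight onto $e_y$. At that moment you must extend $h'$ at $e_y$ to continue $x$'s trajectory, but doing so changes $w'_{h,y}$ and can destroy (3), (5), or (6) for $y$. Your proposed fix --- placing each $e_y$ ``terminally, at the far end of a newly created component no vertex of which is ever reached by the realisation of any other $\Gamma$-vertex'' --- is circular: $x$'s initial trajectory is governed by the very $h'$ you built while placing $e_y$, and the $f$-teleports in $w'$ can land on $e_y$ regardless of which fresh component $e_y$ sits on. The freedom you cite (infinitely many admissible continuations) applies only to \emph{range} points of new extensions; the \emph{domain} point at which $x$ gets stuck is forced.

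The paper meets this head-on. It does not try to realise the old word $w'$ on $x$; instead it lengthens the word at each stage (to $\lambda^{(j+1)}=\lambda^{(j)}\nu\alpha^m\beta\alpha$ with $m>|\lambda^{(j)}\nu|$) and, when a collision $(x)\lambda^{(j+1)}_{q_j,x}=(y)\lambda^{(j+1)}_{q_j,y}$ occurs, runs a second inner induction (conditions (i)--(viii)) which lets $x$ and $y$ advance together while maintaining $|\lambda^{(j+1)}_{r_k,x}|>|\lambda^{(j+1)}_{r_k,y}|$. The buffer $\alpha^m$ guarantees that $x$ reaches the end of $\rho=\lambda^{(j)}\nu\alpha^m\beta$ before $y$ can catch up, and the choice of $m$ via \eqref{equation-choice-m} (or the finiteness of $\fix(f)$ when $n=2$, $\overline f=\id$) forces the final $\beta$ to genuinely move $x$, separating it from $y$ and landing it outside $\dom(r_k)\cup\ran(r_k)$. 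This simultaneous-advance-with-lag mechanism is the missing idea in your sketch.
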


\begin{proof}
  If necessary by extending $q$ using Lemma~\ref{lem-4-easy-one-point} and
  Lemma~\ref{lem-4-one-point-ext}, we may  assume that $\overline{q} \in S_n$
  and $\Sigma, \Gamma \subseteq \dom(q)$. In the case that $n = 2$ and
  $\overline{f} = \id$, we also assume that $\fix(f) \subseteq \dom(q)$.

  \changed{Let $d = |\Gamma|$}. We will now inductively define a sequence $q_0,
  \ldots, q_d \in \A_{f, \Sigma}^{< \omega}$ of extensions of $q$, and a
  sequence $\lambda^{(0)},\ldots, \lambda^{(d)}$ of words in $F_{\alpha,
  \beta}$ so that $h = q_d$ and $w = \lambda^{(d)}$ are as required. Let $q_0 =
  q$, let $\Gamma_0 = \varnothing$, and let $\lambda^{(0)} \changed{ = \alpha}$.
  Suppose that for some $j \in \{0, \ldots, d - 1\}$ we have $\Gamma_j
  \subseteq \Gamma$, a word $\lambda^{(j)}$ in $F_{\alpha, \beta}$ starting
  with $\alpha$ and not containing $\alpha^{-1}$, and $q_j \in \A_{f,
  \Sigma}^{< \omega}$ such that $|\Gamma_j| = j$ and
  \begin{enumerate}[(I)]
    \item $\ran(q_j) \cap \Delta = \varnothing$;
    \item $(u){\lambda^{(j)}}_{q_j,u} \neq (v){\lambda^{(j)}}_{q_j, v}$ for all $u, v
      \in \Gamma_j$ with $u \neq v$;
    \item $(u) {\lambda^{(j)}}_{q_j, u} q_j^m \notin \dom(q)$ for all $m
      \in \Z$ such that $u \in \dom({\lambda^{(j)}}_{q_j, u} q_j^m)$ and all $u
      \in \Gamma_j$;
    \item ${\lambda^{(j)}}_{q_j, u} \not\equiv \lambda^{(j)}$ for all $u \in \Gamma_j$.
  \end{enumerate}

  Let $x \in \Gamma \setminus \Gamma_j$ be arbitrary and let $\Gamma_{j + 1} =
  \Gamma_j \cup \{x\}$. The first step in the proof is to find $\nu \in
  F_{\alpha, \beta}$ so that $x \notin \dom \left( \lambda^{(j)} \nu \alpha
  (q_j)\right)$, and find $m \in \N$ such that $m > |\lambda^{(j)} \nu|$ and it
  so that we can define
  \begin{equation}\label{equation-w-j+1-def}
    \lambda^{(j + 1)} \equiv \lambda^{(j)} \nu \alpha^m \beta \alpha.
  \end{equation}
  
  In order to define $\nu$ consider two cases. If $x \in \dom \left(
  \lambda^{(j)}(q_j) \right)$, then by Lemma~\ref{lem-4-non-stab} that there is
  $\nu \in F_{\alpha, \beta}$ such that $\alpha^{-1}$ is not contained in $\nu$
  and the image of $x$ under $\lambda^{(j)} \nu (q_j)$ is in $n K_\omega
  \setminus \dom(q_j)$. Otherwise, $x \notin \dom \left( \lambda^{(j)}(q_j)
  \right)$, in which case let $\nu$ be the empty word. Hence in both cases 
  \begin{equation}\label{equation-x-not-in-dom}
    x \notin \dom \left( \lambda^{(j)}  \nu  \alpha(q_j)\right).
  \end{equation}

  To define $m$ we will again consider two separate cases. If $n = 2$ and
  $\overline{f} = \id$, let $m \changed{>} |\lambda^{(j)} \nu |$ be arbitrary.
  Otherwise, either $n = 2$ and $\overline{f} = (1 \; 2)$ or $n \geq 3$. Let
  $L_1, \ldots, L_n$ be the connected components of $n K_\omega$, and let $a
  \in \{1, \ldots, n\}$ so that $x \in L_a$. Consider any extension $g \in
  \aut(n K_\omega)$ of $q_j$, and let $b$ be the image of $a$ under the
  permutation $\overline{\left(\lambda^{(j)} \nu \right) (g)}$. Since
  $\overline{q_j} \in S_n$, it follows that $b$ is independent of the extension
  $g$. We will show that in this case we can choose $m > |\lambda^{(j)} \nu|$
  to be such that
  \begin{equation}\label{equation-choice-m}
    (b) \overline{q_j}^m \in \supp(\overline{f}).
  \end{equation}

  If $n = 2$ and  $\overline{f} = (1\; 2)$, then any $m > |\lambda^{(j)} \nu|$
  satisfies \eqref{equation-choice-m}. Let $n \geq 3$ be arbitrary, and let $O$
  be the orbit of $\overline{q_j}$ containing $b$. Suppose that $\overline{f}$
  fixes $O$ pointwise. If $|O| \leq 2$, then since $n \geq 3$, there is $c \in
  \{1, \ldots, n\} \setminus O$, and so $(b \; c) \notin \langle \overline{f},
  \overline{q_j} \rangle$. If $|O| \geq 3$, then the symmetric group on $|O|$
  is not cyclic, and so there is a $\sigma \in S_n$ such that $\supp(\sigma)
  \subseteq O$ and $\sigma \notin \langle \overline{q_j}|_O \rangle$. Then
  $\sigma \notin \langle \overline{f}, \overline{q_j} \rangle$. However, both
  cases are impossible since $\langle \overline{f}, \overline{q_j} \rangle =
  S_n$. Hence $\overline{f}$ does not fix $O$ pointwise. Hence we may choose $m
  > |\lambda^{(j)} \nu|$ to satisfy \eqref{equation-choice-m}. Let $\lambda^{(j
  + 1)}$ be as in \eqref{equation-w-j+1-def}. For brevity, denote the prefix
  $\lambda^{(j)} \nu \alpha^m \beta$ of $\lambda^{(j + 1)}$ by $\rho$.

  Next we show how to construct $q_{j + 1} \in \A_{f, \Sigma}^{< \omega}$ from
  $q_j$.  In order to do so, we need to consider a possible complication,
  namely the existence of $y \in \Gamma_j$ such that $(y) {\lambda^{(j +
  1)}}_{q_j, y} = (x){\lambda^{(j + 1)}}_{q_j, x}$. The case where such $y$
  does not exists is slightly easier and can be proved in a very similar
  fashion, simply ignoring any mention of $y$ in the following argument (to be
  more precise (i), (ii), (iv), and (v) are exactly the same, (vii) and (viii)
  are unnecessary, and in (iii) and (vi) the vertex $u$ can be any vertex in
  the set $\Gamma_j$). Hence we will omit this case.  Suppose there is $y \in
  \Gamma_j$ such that $(y){\lambda^{(j + 1)}}_{q_j, y} = (x){\lambda^{(j +
  1)}}_{q_j, x}$. It follows from (II) that such $y$ is unique. Since
  ${\lambda^{(j + 1)}}_{q_j, x}$ is a partial isomorphism and $x \neq y$, it
  follows that ${\lambda^{(j + 1)}}_{q_j, x} \neq {\lambda^{(j + 1)}}_{q_j,
  y}$, and so ${\lambda^{(j + 1)}}_{q_j, x} \not\equiv {\lambda^{(j +
  1)}}_{q_j, y}$. Condition (IV) implies that  ${\lambda^{(j)}}_{q_j, y}$ is a
  proper prefix of $\lambda^{(j)}$, and so $y \notin \dom \left( \rho(q_j)
  \right)$. Also from \eqref{equation-x-not-in-dom}, we have that 
  \begin{equation}\label{equation-w-j1-x}
    |{\lambda^{(j + 1)}}_{q_j, x}| \leq |\lambda^{(j)} \nu| < |\rho|.
  \end{equation}
  Hence $|{\lambda^{(j + 1)}}_{q_j, x}|, |{\lambda^{(j + 1)}}_{q_j, y}| <
  |\rho|$. There are two cases to consider: either $|{\lambda^{(j + 1)}}_{q_j,
  x}| > |{\lambda^{(j + 1)}}_{q_j, y}|$ or $|{\lambda^{(j + 1)}}_{q_j, x}| <
  |{\lambda^{(j + 1)}}_{q_j, y}|$.

  Consider $|{\lambda^{(j + 1)}}_{q_j, x}| > |{\lambda^{(j + 1)}}_{q_j, y}|$. We proceed by
  inductively constructing a sequence $r_0, \ldots, r_{|\rho|}$ of extensions
  of $q_j$, so that $r_0 = q_j$ and $r_{|\rho|}$ is the required $q_{j + 1}$.
  Let $r_0 = q_j$. For $k \in \{0, \ldots, |\rho|\}$ let the inductive
  hypothesis be as follows: there is an extension $r_k \in \A_{f, \Sigma}^{<
  \omega}$ of $r_{k - 1}$ (or $q_j$ if $k = 0$) such that 
  \begin{enumerate}[(i)]
    \item $k \leq |{\lambda^{(j + 1)}}_{r_k, x}| \leq |\rho|$;

    \item $\ran(r_k) \cap \Delta = \varnothing$;

    \item ${\lambda^{(j + 1)}}_{r_k,u} \equiv {\lambda^{(j)}}_{q_j,u}$ for $u
      \in \Gamma_j \setminus \{ y \}$;

    \item $(u){\lambda^{(j + 1)}}_{r_k,u} \neq (v){\lambda^{(j + 1)}}_{r_k, v}$
      for $u, v \in \Gamma_j$ with $u \neq v$;

    \item $(u) {\lambda^{(j + 1)}}_{r_k, u} r_k^m \in n K_\omega \setminus \dom(q)$ for all $m \in \Z$
      such that $u \in \dom({\lambda^{(j + 1)}}_{r_k, u} r_k^m)$ and all $u \in
      \Gamma_j$;

    \item $(x) {\lambda^{(j + 1)}}_{r_k, x} \notin \dom(r_k) \cup
      \left\{(u){\lambda^{(j)}}_{q_j, u}: u \in \Gamma_j \setminus \{y\} \right\}$.
      Moreover if $k > 0$ and we can write ${\lambda^{(j + 1)}}_{r_k, x} \equiv \tau
      \beta^i$ for some $i \in \Z \setminus\{0\}$, and $\tau \in
      F_{\alpha, \beta}$ such that $\tau$ ends with a letter $\alpha$ and the
      image of $x$ under $\tau(r_k)$ is in  $\supp(f^i)$, then $(x){\lambda^{(j +
      1)}}_{r_k, x} \notin \dom(r_k) \cup \ran(r_k)$;

    \item if $k > 0$ and $(x){\lambda^{(j + 1)}}_{r_{k - 1}, x} \neq (y){\lambda^{(j +
      1)}}_{r_{k - 1}, y}$ then $(x){\lambda^{(j + 1)}}_{r_k, x} \neq (y){\lambda^{(j +
      1)}}_{r_k, y}$.

    \item $|{\lambda^{(j + 1)}}_{r_k, x}| > |{\lambda^{(j + 1)}}_{r_k, y}|$. Moreover, if
      $(x){\lambda^{(j + 1)}}_{r_k, x} = (y){\lambda^{(j + 1)}}_{r_k, y}$, then
      $|{\lambda^{(j + 1)}}_{r_k, y}| \geq |{\lambda^{(j + 1)}}_{q_j, y}| + k$;
  \end{enumerate}

  We will first demonstrate that the base case, $k = 0$, holds. The condition
  (i) is satisfied by $r_0$ by~\eqref{equation-w-j1-x}, and condition (ii) is
  satisfied because $r_0 = q_j$ satisfies (I). Since $q_j$ satisfies (IV) we
  have that ${\lambda^{(j)}}_{q_j, u} \not\equiv \lambda^{(j)}$  and thus $u
  \notin \dom \left(\lambda^{(j)}(q_j) \right)$ which then implies that
  ${\lambda^{(j + 1)}}_{q_j, u} \equiv {\lambda^{(j)}}_{q_j, u}$ for all $u \in
  \Gamma_j$. Hence (iii) is satisfied by $r_0$. Since ${\lambda^{(j +
  1)}}_{q_j, u} \equiv {\lambda^{(j)}}_{q_j, u}$ for all $u \in \Gamma_j$, the
  conditions (iv) and (v) are the same as conditions (II) and (III)
  respectively. Recall that $(x) {\lambda^{(j + 1)}}_{q_j, x} \in  n K_\omega
  \setminus \dom(q_j)$ by the definition of ${\lambda^{(j + 1)}}_{q_j, x}$, and
  that if $(x){\lambda^{(j + 1)}}_{q_j, x} = (u) {\lambda^{(j + 1)}}_{q_j, u}$
  where $u \in \Gamma_j$ then $u = y$ by (II). Hence $r_0$ satisfies
  the first part of (vi), while $r_0$ satisfies second part of (vi), (vii), and
  second part of (viii) trivially, since $k = 0$. Finally, the first part of
  (viii) is just the assumption of this case. Therefore $r_0$ satisfies the
  inductive hypothesis.

  Next we show how to obtain $r_{k + 1}$ from $r_k$. Suppose that for some $k
  \in \{0, \ldots, |\rho| - 1 \}$ we have $r_k \in \A_{f, \Sigma}^{< \omega}$
  which satisfies (i) -- (viii). We consider the case ${\lambda^{(j +
  1)}}_{r_k, x} \equiv \rho$ and ${\lambda^{(j + 1)}}_{r_k, x}$ being a proper
  prefix of $\rho$ separately.

  \textbf{Case 1:} We begin by considering the case where ${\lambda^{(j +
  1)}}_{r_k, x}$ is a proper prefix of $\rho$. Let $z = (x){\lambda^{(j +
  1)}}_{r_k, x}$. Since ${\lambda^{(j + 1)}}_{r_k, x}$ is a proper prefix of
  $\lambda^{(j + 1)}$, it follows that  $z \notin \dom(r_k)$ and ${\lambda^{(j
  + 1)}}_{r_k, x} \alpha$ is a prefix of $\lambda^{(j + 1)}$. Recall that
  $\mathbf{b}(\lambda^{(j + 1)})$ is the total number of occurrences of letters
  $\beta$ and $\beta^{-1}$ in the word ${\lambda^{(j + 1)} \in F}_{\alpha,
  \beta}$. Let $c \in \{1, \ldots, n\}$ be so that $z \in L_c$, and choose
  \[
    z' \in L_{(c)\overline{r_k}} \setminus \bigcup_{i = -
    \mathbf{b}(\lambda^{(j + 1)})}^{\mathbf{b}(\lambda^{(j + 1)})} \left(
    \Delta \cup \left\{ (u){\lambda^{(j + 1)}}_{r_k, u}: u \in \Gamma_j
    \right\} \cup  \dom(r_k) \cup \ran(r_k) \cup \{z\} \right)f^{-i}.
  \]
  Since $z \notin \dom(r_k)$ and $z' \notin \dom(r_k) \cup \ran(r_k)$ it
  follows from Lemmas~\ref{lem-4-easy-one-point} and~\ref{lem-4-one-point-ext}
  that $r_{k + 1} = r_k \cup \{(z, z')\} \in \A_{f, \Sigma}^{< \omega}$. Then
  there is some $i \in \Z$ such that
  \begin{equation}\label{equation-w-j+1-r-k+1}
    {\lambda^{(j + 1)}}_{r_{k + 1}, x} \equiv {\lambda^{(j + 1)}}_{r_k, x}
    \alpha \beta^i.
  \end{equation}
  Hence $|{\lambda^{(j + 1)}}_{r_{k + 1}, x}| > |{\lambda^{(j + 1)}}_{r_k, x}|
  \geq k$. We will now show that ${\lambda^{(j + 1)}}_{r_{k + 1},x}$ is a
  prefix of $\rho$. Suppose that ${\lambda^{(j + 1)}}_{r_{k + 1},x}$ is not a
  prefix of $\rho$. Since ${\lambda^{(j + 1)}}_{r_{k + 1},x}$ is a prefix of
  $\lambda^{(j + 1)}$, it follows that ${\lambda^{(j + 1)}}_{r_{k + 1},x} =
  \lambda^{(j + 1)}$. Hence the fact that $\lambda^{(j + 1)} \equiv \rho
  \alpha$ and \eqref{equation-w-j+1-r-k+1} imply that ${\lambda^{(j +
  1)}}_{r_k, x} \alpha \beta^i \equiv {\lambda^{(j + 1)}}_{r_{k + 1},x} =
  \lambda^{(j + 1)} = \rho \alpha$, thus $i = 0$ and ${\lambda^{(j + 1)}}_{r_k,
  x} = \rho$, which contradicts the assumption of this case. Therefore,
  ${\lambda^{(j + 1)}}_{r_{k + 1}, x}$ is prefix of $\rho$, and so (i) is
  satisfied by $r_{k + 1}$. 

  It follows from the definition of $r_{k + 1}$ that
  \begin{equation}\label{equation-new-dom-ran}
    \dom(r_{k + 1}) = \dom(r_k) \cup \{z\} 
    \quad \text{and} \quad
    \ran(r_{k + 1}) = \ran(r_k) \cup \{z'\}.
  \end{equation}
  Since the vertex $z'$ was chosen outside $\Delta$ we have that (ii) is
  satisfied by $r_{k + 1}$.

  Let $u \in \Gamma_j \setminus \{y\}$. It follows from (vi) for $r_k$ that $z
  = (x){\lambda^{(j + 1)}}_{r_k, x} \neq (u){\lambda^{(j)}}_{q_j, u}$, and
  since $r_k$ satisfies (iii), it follows that $z \neq  (u){\lambda^{(j +
  1)}}_{r_k, u}$. Also ${\lambda^{(j + 1)}}_{r_k, u} = \lambda^{(j)}_{q_j, u}$
  is a proper prefix of $\lambda^{(j)}$, and so a proper prefix of $\lambda^{(j
  + 1)}$, by (iii) and (IV). Then $(u){\lambda^{(j + 1)}}_{r_k,u} \notin
  \dom(r_k)$ and ${\lambda^{(j + 1)}}_{r_k, u} \alpha$ is a prefix of
  $\lambda^{(j + 1)}$, and thus $(u){\lambda^{(j + 1)}}_{r_k,u} \notin
  \dom(r_{k + 1})$ by \eqref{equation-new-dom-ran}. Hence ${\lambda^{(j +
  1)}}_{r_{k + 1}, u} \equiv {\lambda^{(j + 1)}}_{r_k, u}$, and since $r_k$
  satisfies (iii)
  \begin{equation}
    \label{equation-equiv}
    {\lambda^{(j + 1)}}_{r_{k + 1},u} \equiv {\lambda^{(j + 1)}}_{r_k,u} \equiv
    {\lambda^{(j)}}_{q_j,u}.
  \end{equation}
  Therefore $r_{k + 1}$ satisfies (iii).

  In order to prove that $r_{k + 1}$ satisfies (iv), we consider two cases.
  Suppose that $z = (x){\lambda^{(j + 1)}}_{r_k, x} \neq (y){\lambda^{(j +
  1)}}_{r_k, y}$. It follows by (i) and (viii) that $|{\lambda^{(j +
  1)}}_{r_k, y}| < |\rho|$. Hence, ${\lambda^{(j + 1)}}_{r_k, y}$ is a proper
  prefix of $\lambda^{(j + 1)}$, and so $(y){\lambda^{(j + 1)}}_{r_k,y} \notin
  \dom(r_k)$ and ${\lambda^{(j + 1)}}_{r_k, y} \alpha$ is a prefix of
  $\lambda^{(j + 1)}$, and so $(y){\lambda^{(j + 1)}}_{r_k,y} \notin \dom(r_{k
  + 1})$ by \eqref{equation-new-dom-ran}. Hence ${\lambda^{(j + 1)}}_{r_{k +
  1},y} \equiv {\lambda^{(j + 1)}}_{r_k,y}$, in other words
  \begin{equation}\label{equation-w-j+1-y-same}
    z \neq (y){\lambda^{(j + 1)}}_{r_k, y} \implies 
    {\lambda^{(j + 1)}}_{r_{k + 1},y} \equiv {\lambda^{(j + 1)}}_{r_k,y}.
  \end{equation}
  Combining with the previous paragraph ${\lambda^{(j + 1)}}_{r_{k + 1}, u}
  \equiv {\lambda^{(j + 1)}}_{r_k, u}$ for all $u \in \Gamma_j$. Therefore,
  $r_{k + 1}$ satisfies (iv), since $r_k$ does.

  Otherwise, suppose that $z = (x){\lambda^{(j + 1)}}_{r_k, x} = (y){\lambda^{(j + 1)}}_{r_k,
  y}$. Since $(z')f^i \notin \dom(r_{k + 1})$ for all $i \in
  \{-\mathbf{b}(\rho), \ldots, \mathbf{b}(\rho)\}$ by the choice of $z'$ and
  \eqref{equation-new-dom-ran}, there exists $i \in \{-\mathbf{b}(\rho),
  \ldots, \mathbf{b}(\rho)\}$ such that $(y){\lambda^{(j + 1)}}_{r_{k + 1}, y} =
  (z')f^i$, and so ${\lambda^{(j + 1)}}_{r_{k + 1}, y} \equiv {\lambda^{(j + 1)}}_{r_k, y}
  \alpha \beta^i$, in other words
  \begin{equation}\label{equation-w-j+1-y}
    (x){\lambda^{(j + 1)}}_{r_k, x} = (y){\lambda^{(j + 1)}}_{r_k, y} \implies
    {\lambda^{(j +
    1)}}_{r_{k + 1}, y} \equiv {\lambda^{(j + 1)}}_{r_k, y} \alpha \beta^i \
    \text{for some} \ i \in \{ -\mathbf{b}(\rho), \ldots, \mathbf{b}(\rho) \}.
  \end{equation}
  The vertex $z'$ was chosen so that $(z')f^i \neq (u) {\lambda^{(j +
  1)}}_{r_k, u}$ for all $u \in \Gamma_j \setminus \{y\}$. Since ${\lambda^{(j
  + 1)}}_{r_{k + 1}, u} \equiv {\lambda^{(j + 1)}}_{r_k, u}$ for all $u \in
  \Gamma_j \setminus \{y\}$ and $r_k$ satisfies (iv), it then follows that
  $r_{k + 1}$ satisfies (iv).

  Let $u \in \Gamma_j \setminus \{y \}$ be arbitrary.
  Then $(u){\lambda^{(j + 1)}}_{r_{k+1}, u} = (u){\lambda^{(j
  + 1)}}_{r_{k}, u}$ by \eqref{equation-equiv}. 
  Since $z'\not\in\dom(r_k)$, no two components of $r_k$ become subsets of the
  same component of $r_{k+1}$. It follows that, for any $m\in \Z$, $(u){\lambda^{(j +
  1)}}_{r_{k+1}, u} r_{k+1} ^ m$ equals  either $(u){\lambda^{(j +
  1)}}_{r_{k}, u} r_{k} ^ m$ or $z'$, neither of which belongs to $\dom(q)$.
  Hence (v) holds for all $u\in \Gamma_j\setminus \{y\}$. 
  
  By \eqref{equation-w-j+1-y-same}, if $z \neq (y){\lambda^{(j + 1)}}_{r_k, y}$
  then ${\lambda^{(j + 1)}}_{r_{k + 1}, y} \equiv {\lambda^{(j + 1)}}_{r_k,
  y}$, and so using the argument of the previous paragraph, $(y){\lambda^{(j +
  1)}}_{r_{k + 1}, y} r_{k+1} ^ m\not \in \dom(q)$ for all $m\in \mathbb{Z}$.
  Hence to show that $r_{k + 1}$ satisfies (v) it remains to consider the case
  where $z = (x){\lambda^{(j + 1)}}_{r_k, x}= (y){\lambda^{(j + 1)}}_{r_k, y}$.
  It follows from \eqref{equation-w-j+1-y} that  $(y){\lambda^{(j + 1)}}_{r_{k
  + 1}, y} = (z')f^i$ for some $i \in \{-\mathbf{b}(\rho), \ldots,
  \mathbf{b}(\rho)\}$. If $(z')f^i \notin \dom(r_{k + 1}) \cup \ran(r_{k +
  1})$, then no component of $r_{k + 1}$, and thus $q$, contains the vertex
  $(z')f^i = (y){\lambda^{(j + 1)}}_{r_{k + 1}, y}$, and so $r_{k + 1}$
  satisfies (v). Suppose that $(z')f^i \in \dom(r_{k + 1}) \cup \ran(r_{k +
  1})$. But $z'$ was chosen so that $(z')f^i \notin \dom(r_k) \cup
  \ran(r_k)\cup \{z\}$, which implies $(z')f^i = z'$ and so $(y){\lambda^{(j +
  1)}}_{r_{k + 1}, y} = z'$. From its definition, the component of $r_{k + 1}$
  containing $z' = (y){\lambda^{(j + 1)}}_{r_{k + 1}, y}$ is the component of
  $r_k$ containing $z = (y){\lambda^{(j + 1)}}_{r_k, y}$ together with the
  vertex $z'$. In other words $(y){\lambda^{(j + 1)}}_{r_{k+1}, y}r_{k+1}^m$,
  equals $(y){\lambda^{(j + 1)}}_{r_k, y}r_k^m$ or $z'$, if defined. Since
  $(y){\lambda^{(j + 1)}}_{r_k, y}r_k^m\in n K_\omega \setminus \dom(q)$ for
  all $m\in \Z$, it follows that  $(y){\lambda^{(j + 1)}}_{r_{k+1},
  y}r_{k+1}^m\not \in \dom(q)$ for all $m\in \Z$. Thus $r_{k+1}$ satisfies
  condition (v).

  By \eqref{equation-w-j+1-r-k+1}, ${\lambda^{(j + 1)}}_{r_{k + 1}, x} \equiv
  {\lambda^{(j + 1)}}_{r_k, x} \alpha \beta^i$ for some $i \in
  \{-\mathbf{b}(\rho), \ldots, \mathbf{b}(\rho)\}$. 
  Hence $(x){\lambda^{(j + 1)}}_{r_{k + 1}, x} = (z')f^i \not \in 
  \dom(r_k)\cup \{z\} \cup \left\{(u){\lambda^{(j + 1)}}_{r_k, u}: u \in
  \Gamma_j \right\}$ by the choice of $z'$. By (iii), $(u){\lambda^{(j +
  1)}}_{r_k, u} = (u){\lambda^{(j + 1)}}_{q_j, u}$ for all $u\in
  \Gamma_j\setminus\{y\}$ and $\dom(r_{k+1}) = \dom(r_k)\cup \{z\}$, and so the
  first part of (vi) is satisfied by $r_{k+1}$. 
  To check the second part of (vi), suppose that ${\lambda^{(j+1)}}_{r_{k+1},
  x}\equiv \tau \beta^i$ for some $i \in \Z\setminus\{0\}$ and $\tau \in
  F_{\alpha, \beta}$ such that $\tau$ ends with a letter $\alpha$ and the image
  of $x$ under $\tau(r_{k+1})$ is in $\supp(f^i)$. Then, by \eqref{equation-w-j+1-r-k+1}, 
  $\tau = {\lambda^{(j+1)}}_{r_k, x}\alpha$ and the last part of the assumption
  from the previous sentence becomes
  $z' = (x) {\lambda^{(j + 1)}}_{r_k, x} r_{k+1} \in \supp(f^i)$.
  Then $(x){\lambda^{(j + 1)}}_{r_{k + 1}, x} = (z')f^i \neq z'$. Since
  $(z')f^i \notin \dom(r_k) \cup \ran(r_k)
  \cup \{z\}$ by the choice of $z'$, it follows from
  \eqref{equation-new-dom-ran} that $(x){\lambda^{(j + 1)}}_{r_{k + 1}, x}
  \notin \dom(r_{k + 1}) \cup \ran(r_{k + 1})$. Therefore, $r_{k + 1}$
  satisfies (vi).

  By \eqref{equation-w-j+1-y-same} if $z = (x){\lambda^{(j + 1)}}_{r_k, x} \neq
  (y){\lambda^{(j + 1)}}_{r_k, y}$, then ${\lambda^{(j + 1)}}_{r_{k + 1}, y}
  \equiv {\lambda^{(j + 1)}}_{r_k, y}$, so $(y){\lambda^{(j + 1)}}_{r_k, y} =
  (y){\lambda^{(j + 1)}}_{r_{k + 1}, y}$. It follows from
  \eqref{equation-w-j+1-r-k+1} that there is $i \in \{-\mathbf{b}(\rho), \ldots,
  \mathbf{b}(\rho)\}$ so that $(x){\lambda^{(j + 1)}}_{r_{k + 1}, x} =
  (z')f^i$. Hence by the choice of $z'$
  \[
    (x){\lambda^{(j + 1)}}_{r_{k + 1}, x} = (z')f^i \neq  (y){\lambda^{(j +
    1)}}_{r_k, y} = (y){\lambda^{(j + 1)}}_{r_{k + 1}, y},
  \]
  and so (vii) holds for $r_{k + 1}$.

  Finally, we will show that $r_{k + 1}$ satisfies (viii). Suppose that
  $(y){\lambda^{(j + 1)}}_{r_k, y} \neq (x){\lambda^{(j + 1)}}_{r_k, x}$. Then
  ${\lambda^{(j + 1)}}_{r_{k + 1}, y} \equiv {\lambda^{(j + 1)}}_{r_k, y}$ by
  \eqref{equation-w-j+1-y-same}. Since $|{\lambda^{(j + 1)}}_{r_k, x}| <
  |{\lambda^{(j + 1)}}_{r_{k + 1}, x}|$ and $r_k$ satisfies (viii), it follows
  that  $r_{k + 1}$ satisfies (viii) as well. The other case is when
  $(y){\lambda^{(j + 1)}}_{r_k, y} = (x){\lambda^{(j + 1)}}_{r_k, x}$. Then
  ${\lambda^{(j + 1)}}_{r_{k + 1}, y} \equiv {\lambda^{(j + 1)}}_{r_k, y}
  \alpha \beta^i$ for some $i \in \{ -\mathbf{b}(\rho), \ldots,
  \mathbf{b}(\rho)\}$ by~\eqref{equation-w-j+1-y}. Since ${\lambda^{(j +
  1)}}_{r_k, y}$ is a proper prefix of ${\lambda^{(j + 1)}}_{r_k, x}$ by (viii)
  applied to $r_k$, it follows that ${\lambda^{(j + 1)}}_{r_k, y} \alpha$ is a
  prefix of ${\lambda^{(j + 1)}}_{r_k, x}$, \changed{ and so $ {\lambda^{(j +
  1)}}_{r_k, x} =  {\lambda^{(j + 1)}}_{r_k, y} \alpha \beta^{i'}$ for some $i'
  \in \Z$. Suppose that ${\lambda^{(j + 1)}}_{r_{k + 1}, y}$ is not a prefix of
  ${\lambda^{(j + 1)}}_{r_k, x}$, in other words either $i > 0$ and $i' \in
  \{0, \ldots, i - 1\}$; or $i < 0$ and $i' \in \{i + 1, \ldots, 0\}$. Then
  either ${\lambda^{(j + 1)}}_{r_k, x}\beta$ or ${\lambda^{(j + 1)}}_{r_k,
  x}\beta^{-1}$ must be a prefix of $\lambda^{(j + 1)}$, which contradicts
  \eqref{equation-w-j+1-r-k+1}}. Hence ${\lambda^{(j + 1)}}_{r_{k + 1}, y}$ is
  a prefix of ${\lambda^{(j + 1)}}_{r_k, x}$, and thus
  \[
    |{\lambda^{(j + 1)}}_{r_{k + 1}, y}| \leq |{\lambda^{(j + 1)}}_{r_k, x}| <
    |{\lambda^{(j + 1)}}_{r_{k + 1}, x}|.
  \]
  Therefore, $r_{k + 1}$ satisfies first part of (viii).

  In order to show the second part of (viii), suppose that $(x){\lambda^{(j +
  1)}}_{r_{k + 1}, x} = (y){\lambda^{(j + 1)}}_{r_{k + 1}, y}$. Since (vii)
  holds for $r_{k + 1}$ we have that $(x){\lambda^{(j + 1)}}_{r_k, x} =
  (y){\lambda^{(j + 1)}}_{r_k, y}$ and thus $|{\lambda^{(j + 1)}}_{r_k, y}|
  \geq |{\lambda^{(j + 1)}}_{q_j, y}| + k$ by (viii) for $r_k$.
  Also~\eqref{equation-w-j+1-y} implies that $|{\lambda^{(j + 1)}}_{r_k, y}| <
  |{\lambda^{(j + 1)}}_{r_{k + 1}, y}|$. Therefore $|{\lambda^{(j + 1)}}_{r_{k
  + 1}, y}| \geq |{\lambda^{(j + 1)}}_{q_j, y}| + k + 1$ and thus $r_{k + 1}$
  satisfies (viii) and hence this case is complete.

  \textbf{Case 2:} Suppose ${\lambda^{(j + 1)}}_{r_k, x} \equiv \rho$. It
  follows from \eqref{equation-x-not-in-dom} that $|{\lambda^{(j + 1)}}_{r_0,
  x}| < |\rho|$, and so $k > 0$. Let $r_{k + 1} = r_k$. Then $r_{k + 1}$
  trivially satisfies conditions (i) -- (vii) and the first part of condition
  (viii). To show second part of (viii) we will consider two cases. Suppose
  that $n = 2$ and $\overline{f} = \id$. Since $\lambda^{(j + 1)} \equiv \rho
  \alpha$ and ${\lambda^{(j + 1)}}_{r_k, x} \equiv \rho$ it follows that the
  image of $x$ under $\rho (r_k)$ is $(x){\lambda^{(j + 1)}}_{r_k, x} \in n
  K_\omega \setminus \dom(r_k)$. Let $\changed{t} \in nK_\omega$ be the image
  \changed{of} $x$ under $\lambda^{(j)} \nu \alpha^m (r_k)$. Then $\rho \equiv
  \lambda^{(j)} \nu \alpha^m \beta$ implies that $(\changed{t})f$ is the image
  of $x$ under $\rho$, and so if $\changed{t} \in \fix(f)$ 
  \[
    \changed{t} = (\changed{t})f = (x) w_{r_k, x} \in n K_\omega \setminus \dom(r_k)
  \]
  by the assumption that $w_{r_k, x} = \rho$. However, we have assumed at the
  beginning of the proof that $\fix(f) \subseteq \dom(q)$, which is a
  contradiction since $\dom(q) \subseteq \dom(r_k)$. Hence $\changed{t} \in
  \supp(f)$. Otherwise, either $n = 2$ and $\overline{f} = (1 \; 2)$, or $n
  \geq 3$. Recall that $a, b \in \{1, \ldots, n\}$ are such that $x \in L_a$
  and $b$ is the image of $a$ under $\overline{\lambda^{(j)} \nu (r_k)}$. Then
  the image of $a$ under $\overline{\lambda^{(j)} \nu \alpha^m(r_k)}$ is in
  $\supp(\overline{f})$ by \eqref{equation-choice-m}, and so \changed{the
  imaged of $x$ under $\lambda^{(j)} \nu \alpha^m(r_k)$ is in $\supp(f)$} in
  both cases. Hence it follows from the second part of (vi) that 
  \begin{equation}\label{equation-notin-dom-ran}
    (x){\lambda^{(j + 1)}}_{r_k, x} \notin \dom(r_k) \cup \ran(r_k).
  \end{equation}

  Next, using \eqref{equation-notin-dom-ran}, will show that $(x){\lambda^{(j +
  1)}}_{r_k, x} \neq (y){\lambda^{(j + 1)}}_{r_k , y}$, which then implies that
  $r_{k + 1}$ satisfies the second half of (viii), and this case will be
  complete.
  Suppose that $(x){\lambda^{(j + 1)}}_{r_k, x} = (y){\lambda^{(j +
  1)}}_{r_k , y}$. Since ${\lambda^{(j + 1)}}_{r_k, x} \equiv \rho \equiv
  \lambda^{(j)} \nu \alpha^m \beta$ and $|{\lambda^{(j +
  1)}}_{r_0, x}| \leq |\lambda^{(j)} \nu|$
  by~\eqref{equation-x-not-in-dom}, the fact that at any inductive step
  incomplete components of $q_j$ were extended by at most one point, implies
  that $k \geq m$. Since $m$ was chosen so that $m > |\lambda^{(j)}
  \nu|$, and $r_k$ satisfies (viii)
  \[
    |\rho| = |{\lambda^{(j + 1)}}_{r_k, x}| \geq |{\lambda^{(j + 1)}}_{r_k,
    y}| \geq |{\lambda^{(j + 1)}}_{q_j, y}| + k > m > |\lambda^{(j)}
    \nu|.
  \]
  Hence ${\lambda^{(j + 1)}}_{r_k, y}$ is a prefix of $\rho$, and
  $\lambda^{(j)} \nu$ is a prefix of ${\lambda^{(j + 1)}}_{r_k, y}$. The former
  and the fact that $\lambda^{(j+1)} = \rho\alpha$ implies that ${\lambda^{(j +
  1)}}_{r_k, y}$ is a proper prefix of $\lambda^{(j + 1)}$, and so
  ${\lambda^{(j + 1)}}_{r_k, y} \alpha$ is a prefix of $\lambda^{(j + 1)}$ and
  $y \notin \dom(\lambda^{(j + 1)})$. Since $\lambda^{(j + 1)} \equiv
  \lambda^{(j)} \nu \alpha^m \beta \changed{\alpha}$, there is $i \in \{1,
  \ldots, m - 1\}$ such that ${\lambda^{(j + 1)}}_{r_k, y} \equiv \lambda^{(j)}
  \nu \alpha^i$. Hence $(y) {\lambda^{(j + 1)}}_{r_k, y} \in \ran(r_k)$. But
  this contradicts~\eqref{equation-notin-dom-ran}, and so we conclude that
  $(x){\lambda^{(j + 1)}}_{r_k, x} \neq (y){\lambda^{(j + 1)}}_{r_k , y}$.
  Therefore $r_{k + 1}$ satisfies the second part of (viii), since $r_{k + 1} =
  r_k$, as required.

  Hence by induction there is $q_{j + 1} = r_{|\rho|} \in \A_{f, \Sigma}^{<
  \omega}$ satisfying conditions (i) -- (viii). We will now show that $q_{j
  + 1}$ satisfies (I) -- (IV).

  It follows from (ii) that $q_{j + 1}$ satisfies (I). Suppose that
  $(x){\lambda^{(j + 1)}}_{q_{j + 1}, x} = (y) {\lambda^{(j + 1)}}_{q_{j + 1},
  y}$. Then by (i) and (viii) we have
  \[
    |\rho| = |{\lambda^{(j + 1)}}_{q_{j + 1}, x}| > |{\lambda^{(j + 1)}}_{q_{j
    + 1}, y}| \geq |{\lambda^{(j + 1)}}_{q_j, y}| + |\rho|.
  \]
  which is a contradiction. Hence it follows from (iii), (iv), and (vi) that
  $q_{j + 1}$ satisfies (II). It follows from (v) that we only need to verify
  (III) for $x$. From (i) we have that ${\lambda^{(j + 1)}}_{q_{j + 1}, x}
  \equiv \rho$, and so $(x) {\lambda^{(j + 1)}}_{q_{j + 1}, x} \notin \dom(q_{j
  + 1}) \cup \ran(q_{j + 1})$ by (vi) and the choice of $\rho$, and so (III)
  holds for $q_{j + 1}$. Finally, condition (IV) follows from (i), (iii),
  (viii) and the fact that $q_j$ satisfies (IV). Therefore, $q_{j + 1}$
  satisfies the inductive hypothesis.

  Consider the case where $|{\lambda^{(j + 1)}}_{q_j, x}| < |{\lambda^{(j + 1)}}_{q_j, y}|$.
  The above argument applies if we switch the roles of $x$ and $y$, i.e.\ let
  $\Gamma_j' = \Gamma_j \cup \{y\} \setminus \{x\}$, and $\lambda^{(j)}{}' \equiv
  \lambda^{(j + 1)}$. Then $q_j$, $\lambda^{(j)}{}'$ and $\Gamma_j'$ satisfy
  conditions (I) -- (IV) and we can proceed as before.

  Hence by induction there is $h = q_d$ satisfying (I) -- (IV).  Since $\langle
  \overline{f}, \overline{h} \rangle = S_n$ there is $w \in F_{\alpha, \beta}$
  which does not contain $\alpha^{-1}$, $\lambda^{(d)}$ is a prefix of $w$ and
  $\overline{w(h)} = \id$.  Then from (I) -- (IV) it follows that conditions
  (2), (3), (5) and (6) of $\S(\Gamma, \varnothing, \dom(q), \Delta, w)$ are
  satisfied by $h$.  Since $\Theta = \varnothing$, condition (4) of $\S(\Gamma,
  \varnothing, \dom(q), \Delta, w)$ follows trivially from (3) of $\S(\Gamma,
  \varnothing, \dom(q), \Delta, w)$. Hence $h$ satisfies $\S(\Gamma,
  \varnothing, \dom(q), \Delta, w)$.
\end{proof}

The next lemma is the second step in the proof of
Lemma~\ref{lem-4-main-restate}.

\begin{lem}\label{lem-4-fill-prod}
  Let $n \in \N$ be such that $n > 1$, let $f \in \aut(n K_\omega)$ be
  non-stabilising, let $q \in \A_{f, \Sigma}^{< \omega}$ be such that
  $\overline{q} \in S_n$, and let $w \in F_{\alpha, \beta}$ be a word which
  does not contain $\alpha^{-1}$ and which starts with $\alpha$.  Suppose
  $\Gamma, \Phi,  \subseteq \dom(q)$, $\Theta \subseteq \Gamma$, and $x \in
  \Gamma \setminus \Theta$. If $q$ satisfies $\S(\Gamma, \Theta, \Phi, \Delta,
  w)$, then there is an extension $h \in \A_{f, \Sigma}^{< \omega}$ of $q$ such
  that $h$ satisfies $\S(\Gamma, \Theta \cup \{ x\}, \Phi, \Delta, w)$.
\end{lem}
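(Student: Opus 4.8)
The plan is to realise $h$ as the last term $q_M$ of a finite chain $q = q_0 \subseteq q_1 \subseteq \cdots \subseteq q_M$ in $\A_{f,\Sigma}^{< \omega}$, each $q_{k+1}$ obtained from $q_k$ by adjoining a single pair, so that the largest prefix $w_{q_k,x}$ of $w$ with $x \in \dom(w_{q_k,x}(q_k))$ lengthens strictly with $k$ and becomes all of $w$ at stage $M$. Since $w$ contains no $\alpha^{-1}$ and $f$ is total, any failure to evaluate $w$ at $x$ past $w_{q_k,x}$ is a failure of an $\alpha$-step: writing $z_k = (x)w_{q_k,x}$ we have $z_k \notin \dom(q_k)$ and $w_{q_k,x}\alpha$ a prefix of $w$ (the remark recorded just before Lemma~\ref{lem-4-main-restate}). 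We adjoin $(z_k, z_k')$, where, with $z_k \in L_c$, the vertex $z_k'$ is chosen in $L_{(c)\overline{q_k}}$ avoiding the sets $(E_k)f^{-i}$ for $|i| \leq \mathbf{b}(w)$, with $E_k = \Delta \cup \Phi \cup \dom(q_k) \cup \ran(q_k) \cup \{z_k\} \cup \{(y)w_{q,y} : y \in \Gamma\}$; such $z_k'$ exists since $E_k$ is finite and the components of $nK_\omega$ are infinite. As $\overline{q_k} = \overline{q} \in S_n$ and $z_k \notin \dom(q_k)$, $z_k' \notin \dom(q_k) \cup \ran(q_k)$, $z_k' \neq z_k$, Lemmas~\ref{lem-4-easy-one-point} and~\ref{lem-4-one-point-ext} give $q_{k+1} = q_k \cup \{(z_k,z_k')\} \in \A_{f,\Sigma}^{< \omega}$ (we may assume $\Sigma \subseteq \dom(q)$). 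Evaluation of $w$ at $x$ now advances one $\alpha$-step, to $z_k'$, then through the following free block of $\beta$'s; since $(z_k')f^{\pm i} \notin \dom(q_{k+1})$ for $|i| \leq \mathbf{b}(w)$, it halts again at the next $\alpha$-boundary, so $w_{q_{k+1},x}$ has gained exactly one occurrence of $\alpha$. As $w$ has finitely many such occurrences the process terminates, with $h = q_M$ satisfying $w_{q_M,x} \equiv w$.

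It remains to check that each $q_k$, and hence $h$, still satisfies the appropriate instance of $\S$; this is the bulk of the work, but routine. Condition~(1) is automatic since $\overline{q_k} = \overline{q}$ and $w$ is fixed; condition~(2) holds since each $z_k'$ and its small $f$-translates miss $\Delta$. For condition~(3): adjoining $(z_k,z_k')$ enlarges the domain only by $z_k$, and by condition~(5) of $\S$ for $q$ together with the choice of the $z_j'$, the vertex $z_k$ differs from $(y)w_{q_k,y} = (y)w_{q,y}$ for every $y \in \Gamma \setminus \{x\}$; hence no $y \in \Gamma \setminus (\Theta \cup \{x\})$ ever begins to advance, so $w_{h,y} \equiv w_{q,y}$ stays a proper prefix while $\Theta$ stays fully evaluated with unchanged images, giving $\dom(w(h)) \cap \Gamma = \Theta \cup \{x\}$. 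Conditions~(4), (5), (6) follow by tracking the $h$-component of $(x)w(h)$ and the (unchanged) $h$-components of the $(y)w_{q,y}$: because every vertex introduced, together with its translates $(\cdot)f^{\pm i}$ for $|i| \leq \mathbf{b}(w)$, was chosen outside $\dom(q_k) \cup \ran(q_k)$, outside $\{(y)w_{q,y} : y \in \Gamma\}$ and outside $\Phi$, these components acquire no vertex of $\dom(h)$, no vertex coinciding with another $(y)w_{h,y}$, and no vertex of $\Phi$ — this is exactly the $\mathbf{b}(w)$-ball device already used in Case~1 of the proof of Lemma~\ref{lem-4-base-case}.

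The main obstacle is precisely this simultaneous bookkeeping: one must ensure that dragging $x$ forward along $w$ neither pulls some other vertex of $\Gamma$ into $\dom(w(\cdot))$ nor enlarges the relevant components so as to violate (4)--(6). The first is handled by the distinctness clause~(5) of $\S$, which is available by hypothesis; the second by choosing each new vertex generically relative to a fixed finite set together with its $f$-ball of radius $\mathbf{b}(w)$, using that $f$ is total and the components infinite. Note that the phenomenon handled in Lemma~\ref{lem-4-base-case} of two vertices of $\Gamma$ sharing a common intermediate image cannot occur here, again because condition~(5) of $\S$ forbids it; so this lemma is a streamlined instance of that argument, and in particular needs no appeal to Lemma~\ref{lem-4-amalgamation}.
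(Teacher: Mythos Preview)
Your proposal is correct and follows essentially the same approach as the paper: build $h$ by a finite chain of one-point extensions of $q$, each chosen via Lemmas~\ref{lem-4-easy-one-point} and~\ref{lem-4-one-point-ext} with the new vertex avoiding the $f$-translates (to radius $\mathbf{b}(w)$) of a fixed finite set, so that the evaluation of $w$ at $x$ advances one $\alpha$-step at a time while the evaluations at all other $y\in\Gamma$ stay frozen. The only differences are cosmetic --- you index the chain by the number of $\alpha$-extensions made, whereas the paper indexes by the position reached in $w$ (setting $q_{j+1}=q_j$ at $\beta$-letters) --- and your verification of conditions (4)--(6) is a sketch where the paper maintains five explicit invariants (i)--(v) through the induction and checks each carefully; in particular the paper separates out the case analysis on whether $(x)w_{q_{j+1},x}=(z)f^k$ equals $z$ or not when verifying the analogue of (6), which your phrase ``these components acquire no vertex of $\Phi$'' compresses.
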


\begin{proof}
  For all $k \in \{0, \ldots, |w|\}$, define $\rho_k$ to be a prefix of $w$ of
  length $k$. Recall that for all $u \in \Gamma$ we identify the word $w_{q,
  u}$ with its realisation $w_{q, u}(u)$. In the same way, if $q_k$ is a partial
  isomorphism, then we identify the word $\rho_k$ with the partial isomorphism
  $\rho_k(q_k)$.

  It follows from condition (3) of $\S(\Gamma, \Theta, \Phi, \dom(q), w)$ and
  the fact that $x \in \Gamma \setminus \Theta$, that $x \notin \dom \left(
  w(q) \right)$, and so $w_{q, x}$ is a proper prefix of $w$. Let $M$ be such
  that $M - 1 = |w_{q, x}|$, or in other words $M$ is the smallest non-negative
  integer such that $x \notin \dom\left( \rho_M (q)\right)$. Then $M \leq |w|$.
  Since $x \in \Gamma \subseteq \dom(q)$ and $w$ starts with $\alpha$, it
  follows that $M > 1$, and so $M \in \{2, \ldots, |w|\}$. Since $w_{q, x}$ is
  a proper prefix of $w$, it follows that $w_{q, x} \alpha$ is a prefix of $w$
  and $(x)w_{q, x} \in nK_\omega \setminus \dom(q)$. Hence $\rho_M = \rho_{M -
  1} \alpha$ and the image of $x$ under $\rho_{M - 1} (q)$ is in $ n K_\omega
  \setminus \dom(q)$. 

  We will inductively construct a sequence $q_{M - 1} = q, q_M, \ldots, q_{|w|}
  \in \A_{f, \Sigma}^{< \omega}$ such that if $j \in \{M, \ldots, |w|\}$ then
  $q_j$ is an extension of $q_{j - 1}$ and the following conditions are
  satisfied
  \begin{enumerate}[(i)]
    \item $\ran(q_j) \cap \Delta = \varnothing$;
    \item $w_{q_j, u} \equiv w_{q, u}$ and $(u)w_{q_j, u} \in n K_\omega
      \setminus \dom(q_j)$ for all $u \in \Gamma \setminus \{x\}$;
    \item $(x)\rho_j f^i \in n K_\omega \setminus \dom(q_j)$ for all $i \in \{
      - \mathbf{b}(w) + \mathbf{b}(\rho_j), \ldots, \mathbf{b}(w) -
      \mathbf{b}(\rho_j)\}$;
    \item $(x)w_{q_j, x} \neq (u)w_{q_j, u}$ for all $u \in \Gamma \setminus
      \{x\}$;
    \item $(u) w_{q_j, u} q_j^m \in n K_\omega \setminus \Phi$ for all $u \in
      \Gamma$ and for all $m \in \Z$ such that $u \in \dom(w_{q_j, u} q_j^m)$ .
  \end{enumerate}
  Then $h = q_{|w|}$ will be the required extension of $q$.
  
  Let $y$ be the image of $x$ under $\rho_{M - 1} = w_{q, x}$ and
  suppose $y \in L_a$ for some $a \in \{1, \ldots, n\}$. Recall that
  $\mathbf{b}(w)$ is the number of occurrences of letters $\beta$ and
  $\beta^{-1}$ in the word $w$. We may choose:
  \[
    z \in L_{(a)\overline{q}} \setminus \bigcup_{i = -
    \mathbf{b}(w)}^{\mathbf{b}(w)} \left(\dom(q) \cup \ran(q) \cup \{y\} \cup
    \Delta \cup \left\{ (u)w_{q, u} : u \in \Gamma \right\} \right)f^{-i}.
  \]
  and define $q_M = q \cup \{(y, z)\}$. Then 
 $q_M \in \A_{f, \Sigma}^{< \omega}$ by
  Lemmas~\ref{lem-4-easy-one-point} and~\ref{lem-4-one-point-ext}, since
  $y \notin \dom(q)$ and $z \notin \dom(q) \cup \ran(q)$.

  First, we will show that $q_M$ satisfies conditions (i) to (v). Since
  $\ran(q_M) = \ran(q) \cup \{z\}$ and $z$ was chosen outside $\Delta$, it
  follows that $q_M$ satisfies (i). Let $u \in \Gamma \setminus \{x\}$. If $u
  \notin \Theta$, then, from (3) of $\S(\Gamma, \Theta, \Phi, \Delta,
  w)$, $u\not\in \dom(w(q))$ and so $w_{q, u}$ is a proper prefix of $w$. It
  follows that $(u)w_{q, u} \in n K_\omega \setminus \dom(q)$. On the other
  hand, if $u \in \Theta$, then 
  $w_{q, u} = w$ and $(u)w_{q,u} \in n K_\omega \setminus \dom(q)$ by (3) and
  (4) of $\S(\Gamma, \Theta, \Phi, \Delta, w)$. Hence in both cases $(u)w_{q,
  u} \in n K_\omega \setminus \dom(q)$. Since $\dom(q_M) \setminus \dom(q) =
  \{y\}$ and $(u) w_{q, u} \neq (x) w_{q, x} = y$ by part (5) of $\S(\Gamma,
  \Theta, \Phi, \Delta, w)$, it follows that $(u) w_{q, u} \in n K_\omega
  \setminus \dom(q_M)$, and so $w_{q_M, u} \equiv w_{q, u}$, proving (ii). Let
  $i \in \{-\mathbf{b}(w) + \mathbf{b}(\rho_M), \ldots \mathbf{b}(w) -
  \mathbf{b}(\rho_M)\}$. Since $\dom(q_M) = \dom(q) \cup \{y\}$, it follows
  from the choice of $z$ that 
  \[
    (x)\rho_M f^i = (y)q_M f^i = (z)f^i \in n K_\omega \setminus \dom(q_M).
  \]
  Hence $q_M$ satisfies condition (iii). Let $u \in \Gamma \setminus \{x\}$.
  Note that since $q_M$ satisfies (iii) there is $k \in \{- \mathbf{b}(w),
  \ldots, \mathbf{b}(w)\}$ such that $w_{q_M, x} = w_{q, x} \alpha \beta^k$,
  and so $(x)w_{q_M, x} = (z)f^k$. It follows from the choice of $z$, and the
  fact that $q_M$ satisfies (ii) that 
  \[
    (x)w_{q_M, x} = (z)f^k \neq (u) w_{q, u} = (u) w_{q_M, u}.
  \]
  Hence $q_M$ satisfies (iv).

  Finally, to show that $q_M$ satisfies (v) consider two cases --- $u = x$ and
  $u \in \Gamma \setminus \{x\}$. Suppose that $u = x$ and $m \in \Z$ is such
  that $x \in \dom(w_{q_M, x} q_M^m)$. As shown before $(x)w_{q_M, x} = (z)f^k$
  for some $k \in \{- \mathbf{b}(w), \ldots, \mathbf{b}(w)\}$. From the choice
  of $z$ it follows that $(x)w_{q_M, x} = (z)f^k \notin \dom(q)\cup \ran(q)
  \cup \{y\}$. Suppose $(z)f^k \neq z$. Then $(x)w_{q_M, x} = (z)f^k \notin
  \dom(q_M) \cup \ran(q_M)$, and so $m = 0$. Since $\Phi \subseteq
  \dom(q) \subseteq \dom(q_M)$, this implies that 
  \[
    (x)w_{q_M, x}q_M^m = (z) f^k \in n K_\omega \setminus \Phi.
  \]
  Suppose that $(z)f^k = z$, in other words $(x)w_{q_M, x} = z$. Since $z
  \notin \dom(q_M)$, it follows that $x \notin \dom(w_{q_M, x} q_M^m)$ for all
  $m > 0$. If $m = 0$ then $(x)w_{q_M, x} q_M^m = (z)f^k \in n K_\omega
  \setminus \Phi$ by the choice $z$ and since $\Phi \subseteq \dom(q)$. Suppose
  that $m < 0$. Then $m + 1 \leq 0$ and it follows from the definition of $q_M$
  that $\dom(q_M^{m + 1})$ is either $\dom(q^{m + 1})$ or $\dom(q^{m + 1})
  \cup \{(z)q_M^{m + 1}\}$. Note that $y \in \dom(q_M^{m + 1})$ implies $y \in
  \dom(q^{m + 1})$. It follows that from (6) of $\S(\Gamma, \Theta, \Phi,
  \Delta, w)$ that 
  \[
    (x)w_{q_M, x} q_M^m = (z)q_M^m = (y) q_M^{m + 1} =
    (y)q^{m + 1} = (x) w_{q, x} q^{m + 1} \in n K_\omega \setminus  \Phi.
  \]
  Hence $q_M$ satisfies (v) for $u = x$.

  Suppose that $u \in \Gamma \setminus \{x\}$ and $m \in \Z$ is such that $u \in
  \dom(w_{q_M, u} q_M^m)$. Since $q_M$ satisfies (ii), it follows that
  $(u)w_{q_M, u} = (u)w_{q, u}$. If $m \leq 0$, or $m > 0$ and there is no $m'
  \in \{0, \ldots, m - 1\}$ with $(u)w_{q, u}q^{m'} = y$, then $(u)w_{q_M, u}
  q_M^m = (u)w_{q, u} q^m \in nK_\omega \setminus \Phi$ by (6) of $\S(\Gamma,
  \Theta, \Phi, \Delta, w)$. Otherwise, $m > 0$ and there is $m' \in \{0,
  \ldots, m - 1\}$ such that $(u)w_{q, u}q^{m'} = y$, in which case $(u)w_{q_M,
  u}q_M^{m' + 1} = z \notin \dom(q_M)$. Hence $m = m' + 1$, and since 
  $\Phi \subseteq \dom(q) \subseteq \dom(q_M)$, it follows that $(u)w_{q_M,
  u}q_M^m \in n K_\omega \setminus \Phi$. Therefore, $q_M$ satisfies (v) and
  thus the inductive hypothesis holds.

  In the case where $M = |w|$, $q_{|w|}$ already satisfies conditions (i) to
  (v). Hence suppose that $M < |w|$ and suppose that for some $j \in \{M,
  \ldots, |w| - 1 \}$ there is an extension $q_j \in \A_{f, \Sigma}^{< \omega}$ 
  of $q_{j - 1}$ satisfying conditions (i) to (v). We have two cases to
  consider --- either $\rho_{j + 1} = \rho_j \beta^\varepsilon$ or $\rho_{j +
  1} = \rho_j \alpha^\varepsilon$ for some $\varepsilon \in \{-1, 1\}$.

  First consider the case $\rho_{j + 1} = \rho_j \beta^\varepsilon$, where
  $\varepsilon \in \{-1, 1\}$. Let $q_{j + 1} = q_j$. Then conditions (i),
  (ii), (iv), and (v) are trivially satisfied by $q_{j + 1}$. In order to show
  that $q_{j + 1}$ satisfies (iii), let $i \in \Z$ be such that $i \in \{ -
  \mathbf{b}(w) + \mathbf{b}(\rho_{j + 1}), \ldots, \mathbf{b}(w) -
  \mathbf{b}(\rho_{j + 1}) \}$. Then $|i + \varepsilon| \leq \mathbf{b}(w) -
  \mathbf{b}(\rho_{j + 1}) + 1  = \mathbf{b}(w) - \mathbf{b}(\rho_j)$, and so
  \[
    (x)\rho_{j + 1}f^i = (x)\rho_j f^{i + \varepsilon} \in n K_\omega \setminus
    \dom(q_j) = n K_\omega \setminus \dom(q_{j + 1}).
  \]
  Hence $q_{j + 1}$ satisfies condition (iii), and so the induction hypothesis.

  Otherwise $\rho_{j + 1} = \rho_j \alpha^\varepsilon$ for some $\varepsilon
  \in \{-1, 1\}$, and so $\rho_{j + 1} = \rho_j \alpha$ since $w$ does not
  contain $\alpha^{-1}$. Let $y = (x)\rho_j$, and let $a \in \{1, \ldots, n\}$ be
  such that $y \in L_a$. Choose
  \[
    z \in L_{(a)\overline{q_j}} \setminus \bigcup_{i = -
    \mathbf{b}(w)}^{\mathbf{b}(w)} \left( \dom(q_j)\cup\ran(q_j) \cup \{y\}
    \cup \Delta \cup \left\{ (u)w_{q_j, u} : u \in \Gamma \right\} \right)
    f^{-i}.
  \]
  Since $y \notin \dom(q_j)$ by (iii) and $z \notin \dom(q_j) \cup \ran(q_j)$,
  it follows from Lemmas~\ref{lem-4-easy-one-point}
  and~\ref{lem-4-one-point-ext} that $q_{j + 1} = q_j \cup \{(y, z)\} \in
  \A_{f, \Sigma}^{< \omega}$. Observe that
  \begin{equation}\label{equation-new-dom-ran-2}
    \dom(q_{j + 1}) = \dom(q_j) \cup \{y\} \quad \text{and} \quad
    \ran(q_{j + 1}) = \ran(q_j) \cup \{z\}.
  \end{equation}
  The vertex $z$ was chosen so that $z \notin \Delta$, and so $q_{j + 1}$
  satisfies (i).

  It follows from (iii) that $x \in \dom \left(\rho_j \right)$
  and $x \notin \dom \left( \rho_{j + 1}(q_j) \right)$, thus 
   $w_{q_j, x} \equiv \rho_j$. Let $u \in \Gamma \setminus \{x\}$. Since $q_j$
   satisfies (iv)
   \[
    (u) w_{q_j, u} \neq (x) w_{q_j, x} = (x) \rho_j = y.
  \]
  It then follows from $(u) w_{q_j, u} \in n K_\omega \setminus \dom(q_j)$
  and~\eqref{equation-new-dom-ran-2} that $(u) w_{q_j, u} \in nK_\omega
  \setminus \dom(q_{j + 1})$, and so $w_{q_{j + 1}, u} \equiv w_{q_j, u}$.
  Then $(u) w_{q_{j + 1}, u} \in nK_\omega \setminus \dom(q_{j + 1})$, and
  since $q_j$ satisfies (ii) it follows that $q_{j + 1}$ also satisfies (ii).

  Let $i \in \{ - \mathbf{b}(w), \ldots, \mathbf{b}(w)\}$. Then
  by~\eqref{equation-new-dom-ran-2} and the fact that $z$ was chosen so that
  $(z)f^i \notin \dom(q_j) \cup \{y\}$
  \[
    (x)\rho_{j + 1} f^i = (z)f^i \in n K_\omega \setminus \dom(q_{j + 1}).
  \]
  Hence $q_{j + 1}$ satisfies (iii).

  It follows from the fact that $q_{j + 1}$ satisfies (iii), that  $w_{q_{j +
  1}, x} \equiv w_{q_j, x} \alpha \beta^k$ for some $k \in \{ -\mathbf{b}(w),
  \ldots, \mathbf{b}(w)\}$, and so
  \begin{equation}\label{equation-x-under-new-w}
    (x)w_{q_{j + 1}, x} = (z)f^k.
  \end{equation}
  By the choice of $z$ and the fact that $q_{j + 1}$ satisfies (ii)
  \[
    (x)w_{q_{j + 1}, x} = (z)f^k \neq (u) w_{q_j, u} = (u) w_{q_{j + 1}, u}
  \]
  for every $u \in \Gamma\setminus\{x\}$. Hence $q_{j + 1}$ satisfies (iv).

  Finally, to show that $q_{j + 1}$ satisfies (v) consider two cases --- $u =
  x$ and $u \in \Gamma \setminus \{x\}$. Suppose that $u = x$ and $m \in \Z$ is
  such that $x \in \dom(w_{q_{j + 1}, x} q_{j + 1}^m)$. From the choice of $z$
  and~\eqref{equation-x-under-new-w} it follows that $(x)w_{q_{j + 1}, x} =
  (z)f^k \notin \dom(q)\cup \ran(q) \cup \{y\}$. Suppose $(z)f^k \neq z$. Then
  $(x)w_{q_{j + 1}, x} = (z)f^k \notin \dom(q_{j + 1}) \cup \ran(q_{j + 1})$,
  and so $m = 0$, in which case $\Phi \subseteq \dom(q) \subseteq \dom(q_{j +
  1})$ implies that 
  \[
    (x)w_{q_{j + 1}, x}q_{j + 1}^m = (z) f^k \in n K_\omega \setminus \Phi.
  \]
  Suppose that $(z)f^k = z$, in other words $(x)w_{q_{j + 1}, x} = z$. Since $z
  \notin \dom(q_{j + 1})$, it follows that $x \notin \dom(w_{q_{j + 1}, x} q_{j
  + 1}^m)$ for all $m > 0$. If $m = 0$, then $(x)w_{q_{j + 1}, x} q_{j + 1}^m =
  (z)f^k \in n K_\omega \setminus \Phi$ by the choice $z$ and since $\Phi
  \subseteq \dom(q)$. Suppose that $m < 0$. Then $m + 1 \leq 0$ and it follows
  from the definition of $q_{j + 1}$ that $\dom(q_{j + 1}^{m + 1})$ is either
  $\dom(q_j^{m + 1})$ or $\dom(q_j^{m + 1}) \cup \{(z)q_{j + 1}^{m + 1}\}$.
  Note that $y \in \dom(q_{j + 1}^{m + 1})$ implies $y \in \dom(q_j^{m + 1})$.
  It follows that from (6) of $\S(\Gamma, \Theta, \Phi, \Delta, w)$ that 
  \[
    (x)w_{q_{j + 1}, x} q_{j + 1}^m = (z)q_{j + 1}^m = (y) q_{j + 1}^{m + 1} =
    (y)q_j^{m + 1} = (x) w_{q_j, x} q^{m + 1} \in n K_\omega \setminus  \Phi.
  \]
  Hence $q_{j + 1}$ satisfies (v) for $u = x$.

  Suppose that $u \in \Gamma \setminus \{x\}$ and $m \in \Z$ such that $u \in
  \dom(w_{q_{j + 1}, u} q_{j + 1}^m)$. Since $q_j$ and $q_{j + 1}$ satisfies
  (ii), it follows that $(u)w_{q_{j + 1}, u} = (u)w_{q, u} = (u)w_{q_j, u}$. If
  $m \leq 0$, or $m > 0$ and there is no $m' \in \{0, \ldots, m - 1\}$ with
  $(u)w_{q_j, u}q^{m'} = y$, then $(u)w_{q_{j + 1}, u} q_{j + 1}^m = (u)w_{q_j,
  u} q_j^m \in nK_\omega \setminus \Phi$ since $q_j$ satisfies (v). Otherwise,
  $m > 0$ and there is $m' \in \{0, \ldots, m - 1\}$ such that $(u)w_{q_j,
  u}q_j^{m'} = y$, in which case $(u)w_{q_{j + 1}, u}q_{j + 1}^{m' + 1} = z
  \notin \dom(q_{j + 1})$. Hence $m = m' + 1$, and since $\Phi \subseteq
  \dom(q) \subseteq \dom(q_{j + 1})$, it follows that $(u)w_{q_{j + 1}, u}q_{j
  + 1}^m \in n K_\omega \setminus \Phi$. Therefore, $q_{j + 1}$ satisfies (v)
  and thus the inductive hypothesis.

  By induction there is $h = q_{|w|} \in \A_{f, \Sigma}^{< \omega}$
  satisfying (i) -- (v). We will show that $h$ satisfies $\S(\Gamma, \Theta\cup
  \{x\}, \Phi, \Delta, w)$ and will refer to parts (1) to (6) of this condition
  by writing (1) to (6), where appropriate, without reference to $\S(\Gamma,
  \Theta\cup \{x\}, \Phi, \Delta, w)$ in the rest of the proof.
  
  Since $h$ is an extension of $q$ and $\overline{q} \in
  S_n$, it follows that $\overline{h} = \overline{q}$. Hence 
  \[
    \overline{w(h)} = \id,
  \]
  and so $h$ satisfies (1).
  Since $h$ satisfies (i) and (v), it also satisfies (2) and (6).
  Since $w = \rho_{|w|}$ condition (iii) implies that $x \in \dom(w(h))$, and
  so $x \in \dom(w(h)) \cap \Gamma$. If $u \in \Gamma \setminus \{x\}$, then
  $w_{h, u}
  \equiv w_{q, u}$ by (ii), and so $u \in \dom(w(h)) \cap \Gamma$ if and only
  if $u \in \dom(w(q)) \cap \Gamma$. Therefore, $\dom(w(h)) \cap \Gamma =
  \Theta \cup \{x\}$ as $q$ satisfies $\S(\Gamma, \Theta, \Phi,
  \Delta, w)$, in other words $h$ satisfies (3). By (iii) the image of $x$
  under $w(h)$ is in $n K_\omega \setminus \dom(h)$, and by (ii) the image of
  $u \in \Theta$ under $w(h) = w_{q, u}$ is also in $n K_\omega \setminus
  \dom(h)$. Hence $h$ satisfies condition (4). It then follows from (ii), (iv)
  and the fact that $q$ satisfies (5) of $\S(\Gamma, \Theta, \Phi,
  \Delta, w)$ that $(u)w_{h, u} = (v)w_{h, v}$ only if
  $u = v$ for all $u, v \in \Gamma$, and thus $h$ satisfies (5). Hence $h$
  satisifes $\S(\Gamma, \Theta \cup \{x\}, \Phi, \Delta, w)$, as required. 
\end{proof}

\begin{proof}[Proof of Lemma~\ref{lem-4-main-restate}]
  If necessary by extending $q$ using Lemmas~\ref{lem-4-easy-one-point}
  and~\ref{lem-4-one-point-ext}, we can assume that $\Gamma \subseteq \dom(q)$.

  Let $d = |\Gamma|$. By Lemma~\ref{lem-4-base-case}, there is a freely reduced
  word $w \in F_{\alpha, \beta}$ not containing $\alpha^{-1}$ and starting with
  $\alpha$, and an extension $q_0 \in \A_{f, \Sigma}^{< \omega}$ of $q$
  satisfying $\S(\Gamma, \varnothing, \dom(q), \Delta, w)$.  Suppose that for
  some $j \in \{0, 1, \ldots,  d - 1\}$  we have already extended $q = q_0$ to
  $q_j \in \A_{f, \Sigma}^{<\omega}$ such that there is $\Gamma_j \subseteq
  \Gamma$, with $|\Gamma_j| = j$, and $q_j$ satisfies $\S(\Gamma, \Gamma_j,
  \dom(q), \Delta, w)$. Let $x \in \Gamma \setminus \Gamma_j$ and let
  $\Gamma_{j + 1} = \Gamma_j \cup \{x\}$. Then, by condition (3) of $\S(\Gamma,
  \Gamma_j, \dom(q), \Delta, w)$, $x \notin \dom \left( w(q_j)
  \right)$. Hence if we let $\Theta = \Gamma_j$ and $\Phi = \dom(q)$ then by
  Lemma~\ref{lem-4-fill-prod} there is $q_{j + 1} \in \A_{f, \Sigma}^{<
  \omega}$ and extension of $q_j$ satisfying $\S(\Gamma, \Gamma_{j + 1},
  \dom(q), \Delta, w)$.

  Therefore, by induction on $j$ we obtain $h = q_d \in \A_{f, \Sigma}^{<
  \omega}$ which satisfies $\S(\Gamma, \Gamma, \dom(q), \Delta, w)$, as
  required.
\end{proof}

\bibliography{undirected}{}

\begin{thebibliography}{1}

\bibitem{darji2008highly}
U.~B. Darji and J.~D. Mitchell.
\newblock Highly transitive subgroups of the symmetric group on the natural
  numbers.
\newblock {\em Colloquium Mathematicum}, 112(1):163--173, 2008.

\bibitem{darji2011aa}
U.~B. Darji and J.~D. Mitchell.
\newblock Approximation of automorphisms of the rationals and the random graph.
\newblock {\em Journal of Group Theory}, 14(3):361--388, 2011.

\bibitem{Erdos:1963qf}
P.~Erd{\H{o}}s and A.~R{\'e}nyi.
\newblock Asymmetric graphs.
\newblock {\em Acta Math. Acad. Sci. Hungar}, 14:295--315, 1963.

\bibitem{Hodges1997aa}
Wilfrid Hodges.
\newblock {\em A shorter model theory}.
\newblock Cambridge University Press, Cambridge, 1997.

\bibitem{kechris1995classical}
A.~S. Kechris.
\newblock {\em Classical Descriptive Set Theory}.
\newblock Graduate Texts in Mathematics. Springer-Verlag, 1995.

\bibitem{lachlan1980aa}
A.~H. Lachlan and Robert~E. Woodrow.
\newblock Countable ultrahomogeneous undirected graphs.
\newblock {\em Transactions of the American Mathematical Society},
  262(1):51--94, 1980.

\bibitem{piccard1939aa}
Sophie Piccard.
\newblock Sur les bases du groupe symetrique et du groupe alternant.
\newblock {\em Mathematische Annalen}, 116(1):752--767, 1939.

\end{thebibliography}
\bibliographystyle{plain}
\end{document}